\newtheorem{claim}[thm]{Claim}
\def \a {\alpha}
\def \b {\beta}
\newcommand{\Graph}{{\sf G}}
\def\corAB{}
\begin{document}

\tikzstyle{decision} = [diamond, draw, fill=gray!20, 
    text width=4.5em, text badly centered, node distance=3cm, inner sep=0pt]
\tikzstyle{block} = [rectangle, draw, fill=gray!20, 
    text width=10em, text centered, rounded corners, minimum height=3em]
\tikzstyle{line} = [draw, -latex']
\tikzstyle{cloud} = [draw, ellipse,fill=gray!20, node distance=3cm,
    minimum height=2em]

\title[{Large deviations for subgraph counts}]{{Upper tail large deviations of regular subgraph counts in Erd\H{o}s-R\'{e}nyi graphs} in the full localized regime}

\author{Anirban Basak}
\address{Anirban Basak, International Centre for Theoretical Sciences, Tata Institute of Fundamental Research, Bangalore, India}
\email{anirban.basak@icts.res.in}
\author{Riddhipratim Basu}
\address{Riddhipratim Basu, International Centre for Theoretical Sciences, Tata Institute of Fundamental Research, Bangalore, India}
\email{rbasu@icts.res.in}

%
%
%
%

\date{\today}

\subjclass[2010]{05C80, 60C05, 60F10.}

\keywords{Erd\H{o}s-R\'{e}nyi graph, large deviations, sparse random graphs, subgraph counts.}

\maketitle

\begin{abstract}
For a  $\Delta$-regular connected graph ${\sf H}$ the problem of determining the upper tail large deviation for the number of copies of ${\sf H}$ in $\mathbb{G}(n,p)$, an Erd\H{o}s-R\'{e}nyi graph on $n$ vertices with edge probability $p$, has generated significant interests. For $p\ll 1$ and $np^{\Delta/2} \gg (\log n)^{1/(v_{\sf H}-2)}$, where $v_{\sf H}$ is the number of vertices in ${\sf H}$, the upper tail large deviation event is believed to occur due to the presence of localized structures. In this regime the large deviation event that the number of copies of ${\sf H}$ in $\mathbb{G}(n,p)$ exceeds its expectation by a constant factor is predicted to hold at a  speed $n^2 p^{\Delta} \log (1/p)$ and the rate function is conjectured to be given by the solution of a mean-field variational problem. After a series of developments in recent years, covering progressively broader ranges of $p$, the upper tail large deviations for cliques of fixed size was proved by Harel, Mousset, and Samotij \cite{hms} in the entire localized regime. This paper establishes the conjecture for {\em all} connected regular graphs in the {\em whole} localized regime. 
%
%
%
%
%
%
%
%
%
%
%
%
\end{abstract}

\section{Introduction}\label{sec:intro}
Let $\G_n$ be a random graph on $n$ vertices and ${\sf H}$ be a fixed graph. In recent years the study of large deviations for the number of copies of ${\sf H}$ in $\G_n$ has received a paramount interest. See the recent monograph \cite{Chbook} for a comprehensive overview on this topic. Consider the simplest non-trivial set up:~$\G_n= \G(n,p)$ is the Erd\H{o}s-R\'{e}nyi graph on $n$ vertices with edge connectivity probability $p=p(n) \in (0,1)$, and ${\sf H}=K_3$ is a triangle. After a series of works \cite{aug, BGLZ, chd, cod, eld, LZ}, large deviations bounds for the upper tail of   triangle counts in $\G(n,p)$ for all $p \ll 1$ satisfying $np \gg \log n$ is established by Harel, Mousset, and Samotij \cite{hms} (for two sequences of positive reals $\{a_n\}$ and $\{b_n\}$ we write $b_n=o(a_n)$, $a_n \gg b_n$, and $b_n \ll a_n$ to denote $\limsup_{n \to \infty} b_n/a_n =0$). In this regime the large deviation event is due to the presence of localized structures in $\G(n,p)$. Whereas, in the complement regime, i.e. $1 \ll np \ll \log n$, as shown in \cite{hms}, the large deviation is given by the large deviation of a Poisson random variable with an appropriate mean. Thus these two regimes can be termed as the {\em localized regime} and the {\em Poisson regime}, respectively. 

Moving to more general subgraph counts it was established in the series of works mentioned above that for any $\Delta$-regular connected graph ${\sf H}$, the upper tail large deviation occurs due to the presence of localized structures for $1\gg p\gg n^{-\alpha_{\sf H}}$ for successively improved values of $\alpha_{\sf H} \in (0,1)$. 

On the other hand, it was shown in \cite{hms} that for any $\Delta$-regular connected graph ${\sf H}$ the Poisson regime is characterized by the threshold {$1\ll np^{\Delta/2} \ll (\log n)^{1/(v_{\sf H} -2)}$} and the exponent $1/(v_{\sf H} -2)$ is optimal (see \cite[Section 8]{hms} for a discussion on the optimality), where $v_{\sf H}$ is the number of vertices in ${\sf H}$. It naturally leads to the conjecture that for any $\Delta$-regular connected graph for the entire regime $ (\log n)^{1/(v_{\sf H}-2)} \ll {np^{\Delta/2}} \ll n$ the large deviation for the upper tail of the number of copies of ${\sf H}$ in $\G(n,p)$ is due to the presence of localized structures, where the speed is predicted to be $n^2 p^\Delta \log(1/p)$ with the rate function to be given by a mean-field variational problem as in \cite{chd}. 

When ${\sf H}$ is a clique this conjecture was proved in \cite{hms}. For general regular graphs the best previously known result in this direction is again due to \cite{hms}, where the authors derived upper tail large deviations for all $\Delta$-regular connected {\em non-bipartite graphs} ${\sf H}$ in the regime of $p$ satisfying $np^{\Delta/2} \gg (\log n)^{\Delta v_{\sf H}^2}$, whereas for $\Delta$-regular connected {\em bipartite graphs} this is done for the regime $p^{\Delta/2}\ge n^{-1/2-o(1)}$ with the $o(1)$ term decays to zero at any arbitrary rate as $n \to \infty$.  
%
The goal of this paper is to derive the upper tail large deviation for the subgraph count of {\em any} regular connected graph in the {\em entire} localized regime. 


To state our main result we need to introduce some notation. For any graph $\Graph$ we write $V(\Graph)$ and $E(\Graph)$ to denote its vertex and edge sets, respectively. Next we define the notion of labelled copies of a given graph in another graph. 
\begin{dfn}\label{dfn:labelled-density}
Given graphs $\Graph$ and ${\sf H}$ we write $N({\sf H}, {\sf G})$ to denote the number of labelled copies of ${\sf H}$ in $\Graph$. That is,
\beq\label{eq:labelled-density}
N({\sf H}, \Graph):= \sum_{\varphi: V({{\sf H}}) \hookrightarrow V({\Graph})}  \prod_{(x,y) \in E({\sf H})} a^\Graph_{\varphi(x),\varphi(y)},
\eeq
where the sum is over all {\em injective} maps $\varphi$ from $V({\sf H})$ to $V(\Graph)$, and $(a^\Graph_{i,j})_{i,j \in V(\Graph)}$ is the adjacency matrix of the graph $\Graph$. 
\end{dfn}

Next for any graph ${\sf H}$ we denote its independence polynomial by $P_{\sf H}(\cdot)$. That is,
\[
P_{\sf H}(x):= \sum_{k\ge 0} i_{\sf H}(k) x^k,
\]
where $i_{\sf H}(k)$ is the number of $k$-element independent subsets of ${\sf H}$ and set $i_{\sf H}(0):=\corAB{1}$. We further denote $\theta_{\sf H}$ to be unique positive solution to $P_{\sf H}(\theta)=1+\delta$. ($P_{\sf H}$ being a polynomial with nonnegative coefficients, hence increasing on $[0,\infty)$, with $P_{\sf H}(0)=1$, the existence of a unique solution is guaranteed.)

{Let us also note that for any graph ${\sf H}$ we have $\E[N({\sf H}, \Graph)] = (1+o(1)) n^{v_{\sf H}} p^{e({\sf H})}$, where $v_{\sf H}$ and $e({\sf H})$ denote the number of vertices and edges of ${\sf H}$, respectively. Thus for any graph ${\sf H}$ and $\delta >0$ the upper tail event is given by 
\beq\label{eq:UT}
{\sf UT}({\sf H}, \delta):= \left\{ N({\sf H}, \G(n,p) ) \ge (1+\delta) n^{v_{\sf H}} p^{{ e}({\sf H})}\right\}.
\eeq}
The following is the main result of this paper.


\begin{thm}\label{thm:cycle-main}
For any $\Delta$-regular connected graph ${\sf H}$, with $\Delta \ge 2$, and $\delta >0$,
\beq
\label{eq:cycle-ldp}
\lim_{n \to \infty} -\f{\log {\P({\sf UT}({\sf H}, \delta))}}{n^2 p^\Delta \log (1/p)} = \left\{\begin{array}{ll}
\min\{\theta_{\sf H}, \f12 \delta^{2/v_{\sf H}}\} & \mbox{ if } n^{1/2} \ll np^{\Delta/2} \ll n,\\
\f12 \delta^{2/v_{\sf H}} & \mbox{ if } (\log n)^{1/(v_{\sf H}-2)} \ll n p^{\Delta/2} \ll n^{1/2}.
\end{array}
\right.
\eeq
\end{thm}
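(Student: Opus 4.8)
The plan is to establish the matching bounds $\P({\sf UT}({\sf H},\delta))=\exp(-(1+o(1))\,c^\star\,n^2p^\Delta\log(1/p))$, where $c^\star$ denotes the right-hand side of \eqref{eq:cycle-ldp}, obtaining the lower bound on the probability by planting an optimal localized structure and the upper bound by a high-moment argument together with an \emph{entropic-stability} analysis that reduces the exponent to the mean-field variational problem of \cite{chd}. Write $\mu:=\E[N({\sf H},\G(n,p))]=(1+o(1))n^{v_{\sf H}}p^{e({\sf H})}$. Since every subgraph of a $\Delta$-regular graph has edge density at most $\Delta/2$, every nonempty subgraph ${\sf H}'\subseteq{\sf H}$ satisfies $\E[N({\sf H}',\G(n,p))]\ge(1-o(1))(np^{\Delta/2})^2\to\infty$, whence $\operatorname{Var}(N({\sf H},\G(n,p)))=o(\mu^2)$ throughout the regime; in particular $N({\sf H},\G(n,p))$ is concentrated.

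\emph{Lower bound (planting).} Fix $\epsilon>0$ and compare two structures. \emph{(i) Clique planting:} insert all edges inside a fixed $a$-set $A$ with $a:=\lceil((1+\epsilon)\delta)^{1/v_{\sf H}}np^{\Delta/2}\rceil$. As $A$ then spans a complete graph it deterministically contains $a^{\underline{v_{\sf H}}}\ge(1+\epsilon)\delta\mu$ labelled copies of ${\sf H}$, while the copies supported entirely in $[n]\setminus A$ still number $(1-o(1))\mu$ with probability $1-o(1)$ by the concentration just noted; hence ${\sf UT}({\sf H},\delta)$ holds, at cost $\P(A\text{ is complete})=p^{\binom{a}{2}}=\exp(-(1+o(1))\tfrac12((1+\epsilon)\delta)^{2/v_{\sf H}}n^2p^\Delta\log(1/p))$. \emph{(ii) Hub planting}, admissible precisely when $np^{\Delta}\to\infty$ (equivalently $np^{\Delta/2}\gg n^{1/2}$): fix a set $W$ of $m:=\lceil(1+\epsilon)\theta_{\sf H}\,np^\Delta\rceil$ vertices and insert all edges between $W$ and $[n]\setminus W$, none inside $W$. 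For each independent $k$-set $I$ of ${\sf H}$, any injection of $I$ into $W$ and of $V({\sf H})\setminus I$ into $[n]\setminus W$ yields a copy of ${\sf H}$ once the $e({\sf H})-\Delta k$ edges of ${\sf H}$ not incident to $I$ are present, so summing expectations over $k\ge1$ the mean number of new copies equals $(1+o(1))\mu\,(P_{\sf H}(m/(np^\Delta))-1)\ge(1+o(1))(1+\epsilon')\delta\,\mu$ for some $\epsilon'>0$, as $P_{\sf H}$ is strictly increasing with $P_{\sf H}(\theta_{\sf H})=1+\delta$; since $m\to\infty$, distinct such copies interact only through $W$ and a second-moment bound gives concentration, so ${\sf UT}({\sf H},\delta)$ holds, at cost $p^{mn}=\exp(-(1+o(1))(1+\epsilon)\theta_{\sf H}\,n^2p^\Delta\log(1/p))$. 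Taking the cheaper admissible structure and letting $\epsilon\downarrow0$ yields the lower bound on $\P({\sf UT}({\sf H},\delta))$ matching both lines of \eqref{eq:cycle-ldp}; note that when $np^{\Delta/2}\ll n^{1/2}$ the hub is inadmissible ($m\to0$), which is the source of the clique-only value there.

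\emph{Upper bound.} I would apply the method of high moments to an appropriately truncated count $\widehat N\le N({\sf H},\G(n,p))$ --- discarding copies through vertices of abnormally large degree, which on ${\sf UT}({\sf H},\delta)$ costs only an $o(\mu)$ correction with high probability --- via, for an integer $k=k(n)\to\infty$ to be optimized,
\beq
\P({\sf UT}({\sf H},\delta)) \le o(1)+\frac{\E\big[\widehat N^{\,k}\big]}{\big((1-o(1))(1+\delta)\mu\big)^k}.
\eeq
Expand $\E[\widehat N^{\,k}]=\sum_F c_k(F)\,p^{e(F)}$ over graphs $F\subseteq K_n$ arising as unions $F=T_1\cup\cdots\cup T_k$ of $k$ labelled copies of ${\sf H}$, $c_k(F)$ being the number of ordered coverings. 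The core of the argument is an entropic-stability statement in the spirit of \cite{hms}: up to a factor $\exp(o(n^2p^\Delta\log(1/p)))$, the ratio above is at most the maximum, over cores $F_0$ that are disjoint unions of a clique on some $a$ vertices and a complete bipartite ``hub'' between some $W$ and $[n]$, of the contribution of tuples whose union is $F_0$ joined disjointly to a \emph{bulk} of $(1-o(1))k$ essentially edge-disjoint copies of ${\sf H}$ --- the point being that all other tuples (heavily overlapping ``clustered'' configurations, cores of other shapes, etc.) contribute less. Evaluating the clique-and-hub contributions --- $p^{\binom{a}{2}}$ times combinatorial factors for the clique, and, for the hub, a sum over the ways independent sets of ${\sf H}$ embed into $W$, which brings in the independence polynomial $P_{\sf H}$ as in \cite{BGLZ} --- and optimizing over $a$ and $|W|$ is precisely the mean-field problem of \cite{chd}, with value $c^\star$: it equals $\min\{\theta_{\sf H},\tfrac12\delta^{2/v_{\sf H}}\}$ when $np^\Delta\to\infty$, and collapses to $\tfrac12\delta^{2/v_{\sf H}}$ when $np^\Delta\to0$, because then a hub with $|W|\asymp np^\Delta$ (the size needed for an $\Omega(1)$ relative boost) forces $\Omega(n)$ further non-hub edges into $F_0$, costing $p^{\Omega(n)}=\exp(-\omega(n^2p^\Delta\log(1/p)))$. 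The hypothesis $np^{\Delta/2}\gg(\log n)^{1/(v_{\sf H}-2)}$ enters exactly when showing the clustered configurations are negligible; below it such a configuration is already cheap enough to alter the answer (the optimality analysis of \cite[Section 8]{hms}), so the range is best possible.

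\emph{Main obstacle.} I expect the principal difficulty to be the upper bound in the \emph{sparsest} part of the localized regime, where $np^{\Delta/2}$ is only just above $(\log n)^{1/(v_{\sf H}-2)}$: there $N({\sf H},\G(n,p))$ is far from concentrated, crude moment and Janson-type bounds lose even the correct exponent, and one must control with constant precision the entire family of overlap patterns of $k$ copies of ${\sf H}$ --- including the heavily overlapping ones --- to license the entropic-stability reduction. A second, more technical obstacle, and the reason prior work stopped short of full generality, is the \emph{bipartite} case: when ${\sf H}$ is bipartite the rigidity available for non-bipartite ${\sf H}$ (e.g.\ from the absence of short odd closed walks) fails, so both the classification of near-extremal unions and the concentration inside the hub construction must be redone; the $2$-regular case ${\sf H}=C_\ell$, where $P_{C_\ell}$ is explicit and the overlap combinatorics is most transparent, is the natural base case on which to build.
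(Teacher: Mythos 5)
Your lower bound is correct: the clique and hub plantings give exactly the two candidate rates, with the hub admissible only when $np^{\Delta}\to\infty$ (so that $|W|\asymp np^{\Delta}$ is a positive integer), which explains why the minimum collapses below $p\sim n^{-1/\Delta}$. The paper proves the clique bound directly (Proposition~\ref{p:lower}) and cites \cite{hms,BGLZ} for the hub regime, but your version is essentially the same computation.

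The gap is in the upper bound, and it is precisely the gap that this paper was written to close. You invoke an ``entropic-stability statement in the spirit of \cite{hms}'': up to a factor $\exp(o(n^2p^\Delta\log(1/p)))$, the moment ratio is controlled by cores that are disjoint unions of a clique and a complete-bipartite hub. But for bipartite $\Delta$-regular ${\sf H}$ in the regime $p\le n^{-1/\Delta-o(1)}$, this statement is \emph{false}, and \cite[Section 10]{hms} already explains why: there exist core graphs such as $K_{2,Cn^2p^2}$ (for ${\sf H}=C_4$) whose number exceeds $\exp(c\,{\bm e}\log(1/p))$ for any fixed mesh, so the bound $\gN_{\bm e}\le\exp({\bm e}\log(1/p)\cdot o(1))$ on the count of cores with ${\bm e}$ edges fails outright. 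Your explanation of why the hub becomes ``inadmissible'' ($p^{\Omega(n)}$ cost) describes the true extremal structure but does not preclude these \emph{non-extremal} cores from swamping the union bound; the entropic stability you need is a claim about the \emph{number} of near-extremal unions, not just about the optimum.

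Resolving this is the paper's technical content, and none of it appears in your sketch. For cores with $O(n^2p^\Delta)$ edges the paper passes to ``strong-core'' graphs (Definition~\ref{dfn:strong-core}), proves a tight two-sided bound $c_0(\vep)n^2p^\Delta\le\deg(u)\deg(v)\le C_0(\vep)n^2p^\Delta$ on products of degrees across most edges (Lemma~\ref{lem:bad-edges-bounds}), and uses it to build the block-path decomposition of Figure~\ref{fig:path-str}; the reverse pigeonhole then yields a bipartite subgraph $\Graph_\gb$ with almost no copies of ${\sf H}$ straddling $\Graph_\gb$ and $\Graph\setminus\Graph_\gb$, and Lemmas~\ref{lem:small-count} and~\ref{lem:large-count} (which use a perfect matching of ${\sf H}$ avoiding prescribed edges, Lemma~\ref{lem:perfect-match}) bound each piece separately. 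For cores with $\omega(n^2p^\Delta)$ edges, one cannot just peel: when $np^{\Delta/2}\le(\log n)^{v_{\sf H}}$ the subgraph spanned by low-degree vertices is no longer bipartite, and the paper runs a dyadic chaining argument (Proposition~\ref{p:core2}) driven by whether $\cN_{1,1}({\sf H},\Graph)$ --- copies through a low-low edge --- is large or small, iterating across scales $\cJ_j$ with an adaptive threshold $2^{-j}\vep\delta\,\mu$ (Lemmas~\ref{l:core2ja}, \ref{l:core2jb}); the hypothesis $np^{\Delta/2}\gg(\log n)^{1/(v_{\sf H}-2)}$ enters exactly to bound $\wt\cN_{1,1}$ (Lemma~\ref{lem:wtN11}). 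You correctly diagnose the bipartite case as the obstacle and that the answer should involve the independence polynomial, but your proposal supplies no mechanism to control the count of near-extremal configurations once the \cite{hms} core-counting bound fails; as written, the argument does not close.
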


Let us add that, as the edges in an Erd\H{o}s-R\'{e}nyi graph are independent, the case of $\Delta=1$ is trivial and hence omitted from the setup of Theorem  \ref{thm:cycle-main}. 

We point to the reader that the upper tail large deviations of {\em unlabelled copies} of ${\sf H}$ in $\G(n,p)$ has been considered in \cite{hms}. As the number of labelled and unlabelled copies only differ from each other by a factor $|{\rm Aut}({\sf H})|$, the number of automorphisms of ${\sf H}$, the large deviation speed as well as the rate function are identical in these two cases. For convenience we have chosen to work with the labelled copies of ${\sf H}$ in $\G(n,p)$. 

{We remark that for $p \gg n^{-1/\Delta}$ Theorem \ref{thm:cycle-main} follows from \cite{hms}. There it was shown that the \abbr{LHS} of \eqref{eq:cycle-ldp} equals the limit of a mean-field variational problem (see \eqref{eq:var-prblm-labelled} below), whereas the limiting behavior of that variational problem was studied in \cite{BGLZ}. In this paper, in the other regime of $p$ we directly show \eqref{eq:cycle-ldp}, bypassing the variational problem.} 

\begin{rmk}
The case of irregular graphs is more involved. The  upper tail large deviations for the number of copies of an irregular graph ${\sf H}$ in an Erd\H{o}s-R\'{e}nyi graph have been derived in \cite{chd, cod, eld} for $p \gg n^{-c_{\sf H}}$, where $c_{\sf H} \in (0,1)$ is some constant depending on the graph ${\sf H}$. On the other hand \cite{jor} finds the order of the upper and lower bounds on the $\log$ of the probability of ${\sf UT}({\sf H}, \cdot)$ for a much wider range of $p$. However the order of these two bounds differ by a factor of $\log (1/p)$. For $p \ll n^{-1/\Delta}$, where $\Delta$ is the maximum degree in an irregular graph ${\sf H}$, even the order of the $\log$ probability of ${\sf UT}({\sf H}, \cdot)$ is not well understood. In fact, in this case the conjectured bound of \cite{DK1} have been disproved in some examples (see  \cite{SW}). 
\end{rmk}


\begin{rmk}
A related problem of interest is to study upper tail large deviations of the {\em homomorphism count}. The homomorphism count of ${\sf H}$ in $\Graph$, denoted by ${\rm Hom}({\sf H}, \Graph)$, is defined to be sum in \eqref{eq:labelled-density} when $\varphi$ varies over {\em all} maps from $V({\sf H})$ to $V(\Graph)$. For $\ell \ge 3$, denoting $C_\ell$ to be cycle graph of length $\ell$, it follows from \cite{aug, cod, hms} that the upper tail large deviations for $N(C_\ell, \G(n,p))$ and ${\rm Hom}(C_\ell, \G(n,p))$ are the same for $n^{-1/2}(\log n)^2 \ll p \ll 1$. The same phenomenon {should} hold for a wider range of $p$. 

A natural way to derive the upper tail large deviations of ${\rm Hom}(C_\ell, \G(n,p))$ from that of labelled copies of subgraphs in $\G(n,p)$ is to write the former as a sum of $N({\sf H}_\star, \G(n,p))$, where ${\sf H}_\star$ is a {\em quotient subgraph} of $C_\ell$ (see \cite[Chapter 5]{L} for more details on this representation), and derive the upper tail large deviations of $N({\sf H}_\star, \G(n,p))$ {for each such ${\sf H}_\star$}. As the quotient graphs of $C_\ell$ involves {\em star} graphs this route would in particular need an understanding of the upper tail large deviations for such irregular graphs. The best known result in this direction is due to \cite{SW1} where the speed of the large deviation is identified.
\end{rmk}

\begin{rmk}
It is immediate to note that for any graph $\Graph$ and $t \ge 2$,
\[
{\rm Hom}(C_{2t}, \Graph) = \tr [{\rm Adj}(\Graph)^{2t}]= \sum_{i=1}^n \lambda_i^{2t},
\]
where ${\rm Adj}(\Graph)$ is the {\em adjacency matrix} of $\Graph$, $\tr(\cdot)$ denotes the trace, and $\lambda_1 \ge \lambda_2 \ge \cdots \ge \lambda_n$ are the eigenvalues of ${\rm Adj}(\Graph)$ arranged in a non-increasing order. So upper tail large deviations of ${\rm Hom}(C_{2t}, \G(n,p))$ for all $t \ge 2$ would yield the same for the top eigenvalue of ${\rm Adj}(\G(n,p))$. Using results of \cite{cod} this has been achieved in \cite{BG} for $p \gg n^{-1/2}$. Extending the same for a sparser regime would need an understanding of the upper tail large deviations of ${\rm Hom}(C_{2t}, \G(n,p))$. We postpone it to a future work.

{Let us also add that for $p \sim 1$ ({for two sequences of positive reals $\{a_n\}$ and $\{b_n\}$ we write $a_n \sim b_n$ if $0< \liminf_{n \to \infty} a_n / b_n \le \limsup_{n \to \infty} a_n /b_n < \infty$}) the upper tail large deviation of the top eigenvalue was studied in \cite{LZ0}. There the large deviation was derived by showing that the {\em operator norm} of a {\em graphon} is a {\em nice graph parameter}, associating the upper tail event for such graph parameters to a variational problem, and then by analyzing that variational problem. See \cite[Theorem 1.2]{LZ0} and its proof for more details.}
\end{rmk}

\begin{rmk}\label{rmk:rate-fn}
Note that Theorem \ref{thm:cycle-main} does not discuss the nature of the large deviation when $p \sim n^{-1/\Delta}$. It follows from \cite{hms} that for such $p$ the large deviation speed continues to be $n^2 p^\Delta \log (1/p)$. The rate function turns out to be the limit of an $n$-dependent constrained optimization problem defined over the space of graphs on $n$ vertices (see \eqref{eq:var-prblm-labelled} below). 
The solution of this optimization problem depends on the graph ${\sf H}$. 
To highlight the difficulty and generality of this problem we refer the reader to Remark \ref{rmk:rate-fn-c4} where we provide a description of the rate function for $C_4$. 
\end{rmk}

\begin{rmk}
It is of interest to study the typical structure of $\G(n,p)$ conditioned on the upper tail event that $N({\sf H}, \G(n,p))$ exceeds its expectation by a constant factor. When ${\sf H}$ is a clique graph it has been shown in \cite[Theorem 1.8]{hms} that conditioned on the upper tail event the Erd\H{o}s-R\'{e}nyi graph typically has either a {\em clique-like} or a {\em hub-like} structure. In the setting of Theorem \ref{thm:cycle-main} one {can} modify its proof appropriately and proceed similarly along the lines of the proofs of \cite[Propositions 6.4 and 6.6]{hms} to deduce the same. To retain the clarity of the proof of Theorem \ref{thm:cycle-main} this extension is not carried out in detail.
\end{rmk}

\begin{rmk}
Much less is known about the large deviations of subgraph counts in random graph models beyond $\G(n,p)$. This has been studied in the context of {\em random $d_n$-regular graphs} \cite{BD} and {\em random hypergraphs} \cite{LiZ} when the (hyper)-graphs are not too sparse. It is worthwhile to investigate whether the ideas of \cite{hms} and this paper can be adapted to these problems to treat sparser regimes. 
\end{rmk}

\begin{rmk}
In an independent work, Cohen \cite{Co} studies upper tail large deviations of the number of {\em induced} copies of $C_4$ in an Erd\H{o}s-R\'{e}nyi graph, for all $p =o(1)$ satisfying $p \gg n^{-1+\updelta}$ for any $\updelta >0$. Let us add that the upper tail large deviations of $N(C_4, \G(n,p))$ and the number of induced copies $C_4$ in $\G(n,p)$ are quite different because, for $p \ll n^{-1/2}$, the large deviation event in the first setup is due to planting a clique, whereas in the other case it is due to planting complete bipartite graphs of appropriate sizes depending on the sparsity parameter $p$. For further details we refer the reader to \cite{Co}.
\end{rmk}

\subsection{Background and related results}\label{sec:discuss}
As alluded to already, the study of upper tail large deviations of subgraph counts in an Erd\H{o}s-R\'{e}nyi graph has a long and rich history. It can be traced back to the work of Janson and Ruci\'{n}ski \cite{JR} where the problem is described as {\em the infamous upper tail} problem. When ${\sf H}=K_3$, a triangle, it was shown in \cite{jor, KV} that for any $p {\ge} \log n/n$ and $\delta>0$ one has the bounds
\beq\label{eq:upper-tail-conc-bd}
\exp(-c_1(\delta)n^2 p^2 \log(1/p)) \le \P({\sf UT}(K_3, \delta)) \le \exp(-c_2(\delta) n^2 p^2),
\eeq
for some constants $0< c_1(\delta), c_2(\delta) < \infty$, where {we recall the definition of ${\sf UT}(\cdot, \cdot)$ from \eqref{eq:UT}.} 

About a decade later the discrepancy between the exponents of the upper and lower bounds in \eqref{eq:upper-tail-conc-bd}  was resolved in \cite{Ch, DK} by showing that the exponent in the upper bound can be tightened to $c_2(\delta) n^2 p^2\log (1/p)$. These left open the problem of determining the asymptotic dependence of the constants $c_1(\delta)$ and $c_2(\delta)$ in $\delta$ and showing that they differ from each other only by ${o(1)}$. 

For $p \in (0,1)$ fixed, i.e.~not depending on $n$, this problem has been resolved by Chatterjee and Varadhan \cite{CV} where a key ingredient was Szemer\'{e}di's regularity lemma \cite{Sz} for dense graphs. 
{In particular, they showed that the $\log$ probability of the upper tail event ${\sf UT}(K_3,\delta)$, when properly normalized, is asymptotically equivalent to a mean-field variational problem. Lubetzky and Zhao  \cite{LZ0} derived the solution to that variational problem (for general regular graphs) in the {\em replica symmetry} region, thereby settling this question in that regime for $p \sim 1$.} 

Due to the poor quantitative bound of Szemer\'{e}di's regularity lemma the approach of \cite{CV} cannot be adopted when $p\sim n^{-c}$ for some $c >0$. Recently there was a series of breakthroughs in this area. Chatterjee and Dembo \cite{chd} developed a general framework to treat the upper tail large deviation of any nonlinear smooth function $f(\cdot)$ of i.i.d.~$\dBer(p)$ random variables, where $f(\cdot)$ is of {\em low-complexity} characterized by the existence of a net of small cardinality of the image of the unit hypercube under the map $\nabla f$, the gradient of $f$. {Similar to the dense regime, \cite{chd} showed that the $\log$ probability of the upper tail event $\{f \ge(1+\delta) \E [f]\}$ can be associated to a mean-field variational problem. 
This, together with solution of the variational problem derived in \cite{LZ},} when applied to the problem of the upper tail of $N(K_3, \G(n,p))$ yields large deviations for $n^{-1/42} (\log n)^{11/14} \ll p \ll 1$.

Eldan in \cite{eld} derived upper tail large deviations of a nonlinear Lipschitz function $f(\cdot)$ of i.i.d.~$\dBer(p)$ variables when the {\em Gaussian width} of the image of the discrete unit hypercube under the gradient map $\nabla f$ is small. This, {together with \cite{LZ}}, improved the range of $p$ where the large deviations of ${\sf UT}(K_3, \cdot)$ could be established to $ n^{-1/18} (\log n) \ll p \ll 1$. {In a related work Austin \cite{aus} shows that Gibbs measures on arbitrary product spaces can be approximated by mixtures of product measures, if those Gibbs measures are induced by Hamiltonians of low complexity. There the low complexity is characterized by a bound on the covering number of the range of the discrete gradient of the Hamiltonian.}

The work of \cite{eld} was further improved by Augeri \cite{aug}, and Cook and Dembo \cite{cod} to show that the large deviations of the upper tail events ${\sf UT}(C_\ell, \cdot)$, for any $\ell \ge 3$ (including in particular the case of $K_3$), is due to the presence of localized structures in the regime $ n^{-1/2}(\log n)^2 \ll p \ll 1$. {As before, to derive a result analogous to \eqref{eq:cycle-ldp}, for this regime of $p$, one needs to understand the asymptotic behavior of the corresponding variational problem. That was derived in \cite{BGLZ}}. 
The work of Augeri \cite{aug} is an advancement of \cite{chd} where one can now consider non-smooth convex Lipschitz functions and it provides a cleaner error bound suitable for use in a wider sparse regime.  Whereas the key to \cite{cod} is the derivation of a new quantitative version of Szemer\'{e}di’s regularity lemma and the {\em counting lemma} tailored for the sparse regime.

Let us add that, in the context of cycle graphs, the approaches used in \cite{aug, cod} require approximating symmetric square matrices of dimension $n$ with entries in $[0,1]$ in the Hilbert-Schmidt norm. This is done by using standard nets for the large eigenvalues {of such matrices} and the eigenvectors corresponding to those large eigenvalues. As seen from Theorem \ref{thm:cycle-main}, for $p \ll (\log n)^{-1/2}n^{-1/2}$ the speed of the large deviations for ${\sf UT}(C_\ell, \cdot)$ is $o(n)$. {Observe that $\log$ of the cardinality of any net of constant mesh-size of a unit vector in dimension $n$ is at least of order $n$. Thus for $p \ll (\log n)^{-1/2}n^{-1/2}$ using a standard net even for one eigenvector will be too expensive to deduce the large deviation.} Therefore, to be able to use the machinery of \cite{aug, cod} for that sparser regime of $p$ one needs to a priori show that the eigenvectors corresponding to large eigenvalues of ${\rm Adj}(\G(n,p))$ are {\em localized} {with respect to some appropriately chosen collection of basis vectors} with sufficiently large probability. 

The recent most breakthrough in the context of upper tail large deviations is due to Harel, Mousset, and Samotij \cite{hms} where a novel idea is put forward. Their general approach can be described as follows: Using an adaptation of the classical moment argument of \cite{jor} it is shown that the probability of ${\sf UT}({\sf H}, \cdot)$ can be bounded, upto a multiplicative factor of two, by that of the existence of a subgraph $ \Graph_\star$ of $\G(n,p)$ such that it does not have too many edges and has an adequate number of copies of subgraphs of ${\sf H}$. Such subgraphs will be termed as {\em seed graphs} (see Definitions \ref{dfn:seed-0} and \ref{dfn:seed}). Thus, by the union bound 
\beq\label{eq:seed-union}
\P({\sf UT}({\sf H}, \delta)) \le 2 \sum_{\Graph_\star: \, \Graph_\star \text{ is seed}} p^{e(\Graph_\star)}.
\eeq
However, even for ${\sf H}=K_3$ the upper bound in \eqref{eq:seed-union} turns out to be too large to yield the correct large deviation estimate. A natural way to improve the above upper bound is to first {\em approximate} the seed graphs, or in other words construct a net for the seed graphs, and then apply the union bound over the graphs in that net to derive an upper bound on the probability of the existence of a seed graph $\Graph_\star \subset \G(n,p)$. 

At a high level, this was indeed the approach taken in \cite{hms}. In particular, by peeling off edges from any  seed graph $\Graph_\star$, without losing too many copies of subgraphs of ${\sf H}$, they obtain a subgraph $\Graph \subset  \Graph_\star$ such that each edge in $\Graph$ participates in a large number of copies of subgraphs of ${\sf H}$. Following \cite{hms} we term any such graph $\Graph$ to be a {\em core graph} (see Definition \ref{dfn:core-graph} for a precise formulation). 
{Thus the probability of ${\sf UT}({\sf H}, \cdot)$ can now be bounded by that of the existence of a core subgraph of $\G(n,p)$ (again up to a multiplicative factor of two).} This reduction from the seed graphs to the core graphs can be thought of constructing a net for the set of seed graphs which is given by the set of core  graphs. 

Having found the net one then proceeds to apply a union bound, as in \eqref{eq:seed-union}, where now the sum is over all core graphs. To complete the argument one is then left with the task of finding a bound on $\gN_{\bm e}$, the number of core graphs with a given number of edges ${\bm e}$. If 
\beq\label{eq:gN-e}
\gN_{\bm e} \le \exp({\bm e} \log(1/p)\cdot o(1)),
\eeq
then $\gN_{\bm e}$ being sufficiently small compared to $(1/p)^{\bm e}$, the inverse of the probability of observing any graph with ${\bm e}$ edges, one can take the union bound to derive that {the probability} of ${\sf UT}({\sf H}, \cdot)$ is bounded by $\exp(-(1-o(1))\cdot {\bm e}_0\log(1/p))$, where ${\bm e}_0$ is the minimum number of edges a core graph must possess. This results in the correct upper bound on the probability of ${\sf UT}({\sf H}, \cdot)$. 

In \cite{hms}, this general scheme is successfully employed for cliques to derive the upper tail large deviations in the entire localized regime (and also the upper tail large deviations of {\em $k$-term arithmetic progressions}). For $\Delta$-regular bipartite graphs (e.g.~$C_4$) this scheme could only derive the upper tail large deviations when $p \ge n^{-1/\Delta-o(1)}$. 

The obstacle of extending the above {for a sparser regime} stems from the fact that the bound \eqref{eq:gN-e} breaks down for bipartite graphs {when $p \le n^{-1/\Delta -o(1)}$}. Indeed, as already noted in \cite[Section 10]{hms}, for any $C>0$ the number of labelled copies of $K_{2, C n^2 p^2}$ ({for $a, b \in \N$ the standard notation $K_{a,b}$ denotes the complete bipartite graph with two parts having $a$ and $b$ vertices respectively}) in the complete graph on $n$ vertices exceeds the \abbr{RHS} of \eqref{eq:gN-e}. For $C$ sufficiently large the graph $K_{2, C n^2 p^2}$ becomes a core graph, for ${\sf H}=C_4$, and hence one cannot proceed as in \cite{hms}.

Instead, in this paper we obtain appropriate nets for the set of core graphs with sufficiently small  cardinalities so that the union bound produces a meaningful upper bound. As will be elaborated below, the strategy for procuring such nets is delicate and it depends on the number of edges in a core graph, as well as on the sparsity level of the edge probability $p$. 
Formulating this approach requires a couple of new key ideas. 
\begin{enumerate}
\item[(1)]
We show that for any core graph $\Graph$ with $e(\Graph) = O(n^2 p^\Delta)$ ({for two sequences of positive reals $\{a_n\}$ and $\{b_n\}$ hereafter we use the standard notation $b_n=O(a_n)$ to denote that $\limsup_{n \to \infty} b_n /a_n < \infty$}) there exists a subgraph $\wh \Graph \subset \Graph$ (which we term as a {\em strong-core} graph, see Definition \ref{dfn:strong-core}) for which one can extract a bipartite subgraph $\wh \Graph_\gb:=\wh \Graph_\gb(\wh \Graph)$ of it that has an arbitrarily long {\em block path} structure (equivalently {\em blow up of a path}), as shown in Figure \ref{fig:path-str} {below}. The set of all strong-core graphs serves as the right net in this subcase. 

{Using combinatorial arguments we deduce that $\wh\Graph_\gb$ and $\wh\Graph\setminus \wh\Graph_\gb$ are individually {\em entropically stable}. We use the term entropic stability to broadly mean that if $\wh\Graph_\gb$ (or $\wh\Graph\setminus \wh \Graph_\gb$) is assumed to contain adequate numbers of copies of ${\sf H}$ then there exist appropriate lower bounds on their edges and an upper bound on the number of such graphs suitable for the union bound yielding the correct large deviation probability. For example, see Lemmas \ref{lem:small-count} and \ref{lem:large-count}.

Let us point to the reader that in \cite{hms} an upper tail event is termed to be entropically stable if \eqref{eq:gN-e} holds for all ${\bm e}$. As already discussed above that it does not hold for non-bipartite graphs for $p \le n^{-1/\Delta-o(1)}$. Therefore we adopt the above notion of entropic stability that is somewhat different, weaker, and broader than the one in \cite{hms}.

Next, continuing the description of the key ideas of the proof, the block path structure of Figure \ref{fig:path-str} also allows us to choose $\Graph_\gb$ in such a way so that almost all copies of ${\sf H}$ in $\Graph$ must either be contained in $\Graph_\gb$ or $\Graph\setminus \Graph_\gb$. This, in turn, implies that showing entropic stability of $\Graph_\gb$ and $\Graph\setminus \Graph_\gb$ separately guarantees the same for the whole graph $\Graph$.}

\item[(2)] 
To treat core graphs with $e(\Graph) \gg n^2 p^\Delta$ we focus at its subgraph induced by the edges with at least one end point having a {\em low} degree. If $np^{\Delta/2} \ge (\log n)^{ v_{\sf H}}$ then this subgraph can be shown to be bipartite. Using a simple combinatorial argument we show that this case is {\em entropically non-viable} (or equivalently {\em entropically suboptimal}), {i.e.~the probability is much smaller than the large deviation probability}. 

However, for $np^{\Delta/2} \le (\log n)^{v_{\sf H}}$ the bipartite structure described above ceases to exist. For example, consider the graph $\mathscr{K}_{2, C n^3 p^3}$ which consists of $C n^3 p^3$ edges and two vertices such that the two end points of these edges are attached to those vertices. {It can be checked that for $np \le \log(1/p)$} the graph $\mathscr{K}_{2, C n^3 p^3}$ is indeed a core graph for $C_6$. To tackle this additional difficulty our strategy on obtaining a net for core graphs 
depends on whether $\cN_{1,1}({\sf H}, \Graph)$, the number of copies of ${\sf H}$ in $\Graph$ that uses at least one edge whose both end points are of low degree, is large or small. To run this procedure effectively and cover the full localized regime we further split the range of $e(\Graph)$ into dyadic scales. Hence, this part of the proof requires a {\em chaining-type argument}. A schematic representation of the chaining argument is presented in Figure \ref{fig:chaining}.
\end{enumerate}
For a more elaborate description of these two key ingredients we refer the reader to Section \ref{sec:outline}. 

Let us add that during the chaining argument the condition $np^{\Delta/2} \gg (\log n)^{1/(v_{\sf H}-2)}$ is used {\em only} to argue that $\wt \cN_{1,1}({\sf H}, \Graph)$, the number of copies of ${\sf H}$ in $\Graph $ that uses only those edges for which both of their end points are of bounded degree, is negligible compared to $N({\sf H}, \Graph)$. {If $np^{\Delta/2} \sim (\log n)^{1/(v_{\sf H}-2)}$ and $e(\Graph) =O(n^2 p^\Delta \log (1/p))$} then this is no longer true. Furthermore in the Poisson regime the large deviation is expected to be driven by $\wt\cN_{1,1}({\sf H}, \Graph)$. 
Therefore we believe that with some additional efforts the ideas of this paper may be used to show that the {localized and the Poisson behaviors coexist} when $np^{\Delta/2} \sim (\log n)^{1/(v_{\sf H}-2)}$. 

We end this section with the following remark: In \cite{hms} the localized nature of the upper tail large deviation event for non-bipartite $\Delta$-regular graphs ${\sf H}$  is derived in \cite{hms} when $np ^{\Delta/2} \gg (\log n)^{\Delta v_{\sf H}^2}$. The suboptimality in the exponent of $\log n$ is most likely due to the absence of the chaining procedure which as will be seen below is crucial to derive the large deviations in the whole localized regime.

\subsection*{Acknowledgements} We thank Wojciech Samotij for several helpful comments and, Sourav Chatterjee and Yufei Zhao for pointing to relevant references. AB is partially supported by a MATRICS Grant (MTR/2019/
001105)  and a Start-up Research Grant 
(SRG/2019/001376) from Science and Engineering Research Board of Govt.~of India, and an Infosys--ICTS Excellence Grant. RB is partially supported by a Ramanujan Fellowship (SB/S2/RJN-097/2017) from Govt.~of India, and an ICTS--Simons Junior Faculty Fellowship. We acknowledge the support of DAE, Govt.~of India, to ICTS under project no.~12-R\&D-TFR-5.10-1100. This research is carried out in part  as members of the Infosys-Chandrasekharan Virtual Center for Random Geometry, supported by a grant from the Infosys Foundation.

\section{Preliminaries and proof outline}\label{sec:outline}

In this section we describe the idea behind the proof of Theorem \ref{thm:cycle-main} in some detail and introduce relevant definitions and notation. Let us recall that for $p \ge n^{-1/\Delta-o(1)}$, where $o(1)$ is any term converging to zero as $n \to \infty$, the conclusion of Theorem \ref{thm:cycle-main} already follows from \cite{hms}. So, in the rest of the paper we will only consider the case $p \le n^{-1/\Delta -o(1)}$, where $o(1)$ is an appropriately chosen term converging to zero as $n \to \infty$. 

The upper bound in \eqref{eq:cycle-ldp} {(i.e.~the \abbr{LHS} upper bounded by the \abbr{RHS}) or equivalently the lower bound on the large deviations probability} essentially follows by planting a clique of appropriate size. Hence the main work is the derivation of the lower bound in \eqref{eq:cycle-ldp}. 

{In all other previous works, to derive the relevant lower bound the common route has been to show that the logarithm of the upper tail probability is bounded above by the solution of an $n$-dependent variational problem. That variational problem is studied in \cite{BGLZ}. Combining these two pieces together the lower bound on the negative of the $\log$ probability of the upper tail event follows. Let us state the relevant variational problem.} It needs some notation. 

For $x \in [0,1]$ we define the binary entropy
\[
I_p(x):= x \log \f{x}{p}+ (1-x) \log \f{1-x}{1-p},
\]
and for a vector ${\bm x}:=(x_1,x_2,\ldots,x_N) \in [0,1]^N$, with $N \in \N$, we let
\[
I_p({\bm x}):= \sum_{i=1}^N I_p(x_i).
\]
By setting $N:=\binom{n}{2}$, and identifying $\{(i,j): 1 \le i < j \le n\}$ with $\llbracket N \rrbracket := \llbracket 1, N \rrbracket:=\{1,2,\ldots, N\}$ ({{for brevity, throughout the rest of the paper, we continue using the somewhat nonstandard notation $\llbracket N \rrbracket$ for the discrete interval of integers from $1$ to $N$}}), any vector ${\bm x}$ of length $N$ can be associated to a unique weighted simple graph, denoted hereafter by $\Graph[{\bm x}]$. Similar to an unweighted graph one can define the homomorphism count ${\rm Hom}({\sf H}, \Graph[{\bm x}])$ for such weighted graphs. It has been established in \cite{aug,chd,cod,eld} that given a graph of fixed size ${\sf H}$ if $p \gg n^{-\alpha_{\sf H}}$, for certain $\alpha_{\sf H} \in (0,1)$, then the negative of the logarithm of the probability of the upper tail event ${\sf UT}({\sf H}, \delta)$  equals, upon excluding smaller order terms,  
\beq\label{eq:var-prblm-0}
\varphi_{n, {\sf H}}(\delta):=\inf \left\{I_p({\bm x}): {\rm Hom}({\sf H}, \Graph[{\bm x}]) \ge (1+\delta) n^{v_{\sf H}} p^{e({\sf H})} \right\},
\eeq
where we remind the reader that the notation $v_{\sf H}$ and $e({\sf H})$ denote the number of vertices and edges of ${\sf H}$, respectively.

Let  $\gA:=(\ga_{i,j})_{i,j=1}^n$ be the adjacency matrix of $\G(n,p)$. So $\{\ga_{i,j}\}_{i < j}$ are i.i.d.~$\dBer(p)$. Let us introduce one more notation: For any graph $\Graph \subset K_n$, the complete graph on $n$ vertices,  by a slight abuse of notation we write 
\[
\E_\Graph[N({\sf H}, \G(n,p))]:=  \E[N({\sf H},\G(n,p)) |\ga_{i,j} =1, (i,j) \in E(\Graph)].
\]
Equipped with the above notation and upon restricting ${\bm x} \in \{p, 1\}^N$ one can note that the variational problem \eqref{eq:var-prblm-0} transforms to the following variational problem:
\beq\label{eq:var-prblm-labelled}
\Phi_{n,{\sf H}}(\delta):= \min \left\{e(\Graph)\log(1/p): \Graph \subset K_n, \, \E_\Graph[N({\sf H}, \G(n,p))] \ge (1+\delta)n^{v_{\sf H}} p^{e({\sf H})} \right\}.
\eeq
{From \cite[Remark 8.3]{BGLZ} it follows that for $p \ll n^{-1/\Delta}$ the solution to the variational problem in \eqref{eq:var-prblm-labelled} is given by a clique on $\delta ^{1/v_{\sf H}} np^{\Delta/2}$ vertices. 
In this paper we show that the $\log$ of the probability of ${\sf UT}({\sf H}, \delta)$, up to the leading order, equals $\f12 \delta ^{2/v_{\sf H}} n^2p^{\Delta}$. Thereby, with the help of \cite{BGLZ}, we establish that in the entire localized regime the large deviation of the upper tail event ${\sf UT}({\sf H},\cdot)$ (both the speed and the rate function) is determined by the variational problem $\Phi_{n,{\sf H}}(\cdot)$. 

Having stated the variational problem we proceed to describe the idea in showing that the negative of the $\log$ of the probability of ${\sf UT}({\sf H}, \cdot)$ is upper bounded by $\f12 \delta ^{2/v_{\sf H}} n^2p^{\Delta}$ upto a factor of $1+o(1)$.}

The initial part of the proof proceeds as in \cite{hms}. Indeed, using \cite{hms} we show that the probability of ${\sf UT}({\sf H}, \delta)$ is bounded by that of the existence of subgraphs in $\G(n,p)$ which are near-optimizers  of the variational problem in \eqref{eq:var-prblm-labelled}. More precisely, such subgraphs are defined as follows.


\begin{dfn}[Pre-seed graph]\label{dfn:seed-0}
Fix $\vep >0$ sufficiently small. Let $\bar C:=\bar C(\delta,\vep)$ be a sufficiently large constant. A graph $\Graph \subset K_n$ is said to be a pre-seed graph if the followings hold:
\begin{enumerate}
\item[(PS1)] $\E_\Graph[N({\sf H}, \G(n,p))] \ge (1+\delta(1 -\vep)) n^{v_{\sf H}} p^{e({\sf H})}$.
\item[(PS2)] $e(\Graph) \le \bar C n^2 p^\Delta \log(1/p)$.
\end{enumerate}
%
\end{dfn}
The choice of the constant $\bar C$ will be made precise in Section \ref{sec:proof-main-thm} during the course of the proof of Theorem \ref{thm:cycle-main}. Hereafter, we fix a sufficiently small but arbitrary $\vep>0$. With that choice of $\vep$ we will show that the lower bound in \eqref{eq:cycle-ldp} holds with an additional factor $(1-\gf_{\sf H}(\vep))$ on its \abbr{RHS}, for some function $\gf_{\sf H}(\cdot)$ satisfying $\lim_{\vep \downarrow 0} \gf_{\sf H}(\vep)=0$. Thus sending $\vep$ to zero afterwards would yield \eqref{eq:cycle-ldp}.

The upper bound $p \le n^{-1/\Delta -o(1)}$ allows us to consider only those subgraphs for which the property (PS1) of Definition \ref{dfn:seed-0} can be replaced by a simpler condition as stated below.

\begin{dfn}[Seed graph]\label{dfn:seed}
Let $\vep$ and $\bar C$ be as in Definition \ref{dfn:seed-0}. A graph $\Graph \subset K_n$ is said to be a seed graph if the followings hold: 
\begin{enumerate}
\item[(S1)] $N({\sf H}, \Graph) \ge \delta(1 -2\vep) n^{v_{\sf H}} p^{e({\sf H})}$.
\item[(S2)] $e(\Graph) \le \bar C n^2 p^\Delta \log(1/p)$.
\end{enumerate}
\end{dfn}

{We remark that the pre-seed graphs of Definition \ref{dfn:seed-0} is termed as seed graphs in \cite{hms}. Since the assumption $p \le n^{-1/{\Delta}-o(1)}$ allows us to obtain a simpler description of seed graphs of \cite{hms} we have chosen to deviate from the terminology of \cite{hms}.}

We then show that any seed graph must have a subgraph containing most of its copies of ${\sf H}$ such that each of the edges of that subgraph participates in a large number copies of ${\sf H}$ as well. Following \cite{hms} we term these graphs as {\em core graphs}. 

\begin{dfn}[Core graph]\label{dfn:core-graph}
With $\vep$ and $\bar C$ as in Definition \ref{dfn:seed-0} we define a graph $\Graph \subset K_n$ to be a core graph if
\begin{enumerate}
\item[(C1)] $ N({\sf H}, \Graph) \ge \delta  (1-3 \vep) n^{v_{\sf H}} p^{e({\sf H})}$,
\item[(C2)] $e(\Graph) \le \bar C n^2 p^\Delta \log(1/p)$,
\end{enumerate}
and
\begin{enumerate}[resume]
\item[(C3)] \(\min_{e \in E(\Graph)} N({\sf H}, \Graph, e) \ge \delta \vep n^{v_{\sf H}} p^{e({\sf H})} /(\bar C n^2 p^\Delta \log(1/p))\),
\end{enumerate}
where for an edge $e \in E(\Graph)$ the notation $N({\sf H}, \Graph,e)$ denotes the number of labelled copies of ${\sf H}$ in $\Graph$ that contain the edge $e$.
\end{dfn}

Let us remind the reader that \cite{hms} shows that the probability of the upper tail event can be bounded, upto a factor of two, by that of the existence of core graphs. Then \cite{hms} proceeds by finding bounds of core graphs (recall \eqref{eq:gN-e}) and thereafter the union bound yields an adequate bound on the existence of a core subgraph in $\G(n,p)$. As already mentioned in Section \ref{sec:intro}, for $p \le n^{-1/\Delta -o(1)}$ the number of core graphs is too huge to follow the above route. So, to find a bound on the probability of the existence of a core subgraph in $\G(n,p)$ we find nets of such graphs so that the union bounds produces an effective bound.  

To carry out this scheme we split the set of core graphs into two subsets: graphs with $e(\Graph) = O(n^2 p^\Delta)$ and $e(\Graph) \gg n^2 p^\Delta$. We note that for a core graph $\Graph$ with $e(\Graph)=O(n^2 p^\Delta)$ one can find a further subgraph of it so that every edge of that subgraph participates in an even larger number of copies of ${\sf H}$. These subgraphs of $K_n$ will be termed as {\em strong-core} graphs. 

\begin{dfn}[Strong-core graph]\label{dfn:strong-core}
Let $\vep$ be as in Definition \ref{dfn:seed} and $\bar C_\star:= \bar C_\star(\delta) < \infty$ be a large constant, depending only on $\delta$.  We define a graph $\Graph \subset K_n$ to be a strong-core graph if 
\begin{enumerate}
\item[(SC1)] $ N({\sf H}, \Graph) \ge \delta  (1-{6}\vep) n^{v_{\sf H}} p^{e({\sf H})}$,
\item[(SC2)] $e(\Graph) \le \bar C_\star n^2 p^\Delta$,
\end{enumerate}
and
\begin{enumerate}[resume]
\item[(SC3)] \(\min_{e \in E(\Graph)} N({\sf H}, \Graph, e) \ge (\delta\vep/ \bar C_\star) \cdot (np^{\Delta/2})^{v_{\sf H}-2}\).
\end{enumerate}
\end{dfn}

Notice the difference in the lower bounds in (C3) and (SC3) in Definitions \ref{dfn:core-graph} and \ref{dfn:strong-core}, respectively. We will see below that any $\bar C_\star = C_\star \delta^{2/{v_{\sf H}}}$ with $C_\star \ge 32$ will suffice for the proof of Theorem \ref{thm:cycle-main}.  

To treat the core graphs with edges at most $\bar C_\star n^2 p^\Delta$ our approach will be to bound the number of strong-core graphs with a given number of edges. We illustrate the idea behind this step by demonstrating bounds on certain specific collections of strong-core graphs for ${\sf H}=C_4$.

Consider 
$K_{2, C n^2 p^2}$ with $C \ge \wh C_\delta:= \f12 \delta^{1/2}$. It can be verified that for such choices of $C$ the graph $K_{2, C n^2 p^2}$ is indeed a strong-core graph when ${\sf H}= C_4$. On the other hand it is easy to see that any clique $K_{\lceil \delta^{1/2} np\rceil}$ is also a strong-core graph. Same reasoning yields that so is the disjoint union of $K_{2, \wh{C}_{\delta_1} n^2 p^2}$ and $K_{\lceil \delta_2^{1/2} np \rceil}$, to be denoted by ${\sf K}_{\delta_1,\delta_2}$, whenever $\delta_1, \delta_2 \ge 0$ are such that $\delta_1+\delta_2 \ge \delta$. Using Stirling's approximation it is immediate to deduce that, for $\delta_1, \delta_2$ as above, the $\log $ probability of the existence of a copy of ${\sf K}_{\delta_1, \delta_2}$ in $\G(n,p)$ is bounded by that of ${\sf UT}(C_4, \delta)$ upon excluding a negligible factor. This hints that if every strong-core graph can be decomposed as a {\em disjoint} union of a {\em complete bipartite like graph} and a {\em clique like graph} then one may be able to repeat the above argument. This observation is the motivation behind the argument that follows. 

Indeed, we show that there exists a bipartite subgraph $ \Graph_\star \subset \Graph$ with vertices $\mathscr{V} \cup \tilde{\mathscr{V}}$ that have a block path structure as shown in Figure \ref{fig:path-str}, where $\mathscr{V}:=\{V_i\}_{i=1}^{\wt C_3}$ and $\tilde{ \mathscr{V}:}=\{\widetilde V_i\}_{i=1}^{\wt C_3}$ are the two {partite sets} (i.e.~the maximal independent sets), and $\wt C_3$ is an arbitrarily large constant. As the block path structure of Figure \ref{fig:path-str} is of arbitrarily large length one can use (reverse) pigeonhole principle to deduce there indeed exists a subgraph $\Graph_\gb\subset \Graph_\star$ such that almost all copies of ${\sf H}$ are either contained $\Graph_\gb$ or in $\Graph\setminus \Graph_\gb$. That is, almost no copy of ${\sf H}$ use edges from both $\Graph_\gb$ and $\Graph\setminus \Graph_\gb$. 
Heuristically, the reader may view the decomposition of $\Graph$ into $\Graph_\gb$ and $\Graph\setminus \Graph_\gb$ as a separation of copies of $K_{2, \wh C_{\delta_1} n^2 p^2}$ and $K_{\lceil \delta_2^{1/2} n p \rceil}$ that may be present in a strong-core graph, {when ${\sf H}=C_4$. For a general ${\sf H}$ this decomposition of $\Graph$ can be thought of a separation of its complete bipartite like subgraphs and clique like subgraphs.} 

\begin{figure}
\centering
\includegraphics[width=9.5cm, height=4cm]{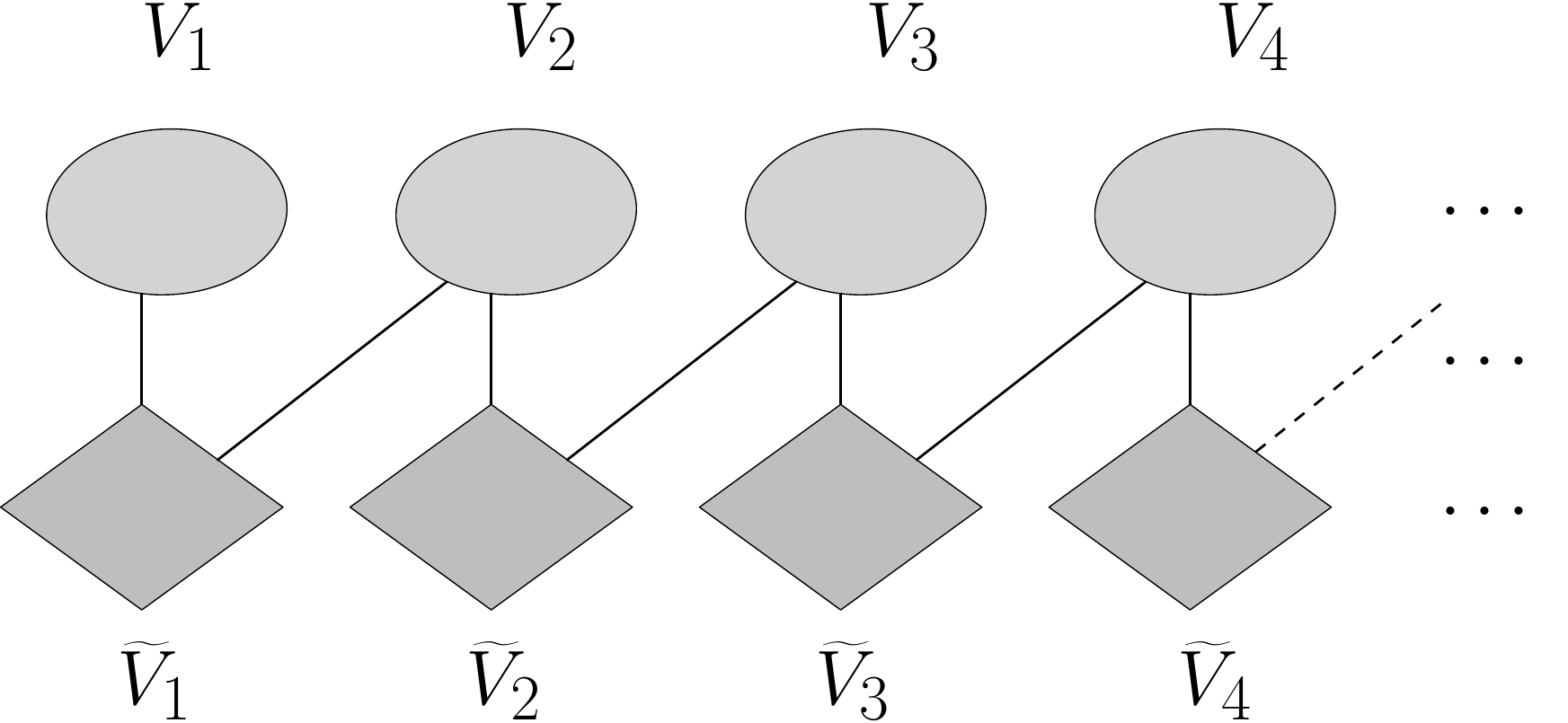}
\caption{{Schematic representation of the block path structure of the bipartite subgraph $\Graph_\star$ in a strong-core graph $\Graph$. Vertices in $V_1$ are only connected to that in $\wt V_1$. Vertices in $V_2$ can only be connected to that in $\wt V_1$ and $\wt V_2$, and so on.}}
\label{fig:path-str}
\end{figure}

Equipped with this observation the desired bound on the probability of the existence of a strong-core subgraph of $\G(n,p)$ follows once we show that $\Graph_\gb$ and $\Graph\setminus \Graph_\gb$ are individually entropically stable. This is done using combinatorial arguments. The details of this step is carried out in Section \ref{sec:strong-core}.

Now it remains to consider core graphs with at least $\bar C_\star n^2 p^\Delta$ edges. In this case, if we additionally assume that $np^{\Delta/2} \ge (\log n)^{v_{\sf H}}$ then one has a lower bound on the product of the degrees of the end points of any edge in a core graph (see Lemma \ref{lem:bad-graph-bd}) which in turn implies that the subgraph of $\Graph$ induced by edges that are incident to at least one vertex of low degree must be bipartite. Observe that as ${\sf H}$ is a $\Delta$-regular graph and by definition every edge of a core graph must participate in at least one copy of ${\sf H}$. Therefore the minimum degree in a core graph must be at least $\Delta$. These last two facts together allow one to derive a bound on the number of such graphs adequate for a union bound showing their entropic {suboptimality}. 

As already explained in Section \ref{sec:intro} one may not have the above bipartite structure in a core graph when $np^{\Delta/2} \le (\log n)^{v_{\sf H}}$. To derive the entropic stability in this case we carry out a {delicate} chaining type argument which is the second key idea of this paper. This chaining argument allows us to find an optimal net for core graphs with large number of edges when the edge probability is near the critical threshold. During this process we also need to discard several sub events that are entropically nonviable. First let us describe the heuristic behind this step. 

For a graph $\Graph$ let us denote $E_{1,1}(\Graph)$ to be the subset of its edges consisting of edges with both end points of {\em low} degree. Notice that this subset of edges is possibly now non empty. Consider the sub event that there exists a core graph $\Graph$ with a large number of $\cN_{1,1}({\sf H}, \Graph)$, the number of copies of ${\sf H}$ in $\Graph$ that use at least one edge belonging to $E_{1,1}(\Graph)$. Using a combinatorial argument (see Proposition \ref{l:seqcounting}) we deduce that this scenario can only happen if there are {\em many} edges in $\Graph$ whose at least one end point is of {\em high} degree. Using the latter observation one can then deduce that this scenario is entropically non-viable. 

This allows us to move our focus to the complement event, i.e.~ the case when core graphs with edges at least $\bar C_\star n^2 p^\Delta$ have a small $\cN_{1,1}({\sf H}, \Graph)$. In this case the most natural strategy would be to delete all edges in $E_{1,1}(\Graph)$ so that in the resulting graph the subgraph induced by edges that are incident to vertices of low degree is now bipartite and thus one may hope repeat the argument of the case $np ^{\Delta/2} \ge (\log n)^{v_{\sf H}}$. However, the resulting graph may possess vertices of degree strictly less than $\Delta$. So one can further iteratively remove vertices of degree strcitly less than $\Delta$ to obtain a subgraph $\Graph_0:=\Graph_0(\Graph) \subset \Graph$ that has minimum degree $\Delta$. As $\cN_{1,1}({\sf H}, \Graph)$ is small and ${\sf H}$ is a $\Delta$-regular graph this entire process of removal of edges results in a loss of only small fraction of copies of ${\sf H}$ in $\Graph$. If the resulting subgraph $\Graph_0$ has a large $E(\Graph_0)\setminus E_{1,1}(\Graph_0)$ then one can again argue using another combinatorial argument (see Lemma \ref{l:core2jb}) to derive entropic sub optimality of this case.
However, if $E_{1,1}(\Graph_0)$ is large then one has to repeat the entire procedure described above. That is, one need to investigate whether $\cN_{1,1}({\sf H}, \Graph_0)$ is large or small and proceed accordingly until all subsequent sub cases are exhausted. 

To build a proof out of the above heuristic we proceed as follows: We break the discrete interval $\llbracket \bar C_\star n^2 p^\Delta, \bar C n^2 p^\Delta \log(1/p) \rrbracket$, the allowable range of the number of edges of core graphs to be considered, into dyadic subintervals $\{\wt\cJ_j\}$, where $\wt\cJ_j:= \llbracket 2^{-j} \bar C n^2 p^\Delta \log(1/p), 2^{-(j-1)} \bar C n^2 p^\Delta \log(1/p)\rrbracket$. For $\Graph$ with $e(\Graph) \in \wt\cJ_j$ the criterion that $E(\Graph_0)\setminus E_{1,1}(\Graph_0)$ is large is then characterized by the event that $e(\Graph_0) \in \wt \cJ_j$. As mentioned above, this case can be shown to be entropically non-viable using combinatorial arguments. Consider the other case, i.e.~when $e(\Graph_0) \in \cup_{j' \ge j+1}\wt \cJ_{j'}$. Since in this case $e(\Graph_0) \le 2^{-j} \bar C n^2 p^\Delta \log(1/p)$ it can be treated similarly to the case of the core graphs with edges at most $2^{-j} \bar C n^2 p^\Delta \log(1/p)$. Therefore, starting with $j=1$ and using the ideas explained  in the previous two paragraphs one can proceed iteratively to establish the entropic stability of the core graphs with at least $\bar C_\star n^2 p^\Delta$ edges. This iterative procedure may remind the reader of Kolmogorov's chaining technique of approximating a space by a nested sequence of nets with meshes that become increasingly coarse. 

Let us also add that during each step of the removal of the edges from $\Graph$ to procure the subgraph $\Graph_0(\Graph)$ we lose a small fraction of the number of copies of ${\sf H}$ in $\Graph$. To ensure that we do not lose a lot of copies ${\sf H}$ during the entire chaining procedure the threshold for {$\cN_{1,1}({\sf H}, \Graph)$} to be declared to be large has to be adaptive. That is, this threshold must depend on the index $j$ for which $e(\Graph) \in \cJ_j$. The larger the index $j$ is the smaller the threshold becomes. This adaptive threshold worsens the lower bound on the number of edges with at least one end point of high degree (see Proposition \ref{l:seqcounting} again), resulting another technical difficulty. It prevents us to carry out the chaining argument for all graphs having at least $\bar C_\star n^2 p^\Delta$. To overcome this obstacle we apply the above chaining procedure only for core graphs that have at least $\bar C n^2 p^\Delta \log(1/p)/(\log \log n)^2$ edges. For the remaining case we use a different argument which is essentially a combination of two key ideas that are described above. This finally completes the outline of the proof of Theorem \ref{thm:cycle-main}. 

For convenience to the reader we have included a schematic diagram of the main steps of the proof the main result (see Figure \ref{fig:proof-main}), as well as a schematic representation of the chaining argument in Figure \ref{fig:chaining}.

\begin{center}
\begin{figure}
\centering
\begin{tikzpicture}[node distance = 3.3cm, auto]
    \node [block] (seed) {{\scriptsize Seed graphs}\\
    {\scriptsize $e(\Graph)\leq \bar{C}n^2p^\Delta\log(1/p)$}\\
    {\scriptsize many copies of ${\sf H}$}};
    \node [block, below of=seed, node distance= 2 cm] (core) {{\scriptsize Core graphs}\\
    {\scriptsize each edge in many copies of ${\sf H}$}};
    \node [block, below left of= core] (strongcore) {{\scriptsize Strong-core graphs}\\
    {\scriptsize $e(\Graph)\leq \bar{C}_{\star}n^2p^\Delta$}};
    \node [block, below right of=core] (weakcore) {{\scriptsize Core graphs with}\\
    {\scriptsize $e(\Graph)\geq \bar{C}_{\star}n^2p^\Delta$}};
    \node [block, below left of= strongcore, node distance= 3cm] (bd-strongcore-1) {{\scriptsize Existence of a }\\ {\scriptsize bipartite subgraph $\Graph_\star$}};
    \node [block, below of= bd-strongcore-1, node distance= 2cm] (bd-strongcore-2) {{\scriptsize Both $\Graph_\star$ and}\\ 
    {\scriptsize $\Graph\setminus \Graph_\star$ entropically stable}\\
    {\scriptsize Proposition \ref{prop:strong-core}}};
       \node [cloud, below right of=weakcore, node distance=4 cm] (largep) {$np^{\Delta/2}\geq (\log n)^{v_{\sf H}}$};
     \node [cloud, below left of=weakcore, node distance=4.3 cm] (smallp) {$np^{\Delta/2}\leq (\log n)^{v_{\sf H}}$};
      \node [block, below of=largep, node distance=2 cm] (subopt-large-1) {{\scriptsize Existence of}\\
    {\scriptsize a bipartite subgraph}};
      \node [block, below of=subopt-large-1, node distance=2 cm] (subopt-large-2) {{\scriptsize Entropically  non-viable}};
     \node [block, below of=bd-strongcore-2, node distance=3 cm] (core1) {{\scriptsize $e(\Graph) \leq \frac{\bar{C}n^2p^\Delta\log(1/p)}{(\log \log n)^2}$}};
    \node [block, below of=smallp, node distance=2.8 cm] (core2) {{\scriptsize $e(\Graph) \geq \frac{\bar{C}n^2p^\Delta\log(1/p)}{(\log \log n)^2}$}};
    \node [block, below of=core1, node distance=2 cm] (subopt-core1) {{\scriptsize Entropic suboptimality}\\
    {\scriptsize Proposition \ref{p:core--1} } };
    \node [block, below right of=core2, node distance=3.5 cm] (subopt-core2) {{\scriptsize Chaining argument}\\
    {\scriptsize Proposition \ref{p:core2}}};
    \path [line] (seed) -- (core);
    \path [line] (core) -- (strongcore);
    \path [line] (core) -- (weakcore);
     \path [line] (strongcore) -- (bd-strongcore-1);
      \path [line] (bd-strongcore-1) -- (bd-strongcore-2);
    \path [line] (weakcore) -- (largep);
    \path [line] (weakcore) -- (smallp);
    \path [line] (largep) -- (subopt-large-1);
     \path [line] (subopt-large-1) -- (subopt-large-2);
     \path [line] (smallp) -- (core1);
     \path [line] (smallp) -- (core2);
     \path [line] (core1) -- (subopt-core1);
     \path [line] (core2) -- (subopt-core2);
\end{tikzpicture}
\caption{Schematic representation of the mains steps in the proof of Theorem \ref{thm:cycle-main}.}
\label{fig:proof-main}
\end{figure}
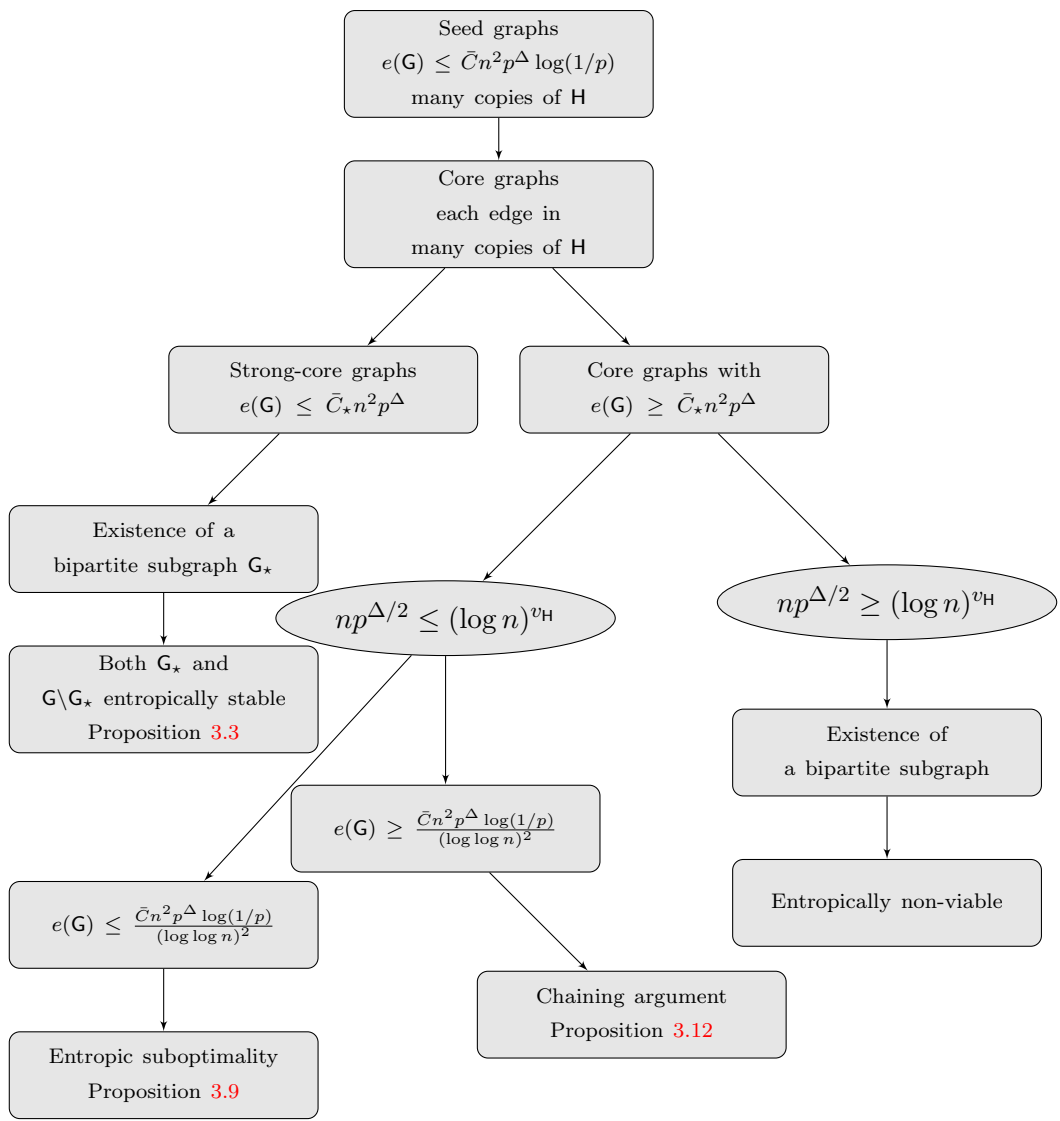
\end{center}

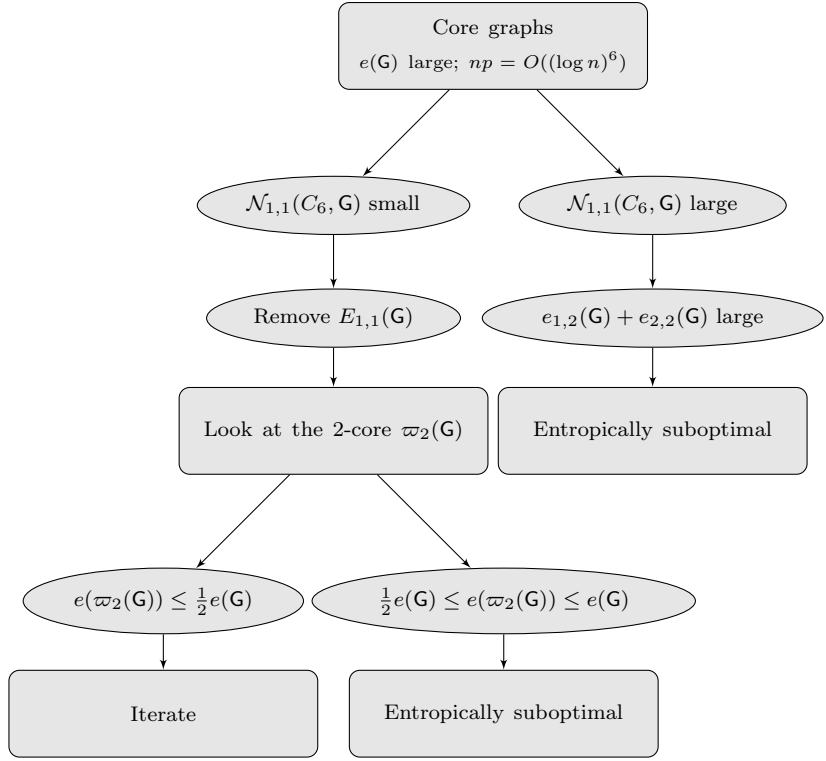
\begin{figure}
\centering
\begin{tikzpicture}[node distance = 3.3cm, auto]
    \node [block] (seed) {{\scriptsize Core graphs}\\
    {\tiny $e({\sf G})$ large; $ np = O((\log n)^6)$}};
    \node [cloud, below right of=seed, node distance= 3 cm] (core) {\scriptsize{$\cN_{1,1}(C_6, {\sf G})$ large}};
     \node [cloud, below left of=seed, node distance= 3 cm] (coren) {\scriptsize{$\cN_{1,1}(C_6, {\sf G})$ small}};
    \node [cloud, below of=core, node distance= 1.5cm] (weakcore) {{\scriptsize $e_{1,2}({\sf G}) + e_{2,2}({\sf G})$ large}};
     \node [cloud, below of=coren, node distance= 1.5cm] (weakcoren) {{\scriptsize Remove $E_{1,1}({\sf G})$}};
      \node [block, below of=weakcore, node distance= 1.5cm] (weakcorea) {{\scriptsize Entropically suboptimal}};
      \node [block, below of=weakcoren, node distance= 1.5cm] (weakcoreb) {{\scriptsize Look at the $2$-core $\varpi_2({\sf G})$}};
        \node [cloud, below left of=weakcoreb, node distance= 3.2cm] (weakcorec) {{\scriptsize $e(\varpi_2({\sf G})) \le \f12 e({\sf G})$}};
          \node [cloud, below right of=weakcoreb, node distance= 3.2cm] (weakcored) {{\scriptsize $\f12 e({\sf G})\le e(\varpi_2({\sf G})) \le e({\sf G})$}};
            \node [block, below of=weakcored, node distance= 1.5cm] (weakcoref) {{\scriptsize Entropically suboptimal}};
              \node [block, below of=weakcorec, node distance= 1.5cm] (weakcoree) {{\scriptsize Iterate}};
    \path [line] (seed) -- (core);
    \path [line] (seed) -- (coren);
    \path [line] (core) -- (weakcore);
     \path [line] (weakcore) -- (weakcorea);
     \path [line] (coren) -- (weakcoren);
      \path [line] (weakcoren) -- (weakcoreb);
       \path [line] (weakcoreb) -- (weakcorec);
        \path [line] (weakcoreb) -- (weakcored);
         \path [line] (weakcorec) -- (weakcoree);
          \path [line] (weakcored) -- (weakcoref);
\end{tikzpicture}
\caption{Schematic representation of the chaining argument for ${\sf H}=C_6$.
}
\label{fig:chaining}
\end{figure}

We remind the reader that our Theorem \ref{thm:cycle-main} does not discuss the upper tail large deviations when $p \sim n^{-1/\Delta}$. As mentioned in Remark \ref{rmk:rate-fn}, thanks to \cite{hms}, one only needs to identify the rate function for such $p$. In the remark below we provide a short outline of the derivation of the rate function for $C_4$.  

\begin{rmk}\label{rmk:rate-fn-c4}
\corAB{
To describe the rate function for $p \sim n^{-1/2}$ and ${\sf H}=C_4$ let us introduce a few notation. For any $U \subset V(\Graph)$ we write $\Graph[U]$ to be the graph spanned by the vertices in $U$ and $\Graph[U, \bar U]$ to be the bipartite subgraph of $\Graph$ induced by the two disjoint subsets of vertices $U$ and $\bar U:=V(\Graph)\setminus U$. We let $N_{U}(K_{1,2}, \Graph)$ to denote the number of labelled copies of $K_{1,2}$ in $\Graph$ for which the center vertex is in $U$ and the leaf vertices are in $\bar U$.

It can be checked that the near-optimizers of the variational problem \eqref{eq:var-prblm-labelled} are the graphs $\Graph$ for which there exists a partition of $V(\Graph)={\sf V}_1 \cup {\sf V}_2$ such that  
\beq\label{eq:rate-fn-1}
N(C_4, \Graph[{\sf V}_1]) \ge \delta (1-\vep) \cdot x \cdot n^4 p^4 
\eeq
and
\beq\label{eq:rate-fn-2}
 \upmu(\kappa):=N(C_4, \Graph[{\sf V}_1, {\sf V}_2])+ 4\kappa N_{{\sf V}_2}(K_{1,2}, \Graph) \ge  \delta(1-\vep) \cdot (1-x) \cdot n^4 p^4,
\eeq
for some $x \in [0,1]$, and any $\vep >0$ sufficiently small, where $\kappa:=\lim_{n \to \infty} n p^2$.  Thus the near minimizers are the graphs with the minimal number of edges satisfying \eqref{eq:rate-fn-1}-\eqref{eq:rate-fn-2}.

To identify such graphs we note the following: If $\deg_\Graph(v)$ denotes the degree of vertex $v$ in graph $\Graph$ then $N(C_4, \Graph[{\sf V}_1, {\sf V}_2])$ is maximized, among all graphs with a given number of edges, if the differences between pairs $\deg_\Graph(v_i)$ and $\deg_\Graph(v_j)$ are minimized for every pair of vertices $v_i, v_j \in {\sf V}_1$. On the other hand, $N_{{\sf V}_2}(K_{1,2}, \Graph)$ is maximized when $\deg_\Graph(v)$ are set to be the maximum value subject to the natural constraint that $\deg_\Graph(v) \le |{\sf V}_2|$, where $|\cdot|$ denotes the cardinality of a set. Due to convexity it can be further deduced that the maximizer for $\upmu(\kappa)$, among all graphs with a given number of edges, has to be one of the two maximizers described above. However, which one of them will dominate depends on $\kappa$. 

Using this observation and minimizing over $x \in [0,1]$ one can find the near minimizers and hence also the rate function. {For a general $C_\ell$} this picture is more intricate. We refrain from fleshing out the detail for the general case. This may be considered elsewhere.}
\end{rmk}

\subsection*{{Outline of the rest of the paper} } In Section \ref{sec:proof-main-thm} we provide a proof of Theorem \ref{thm:cycle-main} assuming that we have the necessary bounds on the probabilities of various subevents of the event that $\G(n,p)$ contains a core graph. 
The proofs of these bounds being combinatorial in nature are pushed to Sections \ref{sec:strong-core}, \ref{sec:core-many-edges}, and \ref{s:comb-reg}. In Section \ref{sec:strong-core-odd} we provide a short proof showing the entropic stability of strong-core graphs for non-bipartite graphs. While Sections \ref{sec:G_b} and \ref{sec:G-G_b} are devoted in deriving bounds that enable us in proving entropic stability of $\Graph_\gb$ and $\Graph\setminus \Graph_\gb$, respectively. Combining results from these two sections we then, in Section \ref{sec:proof-sc-even}, finish the proof of the entropic stability of strong-core graphs in the case of bipartite graphs. 

In Section \ref{s:bipartite} we treat core graphs with large number of edges when $np^{\Delta/2} \ge (\log n)^{v_{\sf H}}$. The next two sections provide proofs of probability bounds necessary for carrying out the chaining argument. In particular, in Section \ref{sec:combin-graph-bd} we derive bounds showing entropic stability of core graphs for which the number of edges with at least one of its end points is of high degree, is large. Whereas in Section \ref{sec:largecN-11} using a combinatorial result (see Proposition \ref{l:seqcounting}) we derive entropic sub optimality of the remaining case. In Section \ref{s:comb-reg} we prove Proposition \ref{l:seqcounting}. This uses a graph decomposition result whose its proof deferred to Appendix \ref{sec:decompose}. 
Finally, in Appendix \ref{app:app1} we derive bounds on the product of the degrees of the end points of most of the edges in a strong-core graph that plays an important role in showing that such graphs are entropically stable.

\section{Proof of Theorem \ref{thm:cycle-main}}\label{sec:proof-main-thm}
In this section we provide the proof of our main result Theorem \ref{thm:cycle-main}. As already mentioned, we will only focus on the case $p\le n^{-1/\Delta-o(1)}$, {where $o(1)$ is an appropriately chosen term decaying to zero as $n \to \infty$} since the other case is proved in \cite{hms}. First let us state the result showing the upper bound in \eqref{eq:cycle-ldp}. 
\begin{prop}
\label{p:lower}
Fix $\delta>0$ and a $\Delta$-regular connected graph ${\sf H}$. For $p=p_n\in (0,1)$ such that $p\ll n^{-1/\Delta}$ we have
\[
\limsup_{n\to \infty} -\dfrac{\log \P\left(N({\sf H}, \G(n,p)) \ge (1+\delta)n^{v_{\sf H}} p^{e({\sf H})}\right)}{n^{2}p^\Delta\log(1/p)}\leq \frac{1}{2}\delta^{2/v_{\sf H}}.
\]
\end{prop}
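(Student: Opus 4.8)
The plan is to prove the bound by the classical \emph{clique-planting} construction, calibrated so that the planted clique only has to account for the \emph{surplus} of $\delta\, n^{v_{\sf H}}p^{e({\sf H})}$ copies of ${\sf H}$, while the untouched part of $\G(n,p)$ supplies the baseline count $(1+o(1))\,n^{v_{\sf H}}p^{e({\sf H})}$ on its own. We work in the regime $np^{\Delta/2}\to\infty$, which is the only regime in which the bound is used in this paper. Fix a small $\eta>0$, let $\lambda=\lambda(\eta)>0$ satisfy $\lambda^{v_{\sf H}}=\delta+2\eta$, and put $m:=\lceil \lambda\, np^{\Delta/2}\rceil$; then $1\ll m\ll n$ since $np^{\Delta/2}\to\infty$ and $p\ll n^{-1/\Delta}$. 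Fix a vertex set $S$ with $|S|=m$, and let $A$ be the event that $S$ spans a complete graph in $\G(n,p)$, so that $\P(A)=p^{\binom m2}$.

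On $A$, all $m(m-1)\cdots(m-v_{\sf H}+1)$ injective maps $V({\sf H})\hookrightarrow S$ realize copies of ${\sf H}$ in $\G(n,p)$ (because ${\sf H}\subseteq K_m$). Letting $N_{\mathrm{out}}$ denote the number of labelled copies of ${\sf H}$ contained in $V(\G(n,p))\setminus S$ and discarding the nonnegative contribution of the copies straddling $S$ and its complement, we get $N({\sf H},\G(n,p))\ge m(m-1)\cdots(m-v_{\sf H}+1)+N_{\mathrm{out}}$. Since $m\to\infty$ and $v_{\sf H}$ is fixed, and since $e({\sf H})=\Delta v_{\sf H}/2$ gives $n^{v_{\sf H}}p^{e({\sf H})}=(np^{\Delta/2})^{v_{\sf H}}$, the first term is $(1-o(1))m^{v_{\sf H}}\ge (1-o(1))(\delta+2\eta)\,n^{v_{\sf H}}p^{e({\sf H})}$. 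The quantity $N_{\mathrm{out}}$ depends only on the edges inside $V(\G(n,p))\setminus S$, hence is independent of $A$; it has mean $(1-o(1))n^{v_{\sf H}}p^{e({\sf H})}$ since $m=o(n)$; and a second-moment estimate, $\mathrm{Var}(N_{\mathrm{out}})\le C_{\sf H}\sum_{H'}n^{2v_{\sf H}-v(H')}p^{2e({\sf H})-e(H')}$ with the sum over subgraphs $H'\subseteq{\sf H}$ having $e(H')\ge 1$, yields via Chebyshev's inequality
\[
\P\big(N_{\mathrm{out}}<(1-\eta)\,\E N_{\mathrm{out}}\big)\le C_{\sf H}'\,\eta^{-2}\sum_{H'}\big(n^{v(H')}p^{e(H')}\big)^{-1}\to 0,
\]
because $2e(H')=\sum_{v\in V(H')}\deg_{H'}(v)\le\Delta\,v(H')$ forces $n^{v(H')}p^{e(H')}\ge (np^{\Delta/2})^{v(H')}\ge(np^{\Delta/2})^2\to\infty$. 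Write $B:=\{N_{\mathrm{out}}\ge(1-\eta)\,\E N_{\mathrm{out}}\}$. On $A\cap B$,
\[
N({\sf H},\G(n,p))\ge \big[(1-o(1))(\delta+2\eta)+(1-o(1))(1-\eta)\big]\,n^{v_{\sf H}}p^{e({\sf H})}\ge (1+\delta)\,n^{v_{\sf H}}p^{e({\sf H})}
\]
for all large $n$, i.e.\ $A\cap B\subseteq {\sf UT}({\sf H},\delta)$.

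By the independence of $A$ and $B$ and $\P(B)\to 1$, we get $\P({\sf UT}({\sf H},\delta))\ge\P(A)\P(B)\ge\f12 p^{\binom m2}$ for all large $n$, and therefore
\[
-\log\P\big({\sf UT}({\sf H},\delta)\big)\le\binom m2\log(1/p)+\log 2\le\f12(1+o(1))\,\lambda^2\,n^2p^\Delta\log(1/p),
\]
where we used $\binom m2\le\f12 m^2$, $m\le\lambda\,np^{\Delta/2}+1$, and $np^{\Delta/2}=o(n^2p^\Delta)$ (together with $n^2p^\Delta\log(1/p)\to\infty$) to absorb the ceiling and the additive $\log 2$. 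Dividing by $n^2p^\Delta\log(1/p)$, taking $\limsup_{n\to\infty}$, and recalling that $\lambda^2=(\delta+2\eta)^{2/v_{\sf H}}$ gives a $\limsup$ at most $\f12(\delta+2\eta)^{2/v_{\sf H}}$; letting $\eta\downarrow 0$ completes the proof.

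The construction is essentially routine; the only two points needing attention are (i) calibrating $\lambda$ to $\delta^{1/v_{\sf H}}$ (up to the slack $\eta$) rather than to $(1+\delta)^{1/v_{\sf H}}$ --- this is what produces the sharp constant $\f12\delta^{2/v_{\sf H}}$ instead of $\f12(1+\delta)^{2/v_{\sf H}}$, and it is legitimate precisely because the untouched bulk of $\G(n,p)$ already furnishes the baseline $(1+o(1))n^{v_{\sf H}}p^{e({\sf H})}$ --- and (ii) the lower-tail concentration of $N_{\mathrm{out}}$, which is where $np^{\Delta/2}\to\infty$ enters. I do not anticipate a genuine obstacle here: this is the ``easy'' half of Theorem~\ref{thm:cycle-main}, the substantive content being the matching lower bound on $-\log\P({\sf UT}({\sf H},\delta))$ established in the remainder of the paper.
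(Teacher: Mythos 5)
Your proof is correct, and it takes a genuinely different route from the paper's. The paper invokes \cite[Lemma 3.5]{hms}, which reduces the claim to bounding the discrete variational problem $\Phi_{n,{\sf H}}(\delta+\wt\vep)$ from \eqref{eq:var-prblm-labelled}; the bound on $\Phi_{n,{\sf H}}$ is then obtained by plugging a clique on $\lceil(\delta+2\wt\vep)^{1/v_{\sf H}}np^{\Delta/2}\rceil$ vertices into the conditional-expectation inequality \eqref{eq:ps-to-s-00}, and $\wt\vep\downarrow 0$ finishes. You instead perform the planting directly at the level of $\G(n,p)$: you use independence of the planted-clique event from the edges outside $S$, and you supply the missing baseline count $(1-o(1))n^{v_{\sf H}}p^{e({\sf H})}$ by a Chebyshev/second-moment argument on $N_{\mathrm{out}}$. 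In effect, you have unwound the black box that \cite[Lemma 3.5]{hms} encapsulates. What the paper's route buys is brevity and reuse of the [hms] machinery that is already imported for the rest of the argument; what your route buys is a self-contained, elementary proof that makes the source of the constant $\tfrac12\delta^{2/v_{\sf H}}$ (rather than $\tfrac12(1+\delta)^{2/v_{\sf H}}$) completely explicit, precisely because one sees the ambient $\G(n,p)$ furnishing the baseline $1$ while the planted clique supplies only the surplus $\delta$.

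One technical remark: your argument assumes $np^{\Delta/2}\to\infty$ (needed both for $m\to\infty$ in the factorial approximation and for the Chebyshev step). This is fine, and is not an extra assumption: the paper's proof makes exactly the same implicit assumption, since $(m)_{v_{\sf H}}\ge(\delta+\tfrac32\wt\vep)n^{v_{\sf H}}p^{e({\sf H})}$ for $m=\lceil(\delta+2\wt\vep)^{1/v_{\sf H}}np^{\Delta/2}\rceil$ also requires $m\to\infty$. In any case the proposition is only ever applied in the regime $np^{\Delta/2}\gg(\log n)^{1/(v_{\sf H}-2)}$, so there is no gap.
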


The proof of Proposition \ref{p:lower} is standard. We include it for completeness. 

\begin{proof}[Proof of Proposition \ref{p:lower}]
We aim to apply \cite[Lemma 3.5]{hms}.  As the edges in $\G(n,p)$ are independent and each edge occurs with probability $p$ we find that for any $\Graph \subset K_n$,
\beq\label{eq:cond-exp-ubd}
\E_\Graph[N({\sf H}, \G(n,p))] - \E [N({\sf H}, \G(n,p))] = \sum_{ \emptyset \ne {\sf H}_\star \subset {\sf H}} N({\sf H}_\star, \Graph) \cdot (1-p^{e({\sf H}_\star)}) \cdot n^{v_{\sf H}- v_{{\sf H}_\star}} p^{e({\sf H}) - {e}({{\sf H}_\star})},
\eeq
where the sum is taken over all subgraphs ${\sf H}_\star$ of ${\sf H}$ with no isolated vertices, and as before $v_{{\sf H}_\star}$ and $e({{\sf H}_\star})$ denote the number of vertices and edges of ${\sf H}_\star$, respectively. This, in particular, implies that
\beq\label{eq:ps-to-s-00}
\E_\Graph[N({\sf H}, \G(n,p))] - \E [N({\sf H}, \G(n,p))] \ge N({\sf H}, \Graph)\cdot (1-p^{e({\sf H})}).
\eeq
We also observe that any clique on $m$ vertices contains $(m)_{v_{\sf H}}:=m (m-1) \cdots (m-v_{\sf H}+1)$ labelled copies of ${\sf H}$. Fix $\wt \vep >0$. Taking $\Graph$ to be the clique on  $\lceil (\delta + 2\wt \vep)^{1/v_{\sf H}} np^{\Delta/2} \rceil$ vertices in \eqref{eq:ps-to-s-00} we see that the \abbr{LHS} of \eqref{eq:ps-to-s-00} is bounded below by 
\(
(\delta+\f32\wt \vep) n^{v_{\sf H}}p^{e({\sf H})},
\)
for all large $n$, where we use the fact that ${\sf H}$ being a $\Delta$-regular graphs we have $e({\sf H}) = (\Delta/ 2) \cdot  v_{\sf H}$. As $\E[N({\sf H}, \G(n,p))]= n^{v_{\sf H}} p^{e({\sf H})}(1+o(1))$, recalling the definition of the variational problem $\Phi_{n,{\sf H}}(\delta)$ from \eqref{eq:var-prblm-labelled}, we therefore deduce that for any $\wt \vep >0$ and $n$ sufficiently large,
\beq\label{eq:phi-ubd}
\Phi_{n, {\sf H}}(\delta +\wt \vep) \le \f12 (\delta + 3\wt \vep)^{2/v_{\sf H}}n^2 p^\Delta \log (1/p),
\eeq
where we also used the fact that $p=o(1)$. Now we apply \cite[Lemma 3.5]{hms} 
to deduce
\[
\limsup_{n \to \infty}  -\dfrac{\log \P\left(N({\sf H}, \G(n,p)) \ge (1+\delta)n^{v_{\sf H}} p^{e({\sf H})}\right)}{n^{2}p^\Delta\log(1/p)} \le \lim_{\wt \vep \downarrow 0} \limsup_{n \to \infty} \dfrac{\Phi_{n, {\sf H}}(\delta+\wt \vep)}{n^2 p^\Delta \log (1/p)} \le \f12 \delta^{2/v_{\sf H}},
\]
where the rightmost inequality is due to \eqref{eq:phi-ubd}. This completes the proof. 
 \end{proof}

Let us now move to the proof of the upper bound which takes up the rest of the paper. As outlined in Section \ref{sec:outline} the initial step in this direction is to show that the probability of ${\sf UT}({\sf H}, \delta)$ can be  bounded above by that of existence of a core graph as defined in Definition \ref{dfn:core-graph}. 
\begin{lem}
\label{l:seed-markov}
Fix $\delta >0$ and a $\Delta$-regular connected graph ${\sf H}$ with $\Delta \ge 2$. If $np^{\Delta}\ll (\log n)^{-\Delta v_{\sf H}}$ then for every $\vep>0$ and all large $n$ we have
\beq\label{eq:ps-to-s}
\P(N({\sf H}, \G(n,p)) \ge (1+\delta)n^{v_{\sf H}} p^{e({\sf H})}) \leq (1+\vep)\P( \G(n,p)~\text{contains a core graph}).
\eeq
\end{lem}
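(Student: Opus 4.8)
The plan is to follow the moment-method reduction from \cite{hms} adapted to the labelled setting. The starting point is the observation, standard since \cite{jor}, that the upper tail of $N({\sf H},\G(n,p))$ is driven by the existence of a relatively small subgraph which ``explains'' the overcount. Concretely, I would first upgrade the event ${\sf UT}({\sf H},\delta)$ to the existence of a \emph{pre-seed} graph (Definition \ref{dfn:seed-0}): one shows that, conditionally on $\G(n,p)$ containing no pre-seed graph, the conditional expectation $\E[N({\sf H},\G(n,p))\,|\,\text{no pre-seed}]$ is still essentially $n^{v_{\sf H}}p^{e({\sf H})}$, and a suitable concentration/moment bound (the Markov-type estimate underlying \cite[Lemma~3.5]{hms}, or the graph container–type argument of \cite{hms}) forces $\P({\sf UT}({\sf H},\delta))\le (1+\vep/3)\,\P(\G(n,p)\text{ contains a pre-seed graph})$. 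The hypothesis $np^{\Delta}\ll(\log n)^{-\Delta v_{\sf H}}$, equivalently $p\le n^{-1/\Delta-o(1)}$, is exactly what lets one pass from (PS1), a statement about the \emph{conditional expectation} $\E_\Graph[N({\sf H},\G(n,p))]$, to (S1), a statement about the deterministic count $N({\sf H},\Graph)$: by the expansion \eqref{eq:cond-exp-ubd} the discrepancy $\E_\Graph[N({\sf H},\G(n,p))]-N({\sf H},\Graph)(1-p^{e({\sf H})})$ is a sum over proper nonempty subgraphs ${\sf H}_\star\subsetneq{\sf H}$ of terms $N({\sf H}_\star,\Graph)\,n^{v_{\sf H}-v_{{\sf H}_\star}}p^{e({\sf H})-e({\sf H}_\star)}$, and in this range of $p$ each such term is negligible relative to $\delta\vep n^{v_{\sf H}}p^{e({\sf H})}$ because $N({\sf H}_\star,\Graph)\le e(\Graph)^{?}$-type bounds combined with (PS2) and $p$ small kill the factor $n^{-v_{{\sf H}_\star}}p^{-e({\sf H}_\star)}$ relative to $n^{-v_{\sf H}}p^{-e({\sf H})}$. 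This yields that any pre-seed graph is (after negligible adjustment of the $\vep$-constants) a seed graph in the sense of Definition \ref{dfn:seed}.

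Next I would perform the edge-peeling step turning a seed graph into a core graph. Given a seed graph $\Graph$, repeatedly delete any edge $e$ with $N({\sf H},\Graph,e) < \delta\vep n^{v_{\sf H}}p^{e({\sf H})}/(\bar C n^2 p^\Delta\log(1/p))$. Each deletion destroys at most that many labelled copies of ${\sf H}$, and since by (S2) the total number of edges is at most $\bar C n^2 p^\Delta\log(1/p)$, the total loss over the whole peeling process is at most $\delta\vep n^{v_{\sf H}}p^{e({\sf H})}$. Hence the resulting graph $\Graph'$ still satisfies $N({\sf H},\Graph')\ge \delta(1-2\vep)n^{v_{\sf H}}p^{e({\sf H})}-\delta\vep n^{v_{\sf H}}p^{e({\sf H})} = \delta(1-3\vep)n^{v_{\sf H}}p^{e({\sf H})}$, which is (C1); it trivially inherits (C2); and by construction every surviving edge meets the threshold in (C3). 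Thus $\Graph'\subset\Graph\subset\G(n,p)$ is a core graph, so the existence of a seed graph in $\G(n,p)$ implies the existence of a core graph in $\G(n,p)$. Chaining the two reductions gives \eqref{eq:ps-to-s} with a loss absorbed into the single factor $(1+\vep)$ (relabel $\vep$ if needed at the outset).

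The main obstacle is the first reduction — getting from the raw tail event ${\sf UT}({\sf H},\delta)$ to ``$\G(n,p)$ contains a pre-seed graph'' with only a $(1+\vep)$ multiplicative loss. This is genuinely the heart of \cite{hms}: it is not a soft union bound but rather a delicate argument (an iterated ``sprinkling''/container-type argument, or the second-moment-with-conditioning scheme of \cite{jor} refined in \cite{hms}) showing that on the complement of the pre-seed event the random variable $N({\sf H},\G(n,p))$ is too concentrated to exceed $(1+\delta)$ times its mean except with probability $\le\vep\cdot\P(\text{pre-seed exists})$. I would invoke this directly from \cite[Section~3]{hms} (in particular \cite[Lemma~3.5]{hms} and the surrounding apparatus), noting that the argument there is carried out for unlabelled copies but transfers verbatim to labelled copies since the two differ only by the constant factor $|\mathrm{Aut}({\sf H})|$ and all the estimates are homogeneous in that normalization. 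The second, purely deterministic, peeling step is routine bookkeeping with the constants $\vep,\bar C$, and the passage from (PS1) to (S1) is the one place the sparsity hypothesis $np^{\Delta}\ll(\log n)^{-\Delta v_{\sf H}}$ is actually consumed, via the term-by-term smallness in \eqref{eq:cond-exp-ubd}.
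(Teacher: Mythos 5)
Your three-stage route --- reduce the upper-tail event to the existence of a pre-seed graph via the moment machinery of \cite{hms}, show every pre-seed graph is also a seed graph by expanding the conditional expectation as in \eqref{eq:cond-exp-ubd}, then peel a seed graph down to a core graph --- is exactly the argument in the paper, and your first and third stages match it in detail. (For the first stage the paper actually invokes \cite[Lemma~3.6]{hms} together with a clique-planting lower bound on $\P(\text{pre-seed exists})$, rather than Lemma~3.5, but this is the apparatus you are gesturing at and the citation is easily fixed.)

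The genuine gap is the ``$e(\Graph)^{?}$'' placeholder in the pre-seed-to-seed step. The bound that makes the arithmetic close is not a generic power of $e(\Graph)$: it is $N({\sf H}_\star,\Graph)\le(2e(\Graph))^{\upalpha^\star_{{\sf H}_\star}}$ where $\upalpha^\star_{{\sf H}_\star}$ is the fractional independence number (\cite[Theorem~5.7]{hms}), sharpened by $\upalpha^\star_{{\sf H}_\star}\le v_{{\sf H}_\star}-e({\sf H}_\star)/\Delta$ for any subgraph of a $\Delta$-regular graph (\cite[Lemma~5.3]{hms}). Plugging in (PS2) gives
\[
\frac{N({\sf H}_\star,\Graph)}{n^{v_{{\sf H}_\star}}p^{e({\sf H}_\star)}}
\le (2\bar C)^{v_{\sf H}}\bigl(\log(1/p)\bigr)^{v_{\sf H}}\,\bigl(np^\Delta\bigr)^{v_{{\sf H}_\star}-2e({\sf H}_\star)/\Delta},
\]
and the exponent $v_{{\sf H}_\star}-2e({\sf H}_\star)/\Delta$ is at least $1/\Delta$ for every proper nonempty ${\sf H}_\star\subsetneq{\sf H}$, because $\Delta v_{{\sf H}_\star}>2e({\sf H}_\star)$ with both sides integers. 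This integrality gap is what produces the factor $(np^\Delta)^{1/\Delta}$, and the hypothesis $np^\Delta\ll(\log n)^{-\Delta v_{\sf H}}$ is calibrated precisely so that $(np^\Delta)^{1/\Delta}\ll(\log n)^{-v_{\sf H}}$ absorbs the $(\log(1/p))^{v_{\sf H}}$ coming from (PS2). Note that a naive exponent such as $v_{{\sf H}_\star}$ would be far too large here (already for a star subgraph $\upalpha^\star$ is close to $v_{{\sf H}_\star}$, not $v_{{\sf H}_\star}/2$), so the fractional-independence-number bound together with its $\Delta$-regularity refinement is the one ingredient you still need to supply; the rest of your outline goes through as written.
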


The proof of Lemma \ref{l:seed-markov} will use \cite{hms} and the bound $np^{\Delta/2} \le n^{1/2-o(1)}$. Let us remark that in \cite[Lemma 3.6, Lemma 3.7]{hms}, a similar statement was established for a slightly different definition of core graphs (that corresponds to replacing $N({\sf H}, \Graph)$ and $N({\sf H}, \Graph, e)$ in (C1) and (C3) of Definition \ref{dfn:core-graph} by $\E_\Graph[N({\sf H}, \G(n,p)] - \E[N({\sf H}, \G(n,p))]$ and $\E_\Graph[N({\sf H}, \G(n,p))] - \E_{\Graph\setminus \{e\}}[N({\sf H}, \G(n,p)]$, respectively).  

\begin{proof}[Proof of Lemma \ref{l:seed-markov}]
We first claim that 
\beq\label{eq:claim-pre-seed}
\P(N({\sf H}, \G(n,p)) \ge (1+\delta)n^{v_{\sf H}} p^{e({\sf H})}) \leq (1+\vep)\P( \G(n,p)~\text{contains a pre-seed graph}).
\eeq
To this end, we apply \cite[Lemma 3.6]{hms} with $N({\sf H}, \G(n,p))$ and $\bar C n^2 p^\Delta \log(1/p)$ taking the roles of $X$ and $\ell$ there, respectively. Applying \cite[Lemma 3.6]{hms} with these choices from the definition of pre-seed graphs one has that
\begin{multline}\label{eq:ps-to-s-1}
\P\left(\left\{N({\sf H}, \G(n,p)) \ge (1+\delta)n^{v_{\sf H}} p^{e({\sf H})} \right\}\cap \left\{\G(n,p) \text{ contains a pre-seed graph}\right\}^c\right)\\
\le \left(\f{1+\delta(1-\vep)}{1+\delta}\right)^{\bar C n^2 p^\Delta \log(1/p)}.
\end{multline}
On the other hand, as a {clique} on $m$ vertices contains $(m)_{v_{\sf H}}$ labelled copies of ${\sf H}$ it is immediate from Definition \ref{dfn:seed-0} and \eqref{eq:ps-to-s-00} that, for all large $n$, 
\begin{align}
  \P(\G(n,p) \text{ contains a pre-seed graph})  \ge & \, \P(\G(n,p) \text{ contains a clique on }\lceil \left(\delta\left(1-\vep/2\right)\right)^{1/v_{\sf H}} n p^{\Delta/2}\rceil \text{ vertices})\notag \\
\ge & \, \exp\left( - \f12 \left(\delta\left(1-\dfrac\vep4\right)\right)^{2/v_{\sf H}} n^2 p^\Delta \log (1/p)\right). \label{eq:ps-to-s-2}
\end{align}
Therefore, for $\bar C$ sufficiently large, depending only on $\delta$ and $\vep$, we deduce from \eqref{eq:ps-to-s-1}-\eqref{eq:ps-to-s-2} that the \abbr{LHS} of \eqref{eq:ps-to-s-1} is at most
\[
\vep \cdot \P(\G(n,p) \text{ contains a pre-seed graph}).
\]
This proves the claim \eqref{eq:claim-pre-seed}.  

We now proceed to prove that if $np^\Delta \ll (\log n)^{-\Delta v_{\sf H}}$ then for all sufficiently large $n$ the existence of a pre-seed subgraph of $\G(n,p)$ guarantees the existence of a seed subgraph. Once we have a seed graph $\Graph$ then we peel off its edges iteratively that participate in strictly less than $\delta \vep n^{v_{\sf H}} p^{e({\sf H})} / (\bar C n^2 p^\Delta \log(1/p))$ labelled copies of ${\sf H}$ to produce a subgraph $\Graph_0 \subset \Graph$ so that 
\[
\min_{{\bm e} \in E(\Graph_0)} N({\sf H}, \Graph_0, {\bm e}) \ge \delta \vep n^{v_{\sf H}} p^{e({\sf H})} / (\bar C n^2 p^\Delta \log(1/p)). 
\]
Note that, as $e(\Graph) \le \bar C n^2 p^\Delta \log(1/p)$, by triangle inequality it follows that this peeling procedure loses at most $\delta \vep n^{v_{\sf H}} p^{e({\sf H})}$ labelled copies of ${\sf H}$ in $\Graph$. Thus $\Graph_0$ satisfies the condition (C1) of Definition \ref{dfn:core-graph}. The condition (C2) is automatic. Therefore $\Graph_0$ is indeed a core graph.\footnote{This peeling procedure is similar to that in \cite[Lemma 3.7]{hms}. We refer the reader to there for more details on this step.} This then yields the desired conclusion.  

So it now remains to show that the existence of a pre-seed graph implies the same for a seed graph. 
Turning to do this task we use \cite[Theorem 5.7]{hms} to find that 
\beq\label{eq:copy-H-star-bd}
N({\sf H}_\star, \Graph) \le (2 e(\Graph))^{ \upalpha_{{\sf H}_\star}^\star},
\eeq
for any ${\sf H}_\star \subset {\sf H}$ without any isolated vertices, where $ \upalpha_{{\sf H}_\star}^\star$ is the fractional independence number\footnote{A fractional independent set in a graph ${\sf J}$ is an assignment $\upalpha: V({\sf J}) \mapsto [0,1]$ such that $\upalpha_u+\upalpha_v \le 1$ for all edges $e=(u,v) \in E({\sf J})$. The fractional independence number of ${\sf J}$, denoted by $\upalpha_J^\star$, is the fractional independence set $\upalpha$ that maximizes $\sum_{v \in V({\sf J})} \upalpha_v$.} of ${\sf H}_\star$. As ${\sf H}_\star$ is a subgraph of the connected $\Delta$-regular graph ${\sf H}$ from \cite[Lemma 5.3]{hms} we also have that
\beq\label{eq:frac-ind-bd}
\upalpha_{{\sf H}_\star}^\star \le v_{{\sf H}_\star} - e({\sf H}_\star)/\Delta
\eeq
For a proper subgraph ${\sf H}_\star$ of the $\Delta$-regular ${\sf H}$ we further note that 
\beq\label{eq:v-e-d-ineq}
\Delta v_{{\sf H}_\star} > 2 e({\sf H}_\star). 
\eeq
Therefore, using \eqref{eq:copy-H-star-bd}-\eqref{eq:frac-ind-bd}, for $p \ll n^{-1/\Delta}$, we find that
\begin{multline*}
N(H_\star,\Graph) \cdot n^{-v_{{\sf H}_\star}} p^{-e({\sf H}_\star)} \le (2 \bar C)^{v_{\sf H}} \cdot (\log (1/p))^{v_{\sf H}} \cdot (n^2 p^\Delta)^{v_{{\sf H}_\star} - e({\sf H}_\star)/\Delta} \cdot n^{-v_{{\sf H}_\star}} p^{-e({\sf H}_\star)}\\
= (2 \bar C)^{v_{\sf H}} \cdot (\log (1/p))^{v_{\sf H}} \cdot (n p^\Delta)^{v_{{\sf H}_\star} - 2 e({\sf H}_\star)/\Delta}  \le (2 \bar C)^{v_{\sf H}} \cdot (\log (1/p))^{v_{\sf H}} \cdot (n p^\Delta)^{1/\Delta},
\end{multline*}
where in the last step we have used the fact $\Delta v_{{\sf H}_\star}$ and $2 e({\sf H}_\star)$ both being integer the inequality \eqref{eq:v-e-d-ineq} implies that $v_{{\sf H}_\star} - 2e({\sf H}_\star)/\Delta \ge 1/\Delta$. Thus continuing from above, as $np ^\Delta \ll (\log n)^{-\Delta v_{\sf H}}$, we derive that 
 \[
\gT:= \sum_{ \emptyset \ne {\sf H}_\star \subsetneq {\sf H}} N({\sf H}_\star, \Graph) \cdot n^{v_{\sf H}- v_{{\sf H}_\star}} p^{e({\sf H}) - {e}({{\sf H}_\star})} =o(n^{v_{\sf H}} p^{e({\sf H})}).
 \]
 As $\E[N({\sf H}, \G(n,p)] = n^{v_{\sf H}} p^{e({\sf H})}(1+o(1))$ it is now immediate from Definition \ref{dfn:seed-0} and \eqref{eq:cond-exp-ubd} that for any pre-seed graph $\Graph$ one must have that
 \[
 N({\sf H}, \Graph) \ge N({\sf H}, \Graph) \cdot (1- p^{e({\sf H})}) \ge \E_\Graph[N({\sf H}, \G(n,p)] - \E[N({\sf H}, \G(n,p)] - \gT \ge \delta(1-2\vep) n^{v_{\sf H}} p^{e({\sf H})}, 
 \]
 for all large $n$. Thus $\Graph$ is indeed a seed graph as well. The proof of the lemma is now complete.
 \end{proof}

Equipped with Lemma \ref{l:seed-markov} the proof of upper bound of the $\log$ probability of ${\sf UT}({\sf H}, \delta)$  now splits into two parts.


%

\begin{prop}\label{prop:strong-core}
 Fix $\delta >0$ and a $\Delta$-regular connected graph ${\sf H}$ with $\Delta \ge 2$. Then, for any {$p \in (0,1)$ satisfying $1 \ll np^{\Delta/2} \le n^{1/2}$}, and $\vep >0$ sufficiently small, we have that
\[
\liminf_{n \to \infty} -\f{ \log \P(\exists \Graph \subset \G(n,p): \Graph \text{ is a strong-core graph})}{n^2 p^\Delta \log (1/p)} \ge \f12 \delta^{2/v_{\sf H}} (1-\gf_{\sf H}(\vep)),
\]
for some nonnegative function $\gf_{\sf H}(\cdot)$ such that $\lim_{\vep \downarrow 0} \gf_{\sf H}(\vep) =0$.
\end{prop}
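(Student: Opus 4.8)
The plan is as follows. By (SC2) a strong-core graph has $e(\Graph)\le\bar C_\star n^2p^\Delta$, so only $O(n^2p^\Delta)=\exp(o(1)\cdot n^2p^\Delta\log(1/p))$ values of $e(\Graph)$ are admissible; hence, by the union bound over $\G(n,p)$, it suffices to show that for every admissible ${\bm e}$ the number $\gN_{\bm e}$ of strong-core graphs with exactly ${\bm e}$ edges satisfies
\[
\gN_{\bm e}\;\le\;\exp\!\left(\Big({\bm e}-(1-\gf_{\sf H}(\vep))\tfrac12\delta^{2/v_{\sf H}}\,n^2p^\Delta\Big)\log(1/p)\right)
\]
for some $\gf_{\sf H}(\vep)$ with $\lim_{\vep\downarrow0}\gf_{\sf H}(\vep)=0$. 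The crude bound $\gN_{\bm e}\le\binom{\binom n2}{\bm e}$ is useless here: as recalled around \eqref{eq:gN-e}, already the number of labelled copies of $K_{2,\,Cn^2p^2}$ in $K_n$ --- which are strong-core graphs for ${\sf H}=C_4$ once $C$ is large --- is too large for the union bound $\sum_{\bm e}\gN_{\bm e}\,p^{\bm e}$ to be of any use, so a structural decomposition of strong-core graphs must precede the count.

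\emph{Non-bipartite ${\sf H}$, and the staircase.} When ${\sf H}$ is non-bipartite a bipartite substructure hosts no copy of ${\sf H}$, so the copies of ${\sf H}$ in a strong-core graph come from a clique-like part, and a direct count --- the content of Section \ref{sec:strong-core-odd}, essentially the clique bound of \cite{hms} recast in our weaker notion of entropic stability --- already yields the displayed inequality. Assume henceforth that ${\sf H}$ is bipartite. The structural step rests on the degree-product estimates of Appendix \ref{app:app1}, which guarantee that for all but a negligible fraction of the edges of a strong-core graph the product of the degrees of the two endpoints is at least of order $(np^{\Delta/2})^{2(v_{\sf H}-2)}$; using this I would show that every strong-core $\Graph$ contains a bipartite subgraph $\Graph_\star$ whose two partite sets split into blocks $\{V_i\}_{i=1}^{\wt C_3}$ and $\{\widetilde V_i\}_{i=1}^{\wt C_3}$ arranged in the staircase (blow-up-of-a-path, i.e.\ complete-bipartite-like) pattern of Figure \ref{fig:path-str}, with $\wt C_3$ an arbitrarily large constant to be fixed last, such that $\Graph_\star$ and $\Graph\setminus\Graph_\star$ between them retain all but a $\gf_{\sf H}(\vep)$-fraction of the copies of ${\sf H}$.

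\emph{Cutting the staircase, and the two pieces.} Since ${\sf H}$ is connected on $v_{\sf H}$ vertices, any copy of ${\sf H}$ that meets the block-path part of $\Graph_\star$ is supported on at most $v_{\sf H}$ consecutive blocks; so, writing $\Graph_\gb^{(k)}$ for the union of the first $k$ blocks of $\Graph_\star$ and summing over $k=1,\dots,\wt C_3$, the number of copies of ${\sf H}$ using edges on both sides of the cut $k$ totals at most $v_{\sf H}\,N({\sf H},\Graph)$, so for some cut $k=k(\Graph)$ at most $\tfrac{v_{\sf H}}{\wt C_3}N({\sf H},\Graph)$ copies are crossing. Set $\Graph_\gb:=\Graph_\gb^{(k)}$ and write $\delta_1 n^{v_{\sf H}}p^{e({\sf H})}:=N({\sf H},\Graph_\gb)$ and $\delta_2 n^{v_{\sf H}}p^{e({\sf H})}:=N({\sf H},\Graph\setminus\Graph_\gb)$; by (SC1) and the above, $\delta_1+\delta_2\ge\delta(1-\gf_{\sf H}(\vep))$ once $\wt C_3$ is large. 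It then remains to bound, separately, the number of possibilities for $\Graph_\gb$ and for $\Graph\setminus\Graph_\gb$. For $\Graph_\gb$ --- which inherits the low-complexity blow-up-of-a-path structure of the staircase --- Lemma \ref{lem:small-count} should give that it has at least $(1-o_{\vep}(1))\tfrac12\delta_1^{2/v_{\sf H}}n^2p^\Delta$ edges and that the number of such graphs with ${\bm e}_1$ edges is at most $\exp(({\bm e}_1-(1-o_{\vep}(1))\tfrac12\delta_1^{2/v_{\sf H}}n^2p^\Delta)\log(1/p))$ (it is the passage through the staircase that supplies the bounded-size skeleton making this count small --- bipartiteness alone certainly would not). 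For $\Graph\setminus\Graph_\gb$, excising the staircase should remove every complete-bipartite-like way of hosting ${\sf H}$, while (SC3) --- possibly after a negligible further pruning of edges --- keeps each surviving edge in many copies of ${\sf H}$ inside $\Graph\setminus\Graph_\gb$ itself, so this piece is clique-like and Lemma \ref{lem:large-count} yields the analogous bound with $(\delta_2,{\bm e}_2)$ in place of $(\delta_1,{\bm e}_1)$.

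\emph{Assembling, and the main difficulty.} A strong-core $\Graph$ with ${\bm e}$ edges is then determined by the cut $k\in\{1,\dots,\wt C_3\}$, by a pair $({\bm e}_1,{\bm e}_2)$ with ${\bm e}_1+{\bm e}_2\le{\bm e}$, and by the two graphs $\Graph_\gb$ and $\Graph\setminus\Graph_\gb$; the first two choices contribute only $\mathrm{poly}(n)$ factors, negligible on the scale $n^2p^\Delta\log(1/p)$ because $1\ll np^{\Delta/2}\le n^{1/2}$. Therefore
\[
\gN_{\bm e}\;\le\;\exp\!\left(\Big({\bm e}-(1-o_{\vep}(1))\tfrac12\big(\delta_1^{2/v_{\sf H}}+\delta_2^{2/v_{\sf H}}\big)\,n^2p^\Delta\Big)\log(1/p)\right),
\]
and since $x\mapsto x^{2/v_{\sf H}}$ is concave on $[0,\infty)$ and vanishes at $0$ it is subadditive, whence $\delta_1^{2/v_{\sf H}}+\delta_2^{2/v_{\sf H}}\ge(\delta_1+\delta_2)^{2/v_{\sf H}}\ge(\delta(1-\gf_{\sf H}(\vep)))^{2/v_{\sf H}}$. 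Absorbing the $o_{\vep}(1)$-loss into $\gf_{\sf H}(\vep)$ gives the required bound on $\gN_{\bm e}$; plugging it into $\P(\exists\,\Graph\subset\G(n,p)\text{ strong-core})\le\sum_{\bm e}\gN_{\bm e}\,p^{\bm e}$ and letting first $n\to\infty$, then $\wt C_3\to\infty$, then $\vep\downarrow0$ completes the argument. I expect the main difficulty to be the structural step --- producing the long staircase $\Graph_\star$, where passing from a core to a strong-core graph and the degree-product estimates of Appendix \ref{app:app1} are indispensable --- together with the two counting lemmas: one must verify that $\Graph_\gb$ genuinely reduces to a bounded-size skeleton so that Lemma \ref{lem:small-count} applies, and, dually, that excising the staircase really eliminates every complete-bipartite-like way of hosting ${\sf H}$ in the complement, so that the clique-like count of Lemma \ref{lem:large-count} applies there. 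Given these, tracking the $\vep$- and $\wt C_3$-dependent losses into a single $\gf_{\sf H}(\vep)\to0$ is routine.
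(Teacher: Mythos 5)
Your high-level picture --- the block-path ``staircase'' $\Graph_\star$, the pigeonhole cut producing $\Graph_\gb$ with few crossing copies of ${\sf H}$, and the subadditivity of $x\mapsto x^{2/v_{\sf H}}$ to combine $\delta_1$ and $\delta_2$ --- does match what the paper does (your pigeonhole is phrased via crossings rather than the paper's telescoping increments $N({\sf H},\Graph_{i+(t-1),\mathrm{g}})-N({\sf H},\Graph_{i,\mathrm{g}})$, but the two are equivalent). The genuine gap is in how you then count. You assert that Lemma~\ref{lem:small-count} supplies a bound of the form $\gN_{\Graph_\gb}({\bm e}_1)\le\exp\big(({\bm e}_1-(1-o_\vep(1))\tfrac12\delta_1^{2/v_{\sf H}}n^2p^\Delta)\log(1/p)\big)$, but Lemma~\ref{lem:small-count} is not a counting lemma: it only lower-bounds the \emph{excess} $e(\bar\Graph)-(\Delta/2)|U_1|$. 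The number of admissible $\Graph_\gb$'s with ${\bm e}_1$ edges is governed by $\binom{n}{|U_1|}$ times the edge choices, and $\binom{n}{|U_1|}p^{{\bm e}_1}\approx p^{{\bm e}_1-(\Delta/2)|U_1|}$ only when $|U_1|\gtrsim np^{\Delta/2}$ (so that $n/|U_1|\le p^{-\Delta/2}$; see the use of \eqref{eq:vhash-bd} in the paper). That threshold on $|U_1|$ is forced precisely by $N({\sf H},\Graph_\gb)\ge\delta_1 n^{v_{\sf H}}p^{e({\sf H})}$ with $\delta_1$ bounded away from $0$; when $\delta_1$ is tiny or zero --- which you must allow --- your bound on $\gN_{\Graph_\gb}({\bm e}_1)$ reduces to $p^{-{\bm e}_1}$, which is simply false: for $p\gg n^{-2}$ the number of bipartite graphs on ${\bm e}_1$ edges is $\gg p^{-{\bm e}_1}$. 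This is not a cosmetic issue; it is exactly the $K_{2,Cn^2p^2}$-type enemy that defeats naive union bounds.

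The paper sidesteps this by \emph{not} counting the two pieces separately. It applies Lemma~\ref{lem:large-count} \emph{once}, to the whole strong-core graph $\Graph$, with $\cV_1$ chosen to be (essentially) the low-degree side $\cup_{j\le i_\star}V_j$ of the staircase. This produces the single factor $\binom{n}{\bm v}p^{-\vep{\bm e}}$, and the edge probability $p^{\bm e}$ is then split as $p^{{\bm e}_\#}\cdot p^{{\bm e}-{\bm e}_\#}$. The bookkeeping is done in two separate cases: when $N({\sf H},\Graph_{i_\star,\mathrm g})$ is below $\vep\delta n^{v_{\sf H}}p^{e({\sf H})}$, the cost $n^{\bm v}$ is absorbed against the edges $p^{{\bm e}_\#}$ of $\Graph_\gb$ using only that the low-degree vertices have degree $\ge\Delta$ (inequality~\eqref{eq:cV-1-decompose-1}), and the whole budget is paid by the clique-like part via ${\bm e}-{\bm e}_\#\ge\tfrac12\delta^{1/t}(1-9\vep)^{1/t}n^2p^\Delta$; when $N({\sf H},\Graph_{i_\star,\mathrm g})$ is large, $|U_1|$ is forced to be large, the trade-off $\binom{n}{\bm v}p^{{\bm e}_\#}\approx p^{{\bm e}_\#-(\Delta/2){\bm v}_\#}$ becomes available, and it is at this point and only at this point that Lemma~\ref{lem:small-count} enters, controlling the net exponent ${\bm e}_\#-(\Delta/2){\bm v}_\#$. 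You should also note that your suggestion to apply Lemma~\ref{lem:large-count} \emph{separately} to $\Graph\setminus\Graph_\gb$ ``after pruning'' is not for free: that lemma's proof (in the $np^{\Delta/2}=\Omega((\log n)^{v_{\sf H}})$ regime) rests on the degree-product lower bound for core graphs (Lemma~\ref{lem:bad-graph-bd}), which would have to be re-established for the pruned subgraph; the paper avoids this entirely by counting $\Graph$ itself.
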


\begin{prop}
\label{p:upper-notstrong}
Let $\delta, \Delta$, $\vep$, ${\sf H}$, and $\gf_{\sf H}(\cdot)$ be as in Proposition \ref{prop:strong-core}. Let $p \in (0,1)$ be such that for $ (\log n)^{1/(v_{\sf H}-2)} \ll np^{\Delta/2} \le n^{1/2}$. Then 
\[
\liminf_{n\to \infty} -\dfrac{\log \P( \exists \Graph \subset \G(n,p): \Graph \text{ is a core graph with } e(\Graph) \ge \bar C_\star n^2 p^\Delta )}{n^2p^\Delta\log (1/p)} \geq \frac{1}{2}\delta^{2/v_{\sf H}}(1-\gf_{\sf H}(\vep)).
\]
\end{prop}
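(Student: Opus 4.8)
The plan is to bound, separately on each dyadic scale of the edge count, the probability that $\G(n,p)$ contains a core graph with that many edges, and then sum over scales. Since any fixed graph with ${\bm e}$ edges sits inside $\G(n,p)$ with probability $p^{\bm e}$ and every core graph under consideration has ${\bm e} \ge \bar C_\star n^2 p^\Delta$ with $\bar C_\star = C_\star \delta^{2/v_{\sf H}}$, $C_\star \ge 32$, it suffices, at each scale, either to exhibit a net for the core graphs of cardinality $\exp({\bm e} \log(1/p)\cdot o(1))$ --- so that the union bound over the net gives $\exp(-(1-o(1)){\bm e}\log(1/p)) \le \exp(-(1-o(1))\bar C_\star n^2 p^\Delta\log(1/p))$, which already beats $\exp(-\tfrac12\delta^{2/v_{\sf H}}n^2p^\Delta\log(1/p))$ --- or to show directly that the associated sub-event is entropically non-viable. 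In the range $(\log n)^{1/(v_{\sf H}-2)}\ll np^{\Delta/2}\le n^{1/2}$ one has $n^2 p^\Delta\log(1/p)\to\infty$, so a multiplicative factor polynomial in $\log\log n$, arising from the number of scales and sub-cases, is absorbed into the $o(1)$; hence it is enough to handle each scale and sub-case in turn, and for every large $n$ exactly one of the two sub-regimes $np^{\Delta/2}\ge(\log n)^{v_{\sf H}}$ or $np^{\Delta/2}<(\log n)^{v_{\sf H}}$ applies.

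In the sub-regime $np^{\Delta/2}\ge(\log n)^{v_{\sf H}}$, Lemma \ref{lem:bad-graph-bd} (proved in Appendix \ref{app:app1}) provides a lower bound $V$ on the product of the degrees of the two endpoints of every edge of a core graph. Declaring a vertex \emph{low-degree} when its degree lies below a threshold $L$ with $\log n\ll L\le\sqrt V$, every edge incident to a low-degree vertex must have its other endpoint of high degree, so the subgraph of $\Graph$ spanned by such edges is bipartite, with $O(e(\Graph)/L)$ vertices on the high-degree side. Combined with the fact that every core graph has minimum degree $\ge\Delta$ (each of its edges lies in a copy of the $\Delta$-regular ${\sf H}$), this structure yields a sufficiently economical count of the core graphs of each given size; this is exactly the content of the proposition of Section \ref{s:bipartite}, which I would invoke directly to conclude in this sub-regime.

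In the complementary sub-regime $(\log n)^{1/(v_{\sf H}-2)}\ll np^{\Delta/2}<(\log n)^{v_{\sf H}}$ I would split $e(\Graph)$ at $\bar C n^2 p^\Delta\log(1/p)/(\log\log n)^2$: below that threshold I invoke Proposition \ref{p:core--1}, and above it Proposition \ref{p:core2}, which runs the chaining argument along the dyadic windows $\wt\cJ_j$. Given a core graph $\Graph$ with $e(\Graph)\in\wt\cJ_j$, one tests whether $\cN_{1,1}({\sf H},\Graph)$ --- the number of copies of ${\sf H}$ using at least one edge with \emph{both} endpoints of low degree --- exceeds a $j$-dependent threshold; if it does, Proposition \ref{l:seqcounting} forces $\Graph$ to carry many edges with at least one high-degree endpoint, which (Sections \ref{sec:combin-graph-bd} and \ref{sec:largecN-11}) makes this branch entropically non-viable. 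If it does not, one deletes the edge set $E_{1,1}(\Graph)$ and then iteratively deletes vertices of degree $<\Delta$ --- which, ${\sf H}$ being $\Delta$-regular, lie in no surviving copy of ${\sf H}$ and hence cost no further copies --- to obtain $\Graph_0\subset\Graph$ of minimum degree $\ge\Delta$ with at most $\cN_{1,1}({\sf H},\Graph)$ fewer copies of ${\sf H}$; if $e(\Graph_0)$ is still in $\wt\cJ_j$ then $E(\Graph_0)\setminus E_{1,1}(\Graph_0)$ is large and Lemma \ref{l:core2jb} yields entropic suboptimality, whereas if $e(\Graph_0)$ has dropped into a strictly smaller window one recurses. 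Collecting the $O(\log\log\log n)$ terminal sub-events, adding their probabilities and dividing by $n^2p^\Delta\log(1/p)$ delivers the claimed $\liminf\ge\tfrac12\delta^{2/v_{\sf H}}(1-\gf_{\sf H}(\vep))$.

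The genuinely hard part is the chaining step, Proposition \ref{p:core2}: the $j$-dependent thresholds for $\cN_{1,1}$ must be chosen so that the total number of copies of ${\sf H}$ shed over the up to $O(\log\log\log n)$ rounds remains a negligible fraction of $N({\sf H},\Graph)$, yet at each round the threshold is still large enough that, once exceeded, Proposition \ref{l:seqcounting} produces enough high-degree-incident edges to render that branch's probability negligible; the tension between these two requirements is precisely why the chaining cannot be pushed down all the way to $\bar C_\star n^2 p^\Delta$ and why the separate Proposition \ref{p:core--1} is needed for the middle range. The combinatorial engine behind all of this, Proposition \ref{l:seqcounting}, in turn relies on the graph-decomposition result of Appendix \ref{sec:decompose}.
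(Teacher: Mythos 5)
Your proposal follows the paper's proof essentially verbatim: the same split into sub-regimes $np^{\Delta/2}\ge(\log n)^{v_{\sf H}}$ (handled via Lemma \ref{lem:bad-graph-bd} plus the bipartite-structure counting of Lemma \ref{lem:core-large-edge}) and $np^{\Delta/2}<(\log n)^{v_{\sf H}}$ (handled by splitting at $\bar C n^2 p^\Delta\log(1/p)/(\log\log n)^2$ and invoking Propositions \ref{p:core--1} and \ref{p:core2}), the same chaining over dyadic windows $\wt\cJ_j$ with the same $j$-adaptive thresholds for $\cN_{1,1}$, and the same reliance on Proposition \ref{l:seqcounting}. You also correctly identify why the chaining cannot reach all the way down to $\bar C_\star n^2 p^\Delta$, matching the paper's rationale for the separate treatment of ${\rm Core}_1$.
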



Let us now complete the proof of Theorem \ref{thm:cycle-main} using Propositions \ref{prop:strong-core} and \ref{p:upper-notstrong}.

\begin{proof}[Proof of Theorem \ref{thm:cycle-main} using Propositions \ref{prop:strong-core}-\ref{p:upper-notstrong}]
Recalling Proposition \ref{p:lower} we note that it only remains to prove the lower bound in \eqref{eq:cycle-ldp}. For brevity let us also write
\[
\mathscr{C}_1:= \left\{\exists \Graph \subset \G(n,p): \Graph \text{ is a core graph with } e(\Graph) \le \bar C_\star n^2 p^\Delta\right\}
\]
and 
\[
\mathscr{C}_2:=\left\{\exists \Graph \subset \G(n,p): \Graph \text{ is a core graph with } e(\Graph) \ge \bar C_\star n^2 p^\Delta \right\}.
\]
Using Lemma \ref{l:seed-markov} we have that 
\begin{align}
\limsup_{n \to \infty} \dfrac{\log \P\left({\sf UT}({\sf H}, \delta)\right)}{n^2 p^\Delta \log (1/p)}& =\limsup_{n \to \infty} \f{\log \P(\exists \Graph \subset \G(n,p): \Graph \text{ is a core graph})}{n^2 p^\Delta \log(1/p)}\notag\\
& \le \max\left\{ \limsup_{n \to \infty} \dfrac{\log \P(\mathscr{C}_1)}{n^2p^\Delta\log (1/p)},  \limsup_{n \to \infty} \dfrac{\log \P(\mathscr{C}_2)}{n^2p^\Delta\log (1/p)}\right\}. \label{eq:UT-split}
\end{align}
We claim that any core graph $\Graph$ with $e(\Graph) \le  \bar C_\star n^2 p^\Delta$ contains a strong-core subgraph $\Graph' \subset \Graph$. To see this for any such core graph $\Graph$ we iteratively remove edges from $E(\Graph)$ that participate in less than ${(\delta \vep/\bar C_\star)} \cdot (np^{\Delta/2})^{v_{\sf H}-2}$ copies of ${\sf H}$. This ensures that subgraph $\Graph'$ obtained at the end of this peeling procedure have the desired lower bound (SC3) of Definition \ref{dfn:strong-core}. The upper bound on $e(\Graph')$ is automatic and (SC1) follows from triangle inequality. This proves that $\Graph'$ is a strong-core graph establishing the claim. Hence
\[
\P(\mathscr{C}_1) \le \P(\exists \Graph \subset \G(n,p): \Graph \text{ is a strong-core graph}).
\]
Therefore, continuing from \eqref{eq:UT-split}, and using Propositions \ref{prop:strong-core}-\ref{p:upper-notstrong}, we deduce that
\begin{align*}
\limsup_{n \to \infty} \dfrac{\log \P\left({\sf UT}({\sf H}, \delta)\right)}{n^2 p^\Delta \log (1/p)}&   \le -\f12 \delta^{2/v_{\sf H}}(1-\gf_{\sf H}(\vep)),
\end{align*}
for any sufficiently small $\vep >0$. As $\lim_{\vep \downarrow 0} \gf_{\sf H}(\vep)=0$, sending $\vep$ to zero the proof is completed.
\end{proof}

The rest of this paper will be devoted to proving Propositions \ref{prop:strong-core} and \ref{p:upper-notstrong}. The proof of Proposition \ref{prop:strong-core} is deferred to Section \ref{sec:strong-core}. To prove Proposition \ref{p:upper-notstrong} we treat two regimes $n^2p^\Delta \ge (\log n)^{2v_{\sf H}}$ and $n^2p^\Delta \le (\log n)^{2v_{\sf H}}$ separately. First let us consider the easier case $n^2p^\Delta \ge (\log n)^{2v_{\sf H}}$.





\subsection{Proposition \ref{p:upper-notstrong} in large $p$ regime}\label{sec:notstronglarge-p}


As already outlined in Section \ref{sec:outline} the key here is to show that the subgraph of a core graph $\Graph$ induced  by the edges that are adjacent to vertices of low degree is a bipartite graph. To make this idea precise let us consider the following set of low degree vertices
\beq\label{eq:cW}
\cW:= \cW(\Graph):=\{ v \in V(\Graph): \deg_\Graph(v) \le D \},
\eeq
where 
{\beq\label{eq:D}
D:=D(\vep):= \lceil 16 \Delta/\vep\rceil.
\eeq}
Further let $\Graph_\cW \subset \Graph$ to be the subgraph induced by edges adjacent to vertices in $\cW$. For $v \in V(\Graph)$ we write $\deg_\Graph(v)$ to denote the degree of vertex $v$ in graph $\Graph$. 
Finally, for ${\bm e} \geq {\bm e}_\star:= \bar C_\star n^2p^{\Delta}$ we let 
\[
\sA_{{\bm e}}:=\left\{\exists \Graph \subset \G(n,p): \Graph \text{ is a core graph}, \, \Graph_\cW \text{ is bipartite}, \text{ and } e(\Graph) = {\bm e} \right\}.
\]
Equipped with the above set of notation let us state the lemma that yields the entropic stability of the set of all core graphs $\Graph$ with a large number of edges for which $\Graph_\cW$ is bipartite.  

\begin{lem}\label{lem:core-large-edge}
Fix ${\sf H}$ a $\Delta$-regular graph and $\delta >0$. If $n^2p^\Delta \ge (\log n)^{2v_{\sf H}}$, then for any $\vep \in (0,\f{1}{8})$,
\[
\limsup_{n \to \infty} -\f{\log \P(\cup_{{\bm e}\ge {\bm e}_\star}\sA_{{\bm e}})}{n^2 p^\Delta \log(1/p)} \ge  {\f{1}{16}} \bar C_\star.
\]
\end{lem}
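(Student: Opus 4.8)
\textbf{Proof proposal for Lemma \ref{lem:core-large-edge}.}

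The plan is to set up a union bound over the sets $\sA_{\bm e}$ for ${\bm e} \ge {\bm e}_\star = \bar C_\star n^2 p^\Delta$, so it suffices to control $\P(\sA_{\bm e})$ for each fixed ${\bm e}$ and then sum a geometric-type series (the summation over ${\bm e}$ in the dyadic range up to $\bar C n^2 p^\Delta \log(1/p)$ only costs a polylogarithmic factor, which is absorbed into the $o(1)$ losses). Fix a core graph $\Graph$ with $e(\Graph) = {\bm e} \ge {\bm e}_\star$ such that $\Graph_\cW$ is bipartite. The first step is to get a \emph{lower bound on the product of the degrees of the endpoints of every edge}: since ${\sf H}$ is $\Delta$-regular and connected and every edge of a core graph lies in at least one copy of ${\sf H}$ (by (C3)), a standard counting argument (this should be exactly the content of Lemma \ref{lem:bad-graph-bd}, which I will invoke) shows that for every edge $e=(u,v)\in E(\Graph)$ one has $\deg_\Graph(u)\deg_\Graph(v) \gtrsim (np^{\Delta/2})^2$ up to a polylogarithmic correction; here the hypothesis $n^2 p^\Delta \ge (\log n)^{2 v_{\sf H}}$ guarantees this lower bound dominates the error terms. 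In particular, for an edge with both endpoints in $\cW$ we would need $\deg(u)\deg(v) \le D^2 = O_\vep(1)$, which contradicts the lower bound once $np^{\Delta/2}$ is large; hence $\Graph_\cW$ can have no edge inside $\cW$, i.e. it is automatically bipartite with one side contained in $\cW$ — but more to the point, the edges of $\Graph$ incident to $\cW$ all go from $\cW$ to $V(\Graph)\setminus\cW$, and since every vertex outside $\cW$ has degree $> D$ this already constrains the structure severely.

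The second and main step is the \emph{entropy count}: I want to show the number of such core graphs with exactly ${\bm e}$ edges is at most $\exp\big(o({\bm e}\log(1/p))\big)$, so that the union bound $\P(\sA_{\bm e}) \le \gN_{\bm e}\, p^{\bm e}$ gives $\P(\sA_{\bm e}) \le \exp(-(1-o(1)){\bm e}\log(1/p)) \le \exp(-(1-o(1)){\bm e}_\star \log(1/p))$, and with ${\bm e}_\star = \bar C_\star n^2 p^\Delta$ and $\bar C_\star \ge 32$ (say) this beats $\tfrac1{16}\bar C_\star n^2 p^\Delta\log(1/p)$ with room to spare. To count: split $E(\Graph) = E_{\mathrm{hi}} \cup E_{\mathrm{lo}}$ where $E_{\mathrm{lo}}$ are edges incident to $\cW$ (low-degree vertices) and $E_{\mathrm{hi}}$ the rest. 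Edges in $E_{\mathrm{hi}}$ have both endpoints of degree $> D$; the number of vertices of degree $>D$ is at most $2{\bm e}/D$, and the number of graphs on a vertex set of size $m$ with ${\bm e}$ edges is $\binom{\binom{m}{2}}{{\bm e}} \le (em^2/(2{\bm e}))^{\bm e}$; plugging $m = O({\bm e}/D)$ gives at most $(O({\bm e}/D^2))^{\bm e}$ choices, and since ${\bm e} = O(n^2 p^\Delta \log(1/p))$ and $D$ is a large constant while $1/p$ is polynomially large, $\log$ of this is $\le {\bm e}\log(1/p)\cdot o(1)$ provided we have first taken $D = D(\vep)$ large — this is where the choice $D = \lceil 16\Delta/\vep\rceil$ and the smallness of $\vep$ enter, though here we just need $D$ large enough that $\log({\bm e}/D^2)/\log(1/p) \to 0$, which is automatic since the bipartite hypothesis lets us bound things crudely. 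For $E_{\mathrm{lo}}$, the bipartite structure of $\Graph_\cW$ is the key: each low-degree vertex in $\cW$ has at most $D$ incident edges, so once we specify the (at most $2{\bm e}$) vertices of $\cW$ that are active, each contributes at most $\binom{n}{D}$ choices of neighborhood, giving at most $n^{2D{\bm e}/(\text{typical low degree})}$... — more carefully, the total number of $E_{\mathrm{lo}}$-edges is at most $D|\cW\cap V(\Graph)|$, and I specify $\Graph_\cW$ by choosing, for each of its $O({\bm e})$ vertices, a subset of $\le D$ neighbors, costing $n^{O({\bm e})}$ in the worst case — this is \emph{too} expensive, so I must be more careful and exploit that the number of \emph{edges} in $E_{\mathrm{lo}}$, not vertices, is what governs the entropy: choosing ${\bm e}_{\mathrm{lo}} := |E_{\mathrm{lo}}|$ edges of a bipartite graph where one side has bounded degree $D$ is like choosing ${\bm e}_{\mathrm{lo}}/D$ "hubs" times their neighbor-sets; the right bound is $\binom{n}{{\bm e}_{\mathrm{lo}}/D}\cdot (\text{choices of } \le D \text{ neighbors each}) \le n^{{\bm e}_{\mathrm{lo}}/D}\cdot 2^{n{\bm e}_{\mathrm{lo}}/D}$, and since ${\bm e}_{\mathrm{lo}} \le {\bm e} = O(n^2 p^\Delta \log(1/p))$ with $np^{\Delta/2}$ at most $n^{1/2}$, one checks $\log(\text{this}) \le (\vep/4)\,{\bm e}\log(1/p)$ — I will need to track the constants so that the total entropy exponent is at most, say, $\tfrac1{32}\bar C_\star$, which combined with the $p^{\bm e}$ factor leaves $\ge \tfrac1{16}\bar C_\star$ on the exponent.

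\textbf{Main obstacle.} The delicate point is the entropy estimate for the low-degree part $E_{\mathrm{lo}}$: naively specifying the neighborhoods of all low-degree vertices costs $n^{\Theta({\bm e})}$, which is $\exp(\Theta({\bm e}\log n))$ and \emph{not} $\exp(o({\bm e}\log(1/p)))$ unless $p$ is polynomially close to a constant. The resolution must use that the bipartite graph $\Graph_\cW$ has \emph{bounded degree $D$ on the $\cW$-side}, so it is determined by far fewer than ${\bm e}$ free binary choices — concretely by roughly $({\bm e}_{\mathrm{lo}}/D)\log n$ bits (choosing the hubs) rather than ${\bm e}\log n$, and then one uses $D = D(\vep)$ large and $\vep$ small, together with ${\bm e} \le \bar C n^2 p^\Delta \log(1/p)$ and $p \le n^{-1/\Delta - o(1)}$ (so $\log n \le \Delta(1+o(1))\log(1/p)$), to convert the $\log n$ into $\log(1/p)$ and make the prefactor $\tfrac{1}{D}\cdot\Delta \le \vep$. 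I expect that assembling these bounds — choosing $D$ first, then $\vep$, and verifying that the geometric sum over ${\bm e} \in [{\bm e}_\star, \bar C n^2 p^\Delta\log(1/p)]$ of the resulting bounds is dominated by the ${\bm e} = {\bm e}_\star$ term — is exactly the bookkeeping that needs care, but no fundamentally new idea beyond Lemma \ref{lem:bad-graph-bd} and the crude counting above.
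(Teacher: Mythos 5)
Your overall target — showing $\gN_{\bm e} \le \exp(o({\bm e}\log(1/p)))$ so that the plain union bound $\P(\sA_{\bm e}) \le \gN_{\bm e}\, p^{\bm e}$ closes the argument — is not achievable, and the paper says so explicitly: when ${\sf H}$ is bipartite the number of core graphs with ${\bm e}$ edges can be of order $\exp(\Theta({\bm e}\log(1/p)))$; e.g.~for ${\sf H}=C_4$ the graphs $K_{2,Cn^2p^2}$ are core graphs with $\Graph_\cW$ bipartite, ${\bm e} = \Theta(n^2p^\Delta)$, and the number of their labelled copies in $K_n$ has logarithm $\Theta({\bm e}\log(1/p))$. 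So no amount of bookkeeping will push the entropy below $o({\bm e}\log(1/p))$, and your heuristic estimate for the $E_{\mathrm{lo}}$ contribution is indeed off in two places: the quantity $n^{{\bm e}_{\mathrm{lo}}/D}\cdot 2^{n{\bm e}_{\mathrm{lo}}/D}$ is not a valid bound (the second factor is astronomically too large to be useful), and even the correct version — $\binom{n}{{\bm w}}$ for the hubs times $\binom{|\text{possible neighbors}|}{\le D}$ per hub — does not give $\vep{\bm e}\log(1/p)$ because ${\bm w}$ can be as large as ${\bm e}_{1,2}/\Delta$, in which case $n^{{\bm w}} \approx (1/p)^{{\bm e}_{1,2}}$ eats essentially the whole probability $p^{{\bm e}_{1,2}}$ of those edges, and because without invoking Lemma~\ref{lem:bad-graph-bd} the per-hub neighbor set is of size $\Theta({\bm e}/D)$, giving a contribution $\exp(\Theta({\bm e}\log n))$, not $o({\bm e}\log(1/p))$.

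What the paper actually does is different in two essential ways, both of which are missing from your sketch. First, the entropy count is supplied by Lemma~\ref{lem:large-count}, which gives $\cN_0({\bm e},{\bm v},\gD) \le \binom{n}{{\bm v}}\exp(\vep{\bm e}\log(1/p))$ — and the $\binom{n}{{\bm v}}$ is \emph{not} absorbed into the $o(1)$ error; its proof in the large-$p$ regime relies on Lemma~\ref{lem:bad-graph-bd} to show that a low-degree vertex can only be adjacent to vertices in a polylogarithmically small set $W_{j_\star}$, which is the step your per-hub neighbor bound silently skips. Second, the paper then performs a case split on whether ${\bm e}_{2,2}\ge{\bm e}_{1,2}$ or not, using the double-sided constraint $\Delta{\bm w}\le{\bm e}_{1,2}\le D{\bm w}$. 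When ${\bm e}_{2,2}$ dominates, the cost $n^{{\bm w}}\le(1/p)^{\Delta{\bm w}}\le(1/p)^{{\bm e}_{1,2}}$ is paid back by the untouched probability $p^{{\bm e}_{2,2}}$, yielding $\P(\sA_{{\bm e},\gh})\le p^{(\f12-\vep){\bm e}}$. When ${\bm e}_{1,2}$ dominates, the bipartite structure plus ${\bm e}_{1,2}\ge\f12\bar C_\star n^2p^\Delta$ forces ${\bm w}$ to be large enough that $n/{\bm w}\le(ep^{\Delta/2})^{-1}$, so the Stirling bound $\binom{n}{{\bm w}}\le(en/{\bm w})^{{\bm w}}\le(1/p)^{(\Delta/2){\bm w}}$ picks up a crucial factor $1/2$ in the exponent over the naive $n^{{\bm w}}$, yielding $\P(\sA_{{\bm e},\gh})\le p^{(\f14-\vep){\bm e}}$. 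This factor-of-two savings when ${\bm w}$ is large, not a global $o({\bm e}\log(1/p))$ entropy bound, is the mechanism that makes the lemma work.
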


Lemma \ref{lem:core-large-edge} follows from an easy combinatorial argument bounding the number of potential graphs participating in the event $\cup_{{\bm e}\ge {\bm e}_\star}\sA_{{\bm e}}$. Its proof is postponed to Section \ref{s:bipartite}. To complete the proof of  Proposition \ref{p:upper-notstrong} we need the following lower bound on the product of the degrees of the end points of edges in  core graphs which will show that for such graphs $\Graph_\cW$ is indeed bipartite.

\begin{lem}\label{lem:bad-graph-bd}
Fix ${\sf H}$ a $\Delta$-regular graph. Let $\Graph$ be a core graph. If $n^2p^\Delta \gg (\log n)^{v_{\sf H}}$ then for every edge $e = (u,v) \in E(\Graph)$
\beq\label{eq:prod-lbd}
\deg_\Graph(u) \cdot \deg_\Graph(v) \ge \f{\wt c_0(\vep) \cdot e(\Graph)}{(\log n)^{v_{\sf H}}},
\eeq
for some constant $\wt c_0(\vep) >0$. 
\end{lem}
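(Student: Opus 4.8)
The plan is to exploit property (C3) of the core graph together with the simple bound \eqref{eq:copy-H-star-bd} on the number of labelled copies of subgraphs of ${\sf H}$. Fix an edge $e=(u,v)\in E(\Graph)$ and let ${\sf H}$ be such that $e$ is covered by some copy $\varphi\colon V({\sf H})\hookrightarrow V(\Graph)$ of ${\sf H}$; by (C3) there are at least $\delta\vep n^{v_{\sf H}}p^{e({\sf H})}/(\bar C n^2 p^\Delta\log(1/p))$ such copies. The point is that every copy of ${\sf H}$ through $e$ maps the two endpoints of a distinguished edge $(x,y)\in E({\sf H})$ onto $u$ and $v$; removing the endpoints $x,y$ from ${\sf H}$ leaves a graph ${\sf H}'$ on $v_{\sf H}-2$ vertices, and the rest of the copy (the image of $V({\sf H})\setminus\{x,y\}$) is determined by choosing, for each neighbour of $x$ or $y$ in ${\sf H}$, a neighbour of $u$ or $v$ in $\Graph$, and then placing the remaining $v_{\sf H}-2-(\text{number of such neighbours})$ vertices anywhere. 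So the first step is to bound $N({\sf H},\Graph,e)$ from above by roughly $\deg_\Graph(u)\cdot\deg_\Graph(v)$ times a factor counting the number of ways to complete the rest of the embedding — where that completion factor is controlled using \eqref{eq:copy-H-star-bd} applied to the subgraph ${\sf H}'$ (or to the forest/graph obtained after peeling $x,y$), giving a bound of the form $(2e(\Graph))^{\alpha^\star_{{\sf H}'}}$ times powers of $p$ accounting for the forced edges.

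The second step is to make the exponent bookkeeping work out. One has $N({\sf H},\Graph,e)\le \deg_\Graph(u)\deg_\Graph(v)\cdot (\text{const})\cdot N({\sf H}',\Graph)$ up to constants depending only on ${\sf H}$ (here ${\sf H}'={\sf H}\setminus\{x,y\}$, and more care is needed if $x,y$ have common neighbours, but this only improves the bound). Now $N({\sf H}',\Graph)\le (2e(\Graph))^{\alpha^\star_{{\sf H}'}}$ by \eqref{eq:copy-H-star-bd}, and since ${\sf H}'$ is a proper subgraph of the connected $\Delta$-regular ${\sf H}$, from \eqref{eq:frac-ind-bd} we get $\alpha^\star_{{\sf H}'}\le v_{{\sf H}'}-e({\sf H}')/\Delta = (v_{\sf H}-2) - e({\sf H}')/\Delta$. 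One computes $e({\sf H}') = e({\sf H}) - 2\Delta + (\text{number of }u\text{-}v\text{ common neighbours in the copy})\ge e({\sf H})-2\Delta$, so $\alpha^\star_{{\sf H}'} \le v_{\sf H}-2 - e({\sf H})/\Delta + 2 = v_{\sf H}/2$ (using $e({\sf H})=\Delta v_{\sf H}/2$). Combining:
\[
\frac{\delta\vep\, n^{v_{\sf H}}p^{e({\sf H})}}{\bar C n^2 p^\Delta\log(1/p)} \le N({\sf H},\Graph,e) \le C_{\sf H}\,\deg_\Graph(u)\deg_\Graph(v)\cdot (2e(\Graph))^{v_{\sf H}/2}\cdot p^{e({\sf H})-2\Delta},
\]
which, after substituting the crude bound $e(\Graph)\le \bar C n^2 p^\Delta\log(1/p)$ in the right side's $(2e(\Graph))^{v_{\sf H}/2}$ term and solving for the product, yields $\deg_\Graph(u)\deg_\Graph(v)\ge \wt c_0(\vep)\, e(\Graph)/(\log n)^{v_{\sf H}}$ provided $n^2p^\Delta$ is sufficiently large, which is exactly where the hypothesis $n^2p^\Delta\gg(\log n)^{v_{\sf H}}$ enters (it guarantees the $n$- and $p$-powers balance out with only a polylogarithmic loss). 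The precise exponent of $\log n$ should be tracked to confirm it is at most $v_{\sf H}$; a slightly wasteful accounting still lands inside the stated bound since we only need the displayed inequality, not the optimal constant.

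\textbf{Main obstacle.} The delicate point is the combinatorial step of bounding $N({\sf H},\Graph,e)$ by $\deg_\Graph(u)\deg_\Graph(v)$ times a copy-count of a strictly smaller subgraph, and in particular identifying the right subgraph ${\sf H}'$ to which \eqref{eq:copy-H-star-bd}--\eqref{eq:frac-ind-bd} should be applied so that the fractional independence exponent comes out to be $v_{\sf H}/2$ (rather than something larger, which would make the final bound too weak). One must handle carefully the cases where the distinguished edge $(x,y)$ lies in triangles of ${\sf H}$ (so that $u,v$ share forced common neighbours in $\Graph$), since then the naive product $\deg_\Graph(u)\deg_\Graph(v)$ overcounts — but this overcounting is in our favour, so it suffices to be generous. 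A second, more routine, subtlety is that if ${\sf H}'$ has isolated vertices one must split off those vertices (each contributing a factor $\le n$ and a corresponding $p^0$) before invoking \eqref{eq:copy-H-star-bd}, which only applies to subgraphs without isolated vertices; tracking these factors is what pins down the exact power of $\log n$, and the hypothesis $n^2p^\Delta\gg(\log n)^{v_{\sf H}}$ is exactly the slack needed to absorb them.
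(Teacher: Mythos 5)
Your overall strategy — lower-bound $N({\sf H},\Graph,e)$ via property (C3) and upper-bound it by an expression involving $\deg_\Graph(u)\cdot\deg_\Graph(v)$, then solve for the degree product — is indeed the same strategy the paper uses. But the specific upper bound you propose does not close the argument, and the gap is substantive, not bookkeeping.

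The crux is that your claimed inequality $N({\sf H},\Graph,e)\le C_{\sf H}\,\deg_\Graph(u)\deg_\Graph(v)\cdot N({\sf H}',\Graph)$ (where ${\sf H}'={\sf H}\setminus\{x,y\}$) does not actually use the degree product in an essential way. Once the images of $x$ and $y$ are fixed at $u$ and $v$, restricting any embedding of ${\sf H}$ to $V({\sf H})\setminus\{x,y\}$ already gives a copy of ${\sf H}'$, so one has the \emph{stronger} bound $N({\sf H},\Graph,e)\le 2e({\sf H})\cdot N({\sf H}',\Graph)$ with no degree factor at all. Inserting an extra $\deg_\Graph(u)\deg_\Graph(v)$ on the right only weakens this true statement; it cannot be inverted to give a nontrivial lower bound on the degree product. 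To make the degree product appear essentially, one must account for the constraints that the (already-placed) vertices $u,v$ impose on their neighbours in the copy, and this is exactly what \cite[Lemma 5.13]{hms} does. That lemma gives
\[
N({\sf H},\Graph,e)\le 4e({\sf H})\cdot(2e(\Graph))^{\frac{v_{\sf H}}{2}-\frac{2\Delta-1}{\Delta}}\cdot\bigl(4\deg_\Graph(u)\deg_\Graph(v)\bigr)^{\frac{\Delta-1}{\Delta}},
\]
with two features your bound misses: the exponent on $e(\Graph)$ is $\frac{v_{\sf H}}{2}-\frac{2\Delta-1}{\Delta}$, roughly two less than your $\frac{v_{\sf H}}{2}$ (or $\frac{v_{\sf H}}{2}-\frac1\Delta$ if one tracks the $(x,y)$ edge carefully), and the degree product carries the fractional exponent $\frac{\Delta-1}{\Delta}<1$. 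Both refinements are needed; with your exponents the arithmetic simply does not reach the stated conclusion. Concretely, combining (C3) with your upper bound and $e(\Graph)\le\bar C n^2p^\Delta\log(1/p)$ produces a lower bound on $\deg_\Graph(u)\deg_\Graph(v)$ of order at most $(\log(1/p))^{-v_{\sf H}/2}$, while the lemma asks for a quantity of order $n^2p^\Delta/(\log n)^{v_{\sf H}}\gg 1$.

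There is also a spurious factor $p^{e({\sf H})-2\Delta}$ in your final displayed inequality: $N({\sf H},\Graph,e)$ is a deterministic embedding count, so no power of $p$ belongs on the right-hand side. (And even granting it, the arithmetic still does not close, as a direct check for ${\sf H}=C_4$ shows.) The paper avoids all of this by citing \cite[Lemma 5.13]{hms} as a black box and then mimicking the exponent bookkeeping of \eqref{eq:prod-lbd-pre}; the refined exponents in that lemma are precisely what makes the powers of $n^2p^\Delta$ cancel and leave only a polylogarithmic loss.
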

Bounds same as above have been derived in \cite{hms} (see Claim 7.5 there). We include a short outline of the proof of Lemma \ref{lem:bad-graph-bd} in Appendix \ref{app:app1} for the reader's convenience. The lower bound on $p$ in Lemma \ref{lem:bad-graph-bd} is added because otherwise the lower bound \eqref{eq:prod-lbd} would become useless for the graphs for which we will apply this result.

\begin{proof}[Proof of Proposition \ref{p:upper-notstrong} for $n^2p^\Delta \ge (\log n)^{2v_{\sf H}}$ (assuming Lemma \ref{lem:core-large-edge})]
Let $w \in \cW \subset V(\Graph)$ and $(w, w') \in E(\Graph)$, for some $w' \in V(\Graph)$, where $\Graph$ is a core graph. As $n^2p^\Delta \ge (\log n)^{2v_{\sf H}}$, Lemma \ref{lem:bad-graph-bd} implies that 
\beq\label{eq:deg-lbd}
\deg_\Graph(w') \ge \f1D \cdot \deg_\Graph(w) \cdot \deg_\Graph(w') \ge \f1D \cdot \f{\wt c_0(\vep) \cdot e(\Graph) }{(\log n)^{v_{\sf H}}} \ge \f{1}{2D} (\delta(1-3\vep))^{\frac{2}{v_{\sf H}}} \cdot \f{\wt c_0(\vep) \cdot n^2 p^{\Delta} }{(\log n)^{v_{\sf H}}} \ge 2D,
\eeq
for all large $n$, where the penultimate step follows from the fact that $\Graph$ being a core graph must possess at least $\delta(1-3\vep) n^{v_{\sf H}} p^{e({\sf H})}$ copies of ${\sf H}$ (or equivalently $\delta(1-3\vep) (n^2 p^\Delta)^{v_{\sf H}/2}$ copies of ${\sf H}$) and hence the lower bound on $e(\Graph)$ follows from that fact that $N({\sf H}, \Graph) \le (2e(\Graph))^{v_{\sf H}/2}$ (use \eqref{eq:copy-H-star-bd} and \cite[Lemma 5.3]{hms} to deduce the fact that the fractional independence number of the regular graph ${\sf H}$ is equal to  $v_{\sf H}/2$). 

The lower bound \eqref{eq:deg-lbd} in particular implies that the subgraph $\Graph_\cW$ is bipartite. 
Therefore
\[
\P( \exists \Graph \subset \G(n,p): \Graph \text{ is a core graph with } e(\Graph) \ge \bar C_\star n^2 p^{\Delta} ) \le \P(\cup_{{\bm e}\ge {\bm e}_\star}\sA_{{\bm e}}).
\]
The proof is now completed upon using Lemma \ref{lem:core-large-edge} and setting $\bar{C}_{\star}\geq 32\delta^{2/v_{\sf H}}$.
\end{proof}


\subsection{Proposition \ref{p:upper-notstrong} in small $p$ regime}
We begin by introducing the following set of notation. First we split the set of all core graphs with at least $\bar C_\star n^2 p^{\Delta}$ edges into two subsets: 
\[
{\rm Core}_1:= \left\{ \exists \Graph\subset \G(n,p): \Graph \text{ is a core graph with } \bar{C}_{\star} n^2 p^{\Delta} \leq e(\Graph) \leq (\log \log n)^{-2}\bar{C}n^2p^{\Delta} \log (1/p)\right\}
\]
and
\begin{multline*}
{\rm Core}_2:= \big\{ \exists \Graph\subset \G(n,p): \Graph \text{ is a core graph with }\\
 (\log \log n)^{-2}\bar{C}n^2p^{\Delta} \log (1/p) \leq e(\Graph) \leq \bar{C}n^2p^{\Delta} \log (1/p)\big\}.
\end{multline*}
As discussed in Section \ref{sec:outline} obtaining the desired probability bound on ${\rm Core}_2$ requires a chaining type argument. Whereas, to find a bound on the probability ${\rm Core}_1$ we employ a different method. 
So, first let us proceed to show that the set of core graphs participating in the event ${\rm Core}_1$ are entropically stable. This demands a further subdivision of ${\rm Core}_1$. We let 
{\begin{multline*}
{\rm Core}_{1,1}:= \big\{ \exists \Graph\subset \G(n,p): \Graph \text{ is a core graph with } \\
\bar C_\star n^2 p^{\Delta} \le {\bm e}_{1,2}(\Graph)+ {\bm e}_{2,2}(\Graph) \le e(\Graph) \le (\log \log n)^{-2}\bar{C}n^2p^{\Delta} \log (1/p) \big\} ,
\end{multline*}}
where
\beq\label{eq:e-121}
{\bm e}_{1,2}(\Graph):= |E_{1,2}(\Graph)|, \qquad {\bm e}_{2,2}(\Graph):= |E_{2,2}(\Graph)|,
\eeq
\beq\label{eq:e-122}
E_{1,2}(\Graph):=\{e=(u_1,u_2) \in E(\Graph): \text{ one of } u_1 \text{ and } u_2 \text{ is in } \cW(\Graph)\},
\eeq
\beq\label{eq:e-123}
E_{2,2}(\Graph):=\{e=(u_1,u_2) \in E(\Graph): u_1, u_2 \notin \cW(\Graph)\},
\eeq
and we recall the definition of $\cW(\Graph)$ from \eqref{eq:cW}. 
Setting
\beq\label{eq:E11}
E_{1,1}:= E_{1,1}(\Graph):=\{e =(u_1, u_2) \in E(\Graph): u_1, u_2 \in \cW(\Graph)\},
\eeq
we define $\cN_{1,1}({\sf H}, \Graph)$ be the number of labelled copies of ${\sf H}$ in $\Graph$ that use at least one edge from $E_{1,1}(\Graph)$. In words $\cN_{1,1}({\sf H}, \Graph)$ is the number of copies of ${\sf H}$ in $\Graph$ that uses at least an edge with both end points of low degree. Now define 
{\begin{multline*}
{\rm Core}_{1,2}:= \bigg\{ \exists \Graph\subset \G(n,p): \Graph \text{ is a core graph with } \cN_{1,1}({\sf H}, \Graph) \ge \vep \delta n^{v_{\sf H}} p^{e({\sf H})} \\
\text{ and } \bar{C}_{\star} n^2 p^{\Delta} \leq e(\Graph) \leq (\log \log n)^{-2}\bar{C}n^2p^{\Delta} \log (1/p) \bigg\}.
\end{multline*}
}
The next two results yield upper bounds on the probabilities of ${\rm Core}_{1,1}$ and ${\rm Core}_{1,2}$. For a later use during the chaining procedure we will in fact bound probabilities of events that are somewhat larger than ${\rm Core}_{1,1}$ and ${\rm Core}_{1,2}$. Let us define these events. We set $d_{\min}(\Graph):= \min_{v \in V(\Graph)} \deg_\Graph(v)$. Then denote
\begin{multline}\label{eq:wtcore11}
\wt{\rm Core}_{1,1}:= \big\{\exists \Graph \subset \G(n,p): d_{\min}(\Graph) \ge \Delta \text{ and } \\
\bar C_\star n^2 p^{\Delta} \le {\bm e}_{1,2}(\Graph)+ {\bm e}_{2,2}(\Graph) \le e(\Graph) \le \bar C n^2 p^{\Delta} \log(1/p)/(\log \log n)^2\big\}
\end{multline}
and
\begin{multline}\label{eq:wtcore12}
\wt{\rm Core}_{1,2}:= \big\{\exists \Graph \subset \G(n,p): d_{\min}(\Graph)\ge \Delta, \, \cN_{1,1}({\sf H} ,\Graph) \ge \vep \delta n^{v_{\sf H}} p^{e({\sf H})}, \text{ and } \\
   e(\Graph) \le \bar C n^2 p^{\Delta} \log(1/p)/(\log \log n)^2\big\}.
\end{multline}
Note the differences between $\wt{\rm Core}_{1,i}$ and ${\rm Core}_{1,i}$, for $i=1,2$. The former does not require the graph $\Graph$ to be a core graph. It requires a mild condition that the minimum degree should be at least $\Delta$. This mild requirement will suffice to obtain the desired probability bounds. We now state the results. 

\begin{lem}
\label{l:core11}
Fix $\delta, \vep > 0$, and a $\Delta$-regular connected graph ${\sf H}$ with $\Delta \ge 2$. Let $p \in (0,1)$ be such that $1 \ll np^{\Delta/2} \le (\log n)^{v_{\sf H}}$. Then 
\[
\limsup_{n\to \infty} \dfrac{\log \P(\wt{\mathrm{Core}_{1,1}})}{n^2p^{\Delta}\log (1/p)} \leq -{16}\delta^{2/v_{\sf H}}.
\]
\end{lem}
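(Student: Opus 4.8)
\emph{Plan.} I would prove this by a first moment bound, reducing the statement to a graph‑counting estimate. Since the edges of $\G(n,p)$ are independent, $\P(\wt{\mathrm{Core}}_{1,1}) \le \sum_{{\bm e}} \gN_{\bm e}\,p^{\bm e}$, where $\gN_{\bm e}$ counts the labelled graphs $\Graph\subset K_n$ with $e(\Graph)={\bm e}$, $d_{\min}(\Graph)\ge\Delta$ and ${\bm e}_{1,2}(\Graph)+{\bm e}_{2,2}(\Graph)\ge\bar C_\star n^2p^\Delta$, and ${\bm e}$ runs over the integers in $[\bar C_\star n^2p^\Delta,\ \bar Cn^2p^\Delta\log(1/p)/(\log\log n)^2]$. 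In the present regime $np^{\Delta/2}\le(\log n)^{v_{\sf H}}$ forces $n^2p^\Delta\le(\log n)^{2v_{\sf H}}$, so this interval contains only $(\log n)^{O(1)}$ integers, every graph counted by $\gN_{\bm e}$ is supported on at most $2{\bm e}/\Delta=(\log n)^{O(1)}$ vertices, and (as $\log(1/p)\ge\tfrac1\Delta\log n$) the speed $n^2p^\Delta\log(1/p)$ dominates $\log\log n$. Hence it suffices to prove $\gN_{\bm e}\,p^{\bm e}\le\exp(-16\delta^{2/v_{\sf H}}n^2p^\Delta\log(1/p)(1+o(1)))$ for each admissible ${\bm e}$, the polylogarithmic loss from summing over ${\bm e}$ being negligible.

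\emph{Counting valid graphs.} Fix an admissible ${\bm e}$ and abbreviate ${\bm e}_{i,j}={\bm e}_{i,j}(\Graph)$ and ${\bm f}={\bm e}_{1,2}+{\bm e}_{2,2}$. I would specify a valid graph $\Graph$ by first choosing its set $V'$ of non‑isolated vertices and then choosing which of the $\binom{|V'|}{2}$ pairs are edges; the second choice costs only $\exp(O({\bm e}\log|V'|))=\exp(O({\bm e}\log\log n))$ because $|V'|$ is polylogarithmic. The size of $V'$ is controlled by two elementary degree counts: the vertices of $\cW^c(\Graph)$ have degrees summing to ${\bm e}_{1,2}+2{\bm e}_{2,2}\le2{\bm f}$ and each exceeds $D$, so $|\cW^c(\Graph)|<2{\bm f}/D$; the remaining vertices of $V'$ have degree at least $\Delta$ and degrees summing to $2{\bm e}_{1,1}+{\bm e}_{1,2}$, so there are at most $(2{\bm e}_{1,1}+{\bm e}_{1,2})/\Delta$ of them. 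This gives
\[
\log\gN_{\bm e}\ \le\ \Big(\tfrac{2{\bm e}_{1,1}+{\bm e}_{1,2}}{\Delta}+\tfrac{2{\bm f}}{D}\Big)\log n\ +\ O({\bm e}\log\log n).
\]

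\emph{Balancing entropy against probability.} Here I would invoke the remaining content of the hypothesis: $np^{\Delta/2}\le(\log n)^{v_{\sf H}}$ rearranges to $\log n\le\tfrac{\Delta}{2}\log(1/p)+v_{\sf H}\log\log n$. Substituting this into the display, using $\Delta/D\le\vep/16$ (the defining inequality for $D$) and $e(\Graph)\log(1/p)=({\bm e}_{1,1}+{\bm f})\log(1/p)$, the contributions of the $E_{1,1}$‑edges cancel up to $O({\bm e}\log\log n)$, and one is left with $\log(\gN_{\bm e}\,p^{\bm e})\le-\big(\tfrac12-\tfrac{\vep}{16}\big){\bm f}\log(1/p)+O({\bm e}\log\log n)$ (here one uses ${\bm e}_{1,2}\le{\bm f}$). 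Since ${\bm f}\ge\bar C_\star n^2p^\Delta$ with $\bar C_\star$ a sufficiently large constant multiple of $\delta^{2/v_{\sf H}}$, and since ${\bm e}\le\bar Cn^2p^\Delta\log(1/p)/(\log\log n)^2$ makes the error term $o(n^2p^\Delta\log(1/p))$, the right‑hand side is at most $-16\delta^{2/v_{\sf H}}n^2p^\Delta\log(1/p)(1+o(1))$, and summing over the $(\log n)^{O(1)}$ admissible values of ${\bm e}$ completes the proof.

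\emph{Where the difficulty lies.} The delicate point is that the crude form of the entropy estimate — paying $\log n$ to locate each of the $\approx 2{\bm e}/\Delta$ vertices — is already comparable to $e(\Graph)\log(1/p)$ and yields nothing by itself. The argument only has room because the high‑degree vertices are genuinely few (so they soak up most of the ${\bm f}$ edges of $E_{1,2}\cup E_{2,2}$ without enlarging $V'$), and because in this regime $\log(1/p)$ exceeds $\tfrac{2}{\Delta}\log n$ up to a $\log\log n$ error; getting these two savings to combine into a net $-\tfrac12\log(1/p)$ per edge of $E_{1,2}\cup E_{2,2}$, while keeping every leftover term below the speed — which is precisely what the $(\log\log n)^{-2}$ cap on $e(\Graph)$ secures — is the heart of the matter. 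It is also the reason $\wt{\mathrm{Core}}_{1,1}$ is phrased via ${\bm e}_{1,2}+{\bm e}_{2,2}$ rather than $e(\Graph)$, since the saving is only available on edges touching a high‑degree vertex.
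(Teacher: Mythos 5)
Your proposal is correct and follows essentially the same route as the paper: the paper factors the counting into Lemma~\ref{lem:graph-spec-prop-bd} (bounding vertex choices by $n^{\wt{\bm w}+\wt{\bm w}_1}$ with the polylog constraint $\wt{\bm w}+\wt{\bm w}_1 = O((\log n)^{2v_{\sf H}+2})$ absorbing the edge-placement cost) and then uses $\Delta|\cW(\Graph)| \le 2{\bm e}_{1,1}+{\bm e}_{1,2}$ exactly as you do, whereas you inline the same counting. Your identification of the two degree counts, the cancellation of the $E_{1,1}$ contribution against $p^{{\bm e}_{1,1}}$, the role of $D=\lceil 16\Delta/\vep\rceil$, and the use of $np^{\Delta/2}\le(\log n)^{v_{\sf H}}$ to trade $\log n$ for $\tfrac{\Delta}{2}\log(1/p)$ up to a $\log\log n$ error all match the paper's argument.
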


\begin{lem}
\label{l:core12}
Let $\delta, \vep$, and ${\sf H}$ be as in Lemma \ref{l:core11}. If $p \in (0,1)$ such that $ np^{\Delta/2} \gg (\log n)^{1/(v_{\sf H}-2)}$ then we have 
\[
\limsup_{n\to \infty} \dfrac{\log \P(\wt{\mathrm{Core}_{1,2}})}{n^2p^{\Delta}\log (1/p)}\, {=-\infty}.
\]
\end{lem}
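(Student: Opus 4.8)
The plan is to bound $\P(\wt{\mathrm{Core}_{1,2}})$ by a union bound over a small family of witnessing substructures, with Proposition \ref{l:seqcounting} supplying the crucial combinatorics. Fix $\Graph\subset K_n$ with $d_{\min}(\Graph)\ge\Delta$, $\cN_{1,1}({\sf H},\Graph)\ge\vep\delta n^{v_{\sf H}}p^{e({\sf H})}$ and $e(\Graph)\le\bar Cn^2p^{\Delta}\log(1/p)/(\log\log n)^2$. Since $np^{\Delta/2}\gg(\log n)^{1/(v_{\sf H}-2)}$, the quantity $\wt\cN_{1,1}({\sf H},\Graph)$ — the number of copies of ${\sf H}$ in $\Graph$ using only edges both of whose endpoints have bounded degree — is negligible compared to $n^{v_{\sf H}}p^{e({\sf H})}$ (as flagged in the introduction, this is the one place this hypothesis is used here), so for $n$ large at least half of the $\cN_{1,1}({\sf H},\Graph)$ copies use, in addition to an $E_{1,1}(\Graph)$-edge, at least one edge of $E_{1,2}(\Graph)\cup E_{2,2}(\Graph)$. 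Moreover an $E_{1,1}(\Graph)$-edge $(u_1,u_2)$ has both endpoints in $\cW(\Graph)$, hence of degree at most $D$, so a copy of ${\sf H}$ through it is forced, near that edge, into one of only boundedly many (in terms of $D$ and ${\sf H}$) configurations: the neighbours of $u_1,u_2$ inside the copy lie among their $\le D$ neighbours in $\Graph$. Proposition \ref{l:seqcounting} packages these facts into a structural dichotomy: either $e(\Graph)$ already exceeds $\bar Cn^2p^{\Delta}\log(1/p)/(\log\log n)^2$ (so $\wt{\mathrm{Core}_{1,2}}$ is empty for large $n$), or $\Graph$ contains a \emph{sequence structure} — a collection of $\Psi=\Psi(n,p,{\sf H},\delta,\vep)$ anchored partial copies of ${\sf H}$, pairwise sharing only a bounded set of hub vertices, each contributing $\Theta(1)$ fresh vertices of $\cW(\Graph)$ together with $\Theta(1)$ edges — with $\Psi$ bounded below by $\cN_{1,1}({\sf H},\Graph)$ divided by the maximal number of copies of ${\sf H}$ through a single $E_{1,1}(\Graph)$-edge, the latter being at most a polynomial in $e(\Graph)$ of degree strictly smaller than $v_{\sf H}/2$. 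The genuine combinatorial work is Proposition \ref{l:seqcounting} itself (via the graph-decomposition result of Appendix \ref{sec:decompose}), which here I would simply invoke.

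Given the sequence structure, I would run the union bound directly. One enumerates the structures by first specifying the boundedly many hub vertices, in at most $n^{O(1)}$ ways, and then specifying the family of $\Psi$ anchors as an \emph{unordered} collection of tuples of bounded (by ${\sf H}$) length and distinct vertices, in at most $\binom{n^{O(1)}}{\Psi}$ ways, all implicit constants depending only on ${\sf H}$ and $\vep$; each such structure carries at least $\Psi$ edges and so occurs in $\G(n,p)$ with probability at most $p^{\Psi}$. Thus
\[
\P(\wt{\mathrm{Core}_{1,2}})\le n^{O(1)}\binom{n^{O(1)}}{\Psi}p^{\Psi},
\]
which by Stirling's approximation is at most $\exp(\Psi(O(\log n)-\log\Psi-\log(1/p)))$. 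Because each anchor is entropically cheap — it costs only boundedly many free choices among roughly $n$ vertices yet forces $\Theta(1)$ edges — and because a single cheap edge lies in far fewer than $e(\Graph)^{v_{\sf H}/2}$ copies of ${\sf H}$, the lower bound on $\Psi$ is large; substituting it, the $\log\Psi$ term more than compensates the $O(\log n)$ entropy term, and a direct computation shows $\Psi(\log\Psi+\log(1/p)-O(\log n))\gg C n^2p^{\Delta}\log(1/p)$ for every fixed $C$, precisely because $np^{\Delta/2}\gg(\log n)^{1/(v_{\sf H}-2)}$. This gives $\limsup_{n\to\infty}\log\P(\wt{\mathrm{Core}_{1,2}})/(n^2p^{\Delta}\log(1/p))=-\infty$. (For ${\sf H}$ with $v_{\sf H}$ small, e.g.\ $C_4$, the number of copies through a cheap edge is $O(1)$, which forces $n^{v_{\sf H}}p^{e({\sf H})}=O(e(\Graph))$; this fails once $np^{\Delta/2}\gg(\log n)^{1/(v_{\sf H}-2)}$, so $\wt{\mathrm{Core}_{1,2}}$ is then empty and the statement is vacuous.)

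The step I expect to be the crux — besides invoking Proposition \ref{l:seqcounting} — is ensuring that the bound is \emph{super}-exponential and not merely of the large-deviation order: one must use that the anchors sit at \emph{low-degree} edges (hence are both abundant and cheap to locate), rather than the weaker consequence ${\bm e}_{1,2}(\Graph)+{\bm e}_{2,2}(\Graph)\ge\Psi$, which — as the graph $\mathscr{K}_{2, C n^3 p^3}$ of the introduction shows — can be as small as a constant fraction of $e(\Graph)$ and by itself would yield no better than the exponential estimate of Lemma \ref{l:core11}. A second delicate point is the borderline sparsity where $\log(1/p)$ is comparable to $(\log\log n)^2$: there the surplus factor $(\log\log n)^{-2}$ in the definition of $\wt{\mathrm{Core}_{1,2}}$ is exactly what keeps the entropy term $\Psi\cdot O(\log n)$ dominated by $\Psi\log(1/p)+\Psi\log\Psi$, and the estimate must be organised so that this slack is never squandered.
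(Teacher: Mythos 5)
Your identification of Proposition \ref{l:seqcounting} as the engine, and of the role of the hypothesis $np^{\Delta/2}\gg(\log n)^{1/(v_{\sf H}-2)}$ in controlling $\wt{\cN}_{1,1}$, is on target, but the argument you build around it has a genuine gap, and the gap is precisely where you think the crux lies.

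You assert that Proposition \ref{l:seqcounting} ``packages'' a structural dichotomy producing a ``sequence structure'' of $\Psi$ anchored partial copies, and you propose to union-bound over such structures. This is not what the proposition says, nor what its proof does. The proposition is a purely numerical statement: under the hypotheses it concludes
\[
e_{1,2}(\Graph)+e_{2,2}(\Graph)\ \ge\ (n^2p^{\Delta})^{1+\eta},\qquad \eta=\tfrac{1}{4(v_{\sf H}-1)},
\]
and its proof never extracts an anchored family of subgraphs; it compares the upper bound on $\bar\cN_{1,1}$ from Lemma \ref{lem:e-bar-ub} (resp.\ \ref{lem:e-bar-ub-H}) against the lower bound on $\cN_{1,1}$ minus $\wt\cN_{1,1}$. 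The ``sequence structure'' is a fiction, and building a union bound over it would require its own extraction lemma, which you do not supply (and which invoking Proposition \ref{l:seqcounting} cannot supply).

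Worse, in your third paragraph you claim the numeric consequence $e_{1,2}(\Graph)+e_{2,2}(\Graph)\ge\Psi$ ``by itself would yield no better than the exponential estimate of Lemma \ref{l:core11},'' arguing that the anchors must be used directly. This is backwards: the actual proof \emph{does} feed the numeric lower bound into exactly the same counting machinery used for Lemma \ref{l:core11} (Lemma \ref{lem:graph-spec-prop-bd} together with $\Delta\wt{\bm w}\le 2{\bm e}_{1,1}+{\bm e}_{1,2}$). The point you are missing is that Proposition \ref{l:seqcounting} does not say $e_{1,2}+e_{2,2}\ge c\,e(\Graph)$; it gives the much stronger $(n^2p^\Delta)^{1+\eta}$, and since $n^2p^\Delta\to\infty$ (which is where the hypothesis $np^{\Delta/2}\gg(\log n)^{1/(v_{\sf H}-2)}$ is needed a second time), the ratio
\[
\frac{\log\P(\wt{\rm Core}_{1,2})}{n^2p^\Delta\log(1/p)}\ \lesssim\ -\,(n^2p^\Delta)^{\eta}\ \longrightarrow\ -\infty
\]
directly. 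Your $\mathscr{K}_{2,Cn^3p^3}$ objection applies only to the weak, trivial lower bound, not to the one the proposition actually provides.

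Finally, the arithmetic in your ``direct computation'' is wrong. In the regime of this lemma $np^{\Delta/2}\le(\log n)^{v_{\sf H}}$, hence $\Psi=O(n^2p^{\Delta}\log(1/p))=O((\log n)^{2v_{\sf H}+1})$ and $\log\Psi=O(\log\log n)\ll\log n$. So $\log\Psi$ does not ``more than compensate the $O(\log n)$ entropy term''; the divergence must come from the size of $e_{1,2}+e_{2,2}$ itself, not from $\log\Psi$. Relatedly, the bound $n^{O(1)}\binom{n^{O(1)}}{\Psi}p^{\Psi}$ with unspecified ``$O(1)$'' constants is not automatically summable against $p^{\Psi}$ when $\log(1/p)$ and $\log n$ are of the same order: one has to make the entropy coefficient strictly less than $\log(1/p)$ per edge, which the paper does by exploiting $d_{\min}\ge\Delta$ (so each low-degree vertex costs $\le(\Delta/2)\log(1/p)$ but supplies $\ge\Delta$ edges). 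Leaving all this to implicit constants hides exactly the place where the argument can fail.
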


The proofs of these two results being combinatorial in nature are moved to Sections \ref{sec:combin-graph-bd} and \ref{sec:largecN-11}, respectively. Combining Lemmas \ref{l:core11} and \ref{l:core12} we now have the following upper bound on the probability of ${\rm Core}_1$ establishing its entropic stability.

\begin{prop}
\label{p:core--1}
Fix $\delta, \vep >0$, and a $\Delta$-regular connected graph ${\sf H}$ with $\Delta \ge 2$. For $\vep$ sufficiently small and $(\log n)^{1/(v_{\sf H}-2)} \ll np^{\Delta/2} \leq (\log n)^{v_{\sf H}}$ we have 
\beq\label{eq:pcore--1}
\limsup_{n\to \infty} \dfrac{\log \P(\mathrm{Core}_1)}{n^2p^{\Delta}\log (1/p)} \leq -\frac{1}{2}\delta^{2/{v_{\sf H}}} (1-\gf_{\sf H}(\vep)),
\eeq
where $\gf_{\sf H}(\cdot)$ is as in Proposition \ref{prop:strong-core}. 
\end{prop}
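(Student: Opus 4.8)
The plan is to decompose the event $\mathrm{Core}_1$ into $\mathrm{Core}_{1,1}$, $\mathrm{Core}_{1,2}$, and a remaining "good" case, and to bound each piece using Lemmas \ref{l:core11}, \ref{l:core12}, and — for the good case — a reduction to the strong-core bound of Proposition \ref{prop:strong-core}. First I would observe that if $\Graph$ is a core graph participating in $\mathrm{Core}_1$ but \emph{not} in $\mathrm{Core}_{1,1}$ or $\mathrm{Core}_{1,2}$, then ${\bm e}_{1,2}(\Graph) + {\bm e}_{2,2}(\Graph) < \bar C_\star n^2 p^\Delta$ and $\cN_{1,1}({\sf H},\Graph) < \vep\delta n^{v_{\sf H}} p^{e({\sf H})}$. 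The idea (as sketched in Section \ref{sec:outline}) is then to delete from $\Graph$ all edges of $E_{1,1}(\Graph)$ and iteratively remove any resulting vertex of degree $< \Delta$, producing a subgraph $\Graph_0 \subset \Graph$ with $d_{\min}(\Graph_0) \ge \Delta$. Because ${\sf H}$ is $\Delta$-regular, every copy of ${\sf H}$ destroyed in this peeling must have used an edge of $E_{1,1}(\Graph)$ (the edge-deletion step) or a vertex that became isolated-ish during peeling — a standard counting shows the total loss is controlled by $\cN_{1,1}({\sf H},\Graph)$, which is small. Hence $N({\sf H},\Graph_0) \ge \delta(1-O(\vep)) n^{v_{\sf H}} p^{e({\sf H})}$, and $e(\Graph_0) \le {\bm e}_{1,2}(\Graph)+{\bm e}_{2,2}(\Graph) \le \bar C_\star n^2 p^\Delta$ since all remaining edges lie in $E_{1,2}(\Graph_0)\cup E_{2,2}(\Graph_0)$.

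Next I would promote $\Graph_0$ to a strong-core graph by the same peeling-of-low-participation-edges argument used in the proof of Theorem \ref{thm:cycle-main} (iteratively remove edges in fewer than $(\delta\vep/\bar C_\star)(np^{\Delta/2})^{v_{\sf H}-2}$ copies of ${\sf H}$), so that $\Graph_0$ — after possibly one more round of peeling — satisfies (SC1)–(SC3) of Definition \ref{dfn:strong-core}, with $\bar C_\star = C_\star \delta^{2/v_{\sf H}}$, $C_\star \ge 32$. One must check that the constants line up: the loss from the $E_{1,1}$-removal plus the loss from the low-participation peeling together keep $N({\sf H},\cdot)$ above $\delta(1-6\vep) n^{v_{\sf H}} p^{e({\sf H})}$; this is where the choices $\vep$ sufficiently small and $\cN_{1,1} < \vep\delta n^{v_{\sf H}} p^{e({\sf H})}$ are used. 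Consequently the good part of $\mathrm{Core}_1$ is contained in $\{\exists \Graph \subset \G(n,p): \Graph \text{ is a strong-core graph}\}$, whose probability is bounded by Proposition \ref{prop:strong-core}, giving the exponent $-\tfrac12 \delta^{2/v_{\sf H}}(1-\gf_{\sf H}(\vep))$.

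Finally I would assemble the union bound
\[
\P(\mathrm{Core}_1) \le \P(\wt{\mathrm{Core}}_{1,1}) + \P(\wt{\mathrm{Core}}_{1,2}) + \P(\exists \Graph \subset \G(n,p): \Graph \text{ is a strong-core graph}),
\]
valid since $\mathrm{Core}_{1,1} \subset \wt{\mathrm{Core}}_{1,1}$ and $\mathrm{Core}_{1,2}\subset \wt{\mathrm{Core}}_{1,2}$ and the good case is handled above; here I use that $np^{\Delta/2} \le (\log n)^{v_{\sf H}}$ (for Lemma \ref{l:core11}), $np^{\Delta/2} \gg (\log n)^{1/(v_{\sf H}-2)}$ (for Lemma \ref{l:core12}), and $1 \ll np^{\Delta/2} \le n^{1/2}$ (for Proposition \ref{prop:strong-core}), all of which hold in the stated range. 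Since $\tfrac12 \delta^{2/v_{\sf H}}(1-\gf_{\sf H}(\vep)) \le 16\delta^{2/v_{\sf H}}$ for small $\vep$ and $\P(\wt{\mathrm{Core}}_{1,2})$ decays super-exponentially, the dominant term on the right is the strong-core probability, and taking $\limsup$ of the logarithm divided by $n^2 p^\Delta \log(1/p)$ yields \eqref{eq:pcore--1}. The main obstacle I anticipate is the bookkeeping in the peeling step: verifying carefully that removing $E_{1,1}(\Graph)$ together with the subsequent minimum-degree and low-participation clean-ups destroys at most an $O(\vep)$-fraction of the copies of ${\sf H}$, so that all three defining properties of a strong-core graph survive with the prescribed constants. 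This requires the $\Delta$-regularity of ${\sf H}$ in an essential way (each destroyed copy is charged to a removed edge or vertex with bounded multiplicity $O_{v_{\sf H}}(1)$) and a triangle-inequality argument over the at most $e(\Graph) \le \bar C n^2 p^\Delta \log(1/p)$ peeling steps.
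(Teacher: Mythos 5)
Your proposal is correct and follows essentially the same route as the paper: decompose $\mathrm{Core}_1$ into $\mathrm{Core}_{1,1}$, $\mathrm{Core}_{1,2}$, and the remainder; on the remainder, remove $E_{1,1}(\Graph)$ and take the $\Delta$-core, then peel low-participation edges to obtain a strong-core graph; finish with a union bound using Lemmas~\ref{l:core11}, \ref{l:core12}, and Proposition~\ref{prop:strong-core}. One small sharpening of your bookkeeping: you do not need to ``charge'' destroyed copies to removed vertices with bounded multiplicity --- the $\Delta$-regularity of ${\sf H}$ gives something cleaner, namely that the $\Delta$-core step destroys \emph{no} copies of ${\sf H}$ at all (every vertex in a labelled copy of ${\sf H}$ has degree $\ge \Delta$ inside that copy, so by induction it survives the iterative removal), so the total loss from the $E_{1,1}$-deletion plus $\Delta$-core is exactly $\cN_{1,1}({\sf H},\Graph) \le \vep\delta n^{v_{\sf H}} p^{e({\sf H})}$, which is what the paper uses.
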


\begin{proof}[Proof (assuming Lemma \ref{l:core11} and \ref{l:core12})]
We begin by claiming that
\beq\label{eq:core-1-minus-1112}
{\rm Core}_1 \setminus ({\rm Core}_{1,1} \cup {\rm Core}_{1,2}) \subset \left\{\exists \Graph \subset \G(n,p): \Graph \text{ is a strong-core graph}\right\}.
\eeq
To see this we consider a core graph $\Graph'$ for which
\beq\label{eq:e-n11}
\bar e(\Graph'):= e_{1,2}(\Graph') + e_{2,2}(\Graph') \le \bar C_\star n^2 p^{\Delta} \qquad \text{ and } \qquad \cN_{1,1}({\sf H}, {\Graph'}) \le \vep \delta n^{v_{\sf H}} p^{e({\sf H})}.
\eeq
Let $\Graph_0$ be the {$\Delta$-core} of the subgraph $\Graph_0'$ obtained from $\Graph'$ by removing all edges in $E_{1,1}(\Graph')$. Since $\Graph_0$ is a {$\Delta$-core} of $\Graph_0'$ it is straightforward to note that 
\[
N({\sf H}, \Graph_0) = N({\sf H}, \Graph_0') \ge N({\sf H}, \Graph') - \delta \vep n^{v_{\sf H}} p^{e({\sf H})} \ge \delta(1-4\vep) n^{v_{\sf H}} p^{e({\sf H})},
\]
where the penultimate step is due to \eqref{eq:e-n11} and the last step is due to the fact that $\Graph'$ is a {core graph}. Since $e(\Graph_0) \le \bar e(\Graph') \le \bar C_\star n^2 p^{\Delta}$ we now run a peeling procedure as in the proof of Lemma \ref{l:seed-markov} to extract a further subgraph $\Graph$ which is a strong-core graph. As
\[
{\rm Core}_1 \setminus ({\rm Core}_{1,1} \cup {\rm Core}_{1,2}) \subset \left\{\exists \Graph \subset \G(n,p): \Graph \text{ is a core graph and obeys \eqref{eq:e-n11}}\right\},
\] 
we deduce \eqref{eq:core-1-minus-1112}. Therefore, upon noting that 
\[
{\rm Core}_{1,1} \subset \wt{\rm Core}_{1,1} \qquad \text{ and } \qquad {\rm Core}_{1,2} \subset \wt{\rm Core}_{1,2},
\]
using Lemmas \ref{l:core11} and \ref{l:core12}, \eqref{eq:core-1-minus-1112}, and Proposition \ref{prop:strong-core} we now arrive at \eqref{eq:pcore--1}. This completes the proof. 
\end{proof}

In the next section we derive the entropic stability of core graphs with at least $ \bar C n^2 p^{\Delta} \log (1/p) \cdot (\log \log n)^{-2}$ edges, i.e.~we find an appropriate bound on the probability of ${\rm Core}_2$.

\subsection{Core graphs with larger number of edges}
Finding a suitable bound on the probability of ${\rm Core}_2$ requires a chaining type argument. To run the chaining procedure effectively we need a few more notation. Recall that we need to consider core graphs $\Graph$ for which
\[
(\log \log n)^{-2}\bar{C}n^2p^{\Delta} \log (1/p) \leq e(\Graph) \leq \bar{C}n^2p^{\Delta} \log (1/p).
\] 
We divide this range into dyadic discrete intervals. Set $L_{n}:=\lfloor 2\log _2 (\log \log n) \rfloor +1 $. For $j=1,2, \ldots, {L_{n}}$, define
\[
\cJ_j:= \left\{\Graph \subset K_n: e(\Graph) \in \left( 2^{-j} \bar{C}n^{2}p^{\Delta}\log (1/p), 2^{-(j-1)} \bar{C}n^{2}p^{\Delta}\log (1/p)\right]\right\},
\]
and
\[
{\cJ_{L_n+1}}:= \left\{\Graph \subset K_n: e(\Graph) \leq  2^{-L_n} \bar{C}n^{2}p^{\Delta}\log (1/p)\right\}.
\]
It will be clear from below that during the chaining argument we may end up with graphs with no a priori lower bound on its edges. Therefore, in $\cJ_{{L_n+1}}$ we do not impose any lower bound on the number of edges. The partition $\{\cJ_j\}_{j=1}^{{L_n+1}}$ naturally yields a partition of ${\rm Core}_2$. For $j \in \llbracket {L_n+1} \rrbracket$ we define
 \[
 \mathrm{Core}_{2,j}:= \left\{\exists \Graph \subset \G(n,p): \Graph \text{ is a core graph and } \Graph \in \cJ_j\right\}. 
 \]
 Let us also define the following sequence of events: for $j \in \llbracket {L_n+1} \rrbracket$ we let 
  \[
\wt{ \mathrm{Core}}_{2,j}:= \left\{\exists \Graph \subset \G(n,p): N({\sf H}, \Graph) \ge (1 - 3 \vep - \gs_j \vep)\delta n^{v_{\sf H}} p^{e({\sf H})} \text{ and } \Graph \in \cJ_j\right\},
\]
where for brevity we write $\gs_j:= \sum_{i=1}^{j-1} 2^{-i}$. The difference in ${\rm Core}_{2,j}$ and $\wt{\rm Core}_{2,j}$ lies in the fact that the former event requires $N({\sf H}, \Graph) \ge (1-3\vep) \delta n^{v_{\sf H}} p^{e({\sf H})}$, whereas the latter requires a slightly weaker lower bound on $N({\sf H}, \Graph)$. Furthermore the latter one does not need to obey (C3) of Definition \ref{dfn:core-graph}. Therefore
\beq\label{eq:CwtC}
{\rm Core}_{2,j} \subset \wt{\rm Core}_{2,j} \qquad \text{ for } j \in \llbracket {L_n+1} \rrbracket.
\eeq
The rationale behind defining the events $\{\wt {\rm Core}_{2,j}\}$ is as follows: During the chaining argument we need to iteratively run the peeling procedure similar to the one in the proof Lemma \ref{l:seed-markov}. This results in losing a small fraction of the number of copies of ${\sf H}$ in the graph with which we start the next step in the chaining argument. Hence one not only requires to bound the probabilities of $\{{\rm Core}_{2,j}\}$ but also those of $\{\wt{\rm Core}_{2,j}\}$. 

Bounding probabilities of $\wt {\rm Core}_{2,j}$ requires a further subdivision of it. We define
\begin{multline}
{\rm Core}_{2,j,\alpha}:= \Big\{\exists \Graph \subset \G(n,p):  N({\sf H}, \Graph) \ge (1 - 3 \vep - \gs_j \vep)\delta n^{v_{\sf H}} p^{e({\sf H})}, \\
\cN_{1,1}({\sf H}, \Graph) \ge 2^{-j} \vep \delta n^{v_{\sf H}} p^{e({\sf H})}, \text{ and } \Graph \in \cJ_j \Big\}. \label{eq:core2ja}
\end{multline}
Thus ${\rm Core}_{2,j,\a}$ can be considered to be the subset of {$\wt{\rm Core}_{2,j}$} for which $\cN_{1,1}({\sf H}, \Graph)$ is large. Notice that this threshold for $\cN_{1,1}({\sf H}, \Graph)$ to be considered to be large depends on $j$. This will be crucial for our proof.  

Next for a graph $\Graph$ we denote $\varpi_\Delta(\Graph)$ to be the {$\Delta$-core} of the subgraph of $\Graph$ obtained by removing the edges in $E_{1,1}(\Graph)$ (recall \eqref{eq:E11}). We then denote 
{\corAB{\beq
{\rm Core}_{2,j, \b}:= \left\{\exists \Graph \subset \G(n,p): \varpi_\Delta(\Graph), \Graph \in \cJ_j, \text{ and } N({\sf H}, \Graph) \ge (1 - 3 \vep - \gs_j \vep)\delta n^{v_{\sf H}} p^{e({\sf H})}\right\}. \label{eq:core2jb}
\eeq}}
 The following two lemmas yield bound on the probabilities of ${\rm Core}_{2,j,\a}$ and ${\rm Core}_{2,j,\b}$, respectively. 
 \begin{lem}
\label{l:core2ja}
Fix $\delta, \vep >0$, and a $\Delta$-regular ${\sf H}$ connected graph with $\Delta \ge 2$. If $p\in (0,1)$ is such that $(\log n)^{1/(v_{\sf H}-2)} \ll np^{\Delta/2} \le (\log n)^{v_{\sf H}}$ then we have
\[
{\limsup_{n \to \infty} \f{\max_{j \in \llbracket L_n \rrbracket} \left\{\log \P(\mathrm{Core}_{2,j,\a})\right\}}{n^2 p^{\Delta}\log (1/p)} = -\infty.}
\]
\end{lem}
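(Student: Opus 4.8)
## Proof Plan for Lemma \ref{l:core2ja}

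The plan is to bound the probability of ${\rm Core}_{2,j,\a}$ by a union bound over a suitable net of graphs, where the key combinatorial input comes from Proposition \ref{l:seqcounting}. First I would note the structure we need to exploit: any $\Graph$ participating in ${\rm Core}_{2,j,\a}$ has $\cN_{1,1}({\sf H}, \Graph) \ge 2^{-j} \vep \delta n^{v_{\sf H}} p^{e({\sf H})}$, meaning a substantial number of copies of ${\sf H}$ use at least one edge of $E_{1,1}(\Graph)$, i.e.\ an edge both of whose endpoints are of low degree (degree at most $D = \lceil 16\Delta/\vep\rceil$). The point of Proposition \ref{l:seqcounting} (the combinatorial counting result proved in Section \ref{s:comb-reg}) is that a graph with many such ``doubly-low-degree'' copies of ${\sf H}$ must actually contain many edges with at least one endpoint of \emph{high} degree; quantitatively, the number of such high-degree-incident edges must be at least some multiple of $\cN_{1,1}({\sf H},\Graph) / (np^{\Delta/2})^{v_{\sf H}-2}$, which with $\cN_{1,1} \ge 2^{-j}\vep\delta n^{v_{\sf H}}p^{e({\sf H})}$ and $e({\sf H}) = \Delta v_{\sf H}/2$ gives a lower bound of order $2^{-j}\vep\delta \, n^2 p^{\Delta}$ such edges, up to a factor that may involve the adaptive threshold and some powers of $\log n$ or $\log\log n$.

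Next I would construct the net and run the union bound. The idea is to encode a graph $\Graph \in \cJ_j$ by first recording its high-degree vertex set (there can be at most $O(e(\Graph)/\text{(high-degree threshold)}) = O(np^{\Delta/2}\log(1/p))$ of them since the sum of degrees is $2e(\Graph) = O(n^2p^\Delta\log(1/p))$), and then recording the edges. Since $\Graph$ is required to be a core graph, its minimum degree is at least $\Delta$, and by the reasoning already used in the excerpt (e.g.\ Lemma \ref{lem:bad-graph-bd} and its consequences) the edges incident to low-degree vertices have a rigid bipartite-like structure that can be cheaply encoded. The edges incident to high-degree vertices are the expensive part, but by Proposition \ref{l:seqcounting} there are at least $c \cdot 2^{-j}\vep\delta n^2p^\Delta / \mathrm{polylog}(n)$ of them, so the number of distinct such edge-configurations is at most $\exp\big(O(\text{(number of such edges)} \cdot \log n)\big)$ while each contributes a factor $p$ in probability; crucially, the number of edges $e(\Graph) \le 2^{-(j-1)}\bar C n^2 p^\Delta \log(1/p)$ is large enough compared to the net size that the union bound $\gN_{\bm e} \cdot p^{\bm e}$ closes. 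Concretely, one wants $\P({\rm Core}_{2,j,\a}) \le \exp\big(-(1-o(1)) \, c \, 2^{-j}\vep\delta \, n^2 p^\Delta \log(1/p)\big)$ — or even just with a fixed lower bound on the number of high-degree-incident edges that is $\omega(1)$ times the large-deviation rate — and then divide by $n^2p^\Delta\log(1/p)$ and let $n\to\infty$ to get $-\infty$, uniformly over $j \in \llbracket L_n \rrbracket$; here the uniformity is fine because $2^{-j} \ge 2^{-L_n} \gtrsim (\log\log n)^{-2}$, so the gain $2^{-j}\vep\delta n^2p^\Delta\log(1/p)$ still dominates $n^2p^\Delta\log(1/p)$ by a factor $\to\infty$.

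The main obstacle will be making Proposition \ref{l:seqcounting} interface cleanly with the adaptive threshold $2^{-j}$ and the resulting loss of a factor of the form $(\log\log n)^2$ (or worse) in the lower bound on the number of high-degree-incident edges: one needs this lower bound to still be $\omega\big(n^2p^\Delta\log(1/p) / \log n\big)$ so that, after paying $\log n$ per edge to describe the net, the union bound beats the probability cost. This is exactly why the hypothesis restricts to $j \in \llbracket L_n \rrbracket$ (excluding $\cJ_{L_n+1}$) and why the chaining is only applied down to $e(\Graph) \ge \bar C n^2p^\Delta\log(1/p)/(\log\log n)^2$ — for smaller graphs the adaptive threshold degrades too much and Proposition \ref{p:core--1} is invoked instead. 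A secondary technical point is that Proposition \ref{l:seqcounting} presumably requires the condition $np^{\Delta/2} \le (\log n)^{v_{\sf H}}$ (so that low-degree vertices are genuinely ``low'') together with $np^{\Delta/2} \gg (\log n)^{1/(v_{\sf H}-2)}$ (so that the purely-low-degree copies $\wt\cN_{1,1}$ are negligible and one may pass from $\cN_{1,1}$ being large to the desired edge count); checking these hypotheses are met under the stated regime, and that the implied polylog losses are absorbable, is the bulk of the careful bookkeeping. Everything else — the vertex/edge encoding, Stirling estimates for the net cardinality, and the final limit — is routine given the machinery already set up in Sections \ref{sec:proof-main-thm} and \ref{s:comb-reg}.
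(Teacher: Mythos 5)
The high-level strategy you describe---invoke Proposition \ref{l:seqcounting} to extract a lower bound on $e_{1,2}(\Graph)+e_{2,2}(\Graph)$ and then close a union bound via a combinatorial encoding---is indeed the paper's strategy, and you correctly identify why the restriction $j\in\llbracket L_n\rrbracket$ matters for the hypothesis $c_n\ge\frac12(\log\log n)^{-2}$. But your quantitative reading of Proposition \ref{l:seqcounting} is off in a way that would defeat the plan as stated. You derive from it that the number of high-degree-incident edges is ``of order $2^{-j}\vep\delta\, n^2 p^{\Delta}$ \ldots up to a factor that may involve \ldots some powers of $\log n$'' via the heuristic $\cN_{1,1}/(np^{\Delta/2})^{v_{\sf H}-2}$. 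That heuristic only reproduces the trivial bound $e(\Graph)\ge \big(\text{const}\cdot N({\sf H},\Graph)\big)^{2/v_{\sf H}}\gtrsim n^2p^\Delta$, which every graph with many copies of ${\sf H}$ satisfies and which gives no usable gain. The actual content of Proposition \ref{l:seqcounting} is a strict improvement of the exponent: $e_{1,2}+e_{2,2}\gtrsim (n^2p^\Delta)^{1+c}$ for some $c=c({\sf H})>0$ (ultimately because, e.g., $\bar\cN_{1,1}(C_\ell,\Graph)\le D^{O(1)}(e_{1,2}+e_{2,2})^{\lfloor(\ell-1)/2\rfloor}$ with an exponent strictly below $\ell/2$; see Lemma \ref{lem:e-bar-ub} and its extension Lemma \ref{lem:e-bar-ub-H}). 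In the regime $(\log n)^{1/(v_{\sf H}-2)}\ll np^{\Delta/2}\le(\log n)^{v_{\sf H}}$ this polynomial bonus converts into a $(\log n)^\gamma$ factor, $\gamma>0$, and this is exactly what produces the conclusion $-\infty$; with only a constant-times-$n^2p^\Delta$ lower bound, dividing by $n^2p^\Delta\log(1/p)$ would yield a bounded quantity (and dividing by $2^{-j}$ only makes it smaller), not $-\infty$.

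A second, related issue is your net accounting ``pay $\log n$ per high-degree-incident edge, get a factor $p$ back.'' In this regime $\log n\sim\Delta\log(1/p)$, so such an edge would cost about $\Delta$ times the entropy that the probability factor buys, and the union bound would not close. The paper's Lemma \ref{lem:graph-spec-prop-bd} pays per \emph{vertex} instead---roughly $(\Delta/2)\log(1/p)$ per low-degree vertex (possible only because $np^{\Delta/2}\le(\log n)^{v_{\sf H}}$ keeps the whole graph polylogarithmic in size) plus a negligible $O(\log\log n)$ per edge. Combined with $\Delta|\cW|\le 2e_{1,1}+e_{1,2}$ (which needs $d_{\min}(\Graph)\ge\Delta$; note here that you assert ``$\Graph$ is required to be a core graph,'' but the event ${\rm Core}_{2,j,\a}$ does \emph{not} assume $\Graph$ is a core graph---the paper instead passes to the $\Delta$-core of $\Graph$, which preserves $N({\sf H},\Graph)$ and $\cN_{1,1}({\sf H},\Graph)$ while only decreasing $e(\Graph)$, so one may assume $d_{\min}\ge\Delta$ without loss of generality), this cancels the $e_{1,1}$ cost and half of the $e_{1,2}$ cost, leaving a net probability gain of order $\log(1/p)\,(e_{1,2}/2+e_{2,2})\gtrsim\log(1/p)\,n^2p^\Delta(\log n)^\gamma$, which is the whole proof.
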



\begin{lem}
\label{l:core2jb}
Let $\delta, \vep$, and ${\sf H}$ be as in Lemma \ref{l:core2ja}. If $1 \ll np^{\Delta/2} \le (\log n)^{v_{\sf H}}$  then we have
\[
{\limsup_{n \to \infty} \f{\max_{j \in \llbracket L_n \rrbracket} \left\{\log \P(\mathrm{Core}_{2,j,\b})\right\}}{n^2 p^{\Delta}\log (1/p)} =-\infty.}
\]
\end{lem}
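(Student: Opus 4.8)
The plan is to bound the number of graphs $\Graph$ participating in $\mathrm{Core}_{2,j,\b}$ for a fixed $j$ and then apply the union bound, exploiting the defining feature that $\varpi_\Delta(\Graph)$ and $\Graph$ lie in the \emph{same} dyadic block $\cJ_j$. Since $\varpi_\Delta(\Graph)$ is obtained from $\Graph$ by first deleting all of $E_{1,1}(\Graph)$ and then repeatedly deleting vertices of degree $<\Delta$, the hypothesis $e(\varpi_\Delta(\Graph)) > 2^{-j}\bar C n^2 p^\Delta \log(1/p) \ge \tfrac12 e(\Graph)$ (the lower endpoint of $\cJ_j$) forces $|E_{1,1}(\Graph)|$ together with the edges shed during the degree-peeling to be at most $\tfrac12 e(\Graph)$; in particular the bulk of the edges of $\Graph$ survive into a subgraph of minimum degree $\ge \Delta$ using at most $\bm e := e(\Graph)$ edges and, crucially, with $e(\varpi_\Delta(\Graph)) \sim e(\Graph)$ up to a factor of $2$. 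First I would record this elementary structural consequence, and also note $N({\sf H},\varpi_\Delta(\Graph)) = N({\sf H},\Graph_0')$ where $\Graph_0'$ is $\Graph$ with $E_{1,1}$ removed, so that $N({\sf H},\Graph)-N({\sf H},\varpi_\Delta(\Graph)) \le \cN_{1,1}({\sf H},\Graph)$.

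Next I would split on the size of $\cN_{1,1}({\sf H},\Graph)$ exactly at the threshold $2^{-j}\vep\delta n^{v_{\sf H}}p^{e({\sf H})}$ used in the definition of $\mathrm{Core}_{2,j,\a}$. If $\cN_{1,1}({\sf H},\Graph) \ge 2^{-j}\vep\delta n^{v_{\sf H}}p^{e({\sf H})}$ then $\Graph \in \mathrm{Core}_{2,j,\a}$ and we are done by Lemma \ref{l:core2ja}. Otherwise $\varpi_\Delta(\Graph)$ itself satisfies $N({\sf H},\varpi_\Delta(\Graph)) \ge (1-3\vep-\gs_j\vep - 2^{-j}\vep)\delta n^{v_{\sf H}}p^{e({\sf H})} = (1-3\vep-\gs_{j+1}\vep)\delta n^{v_{\sf H}}p^{e({\sf H})}$ using $\gs_{j+1}=\gs_j+2^{-j}$, it has minimum degree $\ge \Delta$, and $e(\varpi_\Delta(\Graph)) \le \bm e \le 2^{-(j-1)}\bar C n^2 p^\Delta\log(1/p)$. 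Now the point is that on this event either $\varpi_\Delta(\Graph)$ already has $\bm e_{1,2}(\varpi_\Delta(\Graph)) + \bm e_{2,2}(\varpi_\Delta(\Graph))$ small, in which case (after a further $\Delta$-core peeling and the edge-peeling as in the proof of Lemma \ref{l:seed-markov}) it contains a strong-core graph, and Proposition \ref{prop:strong-core} applies; or that quantity is large, which (since $E_{1,1}(\varpi_\Delta(\Graph)) = \emptyset$ by construction, so $\bm e_{1,2}+\bm e_{2,2} = e(\varpi_\Delta(\Graph))$) is exactly the situation covered by $\wt{\mathrm{Core}}_{1,1}$ when $e(\varpi_\Delta(\Graph))$ is below the $\mathrm{Core}_1$ cutoff, and is covered inductively by $\mathrm{Core}_{2,j'}$ with $j' \ge j+1$ when it is not. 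This is the chaining step: bounding $\P(\mathrm{Core}_{2,j,\b})$ in terms of $\P(\wt{\mathrm{Core}}_{2,j+1})$, $\P(\wt{\mathrm{Core}}_{1,1})$, and $\P(\exists\ \text{strong-core})$.

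Assembling these, I would prove by downward induction on $j$ (starting from $j = L_n+1$, where $\cJ_{L_n+1}$ has no lower edge bound so $e(\Graph)$ is already below the $\mathrm{Core}_1$ regime and Lemma \ref{l:core11}, Lemma \ref{l:core12}, Proposition \ref{prop:strong-core} finish it directly, much as in Proposition \ref{p:core--1}) that
\[
\limsup_{n\to\infty}\ \frac{\log\P(\wt{\mathrm{Core}}_{2,j})}{n^2 p^\Delta\log(1/p)} \le -\tfrac12\delta^{2/v_{\sf H}}(1-\gf_{\sf H}(\vep)),
\]
and in particular the statement of the lemma follows because $\mathrm{Core}_{2,j,\b}\subset\wt{\mathrm{Core}}_{2,j}$ minus the part already absorbed, and the only ingredients contributing a $-\infty$ rather than a finite bound are Lemmas \ref{l:core2ja} and \ref{l:core12} — but wait, the claimed bound here is $-\infty$, so I should be more careful: $\mathrm{Core}_{2,j,\b}$ with $\cN_{1,1}$ small reduces to the \emph{union} of a strong-core event, a $\wt{\mathrm{Core}}_{1,1}$ event, and finitely many $\wt{\mathrm{Core}}_{2,j'}$ events with $j' > j$, each of which has finite rate $\tfrac12\delta^{2/v_{\sf H}}(1-\gf_{\sf H}(\vep))$, \emph{not} $-\infty$. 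So the correct reading is that Lemma \ref{l:core2jb} as a standalone statement must itself be proved directly by a counting argument showing $\mathrm{Core}_{2,j,\b}$ is entropically \emph{suboptimal} (rate $-\infty$): the hypothesis $e(\varpi_\Delta(\Graph)) \sim e(\Graph)$ means almost no edges were lost to $E_{1,1}$ or to degree-peeling, and one shows combinatorially that a graph with $e(\Graph) \gg n^2 p^\Delta$ edges whose $\Delta$-core after removing $E_{1,1}$ retains at least half the edges and still has $\ge (1-O(\vep))\delta n^{v_{\sf H}}p^{e({\sf H})}$ copies of ${\sf H}$ must have an atypically large number of edges incident to high-degree vertices, pushing the union-bound cost to $\exp(-\omega(n^2 p^\Delta\log(1/p)))$; this is the analogue of the $e_{1,2}+e_{2,2}$-large branch in Figure \ref{fig:chaining} and is where Proposition \ref{l:seqcounting} and Lemma \ref{l:core2jb}'s own combinatorics (deferred to Section \ref{sec:combin-graph-bd}) do the work. \textbf{The main obstacle} is organizing the induction so that the accumulated loss of copies of ${\sf H}$, tracked by the $\gs_j$ slack, stays bounded by $O(\vep)\delta n^{v_{\sf H}}p^{e({\sf H})}$ across all $L_n = O(\log\log\log n)$ levels — which is exactly why the dyadic weights $2^{-i}$ are chosen so that $\sum_i 2^{-i} < 1$ — and ensuring the threshold for "$\cN_{1,1}$ large" shrinks with $j$ fast enough that Lemma \ref{l:core2ja} still yields $-\infty$ at every level while not shrinking so fast that the suboptimality margin in the $\cN_{1,1}$-small branch is destroyed.
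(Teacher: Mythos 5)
Your proposal starts with an argument that cannot work and then only gestures at the correct one. You first try to reduce $\mathrm{Core}_{2,j,\b}$ by chaining to strong-core, $\wt{\mathrm{Core}}_{1,1}$, and $\wt{\mathrm{Core}}_{2,j'}$ events at deeper levels; as you yourself observe mid-proposal, those carry only the finite rate $\frac{1}{2}\delta^{2/v_{\sf H}}(1-\gf_{\sf H}(\vep))$, so no chaining among them can give the $-\infty$ asserted here. In the paper's architecture the chaining across $j$-levels, the $\gs_j$ bookkeeping, and the worry about accumulated loss all live in Proposition \ref{p:core2}; Lemma \ref{l:core2jb} (like Lemma \ref{l:core2ja}) is one of the two \emph{self-contained} entropic-non-viability estimates that make that chaining terminate, and it is a pure counting lemma that in fact never uses the hypothesis $N({\sf H},\Graph)\ge(1-3\vep-\gs_j\vep)\delta n^{v_{\sf H}}p^{e({\sf H})}$.

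The paper's actual proof is a short union bound via Lemma \ref{lem:graph-spec-prop-bd}, not Proposition \ref{l:seqcounting}, which you cite but which serves the $\cN_{1,1}$-large branch (Lemmas \ref{l:core12} and \ref{l:core2ja}). Writing $\Graph_0:=\varpi_\Delta(\Graph)$, the two structural facts are (i) classifying vertices of $\Graph_0$ by their $\Graph$-degree, $\wt e_{1,1}(\Graph_0)=0$ because every edge of $\Graph_0$ survived deletion of $E_{1,1}(\Graph)$ and so has at least one $\Graph$-high-degree endpoint, and (ii) $d_{\min}(\Graph_0)\ge\Delta$, giving $\Delta\wt{\bm w}\le\wt{\bm e}_{1,2}$. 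Since $\Graph_0,\Graph\in\cJ_j$ one gets $e(\Graph_0)\ge\frac{1}{2}e(\Graph)$, which verifies \eqref{eq:barcWcP}; applying Lemma \ref{lem:graph-spec-prop-bd} and multiplying by $p^{\wt{\bm e}_{1,2}+\wt{\bm e}_{2,2}}$ yields an exponent of at least $(\frac{1}{2}-O(\vep))\,e(\Graph_0)\log(1/p)$. The $-\infty$ then drops out because $j\le L_n$ forces $e(\Graph_0)\ge\bar C n^2p^\Delta\log(1/p)/(\log\log n)^2$, while $np^{\Delta/2}\le(\log n)^{v_{\sf H}}$ gives $\log(1/p)/(\log\log n)^2\to\infty$. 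Your proposal never identifies fact (i), never invokes the right counting lemma, and never isolates the source of the divergence, so it does not amount to a proof.
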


Proofs of Lemmas \ref{l:core2ja} and \ref{l:core2jb} are deferred to Sections \ref{sec:largecN-11} and \ref{sec:combin-graph-bd}, respectively. Building on Lemmas \ref{l:core2ja}-\ref{l:core2jb} we now have the result yielding a desired bound on the probability of ${\rm Core}_2$. 
 

%




\begin{prop}
\label{p:core2}
Fix $\delta, \vep >0$, and a $\Delta$-regular ${\sf H}$ connected graph with $\Delta \ge 2$. For $\vep$ sufficiently small and $p \in (0,1)$ such that $(\log n)^{1/(v_{\sf H}-2)} \ll np^{\Delta/2} \le (\log n)^{v_{\sf H}}$ we have 
\beq\label{eq:prob-core20}
\limsup_{n\to \infty} \dfrac{\log \P(\mathrm{Core}_2)}{n^2p^{\Delta}\log (1/p)} \leq -\frac{1}{2}\delta^{2/{v_{\sf H}}}(1-\gf_{\sf H}(\vep)),
\eeq
where $\gf_{\sf H}(\cdot)$ is as in Proposition \ref{prop:strong-core}. 
\end{prop}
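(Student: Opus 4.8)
The plan is to prove \eqref{eq:prob-core20} by the Kolmogorov-style chaining argument sketched in the outline: starting from a core graph $\Graph$ counted in ${\rm Core}_2$, repeatedly strip the edges of $E_{1,1}$ and pass to the $\Delta$-core, moving $\Graph$ downward through the dyadic bands $\cJ_j$ until it either falls into one of the ``bad'' events ${\rm Core}_{2,j,\a}$, ${\rm Core}_{2,j,\b}$ (already ruled out by Lemmas \ref{l:core2ja}--\ref{l:core2jb}) or into an edge range covered by the strong-core estimate (Proposition \ref{prop:strong-core}) together with the ${\rm Core}_1$ estimate (Proposition \ref{p:core--1}). Throughout I use ${\rm Core}_2 \subset \bigcup_{j=1}^{L_n+1}{\rm Core}_{2,j} \subset \bigcup_{j=1}^{L_n+1}\wt{\rm Core}_{2,j}$ (the last inclusion is \eqref{eq:CwtC}) and that $L_n = O(\log\log\log n)$, so any fixed power of $L_n$ is sub-exponential in $n^2 p^\Delta\log(1/p)$, a quantity that diverges in the regime under consideration.

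The heart of the proof is the one-step chaining inclusion: for every $j\in\llbracket L_n\rrbracket$,
\[
\wt{\rm Core}_{2,j}\;\subset\;{\rm Core}_{2,j,\a}\,\cup\,{\rm Core}_{2,j,\b}\,\cup\,\bigcup_{j'=j+1}^{L_n+1}\wt{\rm Core}_{2,j'}.
\]
Indeed, let $\Graph\in\cJ_j$ satisfy $N({\sf H},\Graph)\ge(1-3\vep-\gs_j\vep)\delta n^{v_{\sf H}}p^{e({\sf H})}$. If $\cN_{1,1}({\sf H},\Graph)\ge 2^{-j}\vep\delta n^{v_{\sf H}}p^{e({\sf H})}$ then $\Graph$ witnesses ${\rm Core}_{2,j,\a}$ and we are done. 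Otherwise remove $E_{1,1}(\Graph)$ and pass to $\varpi_\Delta(\Graph)$; since ${\sf H}$ is $\Delta$-regular, the subgraph of $\Graph\setminus E_{1,1}(\Graph)$ formed by the union of all labelled copies of ${\sf H}$ has minimum degree $\ge\Delta$ and hence lies inside $\varpi_\Delta(\Graph)$, so
\[
N({\sf H},\varpi_\Delta(\Graph))=N({\sf H},\Graph\setminus E_{1,1}(\Graph))=N({\sf H},\Graph)-\cN_{1,1}({\sf H},\Graph)\ge(1-3\vep-\gs_{j+1}\vep)\delta n^{v_{\sf H}}p^{e({\sf H})},
\]
using $\gs_{j+1}=\gs_j+2^{-j}$. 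As $e(\varpi_\Delta(\Graph))\le e(\Graph)$ and $\Graph\in\cJ_j$, we get $\varpi_\Delta(\Graph)\in\cJ_{j'}$ for some $j'\ge j$: if $j'=j$ then $\Graph$ witnesses ${\rm Core}_{2,j,\b}$ (recall \eqref{eq:core2jb}), while if $j'>j$ then, since $\gs_{j'}\ge\gs_{j+1}$, the graph $\varpi_\Delta(\Graph)$ witnesses $\wt{\rm Core}_{2,j'}$. Iterating this inclusion from $j=1$ to $j=L_n$ collapses the chain to
\[
{\rm Core}_2\;\subset\;\bigcup_{j=1}^{L_n}\big({\rm Core}_{2,j,\a}\cup{\rm Core}_{2,j,\b}\big)\,\cup\,\wt{\rm Core}_{2,L_n+1}.
\]
The crucial point is the bookkeeping of the thresholds: because $\gs_j\le\sum_{i\ge1}2^{-i}=1$ uniformly in $j$, the total number of copies of ${\sf H}$ lost over the whole chain is at most $\vep\delta n^{v_{\sf H}}p^{e({\sf H})}$, so the residual graph still carries $\ge(1-4\vep)\delta n^{v_{\sf H}}p^{e({\sf H})}$ copies of ${\sf H}$; at the same time the ``large $\cN_{1,1}$'' threshold $2^{-j}\vep\delta$ never drops below $\asymp(\log\log n)^{-2}\vep\delta$, which is exactly the range in which Lemma \ref{l:core2ja} remains usable.

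It remains to control $\wt{\rm Core}_{2,L_n+1}$, whose witnesses $\Graph$ satisfy $e(\Graph)\le 2^{-L_n}\bar C n^2p^\Delta\log(1/p)\le(\log\log n)^{-2}\bar C n^2p^\Delta\log(1/p)$ and $N({\sf H},\Graph)\ge(1-4\vep)\delta n^{v_{\sf H}}p^{e({\sf H})}$. Running the edge-peeling procedure from the proof of Lemma \ref{l:seed-markov} yields a core subgraph $\Graph_0\subset\Graph$ (in the sense of Definition \ref{dfn:core-graph}, up to replacing $\vep$ by a fixed multiple) with $e(\Graph_0)\le(\log\log n)^{-2}\bar C n^2p^\Delta\log(1/p)$; splitting on whether $e(\Graph_0)\le\bar C_\star n^2p^\Delta$ and, in the first case, extracting a strong-core subgraph exactly as in the derivation of Theorem \ref{thm:cycle-main} from Propositions \ref{prop:strong-core}--\ref{p:upper-notstrong}, one obtains $\wt{\rm Core}_{2,L_n+1}\subset\{\exists\,\Graph\subset\G(n,p):\Graph\text{ strong-core}\}\cup{\rm Core}_1$, so Propositions \ref{prop:strong-core} and \ref{p:core--1} bound $\P(\wt{\rm Core}_{2,L_n+1})$ by $\exp(-\f12\delta^{2/v_{\sf H}}(1-\gf_{\sf H}(\vep))(1+o(1))\,n^2p^\Delta\log(1/p))$. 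Combining this with the collapsed chain and a union bound gives
\[
\P({\rm Core}_2)\;\le\;L_n\max_{j\in\llbracket L_n\rrbracket}\big(\P({\rm Core}_{2,j,\a})+\P({\rm Core}_{2,j,\b})\big)+\P(\wt{\rm Core}_{2,L_n+1}),
\]
and since Lemmas \ref{l:core2ja}--\ref{l:core2jb} make the maximum $\exp(-\omega(n^2p^\Delta\log(1/p)))$, which survives multiplication by $L_n$, taking logarithms, dividing by $n^2p^\Delta\log(1/p)$ and letting $n\to\infty$ yields \eqref{eq:prob-core20} (possibly after rescaling the argument of $\gf_{\sf H}$ by a fixed constant, which preserves $\lim_{\vep\downarrow0}\gf_{\sf H}(\vep)=0$).

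I expect the main obstacle to be the first step: making the $\gs_j$-telescoping and the $j$-dependent $\cN_{1,1}$-threshold fit together so that the chain simultaneously loses at most an $\vep$-fraction of copies of ${\sf H}$ end to end and keeps the ``large $\cN_{1,1}$'' threshold in the range where Lemma \ref{l:core2ja} applies — this is precisely what forces the chain to stop at edge count $\bar C n^2p^\Delta\log(1/p)/(\log\log n)^2$ and the residual range to be closed off by ${\rm Core}_1$ rather than by a further iteration — together with the routine but delicate verification that the various $\vep$-parameters in the definitions of seed, core, strong-core, ${\rm Core}_1$ and $\wt{\rm Core}_{2,j}$ graphs remain mutually consistent through all the peeling steps.
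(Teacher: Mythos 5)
Your proposal is correct and follows essentially the same chaining argument as the paper's proof. The one-step inclusion $\wt{\rm Core}_{2,j}\subset{\rm Core}_{2,j,\a}\cup{\rm Core}_{2,j,\b}\cup\bigcup_{j'>j}\wt{\rm Core}_{2,j'}$ is exactly the paper's key observation (it shows $\wt{\rm Core}_{2,j}\setminus({\rm Core}_{2,j,\a}\cup{\rm Core}_{2,j,\b})\subset\bigcup_{j'>j}\wt{\rm Core}_{2,j'}$), and your verification — via $N({\sf H},\varpi_\Delta(\Graph))=N({\sf H},\Graph)-\cN_{1,1}({\sf H},\Graph)$ and the telescoping $\gs_{j+1}=\gs_j+2^{-j}$ — matches the paper. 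Two cosmetic deviations: you collapse the chain as a single set-theoretic union ${\rm Core}_2\subset\bigcup_j({\rm Core}_{2,j,\a}\cup{\rm Core}_{2,j,\b})\cup\wt{\rm Core}_{2,L_n+1}$ before applying a union bound, whereas the paper iterates a probability inequality $\P(\wt{\rm Core}_{2,j})\le 2\exp(-\delta^{2/v_{\sf H}}n^2p^\Delta\log(1/p))+\sum_{j'>j}\P(\wt{\rm Core}_{2,j'})$ and accepts a factor $2^{L_n+2}$ at the end; your version is slightly cleaner and both are killed by $L_n\ll n^2p^\Delta\log(1/p)$. For the residual event, you close $\wt{\rm Core}_{2,L_n+1}$ by peeling to a (rescaled) core graph and splitting on $e(\Graph_0)\lessgtr\bar C_\star n^2p^\Delta$, landing in $\{\text{strong-core}\}\cup{\rm Core}_1$; the paper instead directly subtracts $\wt{\rm Core}_{1,1}\cup\wt{\rm Core}_{1,2}$ from $\wt{\rm Core}_{2,L_n+1}$ and shows the remainder is strong-core, which avoids the implicit $\vep$-rescaling in your appeal to ${\rm Core}_1$ (whose definition quotes the precise constant $1-3\vep$ in (C1), not the weaker $1-4\vep$ you retain after the chain). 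You flag this rescaling yourself and it is harmless, but if you want a fully airtight version you should mimic the paper and invoke $\wt{\rm Core}_{1,1},\wt{\rm Core}_{1,2}$ — which only require $d_{\min}\ge\Delta$ and no (C1) threshold — rather than ${\rm Core}_1$.
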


\begin{proof}
We claim that for any $j \in \llbracket L_n \rrbracket$
\beq\label{eq:chaining-1} 
\P(\mathrm{Core}_{2,j}) \le \P(\wt{ \mathrm{Core}}_{2,j})\leq {2\exp\left(-\delta^{2/v_{\sf H}}n^2p^\Delta\log(1/p)\right))}+ \sum_{j'=j+1}^{{L_n+1}} \P(\wt{ \mathrm{Core}}_{2,j'}).
\eeq
The first inequality is immediate from \eqref{eq:CwtC}. To see the second inequality, we observe that if $\Graph \in \cJ_j$ is graph such that 
\beq\label{eq:NN11}
N({\sf H}, \Graph) \ge (1-3\vep -\gs_j\vep)\delta n^{v_{\sf H}} p^{e({\sf H})}, \quad \cN_{1,1}({\sf H}, \Graph) \le 2^{-j} \vep \delta n^{v_{\sf H}} p^{e({\sf H})}, \quad \text{ and } \quad \varpi_\Delta(\Graph) \notin \cJ_j,
\eeq
then, as $\varpi_\Delta(\Graph) \subset \Graph$, we have that $\varpi_\Delta(\Graph) \in \cup_{j'=j+1}^{{L_n+1}} \cJ_j$. Furthermore, denoting $\Graph'$ to be the subgraph of $\Graph$ obtained by removing edges in $E_{1,1}(\Graph)$, as ${\sf H}$ is a $\Delta$-regular graph, we note that
\[
N({\sf H}, \varpi_\Delta(\Graph)) = N({\sf H}, \Graph') \ge N({\sf H}, \Graph) - \cN_{1,1}({\sf H}, \Graph) \ge (1 - 3\vep - \gs_{j'}\vep)\delta n^{v_{\sf H}} p^{e({\sf H})}
\]
for any $j'=j+1, \ldots, {L_n+1}$, where the final inequality is a consequence of \eqref{eq:NN11}. Thus the last two observations together imply that $\varpi_\Delta(\Graph) \in \cup_{j'=j+1}^{{L_n+1}} \wt{\rm Core}_{2,j'}$ for any graph $\Graph \in \cJ_j$ satisfying \eqref{eq:NN11}. Thus we have shown that 
\[
\wt{\rm Core}_{2,j} \setminus ({\rm Core}_{2,j,\a} \cup {\rm Core}_{2,j,\b}) \subset \cup_{j'=j+1}^{{L_n+1}} \wt{\rm Core}_{2,j'}.
\]
Now Lemmas \ref{l:core2ja} and \ref{l:core2jb} together with the union bound yield the second inequality of \eqref{eq:chaining-1}. 

To complete the proof of the proposition we use \eqref{eq:chaining-1} with $j=1$ to derive that 
\begin{equation*}
\P({\rm Core}_2) \le \sum_{j=1}^{{L_n+1}} \P(\mathrm{Core}_{2,j}) \leq {2 \exp\left(-\delta^{2/{v_{\sf H}}} n^2 p^{\Delta} \log(1/p)\right)} + 2 \sum_{j=2}^{{L_n+1}} \P(\wt{\rm Core}_{2,j'}).
\end{equation*}
Repeating the same procedure as above iteratively with $j=2,3,\ldots, {L_n}$, we arrive at the bound
\beq\label{eq:prob-core21}
\P({\rm Core}_2) \le {2^{L_n+{2}} \left[\exp\left(-\delta^{2/{v_{\sf H}}} n^2 p^\Delta \log (1/p)\right) + \P(\wt{\rm Core}_{2,{L_n+1}})\right]}.
\eeq
So it now remains to evaluate the probability of $\wt{\rm Core}_{2, {L_n+1}}$. To evaluate the same we recall that the number copies of ${\sf H}$ in any graph $\Graph$ and its {$\Delta$-core} are the same. Therefore recalling the definitions of $\wt{\rm Core}_{1,i}$, $i=1,2$, from \eqref{eq:wtcore11}-\eqref{eq:wtcore12}, we derive that
\begin{multline*}
 \wt{\rm Core}_{2, {L_n+1}} \setminus (\wt{\rm Core}_{1,1} \cup \wt{\rm Core}_{1,2})\\
 \subset   \Big\{\exists \Graph \subset \G(n,p): N({\sf H}, \Graph) \ge (1 - 3 \vep - \gs_{L_n}\vep)\delta n^{v_{\sf H}} p^{e({\sf H})}, \, \cN_{1,1}({\sf H}, \Graph) \le \vep \delta n^{v_{\sf H}} p^{e({\sf H})}, \\
 {d_{\min}(\Graph) \ge \Delta}, \text{ and } e_{1,2}(\Graph)+ e_{2,2}(\Graph) \le \bar C_\star n^2 p^{\Delta}\Big\}\\
 \subset \left\{\exists \Graph \subset \G(n,p): \Graph \text{ is a strong-core graph}\right\},
\end{multline*}
where the last step follows by proceeding similarly as in the proof of \eqref{eq:core-1-minus-1112}. We omit the details. 

Hence, applying Lemmas \ref{l:core11}-\ref{l:core12}, and Proposition \ref{prop:strong-core} we derive that
\beq\label{eq:prob-core22}
\limsup_{n \to \infty} \f{\log \P(\wt{\rm Core}_{2,{L_n+1}})}{n^2 p^{\Delta} \log (1/p)} \le -\f12 \delta^{2/{v_{\sf H}}} (1-\gf_{\sf H}(\vep)).
\eeq
As $L_n = O(\log \log \log n) \ll (\log n)^{1/(v_{\sf H}-2)} \ll np^{\Delta/2}$ combining \eqref{eq:prob-core21}-\eqref{eq:prob-core22} we obtain \eqref{eq:prob-core20}.  
This completes the proof of the proposition.
\end{proof}


\section{Strong-core graphs are entropically stable}\label{sec:strong-core}
In this section we prove Proposition \ref{prop:strong-core}. That is, we show that the set of all strong-core graphs is entropically stable. Recall the definition of the strong-core graphs from Definition \ref{dfn:strong-core}. As outlined in Section \ref{sec:outline} the proof relies on identifying a bipartite subgraph $\Graph_\gb$ of a strong-core graph $\Graph$ so that almost all copies of ${\sf H}$ in $\Graph$ is either contained in $\Graph_\gb$ or $\Graph\setminus \Graph_\gb$. We remind the reader the key to this is block path structure of Figure \ref{fig:path-str}. To establish that block path structure we will show that for any strong-core graph, except possibly a few ``bad'' edges, the product of the degrees of the two end points of any of its ``good'' edges satisfies a strong upper and lower bound. Below we provide a precise formulation of good and bad edges of a strong-core graph. 
\begin{dfn}\label{dfn:bad-graph}
Fix $C_0 < \infty$ and let $\Graph_{{\rm high}} \subset \Graph$ be the subgraph spanned by the edges $e =(u,v) \in E(\Graph)$ for which
\beq\label{eq:sc-ubd}
\deg_\Graph(u) \cdot \deg_\Graph(v) \ge C_0 n^2 p^\Delta.
\eeq
Set $\Graph_{{\rm low}}:= \Graph \setminus \Graph_{{\rm high}}$, i.e.~$\Graph_{{\rm low}}$ is spanned by the edges for which \eqref{eq:sc-ubd} does not hold. Furthermore, we write $\Graph_{{\rm bad}} \subset \Graph$ to denote the subgraph induced by the edges $e \in E(\Graph)$ for which every copy of ${\sf H}$ passing through it uses at least one edge belonging to $\Graph_{{\rm high}}$. 
\end{dfn}

In the following lemma we show that, if $C_0$ in \eqref{eq:sc-ubd} is chosen to be sufficiently large, then the number of edges in $\Graph_{{\rm bad}}$ is only a small desired fraction of that in $\Graph$, and moreover the number of labelled copies of ${\sf H}$ in $\Graph_{{\rm low}}$ is almost same as that in $\Graph$. Furthermore, we will establish a lower bound on the product of the degrees of any pair of adjacent vertices in $\Graph$. These facts together will imply that one can work with $\Graph_{{\rm low}}$ instead of $\Graph$ for which one has tight upper and lower bounds on the products of the degrees of the end points of any edge. 

\begin{lem}\label{lem:bad-edges-bounds}
Fix a $\Delta$-regular graph ${\sf H}$ with $\Delta \ge 2$. Let $\Graph$ be a strong-core graph and $p \in (0,1)$. Then, for any $\vep >0$, there exist $0 < c_0(\vep), C_0(\vep) < \infty$, such that the followings hold:
\begin{enumerate}
\item[(a)] For every edge $e =(u,v) \in E(\Graph)$
\[
\deg_\Graph(u) \cdot \deg_\Graph(v) \ge c_0(\vep) n^2 p^\Delta. 
\]
\item[(b)] Let $\Graph_{{\rm high}}:=\Graph_{{\rm high}}(\vep) \subset \Graph$ be the subgraph spanned by the edges $e \in E(\Graph)$ for which \eqref{eq:sc-ubd} holds with $C_0=C_0(\vep)$. Having defined $\Graph_{{\rm high}}$ we let $\Graph_{{\rm low}}:=\Graph_{{\rm low}}(\vep)$ and $\Graph_{{\rm bad}}:= \Graph_{{\rm bad}}(\vep)$ to be as in Definition \ref{dfn:bad-graph}. Then 
\[
N({\sf H}, \Graph_{{\rm low}}) \ge (1-\vep) N({\sf H}, \Graph),
\]
and
\[
e(\Graph_{{\rm high}}) \le e(\Graph_{{\rm bad}}) \le \vep e(\Graph). 
\] 
\end{enumerate}
\end{lem}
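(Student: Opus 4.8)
\textbf{Proof proposal for Lemma \ref{lem:bad-edges-bounds}.}

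The plan is to prove both parts by exploiting condition (SC3) of Definition \ref{dfn:strong-core}, which guarantees that every edge $e$ of a strong-core graph $\Graph$ satisfies $N({\sf H},\Graph,e) \ge (\delta\vep/\bar C_\star)(np^{\Delta/2})^{v_{\sf H}-2}$, together with the upper bound (SC2) that $e(\Graph) \le \bar C_\star n^2 p^\Delta$. For part (a), fix an edge $e=(u,v)$. The key observation is that any labelled copy of ${\sf H}$ through $e$ is determined by choosing, for each remaining vertex of ${\sf H}$, a vertex of $\Graph$; since ${\sf H}$ is connected, every vertex of ${\sf H}$ lies at bounded graph-distance from the edge mapped to $e$, so one can build up the embedding vertex-by-vertex along a spanning structure of ${\sf H}$. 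At each step the number of choices for the next vertex is at most the maximum degree among the already-placed neighbours. A cleaner route: since ${\sf H}$ is $\Delta$-regular, $N({\sf H},\Graph,e) \le C({\sf H}) \cdot \deg_\Graph(u)\deg_\Graph(v) \cdot (2e(\Graph))^{v_{\sf H}/2 - 2}$ — here the factor $\deg_\Graph(u)\deg_\Graph(v)$ accounts for the two neighbours of the edge endpoints inside the copy of ${\sf H}$, and the remaining $v_{\sf H}-4$ vertices are bounded using the edge-count via the fractional-independence-number estimate \eqref{eq:copy-H-star-bd} together with \cite[Lemma 5.3]{hms} (the fractional independence number of $\Delta$-regular ${\sf H}$ equals $v_{\sf H}/2$). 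Combining this upper bound with the lower bound (SC3), and using $e(\Graph)\le \bar C_\star n^2 p^\Delta$ in the form $(2e(\Graph))^{v_{\sf H}/2-2} \le (2\bar C_\star)^{v_{\sf H}/2-2}(n^2p^\Delta)^{v_{\sf H}/2-2}$, every factor of $n^2p^\Delta$ cancels and one is left with $\deg_\Graph(u)\deg_\Graph(v) \ge c_0(\vep) n^2 p^\Delta$ for an explicit $c_0(\vep)$ depending on $\delta, \vep, \bar C_\star$ and the fixed graph ${\sf H}$. One must take slight care when $v_{\sf H}=4$ (then $ \Delta=2$, ${\sf H}=C_4$), where the exponent $v_{\sf H}/2-2$ is zero and the argument is even simpler; and when a copy of ${\sf H}$ uses $e$ with one endpoint of the edge of ${\sf H}$ mapped to $u$ and the other to $v$, so the count is governed by $(\deg_\Graph(u)-1)(\deg_\Graph(v)-1)$ which is $\ge \tfrac14\deg_\Graph(u)\deg_\Graph(v)$ whenever both degrees are $\ge 2$, which holds since each edge sits in a copy of ${\sf H}$ and $\Delta \ge 2$.

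For part (b), the bound $e(\Graph_{{\rm high}}) \le e(\Graph_{{\rm bad}})$ is immediate from the definitions, since any edge satisfying \eqref{eq:sc-ubd} trivially has the property that every copy of ${\sf H}$ through it uses an edge of $\Graph_{{\rm high}}$ (namely itself). The substantive claims are $e(\Graph_{{\rm bad}}) \le \vep e(\Graph)$ and $N({\sf H},\Graph_{{\rm low}}) \ge (1-\vep)N({\sf H},\Graph)$. For the first, I would first bound $e(\Graph_{{\rm high}})$: summing $\deg_\Graph(u)\deg_\Graph(v) \ge C_0 n^2 p^\Delta$ over edges in $\Graph_{{\rm high}}$ and comparing with $\sum_{(u,v)\in E(\Graph)} \deg_\Graph(u)\deg_\Graph(v)$, which by a standard convexity/counting argument (this sum counts paths of length two weighted appropriately) is $O((e(\Graph))^{?})$ — more precisely one uses $\sum_{(u,v)\in E(\Graph)}\deg_\Graph(u)\deg_\Graph(v) \le$ (number of paths $P_3$ in $\Graph$) which via \eqref{eq:copy-H-star-bd}-type bounds or a direct Cauchy–Schwarz estimate is controlled; with $e(\Graph) \le \bar C_\star n^2 p^\Delta$ this forces $e(\Graph_{{\rm high}}) \le (\text{const}/C_0) e(\Graph)$, so choosing $C_0=C_0(\vep)$ large makes this $\le \tfrac12\vep e(\Graph)$. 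Then each edge of $\Graph_{{\rm bad}}$ has, by definition, every copy of ${\sf H}$ through it hitting $\Graph_{{\rm high}}$; since each edge of $\Graph_{{\rm high}}$ lies in at most $C({\sf H})\cdot C_0 n^2 p^\Delta \cdot (2\bar C_\star n^2p^\Delta)^{v_{\sf H}/2-2}$ copies of ${\sf H}$ (by the part-(a) upper bound, now with \eqref{eq:sc-ubd} as an upper input) while, by (SC3), each edge of $\Graph_{{\rm bad}}$ lies in at least $(\delta\vep/\bar C_\star)(np^{\Delta/2})^{v_{\sf H}-2}$ copies, a double-counting of (edge of $\Graph_{{\rm bad}}$, copy of ${\sf H}$, edge of $\Graph_{{\rm high}}$ in that copy) triples yields $e(\Graph_{{\rm bad}})\cdot(\text{SC3 lower bound}) \le e({\sf H}) \cdot e(\Graph_{{\rm high}}) \cdot (\text{max copies per high edge})$, and after cancellation $e(\Graph_{{\rm bad}}) \le C({\sf H},\delta,\vep,\bar C_\star)\cdot e(\Graph_{{\rm high}})$. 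Feeding back $e(\Graph_{{\rm high}}) \le (\text{const}/C_0)e(\Graph)$ and taking $C_0$ larger still gives $e(\Graph_{{\rm bad}}) \le \vep e(\Graph)$. Finally, $N({\sf H},\Graph) - N({\sf H},\Graph_{{\rm low}})$ counts copies of ${\sf H}$ using at least one edge of $\Graph_{{\rm high}}$; by definition of $\Graph_{{\rm bad}}$ such a copy must actually use an edge of $\Graph_{{\rm bad}}$ — wait, more directly: any copy counted in the difference uses an edge of $\Graph_{{\rm high}} \subset \Graph_{{\rm bad}}$, so it uses an edge of $\Graph_{{\rm bad}}$, hence the difference is at most $\sum_{e\in E(\Graph_{{\rm bad}})} N({\sf H},\Graph,e) \le e(\Graph_{{\rm bad}}) \cdot (\text{max copies per edge})$; comparing with $N({\sf H},\Graph) \ge \delta(1-6\vep)n^{v_{\sf H}}p^{e({\sf H})} = \delta(1-6\vep)(n^2p^\Delta)^{v_{\sf H}/2}$ and using $e(\Graph_{{\rm bad}}) \le \vep e(\Graph) \le \vep\bar C_\star n^2p^\Delta$ plus the per-edge upper bound $\le C({\sf H})(2\bar C_\star n^2p^\Delta)^{v_{\sf H}/2-1}$, one gets the difference $\le C({\sf H})\vep \bar C_\star (2\bar C_\star)^{v_{\sf H}/2-1}(n^2p^\Delta)^{v_{\sf H}/2}$, which is $\le \vep' N({\sf H},\Graph)$ with $\vep'\to 0$ as $\vep\to 0$; relabelling $\vep$ absorbs the constant.

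The main obstacle I anticipate is getting clean, correct upper bounds on $N({\sf H},\Graph,e)$ in terms of $\deg_\Graph(u),\deg_\Graph(v)$ and $e(\Graph)$ — specifically, routing the ``remaining'' $v_{\sf H}-4$ (or $v_{\sf H}-2$) vertices of ${\sf H}$ through the edge-count rather than through crude maximum-degree bounds, since a naive maximum-degree bound would lose powers of $n$. The right tool is the fractional-independence / Shearer-type estimate already invoked as \eqref{eq:copy-H-star-bd} combined with \cite[Lemma 5.3]{hms}, applied to the graph ${\sf H}$ with two adjacent vertices (the endpoints of the edge of ${\sf H}$ mapped onto $e$) pre-assigned; one needs that the fractional independence number of ${\sf H}$ minus an edge, relative to that edge, drops by exactly $1$ (i.e.\ equals $v_{\sf H}/2-1$), which follows from $\Delta$-regularity. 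A secondary nuisance is bookkeeping the various constants $c_0(\vep), C_0(\vep)$ and the dependence on $\delta$ and $\bar C_\star$, and ensuring the final error in part (b) genuinely tends to $0$ with $\vep$; this is routine but must be done carefully since $\bar C_\star$ itself depends on $\delta$.
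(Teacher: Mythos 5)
Your overall strategy — combine (SC3) with an upper bound on $N({\sf H},\Graph,e)$ and then double-count to control $e(\Graph_{\rm bad})$ — is aligned with the paper's, but there are two genuine gaps.

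For part (a), you assert the bound $N({\sf H},\Graph,e) \le C({\sf H})\deg_\Graph(u)\deg_\Graph(v)(2e(\Graph))^{v_{\sf H}/2-2}$ but do not establish it. Your heuristic (pick one neighbour of $u$ and one of $v$, then bound the remaining $v_{\sf H}-4$ vertices via \eqref{eq:copy-H-star-bd}) is not a proof: once four vertices are fixed, the remaining $v_{\sf H}-4$ vertices carry adjacency constraints to the fixed four, and there is no general reason the subgraph of ${\sf H}$ they span has fractional independence number at most $v_{\sf H}/2-2$, nor that the constraints can be ignored in a way that yields exactly this form. The paper instead cites \cite[Lemma 5.13]{hms}, which gives the bound with the degree product raised to the power $(\Delta-1)/\Delta$ and $e(\Graph)$ raised to $v_{\sf H}/2-2+1/\Delta$; that exponent structure is what the known entropy argument delivers. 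Your version, if true, would also yield (a), but you would need to supply a proof, and I do not believe it follows for all regular ${\sf H}$ from the tools you invoke.

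For part (b) there is a concrete error. You claim each edge $e'=(u',v')\in E(\Graph_{\rm high})$ lies in at most $C({\sf H})\cdot C_0 n^2p^\Delta\cdot(2\bar C_\star n^2p^\Delta)^{v_{\sf H}/2-2}$ copies of ${\sf H}$, ``by the part-(a) upper bound, now with \eqref{eq:sc-ubd} as an upper input.'' But \eqref{eq:sc-ubd} is a \emph{lower} bound $\deg_\Graph(u')\deg_\Graph(v')\ge C_0 n^2p^\Delta$, so it cannot serve as an upper input; degrees of $\Graph_{\rm high}$-endpoints are unbounded above and your per-edge bound can be arbitrarily large. Replacing it with the trivial bound from \eqref{eq:copy-H-star-bd}–\eqref{eq:frac-ind-bd}, $N({\sf H},\Graph,e')\le C(2e(\Graph))^{v_{\sf H}/2-1/\Delta}$, still fails: multiplying by $e(\Graph_{\rm high})\le 5e(\Graph)^2/(C_0 n^2p^\Delta)$ leaves a spurious factor $(n^2p^\Delta)^{1-1/\Delta}$ and the double-counting does not close. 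The paper's proof instead invokes \cite[Lemma 5.15]{hms}, which bounds the \emph{sum} $\sum_{e'\in E(\Graph_{\rm high})}N({\sf H},\Graph,e')\le e({\sf H})(2e(\Graph))^{v_{\sf H}/2}\bigl(e(\Graph_{\rm high})/e(\Graph)\bigr)^{1/\Delta}$; the key factor $(e(\Graph_{\rm high})/e(\Graph))^{1/\Delta}$, which is what makes $C_0$ large enough to force $e(\Graph_{\rm bad})\le\vep e(\Graph)$, simply cannot be recovered by multiplying a per-edge maximum by the number of high edges. Your bound on $e(\Graph_{\rm high})$ via the path-of-length-two count matches the paper's $N(P_3,\Graph)$ argument, and the final step converting $e(\Graph_{\rm bad})\le\vep e(\Graph)$ into $N({\sf H},\Graph_{\rm low})\ge(1-\vep)N({\sf H},\Graph)$ is in order; but without the hms sum bound (or an equivalent, e.g.\ the exponent-$(\Delta-1)/\Delta$ per-edge bound combined with a H\"older/power-mean inequality over $E(\Graph_{\rm high})$), the central $e(\Graph_{\rm bad})$ estimate does not go through.
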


Note that lower bound in Lemma \ref{lem:bad-edges-bounds}(a) is similar to that in Lemma \ref{lem:bad-graph-bd} while the former is sharper. This is due to the stronger bounds on $e(\Graph)$ and on $N({\sf H}, \Graph, e)$ in a strong-core graph $\Graph$ when compared to a core graph. Bounds analogous to Lemma \ref{lem:bad-edges-bounds}(b) have also been derived in \cite{hms} for core graphs. Repeating a same line of argument and using the bounds (SC2)-(SC3) of Definition \ref{dfn:strong-core} one can deduce Lemma \ref{lem:bad-edges-bounds}. We include its proof in Appendix \ref{app:app1} for completeness. 

Equipped with Lemma \ref{lem:bad-edges-bounds} we now proceed to the proof of Proposition \ref{prop:strong-core}. 
\subsection{Proposition \ref{prop:strong-core} for non-bipartite graphs}\label{sec:strong-core-odd} When ${\sf H}$ is a non-bipartite graph using the fact that it must contain a cycle of odd length we provide an alternate and shorter proof than the one outlined above. 

The existence of an odd length cycle in ${\sf H}$ allows us to translate the tight upper and lower bounds on the product of the degree of any two adjacent vertices in $\Graph_{{\rm low}}$ to a lower bound on the degree of the non-isolated vertices of $\Graph\setminus \Graph_{\rm bad}.$\footnote{A similar argument has appeared in \cite[Claim 7.7]{hms} for core graphs when $np \gg (\log n)^{\Delta v_{\sf H}^2}$ and ${\sf H}$ is a $\Delta$-regular non-bipartite graph.} This together with the upper bound on $e(\Graph_{\rm bad})$ yields a bound on the number of strong-core graphs with a given number of edges, which in turn produces an effective bound on the probability of the existence of a strong-core graph with that many edges. Below we carry out the details.

\begin{proof}[Proof of Proposition \ref{prop:strong-core} for non-bipartite graphs]
We begin by claiming that for every vertex $v \in V(\Graph\setminus \Graph_{\rm bad})$ 
\beq\label{eq:deg-v-lbd}
\deg_\Graph(v) \ge c_\star(\vep) np^{\Delta/2},
\eeq
for some $c_\star(\vep)>0$. 

Fix $v \in V(\Graph\setminus \Graph_{\rm bad})$. Turning to prove the above claim we note that, by definition of $\Graph_{\rm bad}$, the vertex $v$ must participate in at least one labelled copy $\mathscr{H}$ of ${\sf H}$ that is contained in $\Graph_{\rm low}$. For ease of writing, let $\varphi$ be the labelling. That is, $\varphi: V({\sf H}) \mapsto V(\mathscr{H})$ is an embedding of ${\sf H}$ in $\mathscr{H}$. 
Since ${\sf H}$ is a non-bipartite graph it must contain a cycle $C_{2t+1}$, the cycle of length $(2t+1)$, for some $t\ge 1$ such that $2t+1 \le v_{\sf H}$. Let $\{v_1, v_2, \ldots, v_{2t+1}\}$ be the subset of the vertices $V(\mathscr{H})$ that gets mapped to the vertices of cycle $C_{2t+1}$ under the labelling $\varphi$. For ease of writing, without loss of generality, let us also assume that the vertices are indexed in a way so that $(v_i, v_{i+1}) \in E(\Graph_{\rm low})$ for $i  \in \llbracket 2t\rrbracket$ and $(v_{2t+1}, v_1) \in E(\Graph_{\rm low})$. It follows that
\beq\label{eq:odd-equality}
2\log \deg_\Graph(v_1) = \sum_{i=1}^{(t+1)} \log\left(\deg_\Graph(v_{2i-1}) \cdot \deg_\Graph(v_{2i})\right) -  \sum_{i=1}^{t} \log\left(\deg_\Graph(v_{2i}) \cdot \deg_\Graph(v_{2i+1})\right),
\eeq  
where for ease of writing we set $v_{2t+2} = v_1$. 

Now using the upper and lower bounds on the product of the degrees of the two end points of an edge in $\Graph_{\rm low}$ derived in Lemma \ref{lem:bad-edges-bounds} we have that
\[
\log c_0(\vep) + \log (n^2 p^\Delta) \le \log (\deg_\Graph(v_{j}) \cdot \deg_\Graph(v_{j+1})) \le \log C_0(\vep) + \log (n^2 p^\Delta) \qquad \text{ for all } j \in \llbracket 2t+1 \rrbracket.
\]
Therefore, setting 
\[
\wt c_\star(\vep):= \left(\min\left\{c_0(\vep), C_0(\vep)^{-1}\right\}\right)^{(t+1)/2},
\]
we find that
\beq\label{eq:deg-v-lbd-1}
\wt c_\star(\vep) n p^{\Delta/2} \le \deg_\Graph(v_1) \le \wt c_\star(\vep)^{-1} n p^{\Delta/2}.
\eeq
Using the cyclic nature of $C_{2t+1}$ and arguing similarly as in \eqref{eq:odd-equality} we further deduce that \eqref{eq:deg-v-lbd-1} continues to hold for the vertex $v_j$ for any $j \in \llbracket 2t+1\rrbracket$. Thus, if our chosen vertex $v=v_j$ for some $j \in \llbracket 2t+1\rrbracket$ this immediately yields \eqref{eq:deg-v-lbd}. 

Otherwise to derive \eqref{eq:deg-v-lbd} for the vertex $v$ we need a couple of lines of additional argument. Indeed, using the fact that ${\sf H}$ is connected we find that there exists a path $\mathscr{P}$ of length $\ell$, for some $\ell \ge 1$, connecting $v$ to $v_{j_0}$ for some $j_0 \in \llbracket 2t+1 \rrbracket$. For ease of writing, let us denote the vertices of $\mathscr{P}$ (in sequential order of their appearance) by $u_0, u_1, \ldots, u_\ell$, where $u_0=v$ and $u_{\ell}=v_{j_0}$.  
Note that
\[
\log \deg_\Graph(v)  + (-1)^{\ell-1}\log \deg_\Graph(v_{j_0}) = \sum_{i=1}^{\ell} (-1)^{i-1}\log (\deg_\Graph(u_{i-1}) \cdot \deg_\Graph(u_i)).
\]
By appealing to the tight upper and lower bounds on the product of the degrees of the end points in an edge in $\Graph_{\rm low}$ we derive from above that
\[
\log \deg_\Graph(v)  + (-1)^{\ell-1}\log \deg_\Graph(v_{j_0})  \ge \left(\left\lfloor {\ell}/{2}\right\rfloor +1\right)\cdot \log \left(\min \left\{ c_0(\vep), C_0(\vep)^{-1}\right\}\right) + \log (n^2 p^\Delta) \cdot {\bf 1}_{\{\ell \text{ is odd}\}}.
\]
As ${\sf H}$ contains a cycle $C_{2t+1}$ and a path of length $\ell$ that are edge disjoint we also have that
$2t+1 + \ell \le v_{\sf H}$. Therefore recalling that \eqref{eq:deg-v-lbd-1} holds for $v_{j_0}$ we now deduce from above that $\deg_\Graph(v) \ge c_\star(\vep) n p^{\Delta/2}$, where
\[
c_\star(\vep) := \left(\min\left\{c_0(\vep), C_0(\vep)^{-1}\right\}\right)^{v_{\sf H}/2}.
\]
This completes the proof of the claim \eqref{eq:deg-v-lbd}. Observe that this lower bound on the degree immediately yields the following upper bound on the number of non-isolated vertices of $\Graph\setminus \Graph_{\rm bad}$: 
\beq\label{eq:V-good-ubd}
|V(\Graph \setminus \Graph_{\rm bad})| \le \f{2e(\Graph)}{c_\star(\vep) np^{\Delta/2}}.
\eeq
Equipped with \eqref{eq:V-good-ubd} we next proceed to bound the number of strong-core graphs $\Graph$ with $e(\Graph)={\bm e}$  as follows:
\begin{enumerate}
\item[(1)] Choose non-isolated vertices of $\Graph\setminus \Graph_{\rm bad}$.
\item[(2)] Choose $e(\Graph_{\rm bad})$ edges arbitrarily out of all possible edges of the complete graph on $n$ vertices to construct $\Graph_{\rm bad}$.
\item[(3)] Choose ${\bm e} - e(\Graph_{\rm bad})$ edges out of $\binom{|V(\Graph\setminus \Graph_{\rm bad})|}{2}$ possible choices to construct $\Graph\setminus \Graph_{\rm bad}$. 
\item[(4)] Finally take a union over $e(\Graph_{\rm bad})$ in the allowable range $\llbracket 0, \vep {\bm e}\rrbracket$. 
\end{enumerate}
To implement steps (1)-(4) we need some bounds for which we make the following observations: 

Using \eqref{eq:copy-H-star-bd} and the fact that for any regular graph ${\sf H}$ its fractional independence number is $v_{\sf H}/2$ we find that for any graph $\Graph$ one has  $N({\sf H}, \Graph) \le (2 e(\Graph))^{v_{\sf H}/2}$. Since for any strong-core graph $\Graph$ we have 
\[
N({\sf H}, \Graph) \ge \delta(1-6\vep) n^{v_{\sf H}} p^{e({\sf H})} =  \delta(1-6\vep) \cdot (n^2 p^\Delta)^{v_{\sf H}/2},
\]
it immediately implies that the minimum number of edges of a strong-core graph, denoted hereafter by $\wh {\bm e}$, satisfies the lower bound 
\beq
\wh {\bm e} \ge \bar{\bm e}_0(\delta(1-6\vep)),
\eeq 
where for any $\delta_0 >0$ we set  
\beq\label{eq:e-0}
\bar {\bm e}_0(\delta_0):= \bar {\bm e}_0(\delta_0, {\sf H}):=\f12 \delta_0^{{2}/{v_{\sf H}}} n^2 p^\Delta.
\eeq
That is, $\bar {\bm e}_0(\delta_0, {\sf H})$ is the minimum number of edges a graph must possess to have at least $\delta_0 n^{v_{\sf H}} p^{e({\sf H})}$ labelled copies of ${\sf H}$. 

Note that for any ${\bm e} \ge \wh {\bm e}$, upon shrinking $c_\star(\vep)$, if necessary, one also has that
\beq\label{eq:e-half}
{\bm e} \le\f13 \cdot  \f{4 {\bm e}^2}{c_\star(\vep)^2 n^2 p^\Delta}.
\eeq
Now denote $\cI_{\bm e}$ to be set of strong-core graphs with ${\bm e}$ edges. Following the steps (1)-(4) to bound the cardinality of $\cI_{\bm e}$, we find that there exists some constant $C< \infty$, depending on $\delta$ and $\vep$, such that
\begin{multline}\label{eq:cI-e-bd}
|\cI_{\bm e}| \le \sum_{\mathfrak{e}\le \vep {\bm e}} \binom{n}{{2 {\bm e}}/({c_\star(\vep) np^{\Delta/2}})} \cdot \binom{n^2}{\mathfrak{e}} \cdot \binom{{4 {\bm e}^2}/({c_\star(\vep) np^{\Delta/2}})^2}{{\bm e} -\mathfrak{e} }\\
\le \sum_{\mathfrak{e} \le \vep {\bm e}}  \left(\f{e c_\star(\vep) n^2 p^{\Delta/2}}{2 {\bm e}}\right)^{\vep {\bm e}} \cdot n^{2\mathfrak{e}} \cdot \left(\f{4 e {\bm e}}{c_\star(\vep)^2 n^2 p^\Delta}\right)^{\bm e}  \le {\bm e} \cdot \left(\f1p\right)^{4 \Delta \vep {\bm e} } \cdot C^{{\bm e}} \le \left(\f1p\right)^{5 \Delta \vep {\bm e} },
\end{multline}
for any ${\bm e} \ge \wh{\bm e}$ and all large $n$. Let us explain the steps in \eqref{eq:cI-e-bd}. As $p =o(1)$ and ${\bm e} \le \bar C_\star n^2 p^\Delta$, using the fact that for nonnegative integers $y \le x$ the binomial coefficient $\binom{x}{y}$ is increasing for $y \le \lfloor x/2\rfloor$ the bound on the number of ways to choose the non-isolated vertices of $\Graph\setminus \Graph_{\rm bad}$ follows from  \eqref{eq:V-good-ubd}. Using   \eqref{eq:e-half} and the same reasoning as above we have the third term in the second step. Moreover, we use Stirling's approximation to obtain the first term there. Whereas, the third step uses that ${\bm e} \le \bar C_\star n^2 p^\Delta$ and $p \le n^{-1/\Delta}$. Finally to obtain the last inequality above we recall that $p=o(1)$. 

Equipped with \eqref{eq:cI-e-bd} we now take a union bound over ${\bm e} \in \llbracket \wh{\bm e}, \bar C_\star n^2 p^2 \rrbracket$ to find that
\begin{multline*}
 \log \P(\exists \text{ a strong-core graph})\\
  \le  \log\left(\sum_{{\bm e} = \wh{\bm e}}^{\bar C_\star n^2 p^\Delta} \P(\Graph \subset \G(n,p): \Graph \in \cI_{\bm e})\right)   = \log \left( \sum_{{\bm e} = \wh{\bm e}}^{\bar C_\star n^2 p^\Delta} p^{{\bm e} (1-5 \Delta \vep)} \right) \\
 \qquad \qquad \qquad \qquad \qquad   \qquad  \qquad \qquad  \le \log 2 - (1-5\Delta \vep ) \cdot \wh{\bm e} \cdot \log (1/p)\\
\le  \log 2 -  \f12 \delta^{{2}/{v_{\sf H}}} \cdot (1-\gf_{{\sf H}}(\vep)) \cdot n^2 p^\Delta  \log(1/p),
\end{multline*}
for all large $n$, where $\gf_{\sf H}(\vep):= 1 - (1-6\vep)^{{2}/{v_{\sf H}}} \cdot(1-5 \Delta \vep)$. Dividing both sides by $n^2 p^\Delta \log (1/p)$ and then sending $n$ to infinity the proof completes for non-bipartite graphs.
 \end{proof}


When ${\sf H}$ is a bipartite graph it does not have any cycle of odd length. Therefore, in that case we lose the identity \eqref{eq:odd-equality}. Hence one cannot repeat the above argument. In fact, as already mentioned earlier, one can have strong-core graphs with many of its vertices having small degrees and in particular \eqref{eq:deg-v-lbd} does not hold for such graphs. Thus one needs to follow the route outlined in Section \ref{sec:outline}. Recall from there that we split a strong-core graph $\Graph$ into two subgraphs: a bipartite subgraph $\Graph_\gb$ and the rest $\Graph\setminus \Graph_\gb$.  

\corAB{Upon assuming a lower bound on the number of copies of ${\sf H}$ in $\Graph_\gb$ we next derive a lower bound on the difference on the number of edges of $\Graph_\gb$ and an appropriately chosen multiple of its number of vertices of low degree (the appropriateness of this choice will be clear later during the proof of Proposition \ref{prop:strong-core}). This bound would lead to establishing that $\Graph_\gb$ is entropically stable. This is the content of the following section.}

\subsection{\corAB{Lower bound on the difference of the number of edges and vertices}}\label{sec:G_b} 
To prove Proposition \ref{prop:strong-core} for bipartite graphs we will need to be able to use the tight upper and lower bounds on the product of the degrees of Lemma \ref{lem:bad-edges-bounds}. Therefore, similar to the last section, we need to work with $\Graph_{\rm low}$ again. It will be verified below that any bipartite subgraph $\bar \Graph$ of it  must satisfy the following property. 

\begin{ass}\label{ass:bipartite-ass}
Let $\bar \Graph$ be a bipartite graph with two parts $U_1$ and $U_2$, i.e.~$E(\bar \Graph) \subset U_1 \times U_2$ for some disjoint set of vertices $U_1$ and $U_2$. Assume that there exists $U_{1,1} \subset U_1$ such that 
\beq\label{eq:deg-2-lbd}
\min_{u \in U_{1,1}} \deg_{\bar \Graph}(u) \ge \Delta,
\eeq
for some $\Delta \ge 2$, and 
\beq\label{eq:deg-1-ubd}
|U_1\setminus U_{1,1}| \le \vep_2 e(\bar \Graph),
\eeq
for some $\vep_2 \in (0,1/\Delta)$.
\end{ass}

Under the above assumption we find a lower bound $e(\bar \Graph) - (\Delta/2) \cdot |U_1|$ which will be used later in showing the entropic stability of $\Graph_\gb$. 

\begin{lem}\label{lem:small-count}
Fix $ \delta_1>0$ and a $\Delta$-regular connected bipartite graph ${\sf H}$ with $\Delta \ge 2$. Let $\bar \Graph$ be a bipartite graph with parts $U_1$ and $U_2$ satisfying Assumption \ref{ass:bipartite-ass}.  If 
\beq\label{eq:four-cycle-bipartite-bd-1}
 N({\sf H}, \bar \Graph) \ge \delta_1 n^{v_{\sf H}} p^{e({\sf H})}
\eeq
 then
\[
e(\bar \Graph) -\f{\Delta}{2} \cdot|U_1| \ge \f12 \delta_1^{2/v_{\sf H}} \cdot(1-\Delta \vep_2) \cdot n^2 p^\Delta.
\]
\end{lem}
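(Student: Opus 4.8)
The plan is to derive the bound by a counting argument on the labelled copies of ${\sf H}$ inside the bipartite graph $\bar\Graph$, estimating $N({\sf H},\bar\Graph)$ from above in terms of $e(\bar\Graph)$ and $|U_1|$, and then inverting this inequality. Since ${\sf H}$ is bipartite and $\Delta$-regular, write its bipartition as $V({\sf H}) = A \sqcup B$; because ${\sf H}$ is $\Delta$-regular we have $\Delta|A| = \Delta|B| = e({\sf H})$, hence $|A| = |B| = v_{\sf H}/2$ and $e({\sf H}) = \Delta v_{\sf H}/2$. In any labelled copy of ${\sf H}$ inside $\bar\Graph$, the image of $A$ must lie entirely in $U_1$ or entirely in $U_2$ (up to swapping the roles, since ${\sf H}$ being connected forces the two parts to map consistently into the two sides of $\bar\Graph$); so $N({\sf H},\bar\Graph) \le 2 \cdot \#\{\text{copies with }\varphi(A)\subset U_1,\ \varphi(B)\subset U_2\}$ (the factor $2$ absorbing the other orientation, and can be swallowed into the $(1+o(1))$ type slack or handled by symmetry of $|U_1|,|U_2|$ — I should be careful here and perhaps just bound by the version with the larger side playing the role of $U_1$).

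\medskip

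First I would set up the key combinatorial estimate. Fix the bipartition $A\sqcup B$ of ${\sf H}$ with $|A|=|B|=v_{\sf H}/2$. Using the entropy/fractional-cover bound already invoked in the excerpt — namely $N({\sf H}_\star,\Graph) \le (2e(\Graph))^{\upalpha_{{\sf H}_\star}^\star}$ from \cite[Theorem 5.7]{hms}, together with \cite[Lemma 5.3]{hms} giving that the fractional independence number of a $\Delta$-regular graph is $v_{\sf H}/2$ — one gets $N({\sf H},\bar\Graph) \le (2e(\bar\Graph))^{v_{\sf H}/2}$. However this alone is not enough: it does not see the constraint $|U_1|$. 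So the real point is to refine this using Assumption \ref{ass:bipartite-ass}: I would count a copy of ${\sf H}$ by first choosing the images of the vertices in $A$ (the ``low side''), then extending to $B$. The images of $A$ lie in $U_1$, so there are at most $|U_1|^{v_{\sf H}/2}$ ways — but this is wasteful. Better: pick a spanning structure of ${\sf H}$ that lets us bound the number of extensions to $B$ by a product of degrees. Concretely, order the vertices of $B$ as $b_1,\dots,b_{v_{\sf H}/2}$; each $b_i$ has $\Delta$ neighbours in $A$, all already embedded, so the number of choices for $\varphi(b_i)$ is at most $\deg_{\bar\Graph}(\varphi(a))$ for any neighbour $a$ of $b_i$ — in fact at most the minimum such degree. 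This gives $N({\sf H},\bar\Graph) \le \sum_{\varphi|_A} \prod_{i} (\text{something} \le e(\bar\Graph)/|A\text{-image}|\text{-type quantity})$, and by Maclaurin/AM–GM convexity the sum is maximized when the degrees on $U_1$ are balanced, yielding roughly $N({\sf H},\bar\Graph) \lesssim |U_1|^{v_{\sf H}/2}\cdot (e(\bar\Graph)/|U_1|)^{v_{\sf H}/2} \cdot (\text{correction})$. The crucial correction comes from the condition $\deg_{\bar\Graph}(u)\ge\Delta$ on $U_{1,1}$: removing the ``deficient'' vertices $U_1\setminus U_{1,1}$ costs at most $\Delta|U_1\setminus U_{1,1}| \le \Delta\vep_2 e(\bar\Graph)$ edges, so effectively $e(\bar\Graph)$ is replaced by $(1-\Delta\vep_2)e(\bar\Graph)$ when we restrict to copies avoiding those vertices, and such copies still account for essentially all of $N({\sf H},\bar\Graph)$ because a copy of ${\sf H}$ cannot use a vertex of degree $<\Delta$ in the $A$-role (every vertex of $A$ has degree $\Delta$ in ${\sf H}$).

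\medskip

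Putting this together, the bound should read $N({\sf H},\bar\Graph) \le \bigl(2 e(\bar\Graph) - \Delta|U_1|\bigr)^{v_{\sf H}/2}$ up to the $(1-\Delta\vep_2)$ slack — i.e. the effective edge budget after accounting for the degree constraint is $e(\bar\Graph)-\tfrac{\Delta}{2}|U_1|$, because each of the $|U_1|$ vertices ``consumes'' $\Delta/2$ edges' worth of counting capacity in the fractional-cover sense (one can see the exponent $v_{\sf H}/2 = |A|$ and the per-vertex cost $\Delta$ in $2e = \sum\deg$). Combining with the hypothesis $N({\sf H},\bar\Graph)\ge\delta_1 n^{v_{\sf H}}p^{e({\sf H})} = \delta_1 (n^2p^\Delta)^{v_{\sf H}/2}$ (using $e({\sf H})=\Delta v_{\sf H}/2$), raising to the power $2/v_{\sf H}$, and rearranging gives
\[
e(\bar\Graph) - \frac{\Delta}{2}|U_1| \ge \frac{1}{2}\delta_1^{2/v_{\sf H}}(1-\Delta\vep_2)\, n^2 p^\Delta,
\]
as claimed. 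The main obstacle I anticipate is getting the combinatorial counting lemma to produce exactly the linear combination $e(\bar\Graph) - \tfrac{\Delta}{2}|U_1|$ with the sharp constant $\tfrac12$ out front rather than a lossy version: this requires the convexity/Maclaurin step to be executed carefully (balancing degrees on $U_1$) and the bookkeeping of which vertices of ${\sf H}$ map to which side of $\bar\Graph$ to be done without losing constant factors — in particular handling the factor $2$ from the two orientations and making sure it is absorbed into the $\vep_2$-slack rather than degrading the leading constant. A secondary technical point is verifying that copies of ${\sf H}$ meeting $U_1\setminus U_{1,1}$ are genuinely negligible, which follows since any vertex of $A$ has degree exactly $\Delta$ in ${\sf H}$ hence its image needs degree $\ge\Delta$ in $\bar\Graph$, forcing $\varphi(A)\subset U_{1,1}$ (or the symmetric statement), so in fact no copy uses those vertices in the $A$-role at all and the $(1-\Delta\vep_2)$ factor enters only through the edge count available for the $B$-extensions.
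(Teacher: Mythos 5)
Your high-level plan is right — embed one side of the bipartition of ${\sf H}$ into $U_1$ and bound the number of extensions to the other side using degrees — but there is a genuine gap exactly at the step you yourself flag as ``the main obstacle I anticipate'': you never actually extract the linear combination $e(\bar\Graph)-\tfrac{\Delta}{2}|U_1|$ from the counting. Your appeal to ``Maclaurin/AM--GM convexity'' and ``each of the $|U_1|$ vertices consumes $\Delta/2$ edges' worth of counting capacity'' is asserting the conclusion, not deriving it, and the interim bound you write down, $|U_1|^{v_{\sf H}/2}\cdot(e(\bar\Graph)/|U_1|)^{v_{\sf H}/2}$, simply collapses to $e(\bar\Graph)^{v_{\sf H}/2}$ and sees no $|U_1|$ dependence at all.

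The device the paper uses to get the $-\tfrac{\Delta}{2}|U_1|$ is a spanning subgraph of ${\sf H}$ built from a perfect matching, and this is the idea you are missing. Concretely: fix one vertex $t$ in one part of ${\sf H}$, and use Lemma \ref{lem:perfect-match} (K\"{o}nig's edge coloring) to choose a perfect matching ${\sf M}$ of ${\sf H}$ that avoids $\Delta-1$ of the $\Delta$ edges at $t$; form $\wh{\sf M}$ by adding back those $\Delta-1$ edges. Since $\wh{\sf M}$ is a spanning subgraph of ${\sf H}$, one has $N({\sf H},\bar\Graph)\le N(\wh{\sf M},\bar\Graph)$. When you fix $\varphi$ on $U_1$ and count extensions of $\wh{\sf M}$: for each matched vertex $i\ne t$ there are at most $\deg_{\bar\Graph}(\varphi(i))$ choices, and at $t$ the $\Delta$ extra neighbours must be distinct, so there are at most $\deg_{\bar\Graph}(\varphi(t))-(\Delta-1)\le\deg_{\bar\Graph}(\varphi(t))-\tfrac{\Delta}{2}$ choices for the matched neighbour of $t$. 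Summing the product over all tuples in $U_1^{v_{\sf H}/2}$ then factors as $\big(\sum_{u\in U_1}\deg(u)\big)^{v_{\sf H}/2-1}\cdot\big(\sum_{u\in U_1}\deg(u)-\tfrac{\Delta}{2}|U_1|\big)=e(\bar\Graph)^{v_{\sf H}/2-1}\big(e(\bar\Graph)-\tfrac{\Delta}{2}|U_1|\big)$. Note in particular that the resulting bound is $e^{t-1}(e-\tfrac{\Delta}{2}|U_1|)$, not $(e-\tfrac{\Delta}{2}|U_1|)^{t}$ as you conjecture; the latter would in fact yield the conclusion without the $(1-\Delta\vep_2)$ loss, so it is almost certainly too strong. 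The $(1-\Delta\vep_2)$ factor then enters via Assumption \ref{ass:bipartite-ass}, which gives $e(\bar\Graph)\le\tfrac{2}{1-\Delta\vep_2}\big(e(\bar\Graph)-\tfrac{\Delta}{2}|U_1|\big)$ and lets you replace the remaining $e(\bar\Graph)^{t-1}$ factors — so your account of the $(1-\Delta\vep_2)$ as coming from ``discarding copies through low-degree vertices'' is also not how it actually arises. Without the matching-plus-extra-edges trick, the proof does not close.
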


The proof of Lemma \ref{lem:small-count} uses the fact that any bipartite graph admits a {\em perfect matching}. Let us define the notion of {matching} in a graph. 

\begin{dfn}[Matching]\label{dfn:matching}
Let ${\sf H}$ be a graph. A subgraph ${\sf M} \subset {\sf H}$ is said to be a matching if the degree of every non-isolated vertex of ${\sf M}$ is one. Thus ${\sf M}$ is a subgraph spanned by some collection of vertex disjoint edges of ${\sf H}$. Furthermore, ${\sf M}$ is said to be a perfect matching if  every vertex of ${\sf H}$ is incident to some edge in ${\sf M}$.
\end{dfn}

In graph theory it is customary to treat a matching as a set of vertex disjoint edges. However, it will be convenient for us to treat a matching as a subgraph of the original graph. The following result shows that any bipartite graph admits a perfect matching that avoids a collection of pre-specified edges. Its proof essentially follows from K\"{o}nig's edge coloring theorem for bipartite graphs and is deferred to Appendix \ref{sec:decompose}.

\begin{lem}\label{lem:perfect-match}
Let ${\sf H}$ be a $\Delta$-regular bipartite graph with $\Delta \ge 2$. Let $\{e_1, e_2, \ldots, e_{\Delta-1}\}$ be any collection of $(\Delta-1)$ edges of ${\sf H}$. Then there exists a perfect matching ${\sf M}$ of ${\sf H}$ such that $e_j \notin E({\sf M})$ for all $j \in \llbracket \Delta -1 \rrbracket$. 
\end{lem}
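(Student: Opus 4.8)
The plan is to reduce the statement to König's edge-coloring theorem, which asserts that every bipartite graph with maximum degree $\Delta$ has a proper edge coloring with exactly $\Delta$ colors. Since ${\sf H}$ is $\Delta$-regular, each of the $\Delta$ color classes in such a coloring is a subgraph in which every vertex has degree exactly one, i.e.\ each color class is a perfect matching of ${\sf H}$. Thus König's theorem produces a decomposition $E({\sf H}) = E({\sf M}_1) \sqcup \cdots \sqcup E({\sf M}_\Delta)$ into $\Delta$ pairwise edge-disjoint perfect matchings.

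The key observation is then a simple counting/pigeonhole step. Fix the collection $\{e_1,\ldots,e_{\Delta-1}\}$ of $\Delta-1$ prescribed edges. Since the $\Delta$ perfect matchings ${\sf M}_1,\ldots,{\sf M}_\Delta$ are edge-disjoint, each edge $e_j$ lies in at most one (in fact exactly one) of the matchings ${\sf M}_i$. Hence the set of indices $i \in \llbracket \Delta \rrbracket$ for which $E({\sf M}_i)$ contains at least one of $e_1,\ldots,e_{\Delta-1}$ has size at most $\Delta - 1$. Consequently there exists at least one index $i_0 \in \llbracket \Delta \rrbracket$ such that $E({\sf M}_{i_0})$ is disjoint from $\{e_1,\ldots,e_{\Delta-1}\}$, and ${\sf M} := {\sf M}_{i_0}$ is the desired perfect matching avoiding all the $e_j$.

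The only point that needs a word of care is the invocation of König's edge-coloring theorem: it is usually stated for finite bipartite graphs with no multiple edges, which is exactly our setting (${\sf H}$ is a finite simple $\Delta$-regular bipartite graph), so it applies directly; alternatively one can iterate Hall's marriage theorem $\Delta$ times, peeling off one perfect matching at each stage since the $k$-regular bipartite graph remaining after removing $\Delta-k$ perfect matchings still satisfies Hall's condition. I do not anticipate a genuine obstacle here — the lemma is a classical fact dressed up with the (minor) twist of avoiding a bounded list of edges, and the pigeonhole argument over the $\Delta$ color classes disposes of that twist immediately. The one bookkeeping subtlety is making sure the edge-disjointness of the matchings is what powers the pigeonhole bound $\Delta - 1$; this is automatic from the fact that a proper edge coloring assigns each edge a single color.
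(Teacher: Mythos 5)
Your proof is correct and takes essentially the same route as the paper: apply K\"{o}nig's edge-coloring theorem to decompose the $\Delta$-regular bipartite graph into $\Delta$ edge-disjoint perfect matchings, then use pigeonhole over the $\Delta-1$ prescribed edges to find a color class avoiding all of them. The only cosmetic difference is that you justify each color class being a perfect matching by noting directly that every vertex has degree one in it, while the paper invokes its earlier counting argument $|\wh{\mathscr{S}}_j| = e({\sf H})/\Delta$.
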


We are now ready to prove Lemma \ref{lem:small-count}. 

\begin{proof}[Proof of Lemma \ref{lem:small-count}]
Since ${\sf H}$ is a bipartite graph its vertices can be partitioned into $V^{1}({\sf H})$ and $V^{2}({\sf H})$ so that both $V^{1}({\sf H})$ and $V^2({\sf H})$ are independent sets. Furthermore, as ${\sf H}$ is a regular graph it follows that 
\[
|V^1({\sf H})| = V^2({\sf H})| = \f{v_{\sf H}}{2} =: t. 
\]
For convenience, let us write $V^1({\sf H})= \{1,2,\ldots, t\}$ and $V^2({\sf H})= \{\bar 1,\bar 2,\ldots, \bar t\}$. Let $\{\bar s_1, \bar s_2, \ldots, \bar s_\Delta\}$ be the neighbors of the vertex $t$ in the graph ${\sf H}$. By Lemma \ref{lem:perfect-match} there exists a perfect matching ${\sf M}$ of ${\sf H}$ that avoids the edges $\{e_j:= (t, s_{j+1}), \, j \in \llbracket \Delta -1 \rrbracket\}$. Let $\wh{\sf M}$ be the subgraph of ${\sf H}$ obtained from ${\sf M}$ by adding the edges $\{e_j, j \in \llbracket \Delta -1 \rrbracket\}$. By construction one has that 
\beq\label{wh-sf-M}
N({\sf H}, \bar\Graph) \le N(\wh{\sf M}, \bar\Graph). 
\eeq
Therefore, to complete the proof it suffices to show that a lower bound on $N(\wh{\sf M}, \bar \Graph)$ yields the desired lower bound on $e(\bar \Graph) - (\Delta/2) \cdot |U_1|$.

Turning to do the same, we fix ${\bm u} = \{u_1, u_2, \ldots, u_t\} \in U_1^t$, a set of $t$ distinct vertices from $U_1$, and denote $N_1(\wh{\sf M}, \bar \Graph, {\bm u})$ to be the number of copies of $\wh{\sf M}$ in $\Graph$ where the vertex $u_i$ gets mapped to the vertex $i \in V^1({\sf H})$, for $i \in \llbracket t \rrbracket$. We now derive an upper bound $N_1(\wh{\sf M}, \bar \Graph, {\bm u})$ in terms of the degree sequence $\{\deg_{\bar \Graph}(u_i)\}_{i=1}^t$. 

Notice that, as ${\sf M}$ is a perfect matching, there exists a bijection $\pi: V^1({\sf H}) \mapsto V^2({\sf H})$ such that only the edges $(i, \pi(i)) \in E({\sf M})$ for all $i \in \llbracket t \rrbracket$. By construction the edges $e_j \notin E({\sf M})$ for $j \in \llbracket \Delta-1\rrbracket$. Since ${\sf M}$ is a perfect matching we therefore must have that $\pi(t) = \bar s_1$. 

We find a bound on $N_1(\wh{\sf M}, \bar \Graph, {\bm u})$ as follows: Let ${\sf C}$ be any copy of $\wh{\sf M}$ in $\bar \Graph$ with $v_1, v_2, \ldots, v_t \in U_2$ so that $v_j$ is mapped to the vertex $\bar j \in V^2({\sf H})$, for $j \in \llbracket t \rrbracket$. Then
\begin{itemize}
\item For $i \in \llbracket t-1 \rrbracket$ the vertex $v_{\pi(i)}$ must be connected to $u_i$. Hence, for each such $i$ the choice of the vertex $v_{\pi(i)}$ is at most $\deg_{\bar \Graph}(u_i)$.

\item The vertex $v_{s_1}$ must be connected to $u_t$. As the edges $(t, \bar s_{j+1}) \in E(\wh{\sf M})$ for $j \in \llbracket \Delta -1 \rrbracket$ the vertices $v_{s_j}$ must also be neighbors of $u_t$, for $j=2,3,\ldots, \Delta$. Since the vertices $\{v_j\}_{j=1}^\Delta$ must be distinct the number of choices for $v_{s_1}$ is at most $\deg_{\bar \Graph}(u_t)-(\Delta-1)$.
\end{itemize}

Thus combining the above bounds we deduce that 
\begin{align}\label{C-2l-sum}
\sum_{{\bm u} \in U_1^t} N(\wh{\sf M}, \bar \Graph, {\bm u}) & \le \sum_{{\bm u} \in U_1^t}  \left[ \prod_{i=1}^{t-1}\deg_{\bar \Graph}(u_i) \right]\cdot (\deg_{\bar \Graph}(u_t) -(\Delta-1)) \notag\\
& \le \left(\sum_{u \in U_1} \deg_{\bar \Graph}(u) \right)^{t-1} \cdot \left[ \sum_{u \in U_1} \left(\deg_{\bar \Graph}(u)-\f\Delta2\right)\right]
 = e(\bar \Graph)^{t-1} \cdot \left(e(\bar \Graph) - \f\Delta2|U_1|\right),
\end{align}
where in the penultimate inequality we used the lower bound $\Delta \ge 2$, and the last equality is a consequence of the fact that $\bar \Graph$ is a bipartite graph. Furthermore, using \eqref{eq:deg-2-lbd}-\eqref{eq:deg-1-ubd} we see that
\[
\Delta |U_1| \le \sum_{u \in U_1} \deg_{\bar \Graph}(u)  + \Delta  |U_1 \setminus U_{1,1}| \le (1+\Delta \vep_2)e(\bar \Graph)
\]
and therefore
\[
e(\bar \Graph) \le \f{2}{1-\Delta \vep_2} \left(e(\bar \Graph) - \f\Delta2|U_1|\right).
\]
Plugging this bound in \eqref{C-2l-sum} we find that
\beq\label{C-2l-sum-1}
 \sum_{{\bm u} \in U_1^t} N_1(\wh{\sf M}, \bar \Graph, {\bm u}) \le \left(\f{2}{1-\Delta \vep_2}\right)^{t-1} \cdot \left(e(\bar \Graph) - \f\Delta2|U_1|\right)^{t}.
\eeq
To finish the proof let us denote $N_2(\wh{\sf M}, \bar \Graph, {\bm u})$ to be the number of copies of $\wh{\sf M}$ in $\Graph$ where the vertex $u_i$ gets mapped to the vertex $\bar i \in V^2({\sf H})$, for $i \in \llbracket t \rrbracket$. Reversing the roles of $V^1({\sf H})$ and $V^2({\sf H})$ we see that the bound \eqref{C-2l-sum} continues to hold when $N_1(\wh{\sf M}, \bar \Graph, {\bm u})$ is replaced by $N_2(\wh{\sf M}, \bar \Graph, {\bm u})$. Therefore, upon observing that
\[
 \sum_{{\bm u} \in U_1^t} \left[N_1(\wh{\sf M}, \bar \Graph, {\bm u}) + N_2(\wh{\sf M}, \bar \Graph, {\bm u}) \right] = N(\wh{\sf M}, \bar \Graph)
\]
the proof completes from \eqref{wh-sf-M} and \eqref{eq:four-cycle-bipartite-bd-1}. 
\end{proof}

\corAB{In the next section we derive a} bound on the number of core graphs (and hence also for strong-core graphs) in terms of its number of edges and the number of vertices of small degree. This combinatorial lemma will be used to derive the entropic stability of $\Graph\setminus \Graph_\gb$. 

\subsection{\corAB{Bound on the number of core graphs}}\label{sec:G-G_b}
\corAB{The following is main result of this section.}
\begin{lem}\label{lem:large-count}
Fix $\vep \in (0,1)$, ${\sf H}$ a $\Delta$-regular graph with $\Delta \ge 2$, and integers $\gD:=\gD(\vep)$ and $D:=D(\vep):= \lceil 16\Delta /\vep \rceil$ such that $\gD \ge D$. Let
\[
 \cV_1:= \{v \in V(\Graph): \deg_\Graph(v) \le \gD\}
\]
and set $\ol \cV_1:=V(\Graph)\setminus  \cV_1$. 
Further let $\cN_0({\bm e}, {\bm v}, \gD)$ be the number of core graphs with $e(\Graph) = {\bm e}$ and $|\cV_1| = {\bm v}$. 
Then, for any $p \le n^{-1/\Delta}$ and all large $n$, 
\[
\cN_0({\bm e},{\bm v}, \gD) \le \binom{n}{{\bm v}}\cdot \exp( \vep {\bm e} \log(1/p)). 
\]
\end{lem}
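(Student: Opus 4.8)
The plan is to bound $\cN_0({\bm e},{\bm v},\gD)$ by encoding a core graph $\Graph$ with $e(\Graph)={\bm e}$ and $|\cV_1|={\bm v}$ through an economical description: first specify the vertex set $\cV_1$ of low-degree vertices, then specify all edges of $\Graph$ that touch $\ol\cV_1$, and then argue that the edges lying entirely inside $\cV_1$ are cheap to specify because $\ol\cV_1$ is small. More precisely, I would proceed as follows. The choice of $\cV_1 \subset V(\Graph)$ as a subset of the $n$ labelled vertices costs $\binom{n}{{\bm v}}$; this is the leading factor in the statement, so everything else must be absorbed into $\exp(\vep {\bm e}\log(1/p))$.

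Next I would estimate $|\ol\cV_1|$. Since $\Graph$ is a core graph, by (C1) it has at least $\delta(1-3\vep)n^{v_{\sf H}}p^{e({\sf H})}$ copies of ${\sf H}$, hence (using $N({\sf H},\Graph)\le (2e(\Graph))^{v_{\sf H}/2}$ from \eqref{eq:copy-H-star-bd} together with \cite[Lemma 5.3]{hms}) the number of edges obeys ${\bm e}\gtrsim n^2p^\Delta$; also ${\bm e}\le \bar C n^2p^\Delta\log(1/p)$ by (C2). Every vertex in $\ol\cV_1$ has degree exceeding $\gD = \gD(\vep)$, so $|\ol\cV_1|\le 2{\bm e}/\gD$. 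Choosing $\gD$ large enough (depending on $\vep$, $\delta$, $\bar C$) makes $|\ol\cV_1|$ small compared with ${\bm e}$. The edges incident to $\ol\cV_1$ number at most ${\bm e}$; to specify which edges these are I choose, for each of the at most $2{\bm e}/\gD$ vertices of $\ol\cV_1$, its neighbourhood among the $n$ vertices, or more crudely I just select an edge-subset: the number of graphs on $n$ vertices with at most ${\bm e}$ edges all touching a fixed set of size $2{\bm e}/\gD$ is at most $\binom{(2{\bm e}/\gD)\cdot n}{{\bm e}}\le (en\gD/(2{\bm e}))^{{\bm e}}$. Since ${\bm e}\gtrsim n^2p^\Delta$ and $p\le n^{-1/\Delta}$, we have $n/{\bm e}\lesssim 1/(np^\Delta)=n p^{-\Delta}/n^2 \le$ (a power of $1/p$ times lower-order), so after taking logs this term is $O({\bm e}\log(1/p)\cdot(\text{something}))$; I need to be careful that the exponent is actually $o(1)\cdot\vep$ — this is exactly where the hypothesis $p\le n^{-1/\Delta}$ and the lower bound ${\bm e}\ge \Omega(n^2p^\Delta)$ are used, giving $\log(n\gD/{\bm e})\le \log(\gD p^{-\Delta}/n)\cdot(1+o(1))$, and since $np^{\Delta/2}\to\infty$ this is at most $\frac{\Delta}{2}\log(1/p)(1+o(1))$, so this contributes $\frac{\Delta}{2}(1+o(1))\frac{2{\bm e}}{\gD}\cdot\log(1/p)$ — which is $\le \frac{\vep}{2}{\bm e}\log(1/p)$ once $\gD\ge \gD(\vep)$ is large.

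Finally, the edges of $\Graph$ lying entirely inside $\cV_1$ form a graph on ${\bm v}\le n$ vertices in which every vertex has degree at most $\gD$ (as an induced subgraph of $\Graph$ restricted to $\cV_1$, degrees are bounded by $\gD$); the number of such edges is at most ${\bm e}$, and the number of graphs on ${\bm v}$ labelled vertices with max degree $\le \gD$ and at most ${\bm e}$ edges is at most $\binom{\gD {\bm v}/2}{{\bm e}} \cdot \binom{{\bm v}}{\le \gD}^{\,\text{stuff}}$ — cleaner is: choose for each of the ${\bm e}$ edges an endpoint pair, bounded by $({\bm v}\gD)^{{\bm e}}$... wait, this would be too large. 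The correct economical bound: encode this graph by listing, for each vertex $v\in\cV_1$ in order, its neighbours of larger label, i.e. at most $\binom{{\bm v}}{\gD}^{{\bm v}}$ which is still too big. Instead I should bound it as $\binom{\binom{{\bm v}}{2}}{{\bm e}'}$ where ${\bm e}'\le{\bm e}$ is the number of internal edges; but $\binom{\binom{{\bm v}}{2}}{{\bm e}'}\le (e{\bm v}^2/(2{\bm e}'))^{{\bm e}'}$ and with ${\bm v}\le n$, ${\bm e}'\ge$? — there is no lower bound on ${\bm e}'$, so I instead use ${\bm v}^2/{\bm e}' $ is not controlled. The fix: note ${\bm v}^2 = O(n^2)$ and I only need $\binom{\binom{{\bm v}}{2}}{{\bm e}'}\le n^{2{\bm e}'}$; since ${\bm e}'\le{\bm e}\le \bar Cn^2p^\Delta\log(1/p)$ and $p\le n^{-1/\Delta}$, $n^{2{\bm e}'}=\exp(2{\bm e}'\log n)\le \exp(2\Delta{\bm e}'\log(1/p))$ — that's too big.

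So the genuine idea must use the degree bound inside $\cV_1$: the number of graphs on ${\bm v}$ vertices with maximum degree $\le\gD$ is at most ${\bm v}^{\gD {\bm v}}$? No. I will instead bound it by $\prod_{v}\binom{{\bm v}}{\le\gD}\le \left(\binom{{\bm v}}{\gD}\right)^{{\bm v}}$ — too big. The right move, and what I expect to be the main obstacle, is to combine the degree bound with the global edge count: a graph with ${\bm v}$ vertices, max degree $\gD$, and exactly ${\bm e}'$ edges can be encoded in $\binom{{\bm v}}{{\bm v}'}\cdot(\text{graphs on }{\bm v}'\text{ non-isolated vertices})$ where ${\bm v}'\le 2{\bm e}'\le 2{\bm e}$; the number of graphs on ${\bm v}'$ labelled non-isolated vertices with $\le{\bm e}$ edges is $\le \binom{{\bm v}'^2}{{\bm e}}\le \binom{4{\bm e}^2}{{\bm e}}\le (4e{\bm e})^{{\bm e}}$, and $\log((4e{\bm e}))=O(\log n)=O(\Delta\log(1/p))$ — still too big by a constant. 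Hmm.

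Given these tensions, the honest plan is: the $\binom{n}{{\bm v}}$ factor in the statement is meant to also absorb the choice of non-isolated vertices inside $\cV_1$ when ${\bm v}$ is interpreted correctly, and the remaining count $(4e{\bm e})^{{\bm e}}$-type term must be shown to be $\exp(\vep{\bm e}\log(1/p))$, which forces $\log{\bm e}\le \vep\log(1/p)$, i.e. this can only work because ${\bm e}$ is not too large relative to $1/p$ — but ${\bm e}$ can be as large as $n^2p^\Delta\log(1/p)$, so $\log {\bm e}\sim 2\log n$, which is $\Theta(\log(1/p))$ only when $p$ is polynomial in $n$; and indeed in this regime $p\le n^{-1/\Delta-o(1)}$ so $\log(1/p)\ge (\tfrac1\Delta+o(1))\log n$, hence $\log{\bm e}\le 2\log n\le 2\Delta\log(1/p)(1+o(1))$ — so this term is $\exp(O(\Delta){\bm e}\log(1/p))$, NOT $\exp(\vep{\bm e}\log(1/p))$. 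Therefore the crude edge-listing bound genuinely fails, and the proof must exploit that the internal-to-$\cV_1$ graph has \emph{bounded degree}: the number of labelled graphs on at most $n$ vertices with maximum degree $\le\gD$ and exactly ${\bm e}'$ edges is at most $\binom{n}{{\bm v}'}\cdot (\gD!)^{-{\bm v}'}\prod\cdots$ — the clean statement is that it is at most $\binom{n}{{\bm v}'}\cdot C_\gD^{\,{\bm v}'}$ for a constant $C_\gD$, because a bounded-degree graph on ${\bm v}'$ non-isolated vertices is determined by $O_\gD(1)$ bits per vertex. This gives $\exp(O_\gD({\bm v}'))=\exp(O_\gD({\bm e}'))$, and since ${\bm e}'\le{\bm e}$ and $\log(1/p)\to\infty$, $O_\gD({\bm e})=o({\bm e}\log(1/p))\le \vep{\bm e}\log(1/p)$ for large $n$. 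This last estimate — bounded-degree graphs are cheap to count — together with the small-$|\ol\cV_1|$ argument from the previous paragraph, is the crux; I expect the main obstacle to be the careful bookkeeping showing all three contributions (choice of $\cV_1$, edges touching $\ol\cV_1$, bounded-degree remainder) jointly fit under $\binom{n}{{\bm v}}\exp(\vep{\bm e}\log(1/p))$, using ${\bm e}=\Theta(n^2p^\Delta\cdot\text{polylog})$, $p\le n^{-1/\Delta}$, $np^{\Delta/2}\to\infty$, and the freedom to take $\gD=\gD(\vep)$ as large as needed.
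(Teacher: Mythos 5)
The central claim you lean on at the end --- that the number of labelled graphs on ${\bm v}'$ non-isolated vertices with maximum degree $\le\gD$ is at most $C_\gD^{{\bm v}'}$, ``determined by $O_\gD(1)$ bits per vertex'' --- is false. Specifying a neighbour among ${\bm v}'$ labelled vertices costs $\Theta(\log {\bm v}')$ bits, so the correct count is of order $({\bm v}')^{\Theta_\gD({\bm v}')}$; already for $\gD=1$ the perfect matchings on ${\bm v}'$ vertices number $({\bm v}'-1)!!=\exp(\Theta({\bm v}'\log{\bm v}'))$, not $\exp(O({\bm v}'))$. Replacing your claim by the correct $\exp(O_\gD({\bm v}'\log{\bm v}'))$ does not rescue the argument either: when $np^{\Delta/2}$ is a power of $n$, one has ${\bm v}'\le 2{\bm e}$ with $\log{\bm v}'=\Theta(\log n)=\Theta(\Delta\log(1/p))$, so the contribution is $\exp\bigl(O_\gD(\Delta)\,{\bm e}\log(1/p)\bigr)$, and the constant $O_\gD(\Delta)$ cannot be driven below $\vep$ since $\gD\ge D=\lceil 16\Delta/\vep\rceil$ is itself forced to be large. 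So the naive bounded-degree count, corrected or not, genuinely fails in the regime where ${\bm v}'$ is polynomial in $n$.

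What your proposal is missing is that the core-graph hypothesis does far more than supply the edge-count bounds (C1)--(C2) you invoke: condition (C3) forces (via Lemma~\ref{lem:bad-graph-bd}) a lower bound $\deg_\Graph(u)\deg_\Graph(v)\ge \wt c_0(\vep)\,e(\Graph)/(\log n)^{v_{\sf H}}$ for every edge $(u,v)$ of a core graph. In the regime $np^{\Delta/2}=\Omega((\log n)^{v_{\sf H}})$ this lower bound is large compared with $\gD^2$, so two vertices of $\cV_1$ can never be adjacent at all --- the ``edges internal to $\cV_1$'' you were struggling to count do not exist. The paper then organizes $\ol\cV_1$ into dyadic degree scales $W_k$ and uses the same degree-product inequality to show each vertex can only be adjacent to vertices in a single, well-controlled scale $W_{j_0(\cdot)}$; the resulting set $\gM$ of potential edges has size $O\bigl((\log n)^{v_{\sf H}+1}\,{\bm e}\bigr)$, from which $\binom{|\gM|}{{\bm e}}$ is bounded directly. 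In the complementary regime $np^{\Delta/2}=O((\log n)^{v_{\sf H}})$ the whole graph has only $O(\mathrm{polylog}\,n)$ vertices, so brute-force edge listing costs $O(\log\log n)$ per edge and no bounded-degree structure is needed. Your proposal handles neither regime: the first because it never exploits (C3) and the associated degree-product bound, the second because it doesn't observe that the polylog sparsity already makes edge listing cheap.
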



\begin{proof}
We split the proof into two parts. First let us consider the easier case $n p^{\Delta/2} = O((\log n)^{v_{\sf H}})$, and then we consider the case $np^{\Delta/2} = \Omega((\log n)^{v_{\sf H}})$ ({for two sets of positive reals $\{a_n\}$ and $\{b_n\}$ the notation $a_n = \Omega(b_n)$ means $\liminf_{n \to \infty} a_n/ b_n >0$.}), where we remind the reader that $v_{\sf H}$ is the number of vertices in ${\sf H}$. The proof for the latter regime is more involved. 

Since $\min_{v \in \ol \cV_1} \deg_{\Graph}(v) \ge \gD \ge D$ we have that
\[
|\ol \cV_1| \le 2 {\bm e} /D,
\]
where $e(\Graph)={\bm e}$. Thus, using the lower bound on $D$, we find that the number of ways to choose the vertices in $\ol\cV_1$ can be bounded by
\beq\label{eq:large-deg-vertex-bd}
\sum_{{\bm u}=0}^{\min\{2{\bm e}/D, n\}} n^{\bm u}\le n^{4{\bm e}/D} \le n^{\f{\vep}{4 \Delta}{\bm e}} \le p^{-\f\vep4 {\bm e}},
\eeq
where in the last step we use the fact that $p \le n^{-1/\Delta}$. For ease of writing let us denote $\Graph_1\subset \Graph$ be the subgraph induced by the edges in $E(\Graph)$ that are incident to some vertex in $\cV_1$. 

Next we note that the number of ways to choose the edges of $\Graph$ that are not adjacent to any vertex in $\cV_1$ (and hence both end points must be in $\ol \cV_1$) can be trivially bounded by
\beq\label{eq:edge-bd-1}
|\ol \cV_1|^{2 {\bm e}}  \le \exp\left( O(\log \log n) \cdot {\bm e}\right) \le \exp\left(\f\vep8 {\bm e} \log(1/p)\right),
\eeq
for all large $n$, where we have used the fact that $np^{\Delta/2} = O((\log n)^{v_{\sf H}})$ and $|V(\Graph)| \le {\bm e} = O(n^2 p^\Delta \log (1/p))$. Now we need to bound the number of ways to choose the edges of $\Graph_1$, which we denote by ${\bm e}_1$. It is easy to see that, applying Stirling's approximation, this can be bounded by
\beq\label{eq:edge-bd-2}
\binom{{|\cV_1|} \cdot |V(\Graph) |}{{\bm e}_1} \le \left(2 e |V(\Graph) |\right)^{{\bm e}_1} \le \exp\left(\f\vep8 {\bm e} \log(1/p)\right),
\eeq
for all large $n$, where we have used the fact that ${|\cV_1|} \le 2 {\bm e}_1$. Finally the number of ways to choose the vertices in ${\cV_1}$ such that $|\cV_1|={\bm v}$ is bounded by $\binom{n}{{\bm v}}$. Therefore, combining the bounds in \eqref{eq:large-deg-vertex-bd}-\eqref{eq:edge-bd-2} we derive that the number of core graphs with $|\cV_1|={\bm v}$, $e(\Graph_1) = {\bm e}_1$ and $e(\Graph)={\bm e}$ is bounded by 
\[
\binom{n}{{\bm v}} \cdot \exp\left(\f{\vep}{2} {\bm e} \log(1/p)\right).
\]
Since
\[
\log ({\bm e}_1) \le \log ({\bm e}) \le \f\vep 2 {\bm e} \log(1/p),
\]
for all large $n$, finally taking an union bound over the ranges of  ${\bm e}_1$ we derive the desired upper bound on $\cN_0({\bm e}, {\bm v},\gD)$. 


Next we consider the regime $np^{\Delta/2} = \Omega((\log n)^{v_{\sf H}})$. As we have already seen in \eqref{eq:large-deg-vertex-bd} that the choices of the number of vertices of $\ol \cV_1$ can be adequately bounded for the entire regime $p \le n^{-1/\Delta}$. However, the arguments used to derive the bounds \eqref{eq:edge-bd-1}-\eqref{eq:edge-bd-2} becomes ineffective when $np^{\Delta/2}$ is a fractional power of $n$. In this regime, we use the bound derived in Lemma \ref{lem:bad-graph-bd} to deduce that any two arbitrary vertices cannot be connected. This significantly reduces the cardinality of the possible edge set.   

Turning to implement this idea we split the vertices in $\ol \cV_1$ as follows: For $k=1,2,\ldots, k_\star$, we consider the following nested sequence of sets of vertices
\[
W_{k_\star} \subset W_{k_\star-1} \subset \cdots \subset W_2 \subset W_1,
\]
where
\[
W_k:= \{v \in \ol \cV_1: \deg_\Graph(v) \ge 2^{k-1} \gD\},
\]
and
\[
k_\star := \left\lfloor \f{\log (n/\gD)}{\log 2} \right\rfloor.
\]
For ease of writing we set $W_{k_\star+1}:=\emptyset$. Let $u \in W_{i_0-1}\setminus W_{i_0}$ for some $i_0 \ge 2$. This implies that $\deg_\Graph(u) \le 2^{i_0-1} \gD$. Therefore, if $(u,v) \in E(\Graph)$ for some $v \in V(\Graph)$ then using Lemma \ref{lem:bad-graph-bd} we have that 
\beq\label{eq:lem-lbd}
\deg_\Graph(u) \cdot \deg_\Graph(v) \ge \f{\wt c_0(\vep) \cdot e(\Graph)}{(\log n)^{v_{\sf H}}}.
\eeq
We claim that the above implies that $v \in W_{j_0}$, where $j_0:=j_0(i_0)$ is the smallest integer satisfying
\beq\label{eq:j-0-def}
2^{i_0+j_0-1} \gD^2 \ge \f{\wt c_0(\vep) \cdot e(\Graph)}{(\log n)^{v_{\sf H}}}.
\eeq
If not, then 
\[
\deg_\Graph(u) \cdot \deg_\Graph(v) \le 2^{i_0+j_0-2} \gD^2 < \f{\wt c_0(\vep) \cdot e(\Graph)}{(\log n)^{v_{\sf H}}},
\]
where the last step is a consequence of the definition of $j_0(i_0)$. However, this contradicts \eqref{eq:lem-lbd}.

A similar argument also shows that if $u \in \cV_1=\{v \in V(\Graph): \deg_\Graph(v) \le \gD\}$ then any of its adjacent vertices must be in $W_{j_\star}$, where $j_\star$ is the smallest integer satisfying
\beq\label{eq:j-star-def}
2^{j_\star} \gD^2 \ge \f{\wt c_0(\vep) \cdot e(\Graph)}{(\log n)^{v_{\sf H}}}.
\eeq
\corAB{Note that the number of edges in a core graph must be $\Omega(n^2 p^\Delta)$ (this is again a consequence of \eqref{eq:copy-H-star-bd}). Therefore, as by assumption $np^{\Delta/2} = \Omega((\log n)^{v_{\sf H}})$ we have $j_\star \gg 1$. This in particular implies that any vertex in $\cV_1$ cannot connect to another vertex in $\cV_1$.} Furthermore, it is easy to note that for any $i \ge 1$. 
\beq\label{eq:markov-vertex}
|W_i| \le (4/\gD) \cdot 2^{-i} \cdot e(\Graph).
\eeq
Equipped with these observations we bound the number of core graphs with $|\cV_1|={\bm v}$ and $e(\Graph) = {\bm e}$ as follows:
\begin{enumerate}
\item[(1)] Choose the vertices in $\cV_1$.
\item[(2)] Choose the vertices in $\{W_{i_0-1}\setminus W_{i_0}\}_{i_0=2}^{k_\star+1}$. 
\item[(3)] Choose ${\bm e}$ edges from the set of possible edges
\[
\gM:=\left\{(u,v): u \in \cV_1, v \in W_{j_\star}  \right\} \cup \bigcup_{i_0=2}^{k_\star+1}   \left\{(u,v): u \in W_{i_0-1}\setminus W_{i_0}, v \in W_{j_0(i_0)}  \right\}.
\]
\end{enumerate}
Let us find a  bound on the cardinality of $\gM$. 
Using the bound $|\cV_1| \le {\bm e}$, and \eqref{eq:j-star-def}-\eqref{eq:markov-vertex} we find that
\[
\left| \left\{(u,v): u \in \cV_1, v \in W_{j_\star}  \right\} \right| \le {\bm e} \cdot |W_{j_\star}| \le \f{4\gD (\log n)^{v_{\sf H}}}{\wt c_0(\vep)} \cdot {\bm e}. 
\]
Similarly using \eqref{eq:j-0-def} and \eqref{eq:markov-vertex}, for any $i_0 \ge 2$ we deduce that 
\[
\left|\left\{(u,v): u \in W_{i_0-1}\setminus W_{i_0}, v \in W_{j_0(i_0)}  \right\}\right| \le \f{8(\log n)^{v_{\sf H}}}{\wt c_0(\vep)} \cdot {\bm e}.
\]
Thus 
\beq\label{eq:edgeset-gM}
|\gM| \le (k_\star+1) \cdot \f{8\gD (\log n)^{v_{\sf H}}}{\wt c_0(\vep)} \cdot {\bm e} \le \f{8 \gD (\log n)^{v_{\sf H}+1}}{\log 2 \cdot \wt c_0(\vep)} \cdot {\bm e}. 
\eeq
Next we aim to obtain a bound on the number of choices of the vertices belonging to $\{W_{i_0-1}\setminus W_{i_0}\}_{i_0=2}^{k_\star+1}$. Using \eqref{eq:markov-vertex} we find that, for any $i_0=2,3,\ldots, k_\star+1$,
\[
{\bm w}_{i_0}:= |W_{i_0-1}\setminus W_{i_0}| \le (8/\gD) \cdot 2^{-i_0} \cdot {\bm e}.
\]
Thus the number of ways to choose the vertices $\{W_{i_0-1}\setminus W_{i_0}\}_{i_0=2}^{k_\star+1}$ is bounded by 
\beq\label{eq:large-deg-vertex-bd-w}
\prod_{i_0=2}^{k_\star+1} \left(\sum_{{\bm w}_{i_0} =0}^{ \lfloor (8/\gD) \cdot 2^{-i_0} \cdot {\bm e}\rfloor} n^{{\bm w}_{i_0}}\right) \le \prod_{i_0=2}^{k_\star+1} n^{(16/\gD) \cdot 2^{-i_0} \cdot {\bm e}} \le n^{8 {\bm e}/D} \le p^{-\f\vep2 {\bm e}},
\eeq
where in the last step we again use the fact that $p \le n^{-1/\Delta}$ and also the lower bound on $\gD\geq D =\lceil 16\Delta /\vep \rceil$. 
Therefore proceeding as in steps (1)-(3) and applying \eqref{eq:edgeset-gM}-\eqref{eq:large-deg-vertex-bd-w} we now derive hat
\begin{multline*}
\cN_0({\bm e}, {\bm v}, \gD) \le \binom{n}{{\bm v}}  \cdot p^{-\f\vep2 {\bm e}} \cdot \binom{|\gM|}{{\bm e}} \le  \binom{n}{{\bm v}}  \cdot p^{-\f{\vep}{2} {\bm e}} \cdot \left(\f{16e\gD(\log n)^{v_{\sf H}+1}}{\wt c_0(\vep) \cdot \log 2}\right)^{\bm e} \le \binom{n}{{\bm v}} \cdot p^{-{\vep} {\bm e}},
\end{multline*}
for all large $n$, where in the last step we once again use $p \le n^{-1/\Delta}$. This completes the proof of the lemma. 
\end{proof}

\corAB{
Finally in the following section, upon combining the results of this and previous section we prove the entropic stability of the strong-core graphs for regular connected bipartite graphs.}  
\subsection{Proposition of \ref{prop:strong-core} for bipartite graphs}\label{sec:proof-sc-even}
Before going to the proof of Proposition \ref{prop:strong-core} we remind the reader that we would like to choose $\Graph_\gb$ in such a way so that almost all copies of ${\sf H}$ are either completely contained in $\Graph_\gb$ or in $\Graph\setminus \Graph_\gb$. This necessitates the following decomposition of the vertices of the subgraph $ \Graph_{\rm low}:=\Graph_{\rm low}(\vep)$. 

\subsection*{Decomposition of the vertex set} Let ${\sf H}$ be a $\Delta$-regular connected bipartite graph with $\Delta \ge 2$. For ease in writing denote $t:= \f{v_{\sf H}}{2}$. Let $D:=D(\vep)$ be as in Lemma \ref{lem:large-count}. Set
\beq\label{eq:dfn-C_3}
C_3:=C_3(\vep) := (t-1) \left(\left\lceil \f{(2 \bar C_\star)^t}{\delta \vep}\right\rceil +2\right).
\eeq
Now for $i \in \llbracket C_3 \rrbracket$ define
\[
V_i:=\{ v \in V( \Graph_{\rm low}): D_{i-1}+1 \le\deg_\Graph(v) \le D_i\},
\]
where 
\[
D_i:=D_i(\vep):= D \cdot \left( \f{C_0(\vep)}{c_0(\vep)} \right)^{i-1},
\]
with $C_0(\vep)$ and $c_0(\vep)$ as in Lemma \ref{lem:bad-edges-bounds}, and we set $D_0:=0$. Denote
\[
\wt V_1:= \{ v \in V( \Graph_{\rm low}): (u,v) \in E( \Graph_{\rm low}) \text{ for some } u \in V_1\}.
\]
For $i =2,3,\ldots, C_3$, we then iteratively define
\[
\wt V_i:= \{ v \in V( \Graph_{\rm low}): (u,v) \in E( \Graph_{\rm low}) \text{ for some } u \in V_i\}\setminus \wt V_{i-1},
\]
and let
\[
V_\sharp:= V( \Graph_{\rm low}) \setminus \left(\bigcup_{i=1}^{C_3} V_i \cup \wt V_i\right).
\]
For ease of writing, for $i \in \llbracket C_3\rrbracket$ let us also denote $\Graph_{i,{\rm g}}$ to be subgraph spanned by the edges that are incident to some vertex in $\cup_{j=1}^i V_j$, and $\bar \Graph_{i, {\rm g}}$ to be its complement graph when $\Graph_{i, {\rm g}}$ is viewed as a subgraph of $\Graph_{\rm low}$. 

Equipped with the above notation we now proceed to the proof of Proposition \ref{prop:strong-core} when ${\sf H}$ is a connected regular bipartite graph.

\begin{proof}[Proof of Proposition \ref{prop:strong-core} for bipartite graphs]
We begin the proof by first identifying the subgraph $\Graph_\gb$ having the desired property mentioned above. To this end, for any strong-core graph $\Graph$, as $e(\Graph) \le \bar C_\star n^2 p^\Delta$, it follows from \eqref{eq:copy-H-star-bd} and the observation $\upalpha_{\sf H}= \f{v_{\sf H}}{2}=t$ that
\beq\label{eq:ubd-c-2l}
N({\sf H}, \Graph) \le (2\bar C_\star n^2 p^\Delta)^t = (2\bar C_\star)^t \cdot n^{v_{\sf H}} p^{e({\sf H})}.
\eeq
Therefore there exists $i_\star \in \llbracket C_3-t+1 \rrbracket$ such that
\beq\label{eq:cross-term-bd-1}
N({\sf H}, \Graph_{i_\star+(t-1), {\rm g}}) - N({\sf H}, \Graph_{i_\star, {\rm g}}) \le \vep \delta n^{v_{\sf H}} p^{e({\sf H})}.
\eeq
Otherwise, as $\Graph_{C_3, {\rm g}} \subset \Graph$ and $t \ge 2$,
\begin{multline*}
N({\sf H}, \Graph) \ge \left[\sum_{i=1}^{C_3/(t-1) -1} N({\sf H}, \Graph_{(t-1)i+1, {\rm g}}) - N({\sf H}, \Graph_{(t-1)(i-1)+1, {\rm g}}) \right]+ N({\sf H}, \Graph_{1, {\rm g}}) \\
\ge \left(\f{C_3}{t-1}-1\right) \vep \delta n^{v_{\sf H}} p^{e({\sf H})} > (2 \bar C_\star)^t n^{v_{\sf H}} p^{e({\sf H})},
\end{multline*}
yielding a contradiction to \eqref{eq:ubd-c-2l}, where the last inequality follows by recalling the definition of $C_3$ (see \eqref{eq:dfn-C_3} above). Next we make the another observation:

\begin{claim}\label{claim:cross-term}
Fix $i \in \llbracket C_3 - t+1\rrbracket$. Any copy of ${\sf H}$ in $\Graph_{\rm low}$ that uses an edge of $\Graph_{i, {\rm g}}$ must be contained in $\Graph_{i+(t-1), {\rm g}}$. 
\end{claim}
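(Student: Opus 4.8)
**The plan is to prove Claim \ref{claim:cross-term} by a degree-propagation argument along the vertices of a copy of ${\sf H}$.** Recall the structure: the subgraph $\Graph_{i,\rm g}$ is spanned by edges incident to $\bigcup_{j=1}^i V_j$, i.e. by edges with an endpoint of degree at most $D_i$. The key mechanism is the tight two-sided bound of Lemma \ref{lem:bad-edges-bounds}: for every edge $(u,v)$ of $\Graph_{\rm low}$ one has $c_0(\vep) n^2 p^\Delta \le \deg_\Graph(u)\deg_\Graph(v) \le C_0(\vep) n^2p^\Delta$. Hence if $\deg_\Graph(u) \le D_k$ then $\deg_\Graph(v) \ge c_0(\vep) n^2 p^\Delta / D_k = (c_0(\vep)/C_0(\vep)) \cdot n^2 p^\Delta \cdot (c_0(\vep)/C_0(\vep))^{k-2}\cdot D^{-1}$; more usefully, for the direction we need, if $\deg_\Graph(v)$ is at most $D_{k}$ (so $v\in \bigcup_{j\le k}V_j$) then any neighbor $u$ of $v$ has $\deg_\Graph(u) \ge c_0(\vep)n^2p^\Delta/D_k$, and since $D_k D_{k+1}/c_0(\vep) \ge$ roughly $n^2p^\Delta$ by the definition $D_i = D (C_0/c_0)^{i-1}$ and the lower bound on $D$, one gets $\deg_\Graph(u) \ge D_{k}\cdot(C_0/c_0)\cdot(\text{const}) $, which is enough to conclude $u \notin V_1\cup\cdots\cup V_{k}$ but does allow $u\in V_{k+1}$. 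The precise bookkeeping I would set up so that: an edge with one endpoint in $V_j$ forces the other endpoint to have degree exceeding $D_{j-1}$ — i.e. it lies outside $V_1\cup\cdots\cup V_{j-1}$ — so its ``low'' endpoint classification can only increase by controlled steps as we walk along a path.

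**The core step.** Fix a copy $\mathscr H$ of ${\sf H}$ in $\Graph_{\rm low}$ that uses some edge $e_0=(u_0,v_0)$ of $\Graph_{i,\rm g}$, so WLOG $u_0 \in V_{j_0}$ for some $j_0 \le i$. I want to show every vertex $w$ of $\mathscr H$ lies in $\bigcup_{j\le i+(t-1)} V_j \cup \widetilde V_j$ — or more precisely that every edge of $\mathscr H$ lies in $\Graph_{i+(t-1),\rm g}$. Since ${\sf H}$ is connected with $v_{\sf H}=2t$ vertices, any vertex of $\mathscr H$ is reachable from $u_0$ by a path inside $\mathscr H$ of length at most $v_{\sf H}-1 = 2t-1$; but because ${\sf H}$ is bipartite, and $u_0$ lies on one side, a vertex on the same side is reached in an even number of steps $\le 2t-2$ and a vertex on the other side in an odd number $\le 2t-1$. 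Walking from $u_0$: its neighbor $v_0$ has degree $> D_{j_0-1}$; its neighbor-of-neighbor has degree... here I would argue that along the walk the ``V-index'' of the vertices you pass through can increase by at most one every two steps. The cleanest framing: define, for a vertex $w \in V(\Graph_{\rm low})$, its index $\iota(w)$ to be the $j$ with $w\in V_j$ (or $\infty$ if $w\in V_\sharp$). An edge $(u,v)$ of $\Graph_{\rm low}$ with $\iota(u)=j$ forces $\deg_\Graph(u)\le D_j$, hence $\deg_\Graph(v)\ge c_0(\vep)n^2p^\Delta/D_j$; I'd check via the definition of the $D_i$'s and the lower bound $D\ge\lceil 16\Delta/\vep\rceil$ (so $D$ is large, making $D^2 \ge n^2p^\Delta/c_0$ false in general — wait, that's the wrong direction) — rather, since $\deg_\Graph(u)\deg_\Graph(v)\le C_0(\vep)n^2p^\Delta$ and $\deg_\Graph(u)>D_{j-1}$, we get $\deg_\Graph(v) < C_0(\vep)n^2p^\Delta/D_{j-1} = D_{j+1}$ (using $D_{j+1}/D_{j-1} = (C_0/c_0)^2$, so I actually want $\deg_\Graph(v) \le C_0 n^2 p^\Delta / D_{j-1}$; and $C_0 n^2 p^\Delta \le $ something like $C_0 \cdot (C_0/c_0)\cdot D_{j-1}D_{j+1}/(\text{const})$... ). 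The upshot I would establish: $\iota(v) \le \iota(u)+1$ for any edge $(u,v)$ in the low graph with $\iota(u)\ge 1$; combined with bipartiteness this gives $\iota(w)\le j_0 + (t-1) \le i+(t-1)$ for every vertex $w$ of $\mathscr H$, since a copy of ${\sf H}$ spans at most $t$ vertices on each side and the walk alternates sides. Therefore every edge of $\mathscr H$ has an endpoint in $\bigcup_{j\le i+(t-1)}V_j$, i.e. $\mathscr H \subset \Graph_{i+(t-1),\rm g}$.

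**The main obstacle** is getting the index-increment bound $\iota(v)\le\iota(u)+1$ exactly right from Lemma \ref{lem:bad-edges-bounds}, and in particular checking that the geometric ratio $C_0(\vep)/c_0(\vep)$ in the definition of $D_i$ is calibrated so that one low-index endpoint propagates to an index at most one higher — rather than two higher — on the adjacent vertex, across a single edge. If the increment were $2$ per edge instead of $1$, bipartiteness would only save a factor of $2$ and one would need $t-1$ replaced by $2(t-1)$ or so in the span estimate, which would break the counting in \eqref{eq:cross-term-bd-1}. I expect the resolution is that the index bound one genuinely gets from the edge constraint is: if $\deg_\Graph(u) \le D_j$ then $\deg_\Graph(v) \ge c_0(\vep)n^2p^\Delta/D_j$; and separately every vertex of $\Graph_{\rm low}$ incident to an edge has degree $\le D_{C_3}$ is automatic, but the point is that a vertex of index $j+2$ or higher would have degree $> D_{j+1} = D_j\cdot (C_0/c_0)$, which when multiplied by $\deg_\Graph(u) > D_{j-1}$ gives product $> D_{j-1}D_{j+1} = D_j^2 \cdot (c_0/C_0)^0 \cdot \dots$; I'd have to verify $D_{j-1}D_{j+1} > C_0(\vep)n^2p^\Delta$ using the baseline $D = \lceil 16\Delta/\vep\rceil$ and possibly the fact that $n p^{\Delta/2}$ is large — actually since $D$ is a \emph{constant} and $n^2p^\Delta\to\infty$, this inequality $D_{j-1}D_{j+1}>C_0 n^2p^\Delta$ would \emph{fail} for small $j$. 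So the correct statement must instead be phrased in terms of $\widetilde V_i$: a neighbor of a vertex in $V_j$ of degree too large to be in any $V_{j'}$ with $j'\le j$ lands in $\widetilde V_j$ by definition, and it is the interplay $V_j \leadsto \widetilde V_j$ (not $V_j\leadsto V_{j+1}$) that drives the argument; then one further edge from $\widetilde V_j$ returns to some $V_{j'}$, and the definition of $D_i$ ensures $j' \le j+1$. Pinning down this two-step $V_j \to \widetilde V_j \to V_{\le j+1}$ cycle, and checking the arithmetic of the $D_i$-ratios against Lemma \ref{lem:bad-edges-bounds}, is where the real work lies; the bipartite walk-length bookkeeping is then routine.
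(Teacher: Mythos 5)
Your proposal identifies the correct mechanism --- the two-sided degree bound from Lemma \ref{lem:bad-edges-bounds} propagating along paths inside the copy $\mathscr H$ --- and, after a false start, converges to the right formulation. However, the route you take diverges from the paper's in a meaningful way, so let me compare them.

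Your first-pass increment lemma, ``$\iota(v) \le \iota(u)+1$ for any edge $(u,v)$ of $\Graph_{\rm low}$,'' is false: a vertex $u\in V_j$ has $\deg_\Graph(u)\le D_j$ (a constant), so the lower bound $\deg_\Graph(u)\deg_\Graph(v)\ge c_0(\vep)n^2p^\Delta$ forces $\deg_\Graph(v)\ge c_0(\vep)n^2p^\Delta/D_j \to \infty$, placing $v$ in the $\widetilde V$-layers, not in any $V_{j'}$. You catch this and correct to the two-step propagation: $V_{\le j}$ (low degree) $\to$ high degree $\to$ $V_{\le j+1}$. That correction is the right one and, once stated, the arithmetic does close: if $\deg_\Graph(u)\le D_j$ then $\deg_\Graph(v)\ge c_0 n^2p^\Delta/D_j$, and for any further neighbor $w$, the upper bound gives $\deg_\Graph(w)\le C_0 n^2p^\Delta/\deg_\Graph(v) \le (C_0/c_0)D_j = D_{j+1}$. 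Thus the edge-level increases by at most one per \emph{two} edges of the path, and since the alternation of low/high degree is forced, it aligns with the bipartition of $\mathscr H$. Your walk-length bound then works out: starting from the low-degree endpoint $u_0\in V_{\le i}$, a same-side endpoint of any edge $e_\star'$ is reachable by a (BFS) path of even length $\le 2(t-1)$, so $e_\star'$ is the $(\le 2t-1)$-th edge of a path, at level at most $i+\lfloor(2t-1)/2\rfloor = i+t-1$.

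The paper takes a shorter and cleaner path at the cost of a stronger structural input. Rather than your parity-plus-finer-increment argument, it invokes the fact that every connected regular bipartite graph is $2$-vertex-connected (\cite[Ch.~3.7, Exercise 7.4]{BaRa}), hence any vertex and any edge of ${\sf H}$ lie on a common cycle of length at most $2t$. This yields the tight bound ${\rm diam}_E({\sf H})\le t-1$, so a path from $w_1$ to $e_\star'$ of length at most $t$ suffices, and the paper only needs the blunter ``per-edge'' observation that any edge adjacent to $\Graph_{i_0,\rm g}$ lies in $\Graph_{i_0+1,\rm g}$. Your approach buys independence from the $2$-vertex-connectivity lemma but pays with the more delicate per-two-edge bookkeeping and the parity argument that a same-side vertex of a bipartite graph on $2t$ vertices is within distance $2(t-1)$. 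Both yield $i+t-1$. So: same engine (Lemma \ref{lem:bad-edges-bounds} propagation), different tradeoff between the graph-theoretic input and the fineness of the increment lemma. Your plan is sound; what remains to make it a proof is exactly what you flag: state and verify the two-step increment cleanly (your first-pass $\iota(v)\le\iota(u)+1$ needs to be replaced, and the confused intermediate claim ``$C_0 n^2p^\Delta/D_{j-1}=D_{j+1}$'' --- which cannot hold since the left side grows with $n$ --- should be discarded).
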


Equipped with Claim \ref{claim:cross-term} we note that any copy of ${\sf H}$ that uses edges of both $\Graph_{i_\star, {\rm g}}$ and $\bar \Graph_{i_\star, {\rm g}}$ must be contained in $\Graph_{i_\star+(t-1), {\rm g}}$ but not in $\Graph_{i_\star, {\rm g}}$. Hence by \eqref{eq:cross-term-bd-1} the number of such copies of ${\sf H}$ is at most $\vep \delta n^{v_{\sf H}} p^{e({\sf H})}$. Observe that any labelled copy of ${\sf H}$ in $\Graph_{\rm low}$ must be either contained in $\Graph_{i_\star, {\rm g}}$, or $\bar \Graph_{i_\star, {\rm g}}$, or must use edges of both $\Graph_{i_\star, {\rm g}}$ and $\bar \Graph_{i_\star, {\rm g}}$. Therefore from Lemma \ref{lem:bad-edges-bounds}(b) it now follows that 
\begin{align}\label{eq:cross-term-bd-2}
N({\sf H}, \Graph_{i_\star, {\rm g}}) + N({\sf H}, \bar \Graph_{i_\star, {\rm g}})&  \ge N({\sf H}, \Graph_{\rm low}) - \vep \delta n^{v_{\sf H}} p^{e({\sf H})}\notag\\
& \ge (1-\vep)N({\sf H}, \Graph) -\vep \delta n^{v_{\sf H}} p^{e({\sf H})} \ge\delta(1-8\vep) n^{v_{\sf H}} p^{e({\sf H})},
\end{align}
where in the last step we use the fact that $\Graph$ is a strong-core graph. 

Thus setting $\Graph_\gb=\Graph_{i_\star, {\rm g}}$ we indeed have that almost all the copies of ${\sf H}$ in $\Graph$ are either contained in $\Graph_\gb$ in $\Graph\setminus \Graph_\gb$. However, we note that this splitting procedure is not identical for {\em all }strong-core graphs, which is captured by the existence of {\em some} $i_\star \in \llbracket C_3\rrbracket$ that may very well vary for different graphs.  This indeterminacy in the parameter $i_\star$ results in another union bound. Nevertheless, as we will see below this additional bound turns out to be harmless for our purpose. 

Before proceeding further let us prove Claim \ref{claim:cross-term}. Turning to do this we fix an edge $e=(u, \wt u) \in E(\Graph_{i_0, {\rm g}}) \subset E(\Graph_{\rm low})$ for some $i_0 \in \llbracket C_3-t+1\rrbracket$. Without loss of generality assume that $u \in \cup_{j=1}^{i_0} V_j$. From definition of the set $V_j$ we have that
\(
\deg_\Graph(u) \le D_{i_0}
\)
and therefore from Lemma \ref{lem:bad-edges-bounds}(a) it follows that
\beq\label{eq:deg-wt-u-lbd}
\deg_\Graph(\wt u) \ge (c_0(\vep)/D_{i_0}) \cdot n^2 p^\Delta.
\eeq
Let $w$ be any vertex adjacent to $\wt u$ in $\Graph_{\rm low}$. Using Lemma \ref{lem:bad-edges-bounds}(b) we deduce that
\[
\deg_\Graph(w) \le \left(\f{C_0(\vep)}{c_0(\vep)}\right) \cdot D_{i_0} = D_{i_0+1},
\]
which in particular implies that $w \in \cup_{j=1}^{i_0+1} V_j$. This shows that any edge adjacent to some edge in $\Graph_{i_0, {\rm g}}$ must be in $\Graph_{i_0+1, {\rm g}}$.

Next, ${\sf H}$ being a connected regular bipartite graph we observe that it is $2$-vertex connected, i.e.~removal of one vertex does not make it disconnected (see \cite[Chapter 3.7, Exercise 7.4]{BaRa}). Fix any vertex $v$ and and edge $e$ in ${\sf H}$. From \cite[Exercise 3.7(a)]{BaRa} we have that $e$ and $v$ must be in some common cycle. As the length of any cycle in ${\sf H}$ is at most $v_{\sf H}$ ($= 2t$), denoting
\[
{\rm diam}_E({\sf H}):= \max_{e  \in E({\sf H}), v \in V({\sf H})} {\rm dist}_E(e, v),
\]
where ${\rm dist}_E(e,v)$ denotes the minimum length of the path in ${\sf H}$ needed to connect an end of $e$ to $v$, we find that ${\rm diam}_E({\sf H}) \le t-1$. 

Now fix a labelled copy $\mathscr{H}$ of ${\sf H}$ that uses an edge  $e_\star=(w_1, w_2) \in E(\Graph_{i, {\rm g}})$. Without loss of generality, assume that $w_1 \in \cup_{j=1}^i V_j$. Let $e_\star'$ be any other edge in $\mathscr{H}$.  We have from above that 
\[
{\rm dist}_E(e_\star', w_1) \le {\rm diam}_E(\mathscr{H}) \le {\rm diam}_E({\sf H}) \le t-1.
\]
This means that there is a path $\mathscr{P}$ of length at most $t$ such that the starting vertex is $w_1$ and the last edge is $e_\star'$. As $w_1 \in \cup_{j=1}^i V_j$ the first edge of $\mathscr{P}$ is in $\Graph_{i, {\rm g}}$. Since we noted above that  an edge that is adjacent to some edge in $\Graph_{i_0, {\rm g}}$ must be in $\Graph_{i_0+1, {\rm g}}$ for any $i_0 \in \llbracket C_3 -t+1 \rrbracket$ it is now immediate, by induction, that $e_\star' \in E(\Graph_{i+t-1,{\rm g}})$. This shows that any edge of $\mathscr{H}$ must be in $\Graph_{i+t-1,{\rm g}}$ and thus Claim \ref{claim:cross-term} is proved.

We now return to the proof of the proposition. Observe that \eqref{eq:cross-term-bd-2} implies that
\beq\label{eq:sc-2ell-union-1}
\left\{ \exists \text{ a strong-core graph}\right\} \subset  \cup_{i_\star =1}^{C_3-t+1} \cup_{{\bm e}=\bar{\bm e}_0(\delta(1-6\vep))}^{\bar C_\star n^2 p^\Delta}\left\{ \exists \Graph \subset \G(n,p): \Graph \in \cI_{i_\star, {\bm e}}\right\},
\eeq
where 
\begin{multline*}
\cI_{i_\star, {\bm e}}:= \Big\{ \Graph: \Graph \text{ is a strong-core graph with } e(\Graph) = {\bm e} \text{ and } \\
N({\sf H}, \Graph_{i_\star, {\rm g}}) + N({\sf H}, \bar \Graph_{i_\star, {\rm g}}) \ge \delta(1-8\vep) n^{v_{\sf H}} p^{e({\sf H})} \Big\},
\end{multline*}
and we recall from \eqref{eq:e-0} that $\bar{\bm e}_0(\delta_0):=\bar{\bm e}_0(\delta_0, {\sf H})$ is the minimum number of edges that a graph must possess to have $\delta_0n^{v_{\sf H}} p^{e({\sf H})}$ labelled copies of ${\sf H}$. Thus, to find an upper bound on the probability of the \abbr{LHS} of \eqref{eq:sc-2ell-union-1} it suffices to prove the same for $\cI_{i_\star, {\bm e}}$, and then take a union bound over the allowable range of $i_\star$ and ${\bm e}$. 

We split $\cI_{i_\star, {\bm e}}$ further into two subsets: $N({\sf H}, \Graph_{i_\star, {\rm g}})$ is small and $N({\sf H}, \Graph_{i_\star, {\rm g}})$ is large. Let us first consider the case when $N({\sf H}, \Graph_{i_\star, {\rm g}})$ is small. 

\subsection*{Case $1$} $N({\sf H}, \Graph_{i_\star, {\rm g}}) \le \vep \delta n^{v_{\sf H}} p^{e({\sf H})}$. 

In this case as $N({\sf H}, \Graph_{i_\star, {\rm g}})$ is small the graph $\Graph_{i_\star, {\rm g}}$ can potentially be close to an empty graph. So we cannot use the entropic stability of it. We need to rely on the entropic stability of $\bar \Graph_{i_\star, {\rm g}}$. 

Turning to make this idea precise we fix ${\bm e}_\# \le {\bm e}$ and for ease of writing denote
\[
\cI_{i_\star, {\bm e}, {\bm e}_\#}^{(1)}:= \left\{\Graph: \Graph \in \cI_{i_\star, {\bm e}}, e(\Graph_{i_\star, {\rm g}})={\bm e}_\#, \text{ and } N({\sf H}, \Graph_{i_\star, {\rm g}}) \le \vep \delta n^{v_{\sf H}} p^{e({\sf H})}\right\}.
\]
We aim to derive a bound on the probability of the event $\{\exists \Graph \subset \G(n,p): \Graph \in \cI_{i_\star, {\bm e}, {\bm e}_\#}^{(1)}\}$. To achieve this goal we will apply Lemma \ref{lem:large-count} with 
\beq\label{eq:cV-1}
\cV_1= \left(\cup_{j=1}^{i_\star} V_j\right) \bigcup \left\{v \in V(\Graph_{\rm high}): \deg_\Graph(v) \le D_{i_\star}\right\}
\eeq
and 
\beq\label{eq:gD}
\gD= D_{i_\star}. 
\eeq
Before applying that lemma we need to make several observations. 

From the definition of $\cV_1$ it follows that $\cV_1\setminus (\cup_{j=1}^{i_\star} V_j) \subset V(\Graph_{\rm high})$. Thus 
 \beq\label{eq:cV-1-decompose}
 |\cV_1| \le |V(\Graph_{\rm high})| + \left|\left(\cup_{j=1}^{i_\star} V_j\right)\setminus V(\Graph_{\rm bad})\right| + |V(\Graph_{\rm bad})|,
 \eeq
 where we recall the definition of $\Graph_{\rm bad}$ from Definition \ref{dfn:bad-graph}. We next note that if $v \notin V(\Graph_{\rm bad})$ then there exists at least one copy ${\sf H}$ passing through $v$ that is contained in $\Graph_{\rm low}$. This, in particular implies that $\deg_{\Graph_{i_\star, {\rm g}}}(v) \ge \Delta$ for all $v \in (\cup_{j=1}^{i_\star} V_i)\setminus V(\Graph_{\rm bad})$. From \eqref{eq:deg-wt-u-lbd}, as $np^{\Delta/2} \gg 1$, we also observe that $\Graph_{i_\star, {\rm g}}$ is a bipartite graph with one part $\cup_{j=1}^{i_\star} V_j$. Hence using Lemma \ref{lem:bad-edges-bounds}(b), as
 \[
 |V(\Graph_{\rm high})| \le |V(\Graph_{\rm bad})| \le 2 e(\Graph_{\rm bad}),
 \]
 from \eqref{eq:cV-1-decompose} we derive that
\beq\label{eq:cV-1-decompose-1}
\Delta |\cV_1| \le \Delta \left(|V(\Graph_{\rm high})| + |\left(\cup_{j=1}^{i_\star} V_j\right)\setminus V(\Graph_{\rm bad})| + |V(\Graph_{\rm bad})|\right) \le  4\Delta \vep e(\Graph) + e(\Graph_{i_\star, {\rm g}}).
\eeq
We now apply Lemma \ref{lem:large-count} to find that the cardinality of the set of graphs belonging to $\cI_{i_\star, {\bm e}, {\bm e}_\#}^{(1)}$ with $|\cV_1| ={\bm v}$ is bounded by 
\[
n^{\bm v} p^{-\vep {\bm e}} \le p^{-\Delta {\bm v}} p^{-\vep {\bm e}} \le p^{-{\bm e}_\#} \cdot p^{-5 \Delta \vep  {\bm e}},
\]
for all large $n$, where in the first step we used the fact that $p \le n^{-1/\Delta}$, and in the last step we used \eqref{eq:cV-1-decompose-1} and the fact that for any $\Graph \in \cI_{i_\star, {\bm e}, {\bm e}_\#}^{(1)}$ one has $e(\Graph_{i_\star, {\rm g}})={\bm e}_\#$. 

As the probability of observing any graph with ${\bm e}$ edges is $p^{\bm e}$ taking an union over ${\bm v} \le 2 {\bm e} \le 2 \bar C_\star n^2 p^\Delta$ we conclude that
\begin{align}\label{eq:cI-1-bd-1}
\P(\Graph \subset \G(n,p): \Graph \in \cI_{i_\star, {\bm e}, {\bm e}_\#}^{(1)})  & \le2 \bar C_\star n^2 p^\Delta \cdot  \exp\left(-\log(1/p) \left\{ (1-5\Delta \vep) ({\bm e} -{\bm e}_\#) - 5\Delta \vep  {\bm e}_\#\right\}\right).
\end{align}
From the definition of $\cI_{i_\star, {\bm e}, {\bm e}_\#}$ it further follows that
\[
N({\sf H}, \bar\Graph_{i_\star, {\rm g}}) \ge \delta(1 - 9 \vep)n^{v_{\sf H}} p^{e({\sf H})}
\]
for any $\Graph \in \cI_{i_\star, {\bm e}, {\bm e}_\#}^{(1)}$. 
Thus
\beq\label{eq:e-ehashbd-1}
{\bm e} - {\bm e}_\# = e(\Graph) - e(\Graph_{i_\star, {\rm g}}) \ge e(\bar \Graph_{i_\star, {\rm g}}) \ge \f12 \delta^{\f1t} (1-9\vep)^{\f1t} n^2 p^\Delta,
\eeq
where the final lower bound is again a consequence of \eqref{eq:copy-H-star-bd} and recall $t=\f{v_{\sf H}}{2}$. Moreover, as $\Graph$ is a strong-core graph,
\beq\label{eq:e-ehashbd-2}
{\bm e}_\# \le {\bm e} \le \bar C_\star n^2 p^\Delta.
\eeq
Hence summing both sides of \eqref{eq:cI-1-bd-1} over the allowable range of ${\bm e}_\#$ and ${\bm e}$, given by \eqref{eq:e-ehashbd-1}-\eqref{eq:e-ehashbd-2}, we derive that
\begin{multline}\label{eq:P-cI-1-bd-1} 
\P\left(\bigcup_{{\bm e}, {\bm e}_\#}\left\{\exists \Graph \subset \G(n,p): \Graph \in \cI_{i_\star, {\bm e}, {\bm e}_\#}^{(1)}\right\}\right) \\
\le 2 (\bar C_\star n^2 p^\Delta)^3 \cdot \exp\left( -\log(1/p) \left\{\f12 \delta^{\f1t} (1-5\Delta\vep)(1-9\vep)^{\f1t}n^2 p^\Delta - 5\Delta \vep\cdot \bar C_\star n^2 p^\Delta\right\} \right)\\
\le \exp\left(-\log(1/p)\cdot \f12 \delta^{\f1t} (1-\gf^{(1)}_{\sf H}(\vep)) n^2 p^\Delta\right),
\end{multline}
for all large $n$, where $\gf^{(1)}_{\sf H}(\cdot)$ is some function with the property $\lim_{\vep \downarrow 0} \gf^{(1)}_{\sf H}(\vep)=0$. 

This gives the desired bound when $N({\sf H}, \Graph_{i_\star, {\rm g}})$ is small. Next we consider the other case.

\subsection*{Case $2$} $N({\sf H}, \Graph_{i_\star, {\rm g}}) > \vep \delta n^{v_{\sf H}} p^{e({\sf H})}$.

In this case we need to use the entropic stability of both $\Graph_\gb$ and $\Graph\setminus \Graph_\gb$. As before, let us set 

\[
\cI_{i_\star, {\bm e}, {\bm e}_\#}^{(2)}:= \left\{\Graph: \Graph \in \cI_{i_\star, {\bm e}}, e(\Graph_{i_\star, {\rm g}})={\bm e}_\#, \text{ and } N({\sf H}, \Graph_{i_\star, {\rm g}}) \ge \vep \delta n^{v_{\sf H}} p^{e({\sf H})}\right\}.
\]

To carry out the argument effectively we need to discretize the range of $N({\sf H}, \Graph_{i_\star, {\rm g}})$. To this end, denote 
\[
\cS:=\{\vep, 2\vep, 3\vep, \ldots, s_0\vep\},
\] 
where $s_0:=\lfloor (1-9\vep)/\vep\rfloor$. For $\upeta \in \cS$ let
 \[
\cI_{i_\star, {\bm e}, {\bm e}_\#, \upeta}^{(2)}:= \left\{\Graph: \Graph \in \cI_{i_\star, {\bm e}}, e(\Graph_{i_\star, {\rm g}})={\bm e}_\#, \text{ and } N({\sf H}, \Graph_{i_\star, {\rm g}}) \in[ \upeta \delta n^{v_{\sf H}} p^{e({\sf H})}, (\upeta+\vep) \delta n^{v_{\sf H}} p^{e({\sf H})}]\right\}.
\]

Finally let
\[
\wh\cI_{i_\star, {\bm e}, {\bm e}_\#}^{(2)}:=\left\{\Graph: \Graph \in \cI_{i_\star, {\bm e}}, e(\Graph_{i_\star, {\rm g}})={\bm e}_\#, \text{ and } N({\sf H}, \Graph_{i_\star, {\rm g}}) \geq (1-9\vep) \delta n^{v_{\sf H}} p^{e({\sf H})}\right\}.
\]

Let us proceed to bound the cardinality of $\cI_{i_\star, {\bm e}, {\bm e}_\#, \upeta}^{(2)}$. This will be done by applying Lemma \ref{lem:large-count}. To apply that lemma we need several estimates. 

Since $i_\star \le C_3$ recalling the definition of the sets $\{V_j\}$ and that $\Graph_{i_\star, {\rm g}}$ is a bipartite graph, with $\cup_{j=1}^{i_\star} V_j$ as one of its parts, we find that 
\beq\label{eq:v-sharp-bd}
D\cdot \left(\f{C_0(\vep)}{c_0(\vep)}\right)^{C_3} \cdot |\cup_{j=1}^{i_\star} V_j| \ge e(\Graph_{i_\star, {\rm g}}).
\eeq
If $\Graph \in \cI_{i_\star, {\bm e}, {\bm e}_\#, \upeta}^{(2)}$ then $N({\sf H}, \Graph_{i_\star, {\rm g}}) \ge \upeta \delta n^{v_{\sf H}} p^{e({\sf H})}$ which in turn, by yet another application of \eqref{eq:copy-H-star-bd}, implies that
\beq\label{eq:e-i-star-bd}
e(\Graph_{i_\star, {\rm g}}) \ge \f{1}{2}\upeta^{\f1t} \delta^{\f1t} n^2 p^\Delta \ge \f{1}{2}\vep^{\f1t} \delta^{\f1t} n^2 p^\Delta.
\eeq
Thus, as $n p^{\Delta/2} \gg 1$, by \eqref{eq:v-sharp-bd} we have
\beq\label{eq:vhash-bd}
{\bm v}_\# := |\cup_{j=1}^{i_\star} V_j| \ge e np^{\Delta/2},
\eeq
for all large $n$. Furthermore, we recall that for $\cV_1$ as in \eqref{eq:cV-1} the set of vertices $\cV_1 \setminus (\cup_{j=1}^{i_\star} V_j) \subset V(\Graph_{\rm high})$. Thus using Lemma \ref{lem:bad-edges-bounds}(b) we also have that
\beq\label{eq:cV-1-bad-bd}
0 \le |\cV_1| - |\cup_{j=1}^{i_\star} V_j| \le 2 \vep e(\Graph).
\eeq
Using the lower bound on ${\bm v}_\#$ we now apply Lemma \ref{lem:large-count} with 
$\cV_1$ and $\gD$ as in \eqref{eq:cV-1}-\eqref{eq:gD} to find that the number of graphs in $\cI_{i_\star, {\bm e}, {\bm e}_\#, \upeta}^{(2)}$ with $|\cV_1| ={\bm v}$ and $|\cup_{j=1}^{i_\star} V_j| = {\bm v}_\#$ is bounded by 
\[
\binom{n}{{\bm v}} p^{-\vep {\bm e}} \le \left(\f{en}{{\bm v}}\right)^{{\bm v}} p^{-\vep {\bm e}}  \le \left(\f1p\right)^{(\Delta/2) \cdot {\bm v}+\vep {\bm e}} \le \left(\f1p\right)^{{\bm v}_\# + 2\Delta \vep {\bm e}}, 
\]
where the first step is due to Stirling's approximation, \corAB{and the second step is due to the fact that ${\bm v} \ge {\bm v}_\#$ and \eqref{eq:vhash-bd}. While the last inequality is due to \eqref{eq:cV-1-bad-bd}.} Thus
\begin{multline}\label{eq:P-cI-2-bd-1}
\P\left(\exists \Graph \subset \G(n,p): \Graph \in \cI_{i_\star, {\bm e}, {\bm e}_\#, \upeta}^{(2)} \text{ such that } |\cV_1| ={\bm v} \text{ and } |\cup_{j=1}^{i_\star} V_j| = {\bm v}_\# \right) \\
\le \exp\left(-\log(1/p) \cdot \left\{({\bm e}_\# -(\Delta/2) \cdot {\bm v}_\#) + (1-2\Delta \vep) ({\bm e}-{\bm e}_\#) - 2\Delta \vep {\bm e}_\#\right\}\right).
\end{multline}
To simplify the \abbr{RHS} we need lower bounds on $({\bm e}_\# -(\Delta/2) \cdot {\bm v}_\#)$ and $({\bm e}-{\bm e}_\#)$. Here we will use the lower bound on $N({\sf H}, \Graph_{i_\star, {\rm g}})$ and Lemma \ref{lem:small-count}.

 To this end, we remind the reader that we already noted above that the graph $\Graph_{i_\star, {\rm g}}$ is a bipartite graph with one part $\cup_{j=1}^{i_\star} V_j$. We also recall from above that for any $v \in \cup_{j=1}^{i_\star} V_j \setminus V(\Graph_{\rm bad})$ its degree $\deg_{\Graph_{i_\star, {\rm g}}}(v) \ge \Delta$. By Lemma \ref{lem:bad-edges-bounds}(b) again we have that
\[
|V(\Graph_{\rm bad})| \le \vep e(\Graph) \le \vep \cdot \bar C_\star n^2 p^\Delta \le K_0 \vep' \cdot \f12 \vep^{\f1t} \delta^{\f1t} n^2 p^\Delta \le K_0 \vep' \cdot e(\Graph_{i_\star, {\rm g}}),
\]
for some large constant $K_0$ (depending only on $\delta$) and $\vep'=\vep^{1-\f1t}$, where the first inequality is due to the fact that $\Graph$ is a strong-core graph, and the last step follows from \eqref{eq:e-i-star-bd}. Therefore $\Graph_{i_\star, {\rm g}}$ satisfies Assumption \ref{ass:bipartite-ass} with $\vep_2=K_0 \vep'$. Hence applying Lemma \ref{lem:small-count}  with $\delta_1=\upeta \delta$ and $\vep_2$ as above we find that
\beq\label{eq:ehash-vhash}
{\bm e}_\# - (\Delta/2) \cdot {\bm v}_\# \ge \f12 \upeta^{\f1t} \delta^{\f1t} \cdot (1-\Delta \vep_2) n^2 p^\Delta. 
\eeq
On the other hand the assumption $N({\sf H}, \Graph_{i_\star, {\rm g}}) \le (\upeta+\vep) \delta n^{v_{\sf H}}p^{e({\sf H})}$ together with \eqref{eq:cross-term-bd-2} implies that 
\[
N({\sf H}, \bar \Graph_{i_\star, {\rm g}}) \ge (1-9\vep-\upeta)\delta n^{v_{\sf H}} p^{e({\sf H})}. 
\]
This in turn yields the lower bound
\beq\label{eq:e-ehash-bd-2}
{\bm e} - {\bm e}_\# \ge \f12 \delta^{\f1t} \cdot (1-9\vep -\upeta)^{\f1t}n^2 p^\Delta. 
\eeq
Since for any $x \in [0,1]$ and $t \ge 1$ one has $x^{\f1t} + (1-x)^{\f1t} \ge 1$, we deduce from \eqref{eq:ehash-vhash} and \eqref{eq:e-ehash-bd-2} that any $\upeta \in \cS$  we have
\[
({\bm e}_\# - (\Delta /2) \cdot {\bm v}_\#) + (1-2 \Delta \vep) ({\bm e}-{\bm e}_\#) \ge \f12 \delta^{\f1t} \cdot (1-\Delta \vep_2) \cdot (1-2\Delta \vep) \cdot (1-9\vep)^{\f1t} \cdot n^2 p^\Delta.
\]
Plugging this bound in the \abbr{RHS} of \eqref{eq:P-cI-2-bd-1} and then taking a union over $\upeta \in \cS$, and ${\bm v}$, ${\bm v}_\#$, ${\bm e}$ and ${\bm e}_\#$ over their respective allowable ranges we derive that 
\begin{multline}\label{eq:P-cI-2-bd-2} 
\P\left(\bigcup_{{\bm e}=\bar{\bm e}_0(\delta(1-6\vep))}^{\bar C_\star n^2 p^\Delta} \bigcup_{{\bm e}_\#, \upeta}\left\{\exists \Graph \subset \G(n,p): \Graph \in \cI_{i_\star, {\bm e}, {\bm e}_\#, \upeta}^{(2)}\right\}\right) \\
\le 4 (\bar C_\star n^2 p^\Delta)^4 \cdot |\cS| \cdot \exp\left( -\log(1/p) \left\{\f12 \delta^{\f1t} (1-\Delta \vep_2)(1-2 \Delta \vep)  (1-9\vep)^{\f1t}  n^2 p^\Delta - 2 \Delta \vep\cdot \bar C_\star n^2 p^\Delta\right\} \right)\\
\le \exp\left(-\log(1/p) \cdot \f12 \delta^{\f1t}(1-\gf^{(2)}_{\sf H}(\vep)) n^2 p^\Delta\right),
\end{multline}
for all large $n$, where $\gf_{\sf H}^{(2)}(\cdot)$ is some other function satisfying $\lim_{\vep \downarrow 0} \gf_{\sf H}^{(2)}(\vep)=0$. 

Next we need to obtain a bound for the case $\Graph\in \wh\cI_{i_\star, {\bm e}, {\bm e}_\#}^{(2)}$. Observe that only the lower bound on $N({\sf H}, \Graph_{i_\star, {\rm g}})$ was used in deriving \eqref{eq:P-cI-2-bd-1} and hence the same argument gives (ignoring the term $(1-2 \Delta \vep) ({\bm e}-{\bm e}_\#)$
\begin{multline}\label{eq:P-cI-2-bd-3}
\P\left(\exists \Graph \subset \G(n,p): \Graph \in  \wh\cI_{i_\star, {\bm e}, {\bm e}_\#}^{(2)} \text{ such that } |\cV_1| ={\bm v} \text{ and } |\cup_{j=1}^{i_\star} V_j| = {\bm v}_\# \right) \\
\le \exp\left(-\log(1/p) \cdot \left\{({\bm e}_\# - (\Delta/2) \cdot {\bm v}_\#) - 2\Delta \vep {\bm e}_\#\right\}\right).
\end{multline}
Also, as in the derivation of \eqref{eq:ehash-vhash}, using Lemma \ref{lem:small-count} with $\delta_1=(1-9\vep)\delta$ we get
\beq\label{eq:ehash-vhash-2}
{\bm e}_\# - {\bm v}_\# \ge \f12 (1-9\vep)^{\f1t} \delta^{\f1t} \cdot (1-\Delta \vep_2) n^2 p^\Delta. 
\eeq
Taking a union bound over ${\bm v}$, ${\bm v}_\#$, ${\bm e}$ and ${\bm e}_\#$ over their respective allowable ranges we derive as in \eqref{eq:P-cI-2-bd-2} that
\begin{multline}\label{eq:P-cI-2-bd-4} 
\P\left(\bigcup_{{\bm e}=\bar{\bm e}_0(\delta(1-6\vep))}^{\bar C_\star n^2 p^\Delta} \bigcup_{{\bm e}_\#\leq {\bm e}}\left\{\exists \Graph \subset \G(n,p): \Graph \in \wh\cI_{i_\star, {\bm e}, {\bm e}_\#}^{(2)}\right\}\right) \\
\le 4 (\bar C_\star n^2 p^\Delta)^4  \cdot \exp\left( -\log(1/p) \left\{\f12 \delta^{\f1t} (1-\Delta \vep_2) (1-9\vep)^{\f1t}  n^2 p^\Delta - 2 \Delta \vep\cdot \bar C_\star n^2 p^\Delta\right\} \right)\\
\le \exp\left(-\log(1/p) \cdot \f12 \delta^{\f1t}(1-\gf^{(3)}_{\sf H}(\vep)) n^2 p^\Delta\right),
\end{multline}
for all large $n$, where $\gf_{\sf H}^{(3)}(\cdot)$ is another function satisfying $\lim_{\vep \downarrow 0} \gf_{\sf H}^{(3)}(\vep)=0$.

As
\[
\left\{\exists \Graph \subset \G(n,p):\wh\cI_{i_\star, {\bm e}, {\bm e}_\#}^{(2)}\right\} \subset \left\{\exists \Graph \subset \G(n,p): \Graph \in \wh\cI_{i_{\star},{\bm e}, {\bm e}_\#}^{(2)}\right\} \bigcup_{\upeta \in \cS} \left\{\exists \Graph \subset \G(n,p): \Graph \in \cI_{i_{\star}, {\bm e}, {\bm e}_\#, \upeta}^{(2)}\right\},
\]
equipped with \eqref{eq:P-cI-1-bd-1}, \eqref{eq:P-cI-2-bd-2}, and\eqref{eq:P-cI-2-bd-4} we then take another union over $i_\star \in \llbracket C_3\rrbracket$,  set 
\[
\gf_{\sf H}(\vep):= \max\{\gf_{\sf H}^{(1)}(\vep), \gf_{\sf H}^{(2)}(\vep), \gf_{\sf H}^{(3)}(\vep)\} +\vep,
\] 
and use \eqref{eq:sc-2ell-union-1} to derive the desired bound on the probability of the existence of a strong-core graph. This finally finishes the proof of the proposition.  
\end{proof}

\begin{rmk}
Let us remark that the reverse pigeonhole principle was also used in \cite{BGLZ} to find the solution of the variational problem in \eqref{eq:var-prblm-0}. In particular, there it was used to deduce that if the subgraph ${\sf H}$ is bipartite (in particular a cycle of even length) and $U$ is a {\em graphon}, then the contribution to the normalized homomorphism density $t({\sf H}, U)$ when some neighboring vertices in ${\sf H}$ are mapped to {\em vertices} of low and high degrees in $U$, is negligible compared to the leading term. See the proofs of \cite[Lemmas 8.1 and 8.2]{BGLZ}. 
\end{rmk}

\section{Entropic stability of core graphs with many edges}\label{sec:core-many-edges}
It remains to show that the set of core graphs with many edges are entropically stable. This is the content of Proposition \ref{p:upper-notstrong}. We recall from Section \ref{sec:proof-main-thm} that the proof of that result splits into two cases: {$np^{\Delta/2} \ge (\log n)^{v_{\sf H}}$} and {$np^{\Delta/2} \le (\log n)^{v_{\sf H}}$}. First we consider the easier case of large $p$. 



\subsection{Core graphs with many edges in the large $p$ regime}\label{s:bipartite}
We remind the reader that in this case the proof of Proposition \ref{p:upper-notstrong} was provided in Section \ref{sec:notstronglarge-p} assuming Lemma \ref{lem:core-large-edge}. So it suffices to prove the latter result. 
\begin{proof}[Proof of Lemma \ref{lem:core-large-edge}]
\corAB{We begin by reminding ourselves of the definitions of $\cW(\Graph)$, $E_{1,2}(\Graph)$, $E_{2,2}(\Graph)$, $e_{1,2}(\Graph)$, and $e_{2,2}(\Graph)$ (see \eqref{eq:cW}, and \eqref{eq:e-121}-\eqref{eq:e-123}). Recall} that $\Graph_\cW$ is the subgraph of $\Graph$ induced by edges that are incident to some vertex in $\cW$. Since by assumption $\Graph_\cW$ is bipartite we have that $e(\Graph)= e_{1,2}(\Graph) + e_{2,2}(\Graph)$.

Equipped with this observation we now proceed as follows: we fix $\gh:=\{{\bm w}, {\bm e}_{1,2}, {\bm e}_{2,2}\}$ with ${\bm e}_{1,2}+{\bm e}_{2,2}={\bm e}\geq {\bm e}_{\star}$ (recall from Section \ref{sec:notstronglarge-p} that ${\bm e}_\star = \bar C_\star n^2 p^\Delta$) and let 
\[
\sA_{{\bm e}, \gh}:=\left\{\exists \Graph \subset \G(n,p): \Graph \text{ is a core graph with } (|\cW(\Graph)|, e_{1,2}(\Graph), e_{2,2}(\Graph)) = \gh \right\}.
\]
We bound the probability of $\sA_{{\bm e}, \gh}$ for each fixed choice of $\gh$ and then take a union bound over the allowable range of $\gh$.

Observe that for $\sA_{{\bm e}, \gh}$ to be non-empty the following constraint needs to be satisfied:
\begin{equation}
\label{e:V1bound}
\Delta {\bm w}\leq {\bm e}_{1,2}\leq D{\bm w}.
\end{equation}
Since $\Graph_\cW$ is bipartite the upper bound is immediate as the maximal degree among the vertices in $\cW(\Graph)$ is at most $D$. On the other hand $\Graph$ being a core graph each edge must participate in at least one copy of ${\sf H}$. This yields that the minimum degree of the vertices in $\Graph$ is at least $\Delta$ which in turn implies the lower bound in \eqref{e:V1bound}. 

We now split the proof into two cases: (1) ${\bm e}_{2,2} \geq {\bm e}_{1,2}$, and (2) ${\bm e}_{2,2} \leq {\bm e}_{1,2}$.

\subsection*{Case 1} ${\bm e}_{2,2} \geq {\bm e}_{1,2}$. 

We invoke {Lemma \ref{lem:large-count}} to upper bound the total number of core graphs with $(|\cW|, e_{1,2}(\Graph), e_{2,2}(\Graph))=\gh$ by 
\[
\binom{n}{{\bm w}}\left( \frac{1}{p}\right)^{\vep {\bm e}} \le n^{\bm w} \cdot \left( \frac{1}{p}\right)^{\vep {\bm e}} \le \left( \frac{1}{p}\right)^{\vep {\bm e} + {\bm e}_{1,2}}
\]
where we have used that $p \le n^{-1/\Delta}$ and the lower bound from \eqref{e:V1bound}. Thus 
\begin{equation}\label{eq:e22large}
\P(\sA_{{\bm e},\gh}) \leq  \left( \frac{1}{p}\right)^{\vep {\bm e} + {\bm e}_{1,2}} \cdot p^{{\bm e}_{1,2}+ {\bm e}_{2,2}  } \le p^{{\bm e}_{2,2} - \vep {\bm e}} \le p^{\left(\f12 -\vep\right) {\bm e}},
\end{equation}
where the last inequality follows upon noting that ${\bm e}_{2,2} \ge \f12 {\bm e}$.

\subsection*{Case 2} ${\bm e}_{2,2}\leq {\bm e}_{1,2}$.

In this case using the upper bound in \eqref{e:V1bound} and the fact that ${\bm e}_{1,2} \ge \f12 {\bm e} \ge\f12 {\bm e}_\star$  we find that
\[
\frac{n}{{\bm w}} \leq \frac{n D}{{\bm e}_{1,2}} \leq \frac{2 n D}{\bar C_\star n^2p^\Delta} \leq \frac{1}{e p^{\Delta/2}},
\]
for all large $n$, where the last inequality is a consequence of the fact that $np^{\Delta/2} \gg 1$. Therefore applying Lemma \ref{lem:large-count} again and using Stirling's approximation we deduce that the number of core graphs with $(|\cW|, e_{1,2}(\Graph), e_{2,2}(\Graph))=\gh$ is bounded by
\[
\left(\f{e n}{{\bm w}}\right)^{\bm w} \cdot \left( \frac{1}{p}\right)^{\vep {\bm e}} \le \left( \frac{1}{p}\right)^{\vep {\bm e} + (\Delta/2) \cdot {\bm w}}.
\]
Now using the lower bound \eqref{e:V1bound} and the fact that ${\bm e}_{1,2} \ge \f12 {\bm e}$ we derive from above that
\begin{equation}\label{eq:e22small}
\P(\sA_{{\bm e}, \gh}) \leq  \left( \frac{1}{p}\right)^{\vep {\bm e} +(\Delta/2) \cdot  {\bm w}} \cdot p^{{\bm e}_{1,2}} 
\leq  p^{\f12 {\bm e}_{1,2} - \vep {\bm e}} \le p^{\left(\f14 - \vep\right) {\bm e}}.
\end{equation}
To complete the proof we first sum over all possible choices of $\gh$ for a given ${\bm e}$. Observe that the total number of choices for each of ${\bm w}$ and ${\bm e}_{1,2}$ are trivially upper bounded by ${\bm e}$. Thus, using \eqref{eq:e22large}-\eqref{eq:e22small} and applying  an union bound we find 
\[
\P(\sA_{\bm e}) = \P\left(\cup_{\gh} \sA_{{\bm e}, \gh}\right) \leq {\bm e}^2 p^{\left(\frac{1}{4} -\vep\right){\bm e}}. 
\]
Now summing the over all {${\bm e}_{\star}\leq {\bm e}\leq \bar C n^2p^{\Delta}\log(1/p)$} we derive that 
\[ 
\P\left( \cup_{{\bm e} \ge {\bm e}_\star} \sA_{\bm e}\right) \le  (\bar C n^2p^{\Delta}\log(1/p))^2 \cdot p^{\left(\frac{1}{4}-\vep\right){\bm e}_{\star}}\leq \exp\left(-\frac{1}{16}\bar{C}_\star n^2p^{\Delta}\log(1/p)\right),
\]
for all large $n$, where in the last step we have used the facts $n^2p^{\Delta}\log(1/p)\gg 1$ and $\vep \le \f18$. This completes the proof of the lemma. 
\end{proof}


\corAB{We now turn to the case of $np^{\Delta/2} \le (\log n)^{v_{\sf H}}$. We remind the reader that in this case} for a core graph $\Graph$ the subgraph $\Graph_\cW$ need not be a bipartite graph. Nevertheless, as we show below in the next section if we assume that $e_{1,2}(\Graph)+ e_{2,2}(\Graph)$ is sufficiently large then the set of those core graphs are entropically stable.

\subsection{Entropic stability for graphs with large $e_{1,2}(\Graph)+e_{2,2}(\Graph)$}\label{sec:combin-graph-bd}

In this section we prove Lemmas \ref{l:core11} and \ref{l:core2jb}. Both proofs will use an argument analogus to the proof of Lemma \ref{lem:large-count}. However, we remind the reader these lemmas find bounds on the probabilities of certain events that may involve graphs which are no longer core graphs. Therefore we cannot directly apply Lemma \ref{lem:large-count}. To this end, we have the following general lemma. Its proof is similar in nature to that of Lemma \ref{lem:large-count}. Before stating the lemma we introduce a few more notation. 

For every graph $\Graph$ we denote $\cW_\cP(\Graph)$ to be the subset of vertices satisfying some property $\cP_\Graph$. That is, $\cP_\Graph: V(\Graph) \mapsto \{0,1\}$ is a map and $\cW_\cP(\Graph):= \{v \in V(\Graph): \cP_\Graph(v) = 1\}$. Set $\ol{\cW}_\cP(\Graph):= V(\Graph)\setminus \cW_\cP(\Graph)$. Thus $\{\cW_\cP(\Graph), \ol{\cW}_\cP(\Graph)\}$ is a partition of the vertices of $\Graph$ which may be determined by some properties of the graph $\Graph$. Denote
\[
\wt e_{1,1}(\Graph) := \left| \left\{(u,v) \in E(\Graph): u,v \in \cW_\cP(\Graph)\right\}\right|,
\]
\[
\wt e_{1,2}(\Graph) := \left| \left\{(u,v) \in E(\Graph): u \in \cW_\cP(\Graph), v\in \ol\cW_\cP(\Graph)\right\}\right|,
\]
and
\[
\wt e_{2,2}(\Graph) := \left| \left\{(u,v) \in E(\Graph): u,v \in \ol\cW_\cP(\Graph)\right\}\right|.
\]
We now state the relevant lemma. 

\begin{lem}\label{lem:graph-spec-prop-bd}
Fix non-negative integers $\wt{\bm e}_{1,1}, \wt{\bm e}_{1,2},\wt{\bm e}_{2,2}$, $\wt{\bm w}$, and $\wt \vep >0$. Set $\ul{\bm e}:= (\wt{\bm e}_{1,1}, \wt{\bm e}_{1,2}, \wt{\bm e}_{2,2})$. 
Let $\cN_\#(\ul{\bm e}, \wt{\bm w})$ be the number of graphs $\Graph$ with $|\cW_\cP(\Graph)|=\wt{\bm w}$,  
\beq\label{eq:wte-cond}
\wt{e}_{1,1}(\Graph) = \wt{\bm e}_{1,1}, \quad \wt{e}_{1,2}(\Graph) = \wt{\bm e}_{1,2}, \quad \text{ and } \quad \wt{e}_{2,2}(\Graph) = \wt{\bm e}_{2,2},
\eeq
such that  $\wt {\bm e}:=\wt {\bm e}_{1,1} + \wt {\bm e}_{1,2} + \wt {\bm e}_{2,2}\leq \bar{C}n^2 p^{\Delta} \log (1/p)$, and
\beq\label{eq:barcWcP}
|\ol \cW_\cP(\Graph)| \le \wt\vep (\wt e_{1,2}(\Graph) +\wt e_{2,2}(\Graph)).
\eeq
If $np^{\Delta/2} \le (\log n)^{v_{\sf H}}$ then
\[
\cN_\#(\ul{\bm e}, \wt{\bm w}) \le  \exp \left(\log(1/p) \cdot (\Delta/2) \cdot \left\{\wt{\bm w}+ \wt \vep(\wt{\bm e}_{1,2} +\wt{\bm e}_{2,2})\right\} + K (\log \log n) \cdot \wt{\bm e} \right)
\]
for all large $n$, where {$K:=K(v_{\sf H})$} is some constant depending only on the number of vertices of ${\sf H}$.
\end{lem}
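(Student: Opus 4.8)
The plan is to count graphs $\Graph$ satisfying the stated constraints by a three-stage selection procedure that mirrors the proof of Lemma \ref{lem:large-count}, but now tracking the three edge-types $\wt e_{1,1}, \wt e_{1,2}, \wt e_{2,2}$ separately rather than lumping the low-degree side into a single quantity. First I would fix the values of $\wt{\bm w} = |\cW_\cP(\Graph)|$ and $\ul{\bm e}$, choose the vertex set $\cW_\cP(\Graph)$ (at most $\binom{n}{\wt{\bm w}} \le n^{\wt{\bm w}}$ ways, which contributes $\log(1/p)\cdot \Delta \wt{\bm w}/ \Delta = \log(1/p)\wt{\bm w} \le $ something of the right shape after using $p \le n^{-1/\Delta}$, so actually $n^{\wt{\bm w}} \le p^{-\wt{\bm w}}$, giving an exponent $\wt{\bm w}\log(1/p)$ — note the claimed bound has $(\Delta/2)\wt{\bm w}\log(1/p)$, which is larger, so this is comfortably absorbed). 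Then I would choose the vertices of $\ol\cW_\cP(\Graph)$: by \eqref{eq:barcWcP} there are at most $\wt\vep(\wt{\bm e}_{1,2} + \wt{\bm e}_{2,2})$ of them, so the number of ways to pick them is at most $n^{\wt\vep(\wt{\bm e}_{1,2}+\wt{\bm e}_{2,2})} \le p^{-\Delta \wt\vep(\wt{\bm e}_{1,2}+\wt{\bm e}_{2,2})}$; after rescaling $\wt\vep$ this is the $(\Delta/2)\wt\vep(\wt{\bm e}_{1,2}+\wt{\bm e}_{2,2})\log(1/p)$ term (one should be slightly careful about the constant, but the statement's $\wt\vep$ is a free parameter so a factor of $2$ is harmless — or one phrases \eqref{eq:barcWcP} with $\wt\vep/(2\Delta)$ in mind).

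The second stage is to choose the edges. The edges counted by $\wt e_{1,2}$ and $\wt e_{2,2}$ have at least one, respectively both, endpoints in $\ol\cW_\cP(\Graph)$, whose cardinality is at most $\wt\vep(\wt{\bm e}_{1,2}+\wt{\bm e}_{2,2}) \le \wt\vep\,\wt{\bm e} = O(n^2 p^\Delta\log(1/p))$; since $np^{\Delta/2}\le(\log n)^{v_{\sf H}}$, this cardinality is $\le n^{o(1)}\cdot$poly$(\log n)$, in fact $O((\log n)^{2v_{\sf H}}\log(1/p))$, so the number of choices for these two families of edges is at most $|\ol\cW_\cP(\Graph)|^{2(\wt{\bm e}_{1,2}+\wt{\bm e}_{2,2})} \le \exp(O(\log\log n)\cdot \wt{\bm e})$, which is the $K(\log\log n)\wt{\bm e}$ term (here I use that both $\wt{\bm e}_{1,2}+\wt{\bm e}_{2,2} \le \wt{\bm e}$ and that $\log|\ol\cW_\cP(\Graph)| = O(\log\log n)$ in this $p$-regime). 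This is exactly the place where the hypothesis $np^{\Delta/2}\le(\log n)^{v_{\sf H}}$ is used — in the complementary regime $\ol\cW_\cP(\Graph)$ could be a genuine fractional power of $n$ and this crude bound would fail, which is why Lemma \ref{lem:large-count} needed the more delicate argument via Lemma \ref{lem:bad-graph-bd}; here we are spared that because we only ever apply this lemma when $p$ is small.

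The third stage is the $\wt e_{1,1}$-edges, both of whose endpoints lie in $\cW_\cP(\Graph)$. This is the main obstacle, and the resolution is the same accounting trick as in Lemma \ref{lem:large-count}: the number of ways to place $\wt{\bm e}_{1,1}$ edges inside the vertex set $\cW_\cP(\Graph)\cup V(\Graph)$ incident to $\cW_\cP$ is $\binom{\wt{\bm w}\cdot|V(\Graph)|}{\wt{\bm e}_{1,1}} \le (e|V(\Graph)|)^{\wt{\bm e}_{1,1}}$, and since $|V(\Graph)| \le \wt{\bm e} = O(n^2 p^\Delta\log(1/p))$ and $np^{\Delta/2}\le(\log n)^{v_{\sf H}}$, this is again $\exp(O(\log\log n)\cdot\wt{\bm e}_{1,1}) \le \exp(O(\log\log n)\cdot\wt{\bm e})$, using $\wt{\bm w} \le 2\wt{\bm e}_{1,1}$ when there is at least one $\wt e_{1,1}$-edge per low-degree vertex — but wait, that bound on $\wt{\bm w}$ need not hold here since $\cW_\cP$ is an arbitrary property set, so instead I simply bound $\wt{\bm w} \le |V(\Graph)| \le \wt{\bm e}$ directly and absorb it. Actually the cleanest route: the $\wt e_{1,1}$-edges join two vertices of $\cW_\cP(\Graph)$, a set of size $\wt{\bm w}$, so there are at most $\binom{\binom{\wt{\bm w}}{2}}{\wt{\bm e}_{1,1}} \le \wt{\bm w}^{2\wt{\bm e}_{1,1}}$ ways, and if $\wt{\bm w} \le \wt{\bm e} = O(n^2p^\Delta\log(1/p))$ with $np^{\Delta/2}\le(\log n)^{v_{\sf H}}$ this is $\exp(O(\log\log n)\wt{\bm e})$ once more. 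Finally I would collect the exponents from all stages, observe that the $n^{\wt{\bm w}} \le p^{-\wt{\bm w}} \le p^{-(\Delta/2)\wt{\bm w}}$-type contributions match the leading term in the claimed bound, that the vertex choices for $\ol\cW_\cP$ give the $(\Delta/2)\wt\vep(\wt{\bm e}_{1,2}+\wt{\bm e}_{2,2})$ term, and that all remaining contributions are of order $(\log\log n)\wt{\bm e}$, and take a union over the (polynomially many in $\wt{\bm e}$) choices of $\ul{\bm e}$ and $\wt{\bm w}$, which is absorbed into $K(\log\log n)\wt{\bm e}$. This yields the stated inequality for all large $n$.
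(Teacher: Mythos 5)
Your strategy is the same as the paper's: fix the sizes of $\cW_\cP$ and $\ol\cW_\cP$, choose the vertex sets, then choose edges from the pairs of chosen vertices, and use the hypothesis $np^{\Delta/2}\le(\log n)^{v_{\sf H}}$ to argue that the number of vertices (hence of available edge slots) is polylogarithmic, so the edge-selection cost is $\exp(O(\log\log n)\cdot\wt{\bm e})$. The reorganization of the edge count into the three classes $\wt e_{1,1},\wt e_{1,2},\wt e_{2,2}$ is harmless but also unnecessary: the paper just writes the single crude bound $(\wt{\bm w}+\wt{\bm w}_1)^{2\wt{\bm e}}$ for all edges at once.

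However, your exponent accounting for the vertex choices is wrong. You invoke $p\le n^{-1/\Delta}$ (which holds in this regime but is not the stated hypothesis) and claim that $n^{\wt{\bm w}}\le p^{-\wt{\bm w}}$, i.e., $n\le 1/p$. For $\Delta\ge 2$, the bound $p\le n^{-1/\Delta}$ gives $n\le p^{-\Delta}$, hence $n^{\wt{\bm w}}\le p^{-\Delta\wt{\bm w}}$, which contributes $\Delta\wt{\bm w}\log(1/p)$ to the exponent. This overshoots the stated $(\Delta/2)\wt{\bm w}\log(1/p)$ by a factor of $2$, and your parenthetical "\emph{which is larger, so this is comfortably absorbed}" has the inequality backwards: $(\Delta/2)<\Delta$, so the target is smaller, and nothing absorbs the deficit. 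The same arithmetic issue recurs in your treatment of $\ol\cW_\cP(\Graph)$, where you patch it with an explicit $\wt\vep\mapsto\wt\vep/(2\Delta)$ rescaling, but $\wt\vep$ appears both in \eqref{eq:barcWcP} (hypothesis) and in the conclusion with the same meaning, so one cannot just rescale; you would be proving a weaker statement. The correct route, which is also what the paper uses, is to bound $n$ directly from the stated hypothesis: $np^{\Delta/2}\le(\log n)^{v_{\sf H}}$ gives $\log n\le(\Delta/2)\log(1/p)+v_{\sf H}\log\log n$, hence $n^{\wt{\bm w}+|\ol\cW_\cP(\Graph)|}\le\exp\big((\Delta/2)(\wt{\bm w}+|\ol\cW_\cP(\Graph)|)\log(1/p)+v_{\sf H}(\wt{\bm w}+|\ol\cW_\cP(\Graph)|)\log\log n\big)$, and the polylog term is absorbed into $K\log\log n\cdot\wt{\bm e}$ via $\wt{\bm w}+|\ol\cW_\cP(\Graph)|\le 2\wt{\bm e}$. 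With this correction (and combining \eqref{eq:barcWcP} to bound $|\ol\cW_\cP(\Graph)|$) your proof goes through and gives the stated bound.
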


\begin{proof}
The proof uses simple combinatorial bounds. For ease of writing let us denote $\wt{\bm w}_1:= |\bar \cW_\cP(\Graph)|$. We write $\cN_\#(\ul{\bm e}, \wt{\bm w}, \wt{\bm w}_1)$ to denote the number of graphs with $|\cW_\cP(\Graph)|=\wt{\bm w}$, $|\cW_\cP(\Graph)| = \wt{\bm w}_1$, $\wt{\bm e} \le \bar C n^2 p^{\Delta} \log(1/p)$, and satisfies \eqref{eq:wte-cond}-\eqref{eq:barcWcP}. First we a find a bound on $\cN_\#(\ul{\bm e}, \wt{\bm w}, \wt{\bm w}_1)$ and then take a union over the allowable range of $\wt{\bm w}_1$ to derive a bound on $\cN_\#(\ul{\bm e}, \wt{\bm w})$. 

To this end, consider any $\wt{\bm w}_1 \leq \wt\vep (\wt {\bm e}_{1,2} +\wt {\bm e}_{2,2})$ (note this bound is imposed by \eqref{eq:barcWcP}).  The vertices in $\cW_\cP(\Graph)$ and $\ol \cW_\cP(\Graph)$, and hence all the vertices in $\Graph$ can be chosen in at most $n^{\wt{\bm w}+\wt{\bm w}_1}$ ways. Once these vertices are chosen the number of ways to specify edges is at most $(\wt{\bm w}+\wt{\bm w}_1)^{2\wt {\bm e}}$. 

As $np^{\Delta/2} \le (\log n)^{v_{\sf H}}$ and $\wt{\bm e} \le \bar C n^2 p^{\Delta} \log(1/p)$ we also get that $\wt{\bm w}+\wt{\bm w}_1\leq 2\wt {\bm e}\leq  (\log n)^{2v_{\sf H}+2}$ for all large $n$. This implies that $(\wt{\bm w}+\wt{\bm w}_1)^{2\wt {\bm e}}\leq \exp(K'\log \log n \cdot \wt {\bm e})$ for some $K'>0$ and all $n$ sufficiently large. Combining these estimates we obtain that 
\[
\cN_\#(\ul{\bm e}, \wt{\bm w}, \wt{\bm w}_1)\leq n^{\wt{\bm w}_+\wt{\bm w}_1} \exp(K'\log \log n \cdot \wt {\bm e})\le \exp (\log(1/p)(\wt{\bm w}+\wt{\bm w}_1) + 2 K'\log \log n \cdot \wt {\bm e}),
\]
where in the last step we again used the facts $np^{\Delta/2} \le (\log n)^{v_{\sf H}}$ and $\wt{\bm w}+\wt{\bm w}_1\leq 2{\bm e}$. 
Finally taking a union over all $\wt{\bm w}_1 \leq \wt\vep (\wt e_{1,2} +\wt e_{2,2})$ we arrive at the desired result. 
\end{proof}

Using Lemma \ref{lem:graph-spec-prop-bd} we now prove Lemma \ref{l:core11}.

\begin{proof}[Proof of Lemma \ref{l:core11}]
Fix a vector $\wh{\ul{\bm e}}:= ({\bm e}_{1,1}, {\bm e}_{1,2}, {\bm e}_{2,2})$ and ${\bm w}$. Set $e_{1,1}(\Graph):=|E_{1,1}(\Graph)|$. Let $\mathscr{N}(\wh{\ul{\bm e}},{\bm w})$ denote the number of possible graphs satisfying the hypothesis of the event $\wt{\mathrm{Core}}_{1,1}$ (recall its definition from \eqref{eq:wtcore11}), i.e., the set of graphs $\Graph$ with 
\begin{enumerate}
\item $e_{1,1}(\Graph)={\bm e_{1,1}}, \, e_{1,2}(\Graph)={\bm e_{1,2}}, \, e_{2,2}(\Graph)={\bm e_{2,2}}$, $|\cW(\Graph)| = {\bm w}$,
\item ${\bm e_{1,1}}+{\bm e_{1,2}}+{\bm e_{2,2}}={\bm e} \leq (\log \log n)^{-2}\bar{C}n^2p^{\Delta} \log (1/p)$,
\end{enumerate}
and
\begin{enumerate}
\item[(3)] ${\bm e_{1,2}}+{\bm e_{2,2}} \geq \bar{C}_{\star}n^2p^{\Delta}$ and {$d_{\min}(\Graph) \ge \Delta$}. 
\end{enumerate} 
We refer the reader to \eqref{eq:cW}, and \eqref{eq:e-121}-\eqref{eq:e-123} to recall the definitions of $\cW(\Graph)$, $e_{1,2}(\Graph)$, and $e_{2,2}(\Graph)$.
Now set $\cP_\Graph: V(\Graph) \mapsto \{0,1\}$ to be 
\beq\label{eq:cP-dfn-1}
\cP_\Graph(v) := \left\{\begin{array}{ll} 1 & \mbox{ if } \deg_\Graph(v) \le D,\\
0 & \mbox{ otherwise},
\end{array}
\right.
\eeq
where $D$ is as in \eqref{eq:D}. With this choice of $\cP_\Graph$ we have $\cW(\Graph)= \cW_\cP(\Graph)$. Thus any edge incident to some vertex in $\ol \cW_\cP(\Graph)$ must either be in $E_{1,2}(\Graph)$ or be in $E_{2,2}(\Graph)$. Therefore, using the fact any vertex in $V(\Graph)\setminus \cW(\Graph)$ has degree at least $D$, we have   
\[
|\ol \cW_\cP(\Graph)| \le (2/D) \cdot (e_{1,2}(\Graph)+ e_{2,2}(\Graph)) \le (\vep/8 \Delta)\cdot (e_{1,2}(\Graph)+ e_{2,2}(\Graph)),
\]
indicating that \eqref{eq:barcWcP} holds with $\wt \vep = \vep/(8\Delta)$. Hence, we can now apply Lemma \ref{lem:graph-spec-prop-bd} with $\cP_\Graph$ as in \eqref{eq:cP-dfn-1} and $\wt \vep$ as above. To obtain a usable bound we further note that any edge adjacent to some vertex in $\cW(\Graph)$ must be in $E_{1,1}(\Graph) \cup E_{1,2}(\Graph)$. Moreover, it follows from the definition that both end points of an edge in $E_{1,2}(\Graph)$ cannot be in $\cW(\Graph)$. As $d_{\min}(\Graph) \ge \Delta$ we deduce that
\beq\label{eq:cWE112}
\Delta|\cW(\Graph)| \le \sum_{v \in \cW(\Graph)} \deg_\Graph(v) \le 2 |E_{1,1}(\Graph)| + |E_{1,2}(\Graph)|.
\eeq
By Lemma \ref{lem:graph-spec-prop-bd} we have that 
\begin{align}\label{eq:mathscrN}
\mathscr{N}(\wh{\ul {\bm e}}, {\bm w}) &  \le \exp\left(\log(1/p) \cdot (\Delta/2) \cdot \left\{{\bm w}+ (\vep/(8\Delta)) \cdot ({\bm e}_{1,2}+ {\bm e}_{2,2})\right\} + K \log \log n \cdot {\bm e}\right) \notag\\
& \le \exp\left(\log(1/p) \left\{{\bm e}_{1,1} + \f12 {\bm e}_{1,2} + (\vep/16) \cdot ({\bm e}_{1,2}+ {\bm e}_{2,2})\right\} + K \log \log n \cdot {\bm e}\right),
\end{align}
where we the last step is a consequence of \eqref{eq:cWE112}. Furthermore, as ${\bm e}_{1,2}+ {\bm e}_{2,2} \ge \bar C_\star n^2 p^{\Delta}$ and  ${\bm e} \le \bar C  n^2 p^{\Delta} \log(1/p)/(\log \log n)^2$ we have
\[
K \log \log n \cdot {\bm e} \le (\vep /16) \cdot ({\bm e}_{1,2}+ {\bm e}_{2,2}) \cdot \log(1/p), 
\]
for all large $n$. Thus, using \eqref{eq:mathscrN} and the above inequality we derive
\begin{multline}\label{eq:core11-pre-bd}
\P(\wt{\rm Core}_{1,1}) \le \sum_{\wh{\ul{\bm e}}, {\bm w}} \mathscr{N}(\wh{\ul {\bm e}}, {\bm w}) \cdot p^{{\bm e}_{1,1}+{\bm e}_{1,2}+ {\bm e}_{2,2}} \\
\le \sum_{\wh{\ul{\bm e}}, {\bm w}} \exp\left(-\log(1/p) \left\{  \f12 {\bm e}_{1,2} + {\bm e}_{2,2} - (\vep/8) \cdot ({\bm e}_{1,2}+ {\bm e}_{2,2})\right\} \right),
\end{multline}
for all large $n$, where the sums over $\wh{\ul{\bm e}}$ and ${\bm w}$ are to be taken over their allowable respective ranges. 


Since ${\bm w} \le 2 {\bm e} \le 2 \bar C n^2 p^{\Delta} \log(1/p)/ (\log \log n)^2$ and ${\bm e}_{1,2} + {\bm e}_{2,2} \ge \bar C_\star n^2 p^{\Delta}$ it is now immediate from \eqref{eq:core11-pre-bd} that
\beq\label{eq:wtcore11p-bd}
\P(\wt{\rm Core}_{1,1}) \le \exp\left(-\f{\bar C_\star}{8}n^2 p^{\Delta} \log(1/p)\right),
\eeq
for all large $n$. Recalling that $\bar C_\star \ge 32 \delta^{\f2{v_{\sf H}}}$ we obtain the desired probability upper bound. This completes the proof. 
%
%
\end{proof}

Next, using Lemma \ref{lem:graph-spec-prop-bd} again we prove Lemma \ref{l:core2jb}. 

\begin{proof}[Proof of Lemma \ref{l:core2jb}]
Fix any $j \in \llbracket L_n \rrbracket$ and let us recall the definition of ${\rm Core}_{2,j,\b}$ from \eqref{eq:core2jb}. We see that any graph $\Graph_0$ satisfying the hypothesis of ${\rm Core}_{2,j,\b}$ must be contained in the set 
\beq\label{eq;core2jbset}
\Omega:=\{\Graph_0: \exists \Graph':=\Graph'(\Graph_0) \supset \Graph_0 \text{ such that } \Graph_0, \Graph' \in \cJ_j \text{ and } \varpi_\Delta(\Graph') = {\Graph_0} \}, \notag
\eeq
where we recall that $\varpi_\Delta(\Graph')$ is the $\Delta$-core of the subgraph of $\Graph'$ obtained upon removing the edges in $E_{1,1}(\Graph')= \{(u,v) \in E(\Graph'): \deg_{\Graph'}(u), \deg_{\Graph'}(v) \le D\}$ and $D$ is as in \eqref{eq:D}. For a $\Graph_0 \in\Omega$ there may be more than one $\Graph' \in \cJ_j$ such that $\Graph' \in \cJ_j$ and $\varpi_\Delta(\Graph')=\Graph_0$. Choose any one of them arbitrarily and fix it for the rest of the proof of this lemma. 

Our goal would be to bound the cardinality of $\Omega$. To this end, set $\cP_{\Graph_0}: V(\Graph_0) \mapsto \{0,1\}$ to be
\beq\label{eq:cP-dfn-2}
\cP_{\Graph_0}(v) := \left\{\begin{array}{ll} 
0 & \mbox{ if }  \deg_{\Graph'(\Graph_0)}(v) \ge D,\\
1 & \mbox {otherwise}.
\end{array}
\right. \notag
\eeq
With  this choice of $\cP_{\Graph_0}$ we now apply Lemma \ref{lem:graph-spec-prop-bd}. Since $\Graph_0, \Graph' \in \cJ_j$ we find that $e(\Graph_0) \ge \f12 e(\Graph')$. Therefore, as $D \ge 16 \Delta/\vep$ we derive that
\[
|\ol{\cW}_{\cP}(\Graph_0)|\leq |\{v\in V(\Graph): {\rm deg}_{\Graph'}(v)\geq D\}|\leq (\vep/8 \Delta)\cdot e(\Graph') \leq (\vep/4 \Delta) \cdot e(\Graph_{0}).
\]
Thus \eqref{eq:barcWcP} is satisfied with $\wt \vep=\vep/(4 \Delta)$. Hence denoting $\wh{\mathscr{N}}(\ul{\bm e},\wt{\bm w})$ to be the number of graphs $\Graph_0 \in \Omega$ with $|\cW_{\cP}(\Graph_0)| = \wt{\bm w}$ and satisfying \eqref{eq:wte-cond}-\eqref{eq:barcWcP}, and applying Lemma \ref{lem:graph-spec-prop-bd} we deduce that
\begin{align}\label{eq:whmathscrN}
\wh{\mathscr{N}}(\ul{\bm e}, \wt{\bm w})  & \le \exp \left(\log(1/p) \cdot (\Delta/2)  \cdot \left\{\wt{\bm w}+  (\vep/(4 \Delta)) \cdot (\wt{\bm e}_{1,2} +\wt{\bm e}_{2,2})\right\} + K (\log \log n) \cdot \wt{\bm e} \right) \notag\\
& \le  \exp \left(\log(1/p) \left\{(\Delta/2) \cdot \wt{\bm w}+  (\vep/4) \cdot (\wt{\bm e}_{1,2} +\wt{\bm e}_{2,2})\right\}  \right), 
\end{align}
where the last step is due to the fact as $\varpi_\Delta(\Graph')=\Graph_0$, it follows from the definition of $\cP_{\Graph_0}$ and $E_{1,1}(\Graph')$ that $\wt{\bm{e}}_{1,1}=\wt e_{1,1}(\Graph_0)=0$, and thus $\wt{\bm e}=\wt{\bm e}_{1,2}+\wt{\bm e}_{2,2}$.

Equipped with \eqref{eq:whmathscrN} we observe that
\begin{multline*}
\P({\rm Core}_{2,j,\b}) \le \sum_{\ul{\bm e}, \wt{\bm w}} \wh{\mathscr{N}}(\ul{\bm e}, \wt{\bm w}) \cdot p^{\wt{\bm e}_{1,2} + \wt{\bm e}_{2,2}} \le  \sum_{\ul{\bm e}, \wt{\bm w}} \exp \left(-\log(1/p) \left\{\f{\wt{\bm e}_{1,2}}{2}+\wt{\bm e}_{2,2}-  (\vep/4) \cdot (\wt{\bm e}_{1,2} +\wt{\bm e}_{2,2})\right\}  \right),
\end{multline*}
where the sum is over allowable ranges of $\wt{\bm w}$ and $\ul{\bm e}$, and the last step follows from the fact that for any $\Graph_0 \in \Omega$ we have {$d_{\min}(\Graph_0) \ge \Delta$} \corAB{and thus $\Delta \wt{\bm w} \le  \wt{\bm e}_{1,2} + 2 \wt{\bm e}_{1,1}=\wt{\bm e}_{1,2}$.} 

\corAB{Finally using that 
\[
\wt {\bm w} \le 2(\wt{\bm e}_{1,2} + \wt{\bm e}_{2,2}) \le 2 \wt{\bm e} \le 2 \bar C n^2 p^{\Delta} \log(1/p),
\] 
and the lower bound 
\[
(1/2) \cdot(\wt{\bm e}_{1,2} + \wt{\bm e}_{2,2}) \ge \f12 \bar C n^2 p^{\Delta} \log(1/p) /(\log \log n)^2, 
\]
induced} by the fact $\Graph_0 \in \cJ_j$, for $j \in \llbracket L_n \rrbracket$, one evaluates the above sum to obtain a desired bound. We omit further details. This completes the proof of the lemma.  
\end{proof}

\subsection{Graphs with large $\cN_{1,1}$}\label{sec:largecN-11}
 In this section our goal is to prove Lemmas \ref{l:core12} and \ref{l:core2ja}. That is, to show that seed graphs with large $\cN_{1,1}({\sf H}, \Graph)$ are entropically stable, where we recall that the notation $\cN_{1,1}({\sf H}, \Graph)$ denotes the number of copies of ${\sf H}$ that uses at least one edge from $E_{1,1}(\Graph)$ with $E_{1,1}(\Graph)$ is as in \eqref{eq:E11} (edges where both endpoints have low degree). Similar to Section \ref{sec:combin-graph-bd} here we will also rely on Lemma \ref{lem:graph-spec-prop-bd}. We remind the reader that to complete the proofs of Lemmas \ref{l:core11} and \ref{l:core2jb} we needed lower bounds on $\wt{\bm e}_{1,2}(\Graph) + \wt{\bm e}_{2,2}(\Graph)$. The following result shows that such bounds follow once we assume a lower bound on $\cN_{1,1}({\sf H}, \Graph)$. 
 
\begin{prop}
\label{l:seqcounting}
Fix $\tau >0$ and a $\Delta$-regular connected graph ${\sf H}$ with $\Delta \ge 2$. Let $\Graph$ be a graph with 
\beq\label{eq:large-e-bd}
e(\Graph) \le c_n \cdot \bar C n^2 p^\Delta \log(1/p),
\eeq
for some $\f12 (\log \log n)^{-2} \le c_n \le 1$. Assume
\beq\label{eq:cN-0-lbd}
\cN_{1,1}({\sf H}, \Graph) \ge \f{c_n}{2} \cdot \tau n^{v_{\sf H}} p^{e({\sf H})}.
\eeq
If $np \gg (\log n)^{1/(v_{\sf H}-2)}$ then we have 
\[
e_{1,2}(\Graph) +e_{2,2}(\Graph)\ge (n^2 p^2)^{1+\f{1}{4(v_{\sf H}-1)}},
\]
for all large $n$.
\end{prop}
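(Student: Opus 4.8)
\emph{Proof proposal.} I would argue by contraposition: assuming that
\[
m\;:=\;e_{1,2}(\Graph)+e_{2,2}(\Graph)\;<\;(n^2p^2)^{1+\frac{1}{4(v_{\sf H}-1)}},
\]
I will show that $\cN_{1,1}({\sf H},\Graph)=o\big(c_n\,n^{v_{\sf H}}p^{e({\sf H})}\big)$ as $n\to\infty$, which for all large $n$ contradicts \eqref{eq:cN-0-lbd}. First record the elementary consequences of this assumption. Every edge of $\Graph$ having at least one endpoint of degree exceeding $D$ lies in $E_{1,2}(\Graph)\cup E_{2,2}(\Graph)$, so, writing $\cW^{\rm c}:=V(\Graph)\setminus\cW(\Graph)$ for the set of such high-degree vertices, we have $\sum_{v\in\cW^{\rm c}}\deg_\Graph(v)\le 2m$; in particular $|\cW^{\rm c}|\le 2m/D$ and $\deg_\Graph(v)\le 2m$ for every $v\in\cW^{\rm c}$. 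Thus the high-degree part of $\Graph$ carries only a ``budget'' of $2m$ in its degree sum, and it is this budget that must be shown to be insufficient to generate $\frac{c_n}{2}\tau\,n^{v_{\sf H}}p^{e({\sf H})}$ copies of ${\sf H}$ through edges of $E_{1,1}(\Graph)$.

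Next I would set up the count. A copy of ${\sf H}$ contributing to $\cN_{1,1}$ contains an edge of $E_{1,1}(\Graph)$, hence two adjacent vertices mapped into $\cW(\Graph)$. Fixing a spanning tree $T$ of the connected graph ${\sf H}$ rooted at one of its edges, together with a breadth-first vertex ordering $x_1,x_2,\dots,x_{v_{\sf H}}$ with root edge $x_1x_2$, every such copy is enumerated --- after summing over the finitely many choices of $T$, of the root edge, and of the orientation --- by first placing the image of $x_1x_2$ inside $E_{1,1}(\Graph)$ and then revealing $\varphi(x_3),\dots,\varphi(x_{v_{\sf H}})$ in turn, each $\varphi(x_i)$ being a $\Graph$-neighbour of its already-placed $T$-parent $\varphi(x_{j_i})$. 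A step issued from a low-degree vertex costs at most $D$ choices; a step issued from a high-degree vertex $v$ costs at most $\deg_\Graph(v)$ choices and traverses an edge of $E_{1,2}(\Graph)\cup E_{2,2}(\Graph)$. The crux is that estimating every high step crudely by $\deg_\Graph(v)\le 2m$ is far too lossy. Instead one should classify the copies by the subset $S\subsetneq V({\sf H})$ of vertices mapped into $\cW^{\rm c}$ --- the pre-image of the $E_{1,1}$-edge lies in $V({\sf H})\setminus S$, so ${\sf H}[V({\sf H})\setminus S]$ has an edge, and, ${\sf H}$ being connected, every component of ${\sf H}[V({\sf H})\setminus S]$ touches $S$ when $S\neq\emptyset$ --- and then, for each profile $S$, bound the number of copies with that profile by summing the relevant products of high degrees against the single constraint $\sum_{v\in\cW^{\rm c}}\deg_\Graph(v)\le 2m$, combined with convexity and with the restrictions that $T$ places on how $S$ attaches to $V({\sf H})\setminus S$. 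Carrying this out requires organising the breadth-first exploration of ${\sf H}$ along a suitable decomposition of its edge set; this is precisely the role of the graph-decomposition lemma proved in Appendix~\ref{sec:decompose}, and it should yield, for each profile $S$, a bound of the shape (a polylogarithmic factor) $\times$ (an explicit monomial in $m$, and $n^2p^2$).

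Summing these contributions over the constantly many profiles and tree/root-edge choices produces a bound $\cN_{1,1}({\sf H},\Graph)\le (\log n)^{O(v_{\sf H})}\,m^{\gamma_{\sf H}}$ with the exponent $\gamma_{\sf H}$ dictated by the decomposition. Comparing with $\cN_{1,1}\ge\frac{c_n}{2}\tau\,n^{v_{\sf H}}p^{e({\sf H})}=\frac{c_n}{2}\tau\,(n^2p^\Delta)^{v_{\sf H}/2}$, using $p^\Delta\le p^2$, and then absorbing every polylogarithmic loss against a genuine (possibly tiny) power of $n^2p^2$ --- here one uses $n^2p^2=(np)^2\gg(\log n)^{2/(v_{\sf H}-2)}$ together with $c_n\ge\tfrac12(\log\log n)^{-2}$ --- and solving for $m$, one obtains a lower bound of the form $m\ge(n^2p^2)^{v_{\sf H}/(2\gamma_{\sf H})}$, the decomposition being calibrated so that $v_{\sf H}/(2\gamma_{\sf H})=1+\frac{1}{4(v_{\sf H}-1)}$. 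The main obstacle is unquestionably the per-profile count of the preceding paragraph: one has to route the exploration so that every expensive (high-degree) move is charged efficiently against the $2m$-budget, exploiting that a copy using an $E_{1,1}$-edge is ``anchored'' in the low-degree part and can enter each high-degree region of $\Graph$ only through edges of $E_{1,2}\cup E_{2,2}$ in a pattern constrained by the $\Delta$-regularity and connectivity of ${\sf H}$; and the polylogarithmic bookkeeping must be kept tight enough never to swamp the small power of $n^2p^2$ that the estimate gains --- which is exactly where the hypothesis $np\gg(\log n)^{1/(v_{\sf H}-2)}$ is invoked.
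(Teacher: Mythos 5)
Your overall direction is reasonable — count $\cN_{1,1}$-copies by an exploration of ${\sf H}$, distinguish low-degree from high-degree image vertices, and invoke the decomposition from Appendix~\ref{sec:decompose} — but there are several substantive gaps relative to what actually makes the argument close.

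\emph{Missing split $\cN_{1,1}=\wt\cN_{1,1}+\bar\cN_{1,1}$ and the role of the hypothesis on $p$.} Your profile $S=\emptyset$ (copies using only $E_{1,1}$ edges) cannot be bounded by any power of $m=e_{1,2}+e_{2,2}$: it is bounded by roughly $2e_{1,1}(\Graph)D^{v_{\sf H}-2}\le 2e(\Graph)D^{v_{\sf H}-2}$, and $e_{1,1}(\Graph)$ is controlled only by the hypothesis $e(\Graph)\le c_n\bar C n^2p^\Delta\log(1/p)$. The paper isolates this as $\wt\cN_{1,1}$ (Lemma~\ref{lem:wtN11}) and shows it is $o\big(c_n\,n^{v_{\sf H}}p^{e({\sf H})}\big)$ by exactly here invoking $np^{\Delta/2}\gg(\log n)^{1/(v_{\sf H}-2)}$ (so that $n^2p^\Delta\log(1/p)\ll(np^{\Delta/2})^{v_{\sf H}}$). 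You attribute that hypothesis to generic ``polylogarithmic bookkeeping,'' but it has one precise job: killing the all-$E_{1,1}$ contribution. Once this term is subtracted, what remains is $\bar\cN_{1,1}$, the copies that use at least one $E_{1,1}$ edge \emph{and} at least one $E_{1,2}\cup E_{2,2}$ edge, and \emph{only this} quantity is bounded purely in terms of $m$ (Lemmas~\ref{lem:e-bar-ub} and \ref{lem:e-bar-ub-H}).

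\emph{Wrong anchor.} You root the exploration at the $E_{1,1}$ edge $\varphi(x_1x_2)$, which costs $\le 2e_{1,1}(\Graph)$ choices — again not controlled by $m$. The paper anchors at the \emph{boundary}: in $\cN^*(C_\ell,\cdot)$ it fixes the $E_{1,2}\cup E_{2,2}$ edge \emph{adjacent to} an $E_{1,1}$ edge (cost $\le 2m$) and then lets the $E_{1,1}$ neighbour be determined up to a factor $D$. This reorientation is what makes the whole count a function of $m$ alone.

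\emph{The per-profile ``budget plus convexity'' bound is not the argument that yields the exponent.} The paper's path lemma (Lemma~\ref{path}) does a precise inductive count over the bit string ${\bm s}$ recording which edges are $E_{1,1}$ versus $E_{1,2}\cup E_{2,2}$: the key trick is that for a consecutive ``$11$'' pair one chooses the \emph{second} edge freely ($\le 2m$ ways), after which the first edge is determined because one of its endpoints has already been fixed. This pairing is how the exponent $\lfloor\ell/2\rfloor$ arises, and hence $v_{\sf H}/2-1$ in Lemma~\ref{lem:e-bar-ub-H}. Your convexity argument against the budget $\sum_{v\in\cW^{\rm c}}\deg_\Graph(v)\le 2m$ would give an exponent governed by $|S|$ or by the number of tree-children of $S$, and it is not clear that it equals, or is bounded by, $v_{\sf H}/2-1$; you concede this is ``unquestionably the main obstacle,'' but it is precisely the content one needs to supply.

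\emph{Role of the decomposition.} You present the decomposition lemma as a way to ``organise the breadth-first exploration,'' i.e., still within a spanning-tree framework. Its actual role (Proposition~\ref{prop:graph-decompose}, proved via K\"onig on the bipartite double cover) is to write ${\sf H}$ as a vertex-disjoint union of \emph{cycles and single edges} avoiding the prescribed edge $(u_1,u_2)$. Cycles are essential: a path in a spanning tree has a free end and costs an extra power of $m$, whereas a cycle closes up and is charged only $\lfloor\ell/2\rfloor$ (or $\lfloor(\ell-1)/2\rfloor$ if it contains the anchor). Replacing the cycle decomposition by a BFS tree would not produce the exponent $v_{\sf H}/2-1$.

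Finally, a minor but real inaccuracy: if you denote by $\gamma_{\sf H}$ the exponent of $m$ in the upper bound for $\bar\cN_{1,1}$, the correct value is $\gamma_{\sf H}=v_{\sf H}/2-1$, which delivers $m\gtrsim c_n^{1/\gamma_{\sf H}}(n^2p^\Delta)^{v_{\sf H}/(v_{\sf H}-2)}$; this is \emph{stronger} than the stated $(n^2p^2)^{1+\frac{1}{4(v_{\sf H}-1)}}$, and the slack is what absorbs the $c_n\ge\frac12(\log\log n)^{-2}$ loss. Your statement that the decomposition is ``calibrated so that $v_{\sf H}/(2\gamma_{\sf H})=1+\frac{1}{4(v_{\sf H}-1)}$'' is therefore not the right reading of where the numbers come from, and the putative $(\log n)^{O(v_{\sf H})}$ prefactor in your bound on $\cN_{1,1}$ does not appear in the paper's bound on $\bar\cN_{1,1}$ at all.
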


The proof of Proposition \ref{l:seqcounting} is deferred to Section \ref{s:comb-reg}. In the remainder of this section using Proposition \ref{l:seqcounting} we prove Lemmas \ref{l:core12} and \ref{l:core2ja}.

\begin{proof}[Proof of Lemma \ref{l:core12} (assuming Proposition \ref{l:seqcounting})]
This proof is an easy consequence of Lemma \ref{lem:graph-spec-prop-bd} and Proposition \ref{l:seqcounting}. Fix $\wh{\ul{\bm e}}:= ({\bm e}_{1,1}, {\bm e}_{1,2}, {\bm e}_{2,2})$, a non-negative integer ${\bm w}$, and let $\mathscr{N}_\#(\ul{\wh{\bm e}}, {\bm w})$ be the number of graphs with
\beq\label{eq:graph-cond-12}
(e_{1,1}(\Graph), e_{1,2}(\Graph), e_{2,2}(\Graph)) = \wh{\ul{\bm e}}, \qquad \cW(\Graph)={\bm w},
\eeq
and satisfying the hypothesis of the event $\wt{\rm Core}_{1,2}$ (see \eqref{eq:wtcore12} for the definition of the event $\wt{\rm Core}_{1,2}$), where we refer the reader to \eqref{eq:cW}, and \eqref{eq:e-121}-\eqref{eq:e-123} to recall the definitions of $\cW(\Graph)$, $e_{1,2}(\Graph)$, and $e_{2,2}(\Graph)$, and recall that $e_{1,1}(\Graph)= |E_{1,1}(\Graph)|$. Before applying Lemma \ref{lem:graph-spec-prop-bd} we note that any graph $\Graph$ satisfying the hypothesis of $\wt{\rm Core}_{1,2}$ must also satisfy the inequality $\cN_{1,1}({\sf H}, \Graph) \ge \vep \delta n^{v_{\sf H}} p^{e({\sf H})}$. Therefore applying Proposition \ref{l:seqcounting} with $\tau=\vep \delta$ and $c_n=1$, as $np^{\Delta/2} \gg (\log n)^{1/(v_{\sf H}-2)}$, yields that for any such graph $\Graph$
\beq\label{eq:e122-lbd}
e_{1,2}(\Graph) + e_{2,2}(\Graph) \ge n^2 p^\Delta   \cdot (\log n)^\gamma,
\eeq
for some $\gamma >0$. Thus setting $\cP_\Graph$ as in \eqref{eq:cP-dfn-1}, applying Lemma \ref{lem:graph-spec-prop-bd}, and  proceeding as in the steps leading to \eqref{eq:mathscrN} we find that 
\begin{align}\label{eq:mathscrNs}
\mathscr{N}_\#(\wh{\ul {\bm e}}, {\bm w}) & \le \exp\left(\log(1/p) \left\{{\bm e}_{1,1} + \f12 {\bm e}_{1,2} + (\vep/16) \cdot ({\bm e}_{1,2}+ {\bm e}_{2,2})\right\} + K \log \log n \cdot {\bm e}\right)\notag\\
& \le \exp\left(\log(1/p) \left\{{\bm e}_{1,1} + \f12 {\bm e}_{1,2} + (\vep/8) \cdot ({\bm e}_{1,2}+ {\bm e}_{2,2})\right\} \right),
\end{align}
where the last step is a consequence of \eqref{eq:e122-lbd} and the fact that $e(\Graph) \le \bar C n^2 p^\Delta \log (1/p)$. Having obtained \eqref{eq:mathscrNs} we now again proceed similar to the steps leading to \eqref{eq:wtcore11p-bd} and use the lower bound \eqref{eq:e122-lbd} to derive the desired upper bound on the probability of $\wt{\rm Core}_{1,2}$. To avoid repetition we omit further details. This finishes the proof of this lemma.  
\end{proof}

Next we prove Lemma \ref{l:core2ja}. It is quite similar to that of Lemma \ref{l:core12}. 

\begin{proof}[Proof of Lemma \ref{l:core2ja} (assuming Proposition \ref{l:seqcounting})]
Fix $j \in \llbracket L_n \rrbracket$. As before we fix $\wh{\ul{\bm e}}:=({\bm e}_{1,1}, {\bm e}_{1,2}, {\bm e}_{2,2})$ and ${\bm w}$, and set $\wh{\mathscr N}_\#(\wh{\ul{\bm e}}, {\bm w})$ to be the set of the graphs satisfying \eqref{eq:graph-cond-12} and the hypothesis of the event $\wt{\rm Core}_{2,j,\a}$ (see \eqref{eq:core2ja} for its definition). Once again we apply Proposition \ref{l:seqcounting}. As any graph $\Graph$ satisfying the hypothesis of the event $\wt{\rm Core}_{2,j\a}$ have 
\beq\notag
e(\Graph) \le 2^{-(j-1)} \vep \delta n^{v_{\sf H}} p^{e({\sf H})}, \, N({\sf H}, \Graph) \ge (1 - 3 \vep - \gs_j \vep)\delta n^{v_{\sf H}} p^{e({\sf H})}, \, \text{ and } \, 
\cN_{1,1}({\sf H}, \Graph) \ge 2^{-j} \vep \delta n^{v_{\sf H}} p^{e({\sf H})},
\eeq
we apply Proposition \ref{l:seqcounting} with $\tau=\vep \delta$ and $c_n = 2^{-(j-1)}$ to see that \eqref{eq:e122-lbd} continues to hold, uniformly for any $j \in \llbracket L_n \rrbracket$.

Note that moving from a graph $\Graph$ to its $\Delta$-core does not change $N({\sf H}, \Graph)$ and $\cN_{1,1}({\sf H}, \Graph)$. It only decreases the number of edges. So without loss of generality we may also assume that any graph $\Graph$ satisfying the hypothesis of $\wt{\rm Core}_{2,j,\a}$ must have $d_{\min}(\Graph) \ge \Delta$. Therefore arguing exactly same as above we notice that \eqref{eq:mathscrNs} holds for $\wh{\mathscr N}_\#(\wh{\ul{\bm e}}, {\bm w})$ as well, uniformly for all $j \in \llbracket L_n \rrbracket$. Thus repeating the same arguments as in the proof of Lemma \ref{l:core12} the desired bounds is derived. 
Further details are omitted. 
\end{proof}


\section{Proof of Proposition \ref{l:seqcounting}}
\label{s:comb-reg}
We first consider the case $\Delta=2$ or equivalently ${\sf H}=C_\ell$ for some $\ell \ge 3$. In this case we derive Proposition \ref{l:seqcounting} using a careful counting argument. We then generalize this result for any $\Delta \ge 3$ by decomposing a regular connected graph ${\sf H}$ into vertex disjoint cycles and edges. 
\subsection{Proof of Proposition \ref{l:seqcounting} for cycles}
{As a preliminary step we will show that the number of copies of $C_\ell$ in $\Graph$ that is contained in the subgraph spanned only by the vertices of low degree is $o(n^\ell p^\ell)$. To carry out this step we need the following notation.} 

For any graph ${\sf H}$ we write $\wt \cN_{1,1}({\sf H},\Graph)$ to denote the number of labelled copies of ${\sf H}$ consisting of only edges belonging to $E_{1,1}(\Graph)$ and set
\[
\bar \cN_{1,1}( {\sf H}, \Graph) := \cN_{1,1}({\sf H}, \Graph) - \wt \cN_{1,1}({\sf H}, \Graph).
\]
That is, $\bar \cN_{1,1}({\sf H}, \Graph) $ is the number of labeled copies of ${\sf H}$ in $\Graph$ that uses at least one edge from $E_{1,1}(\Graph)$ and at least one belonging to $E_{1,2}(\Graph) \cup E_{2,2}(\Graph)$. 

{The next lemma yields a bound on $\wt \cN_{1,1}(C_\ell, \Graph)$. Later in Section \ref{sec:general-counting} we will need an analogous result for general connected regular graph. The proof for the general connected regular graph being identical with that for $C_\ell$ we state and prove the result for the general case.} 


\begin{lem}\label{lem:wtN11}
Let $\tau, c_n, \Delta$, and ${\sf H}$ be as in Proposition \ref{l:seqcounting}. Suppose $\Graph$ be a graph satisfying \eqref{eq:large-e-bd}. If $np^{\Delta/2} \gg (\log n)^{1/(v_{\sf H}-2)}$ then 
\[
\wt \cN_{1,1}({\sf H}, \Graph) \le c_n \cdot \f\tau4 n^{v_{\sf H}} p^{e({\sf H})},
\]
for all large $n$. 
\end{lem}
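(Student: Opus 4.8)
\textbf{Proof plan for Lemma \ref{lem:wtN11}.}

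The plan is to bound $\wt\cN_{1,1}({\sf H},\Graph)$ by counting copies of ${\sf H}$ that live entirely inside $\Graph_{1,1}$, the subgraph spanned by $E_{1,1}(\Graph)$ — i.e.\ the subgraph in which every vertex has degree at most $D$ in $\Graph$ (with $D=D(\vep)=\lceil 16\Delta/\vep\rceil$). The key structural fact is that $\Graph_{1,1}$ has maximum degree at most $D$, so a bounded-degree graph: a copy of the connected graph ${\sf H}$ inside it is determined by choosing one vertex freely (at most $n$ ways) and then, since ${\sf H}$ is connected, growing the embedding along a spanning structure of ${\sf H}$ where each new vertex has at most $D$ choices. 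Hence $\wt\cN_{1,1}({\sf H},\Graph)\le n\cdot D^{\,v_{\sf H}-1}$. Here we also use that the number of edges of $\Graph_{1,1}$ is at most $De(\Graph)/2 \le \tfrac D2 \bar C n^2 p^\Delta\log(1/p)$, which bounds $|V(\Graph_{1,1})|$, but the cruder bound $|V(\Graph)|\le n$ already suffices; the degree bound is what makes the count linear in $n$ rather than polynomial of degree $v_{\sf H}$.

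Next I would compare $n\, D^{\,v_{\sf H}-1}$ with the target $c_n\cdot\tfrac\tau4\, n^{v_{\sf H}} p^{e({\sf H})}$. Since ${\sf H}$ is $\Delta$-regular we have $e({\sf H})=\tfrac\Delta2 v_{\sf H}$, so $n^{v_{\sf H}}p^{e({\sf H})} = (np^{\Delta/2})^{v_{\sf H}}$ and the target is of order $c_n\cdot(np^{\Delta/2})^{v_{\sf H}}$. Thus it suffices to check
\[
n\,D^{\,v_{\sf H}-1} \le c_n\cdot\frac\tau4\,(np^{\Delta/2})^{v_{\sf H}},
\]
i.e.\ (using $c_n\ge\tfrac12(\log\log n)^{-2}$ and $D=D(\vep)$ a fixed constant) that $(np^{\Delta/2})^{v_{\sf H}-1}/n \gg (\log\log n)^2$, equivalently $(np^{\Delta/2})^{v_{\sf H}-1}\gg n(\log\log n)^2$. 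Writing $np^{\Delta/2}=n^{1/2}\cdot(n^{1/2}p^{\Delta/2})$ does not immediately help; instead, in the regime of interest we have $np^{\Delta/2}\le n^{1/2}$, so $n\le (np^{\Delta/2})^2$, and it is enough that $(np^{\Delta/2})^{v_{\sf H}-1}\gg (np^{\Delta/2})^2(\log\log n)^2$, i.e.\ $(np^{\Delta/2})^{v_{\sf H}-3}\gg(\log\log n)^2$. For $v_{\sf H}\ge 4$ this follows from $np^{\Delta/2}\gg(\log n)^{1/(v_{\sf H}-2)}$, which forces $np^{\Delta/2}\to\infty$ at a polylog-of-$n$ rate, dominating $(\log\log n)^2$. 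The delicate case is $v_{\sf H}=3$ (i.e.\ ${\sf H}=C_3$, $\Delta=2$), where $v_{\sf H}-3=0$; here one must instead use the sharper input $np^{\Delta/2}=np\gg(\log n)$ directly: the hypothesis for $v_{\sf H}-2=1$ reads $np\gg\log n$, and then $n\,D^2\le c_n\tfrac\tau4\,n^3p^3$ reduces to $n^2p^3\gg(\log\log n)^2$, which holds since $n^2p^3 = (np)^2/n$...

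\textbf{The main obstacle.} I expect the genuinely delicate point to be exactly this boundary case analysis: verifying that the crude bound $\wt\cN_{1,1}({\sf H},\Graph)\le n D^{v_{\sf H}-1}$ beats $c_n\tfrac\tau4(np^{\Delta/2})^{v_{\sf H}}$ in the sparsest allowed regime, where $np^{\Delta/2}$ is only barely larger than $(\log n)^{1/(v_{\sf H}-2)}$ and where the adaptive factor $c_n$ can be as small as $(\log\log n)^{-2}$. The cleanest way to organize this is to absorb the $c_n$ loss and the constant $D(\vep)^{v_{\sf H}-1}$ into the statement that $(np^{\Delta/2})^{v_{\sf H}}/n \gg (\log\log n)^2$, and then argue: the hypothesis $np^{\Delta/2}\gg(\log n)^{1/(v_{\sf H}-2)}$ together with $p\le n^{-1/\Delta-o(1)}$ (so that $np^{\Delta/2}\le n^{1/2-o(1)}$, hence $n\le (np^{\Delta/2})^{2+o(1)}$) gives $(np^{\Delta/2})^{v_{\sf H}}/n \ge (np^{\Delta/2})^{v_{\sf H}-2-o(1)} \gg (\log n)^{1-o(1)} \gg (\log\log n)^2$, using $v_{\sf H}\ge 3$. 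This last chain is where all the arithmetic lives, and it is exactly the place where the threshold exponent $1/(v_{\sf H}-2)$ is used — precisely as flagged in the introduction, this is the only step relying on the lower bound on $np^{\Delta/2}$. Everything else (the degree bound on $\Graph_{1,1}$, the connectivity-based embedding count) is routine.
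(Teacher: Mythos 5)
There is a genuine gap, and it is concentrated in one line: the claim that ``the cruder bound $|V(\Graph)|\le n$ already suffices.'' Your bound $\wt\cN_{1,1}({\sf H},\Graph)\le n\,D^{v_{\sf H}-1}$ is linear in $n$, while the target $c_n\cdot\tfrac\tau4\,n^{v_{\sf H}}p^{e({\sf H})}=c_n\cdot\tfrac\tau4(np^{\Delta/2})^{v_{\sf H}}$ is only \emph{polylogarithmic} in $n$ near the boundary of the allowed regime: with $np^{\Delta/2}$ barely exceeding $(\log n)^{1/(v_{\sf H}-2)}$ and $c_n$ as small as $\tfrac12(\log\log n)^{-2}$, the right-hand side is of order $(\log n)^{v_{\sf H}/(v_{\sf H}-2)}(\log\log n)^{-2}$, vastly smaller than $n$. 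The arithmetic you offer to bridge this is also reversed: $np^{\Delta/2}\le n^{1/2}$ gives $(np^{\Delta/2})^2\le n$, not $n\le(np^{\Delta/2})^2$, so the inequality $(np^{\Delta/2})^{v_{\sf H}}/n\gg(\log\log n)^2$ simply does not follow (and in fact fails). The $v_{\sf H}=3$ sub-case is left unfinished, and the computation there is also off ($n^2p^3=(np)^3/n$, which tends to $0$ when $np\sim\log n$).

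The fix, and what the paper does, is to anchor the embedding on an \emph{edge} of $E_{1,1}(\Graph)$ rather than a free vertex: this edge can be chosen in at most $2\,e(\Graph)$ ways, after which each of the remaining $v_{\sf H}-2$ vertices has at most $D$ choices, giving $\wt\cN_{1,1}({\sf H},\Graph)\le 2e(\Graph)\,D^{v_{\sf H}-2}$. Crucially, the hypothesis \eqref{eq:large-e-bd} now enters and contributes the factor $c_n$ for free: $2e(\Graph)D^{v_{\sf H}-2}\le 2c_n\bar C D^{v_{\sf H}-2}\,n^2p^\Delta\log(1/p)$, and the comparison reduces to $n^2p^\Delta\log(1/p)\ll(np^{\Delta/2})^{v_{\sf H}}$, i.e.\ $\log(1/p)\ll(np^{\Delta/2})^{v_{\sf H}-2}$, which follows directly from $np^{\Delta/2}\gg(\log n)^{1/(v_{\sf H}-2)}$ and $\log(1/p)=O(\log n)$. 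Equivalently, you could keep your vertex-anchoring but replace $n$ by $|V(\Graph_{1,1})|\le 2e_{1,1}(\Graph)\le 2e(\Graph)$; the essential point is that the initial choice must be charged against $e(\Graph)$ (carrying $c_n$), not against the ambient vertex set $V(K_n)$.
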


\begin{lem}\label{lem:e-bar-ub}
For any $\ell \ge 3$ and a graph $\Graph$ we have
\beq\label{eq:bar-cN-0-ubd}
\bar \cN_{1,1}(C_\ell,\Graph) \le  \ell 2^{\ell+1} D^\ell \cdot \left(2 ({\bm e}_{1,2}(\Graph)+ {\bm e}_{2,2}(\Graph))\right)^{\lfloor (\ell-1)/2 \rfloor}.
\eeq
\end{lem}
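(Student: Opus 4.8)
\textbf{Proof plan for Lemma \ref{lem:e-bar-ub}.}

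The plan is to bound $\bar\cN_{1,1}(C_\ell,\Graph)$ by encoding a labelled copy of $C_\ell$ that uses at least one edge of $E_{1,1}(\Graph)$ and at least one edge of $E_{1,2}(\Graph)\cup E_{2,2}(\Graph)$ via a short ``skeleton'' of choices, the bulk of which are made from among the $(\bm e_{1,2}(\Graph)+\bm e_{2,2}(\Graph))$ edges of high type, and the remainder of which are constrained by the degree bound $D$ on low-degree vertices. The key combinatorial observation is the following: if we traverse the cycle $C_\ell$ cyclically and look at the $E_{1,1}$-edges versus the $E_{1,2}\cup E_{2,2}$-edges, then since the cycle contains at least one edge of each kind, the edges of $E_{1,1}$ split into maximal runs (arcs), and between consecutive runs there is at least one edge of $E_{1,2}\cup E_{2,2}$. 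Crucially, every vertex incident to an $E_{1,1}$-edge lies in $\cW(\Graph)$, hence has degree at most $D$ in $\Graph$. So along any maximal $E_{1,1}$-arc, once one endpoint of the arc is fixed, the rest of the arc can be extended in at most $D$ ways per step.

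Here is the order in which I would carry out the argument. First I would fix the cyclic pattern of ``type-$1$'' (meaning $E_{1,1}$) versus ``type-$2$'' (meaning $E_{1,2}\cup E_{2,2}$) edge-slots around $C_\ell$: there are at most $2^\ell$ such patterns, which accounts for the factor $2^{\ell}$ (with a spare factor of $2$ absorbed into the $2^{\ell+1}$). Since the pattern has at least one type-$2$ slot, I can choose a type-$2$ slot as a starting point; there are at most $\ell$ ways to declare which vertex of the labelled copy is the ``first'' vertex of that starting type-$2$ edge, giving the factor $\ell$. Now I would walk around the cycle from that starting point: each time I cross a type-$2$ edge I choose that edge of $\Graph$ from among the $\bm e_{1,2}(\Graph)+\bm e_{2,2}(\Graph)$ available, and crossing it in one of two orientations costs a further factor of $2$, so each type-$2$ edge contributes at most $2(\bm e_{1,2}(\Graph)+\bm e_{2,2}(\Graph))$ choices; each time I cross a type-$1$ edge, its tail vertex is already determined (it is the head of the previously crossed edge, which belongs to $\cW(\Graph)$ since it meets an $E_{1,1}$-edge) and so the head vertex has at most $D$ choices. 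The only subtlety is the closing constraint: after $\ell$ steps the walk must return to the start, so the last step is in fact forced, which only improves the bound — I would simply not use that saving.

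Counting up, if the pattern has $m$ type-$1$ slots and $\ell-m$ type-$2$ slots, the number of labelled copies with that pattern and that starting vertex is at most $D^{m}\cdot\bigl(2(\bm e_{1,2}(\Graph)+\bm e_{2,2}(\Graph))\bigr)^{\ell-m}$. Since $\Graph$ has maximum degree at least $\Delta=2$ implicitly and $D\ge 2$, and since $\bm e_{1,2}(\Graph)+\bm e_{2,2}(\Graph)$ could a priori be smaller than $D$, I cannot simply bound $D$ by the edge count; instead I would argue that the cyclic alternation forces $m\le \ell-1$ (there is at least one type-$2$ edge) and, more importantly, that the number of type-$2$ edges is at least the number of \emph{maximal arcs} of type-$1$ edges, and each maximal arc of type-$1$ edges of length $k$ is entered through a type-$2$ edge, so along the whole cycle the count of type-$1$ steps that follow a type-$1$ step is at most $\lfloor(\ell-1)/2\rfloor$ fewer than $\ell$... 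Here the cleanest route is: bound $D^m$ trivially by $D^\ell$, pull the $2^\ell$ out front (merging with the pattern count as $2^{\ell+1}$), and for the type-$2$ factor use the worst case over patterns. The number of type-$2$ edges, call it $\ell-m$, satisfies $\ell-m\ge \max(1,\lceil m/(\ell-1)\rceil)$ from alternation, but the binding bound I actually want is $\lfloor(\ell-1)/2\rfloor$: since each type-$1$ arc is flanked by type-$2$ edges, if there are $r\ge 1$ type-$1$ arcs there are at least $r$ type-$2$ edges and $m\le \ell-r$; optimising, the exponent of $2(\bm e_{1,2}+\bm e_{2,2})$ that I must allow is at least... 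I expect the precise bookkeeping here — reconciling the number of type-$2$ edges with the exponent $\lfloor(\ell-1)/2\rfloor$ claimed in the statement — to be the main obstacle, and it will come down to checking that in the extremal pattern (alternating type-$1$ and type-$2$ edges as much as possible, so roughly $\ell/2$ of each) one indeed gets exponent $\lfloor(\ell-1)/2\rfloor$ on the edge-count factor while all remaining slots are controlled by $D$; the other patterns are strictly better because fewer type-$2$ edges means fewer of the large factors, and the corresponding extra type-$1$ edges are absorbed into $D^\ell$.
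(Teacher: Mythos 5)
Your plan correctly identifies the skeleton of the argument (root the count at a type-$2$/type-$1$ transition, pay $2(\bm e_{1,2}+\bm e_{2,2})$ per ``freely chosen'' type-$2$ edge and $D$ per type-$1$ step), and you correctly flag the bookkeeping around the exponent as the real issue. But the resolution you sketch does not close the gap, and in fact it cannot: walking around the cycle and paying $2(\bm e_{1,2}+\bm e_{2,2})$ once per type-$2$ edge yields exponent equal to the number of type-$2$ edges, which can be as large as $\ell-1$ (take one $E_{1,1}$-edge and $\ell-1$ edges in $E_{1,2}\cup E_{2,2}$). Your extremality claim is backwards: the alternating pattern has only about $\ell/2$ type-$2$ edges, while other admissible patterns have more, not fewer; so ``the other patterns are strictly better'' is false, and you never get the exponent down to $\lfloor(\ell-1)/2\rfloor$ by this route.

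The missing idea is exactly the ``closing constraint'' you explicitly decline to exploit. The paper's proof (via Lemma \ref{path}) bounds $\cN(P_q,\Graph,{\bm s},v_1,v_2)$ for a path with \emph{both} endpoints fixed by $D^q\,(2\bar e)^{\lfloor q/2\rfloor}$, and the induction works precisely because fixing both ends means that along a run of consecutive type-$2$ edges, choosing every \emph{other} edge determines the remaining ones (e.g., for ${\bm s}=11$ you choose $e_2$, then $e_1$ is forced since both its endpoints are now known). This is what halves the exponent. In the cycle argument one then anchors at a $10$ transition, pays $2\bar e\cdot D^{q-1}$ for the first maximal run of type-$1$ edges together with the preceding type-$2$ edge, and applies the fixed-endpoint path bound to the rest; the inequality $1+\lfloor(\ell-q)/2\rfloor\le\lfloor(\ell-1)/2\rfloor$ for $q\ge3$ gives the claimed exponent. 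Without that endpoint-fixing step (or some other device achieving the same ``pick every other edge'' saving), your walk gives a bound of order $(2\bar e)^{\ell-1}$ in the worst case, which is too weak for the later use of this lemma.
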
 

Using Lemmas \ref{lem:wtN11} and \ref{lem:e-bar-ub} let us now prove Proposition \ref{l:seqcounting} for cycle graphs $C_\ell$. 

\begin{proof}[Proof of Proposition \ref{l:seqcounting} for cycles]
Since by assumption $\cN_{1,1}(C_\ell,\Graph) \ge (c_n/2) \cdot \tau n^\ell p^\ell$, applying Lemma \ref{lem:wtN11} for ${\sf H}=C_\ell$ we deduce that
\[
\bar \cN_{1,1}(C_\ell, \Graph) \ge c_n \cdot \f\tau 4 n^\ell p^\ell.
\]
Therefore Lemma \ref{lem:e-bar-ub} now implies that there exists some constant $\wt c > 0$, depending on $\tau$, so that  
\begin{align*}
{\bm e}_{1,2}(\Graph)+ {\bm e}_{2,2}(\Graph)  & \ge\f12 \cdot  \left( c_n \cdot \f\tau4 \cdot D^{-\ell} \ell^{-1} 2^{-(\ell+1)}\right)^{1/\lfloor (\ell-1)/2\rfloor} \cdot \left(n^2p^2\right)^{\f{\ell}{\ell-1}}\\
& \ge \wt c \cdot (\log \log n)^{-\f{4}{\ell-2}} \cdot \left(n^2 p^2\right)^{1+ \f{1}{\ell-1}} \ge \left(n^2 p^2\right)^{1+\f{1}{2(\ell-1)}},
\end{align*}
for all large $n$, where the penultimate step uses the fact that $c_n \ge \frac{1}{2}(\log \log n)^{-2}$ and the last step uses that 
\[
(\log \log n)^{4/(\ell-2)} \ll (\log n)^{\f{1}{2(\ell-1)(\ell-2)}} \ll (np)^{\f{1}{2(\ell-1)}}.
\]
This completes the proof. 
\end{proof}
We now proceed to the proof of Lemma \ref{lem:wtN11}. The proof is straightforward.  
 
\begin{proof}[Proof of Lemma \ref{lem:wtN11}]
We construct a labelled copy of ${\sf H}$ contributing to $\wt\cN_{1,1}({\sf H},\Graph)$ as follows:  Fix any edge $e_0:=(u_0,u_0') \in E({\sf H})$. This edge needs to be mapped to an edge in $E_{1,1}(\Graph)$. It can be done in at most $2|E_{1,1}(\Graph)|$. Having chosen this edge, as both end points of $e_0$ are mapped to vertices of degree at most $D$, we note that each of the neighbors of $u_0$ and $u_0'$ can be chosen at most in $D$ ways. Furthermore by the definition of $\wt\cN_{1,1}({\sf H},\Graph)$ each such neighbor must also be mapped vertices of low degree. Since ${\sf H}$ is connected we carry on this argument by induction to deduce that each of the remaining $(v_{\sf H}-2)$ vertices of ${\sf H}$, excluding $u_0$ and $u_0'$, can be chosen in at most $D$ ways. This shows that
\[
\wt\cN_{1,1}({\sf H}, \Graph) \leq 2|E_{1,1}(\Graph)|\cdot D^{v_{\sf H}-2} \le 2 e(\Graph) \cdot D^{v_{\sf H}-2}=c_n \cdot o(n^{v_{\sf H}} p^{e({\sf H})}),
\]
where in the last step we used that $e(\Graph) \le c_n \cdot \bar C n^2 p^\Delta \log(1/p)$, and the fact that $np^{\Delta/2} \gg (\log n)^{1/(v_{\sf H}-2)}$ implies $n^2 p^\Delta \log (1/p) \ll (np^{\Delta/2})^{v_{\sf H}} = n^{v_{\sf H}} p^{e({\sf H})}$. This completes the proof of the lemma. 
\end{proof}

We need a few combinatorial definitions before proving Lemma \ref{lem:e-bar-ub}. For any labelled copy ${\sf C}$ of either the $\ell$-cycle or the path of length $\ell$ in $\Graph$, we associate it with an element ${\bm s}({\sf C}):=(s_1,s_2,\ldots, s_{\ell})$ of $\{0,1\}^{\ell}$ as follows. We set $s_{i}=0$ if the $i$-th edge of ${\sf C}$ (according to the labelling) is in $E_{1,1}(\Graph)$ and $s_{i}=1$ otherwise. For any fixed ${\bm s}\in \{0,1\}^{\ell}$ and $v_1, v_2 \in V(\Graph)$ we write $\cN(P_\ell, \Graph, {\bm s}, v_1, v_2)$ to denote the number of labelled copies ${\sf C}$ of $P_\ell$, the path of length $\ell$, such that ${\bm s}({\sf C})={\bm s}$ with the starting and the ending vertices being $v_1$ and $v_2$ respectively. 

We have the following counting lemma which will be key in proving Lemma \ref{lem:e-bar-ub}.

\begin{lem}
\label{path}
For any integer $\ell\ge 3$, ${\bm s}\in \{0,1\}^{\ell}$, and $v_1, v_2 \in V(\Graph)$ we have 
\[
\cN(P_\ell, \Graph, {\bm s}, v_1, v_2) \leq D^{\ell}\left(2 (e_{1,2}(\Graph)+e_{2,2}(\Graph))\right)^{\lfloor \ell/2 \rfloor}.
\] 
\end{lem}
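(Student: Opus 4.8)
### Proof proposal for Lemma \ref{path}

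The plan is to prove this by induction on $\ell$, where the inductive step splits a path of length $\ell$ into a short initial segment (of length one or two, depending on the first coordinate of ${\bm s}$) and a remaining path of length $\ell-1$ or $\ell-2$. The key observation driving the whole argument is a dichotomy for each edge of the path: an edge mapped to $E_{1,1}(\Graph)$ has both endpoints of degree at most $D$, so it contributes very little branching, whereas an edge mapped to $E_{1,2}(\Graph)\cup E_{2,2}(\Graph)$ is a "free" edge that must be chosen from a set of size at most $2(e_{1,2}(\Graph)+e_{2,2}(\Graph))$. The exponent $\lfloor \ell/2\rfloor$ on the free-edge count reflects that, in the worst case, roughly half the edges along the path can be free, and we need to make sure we never "pay" twice for choosing the same free edge.

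Here is how I would carry out the induction. Fix ${\bm s}=(s_1,\dots,s_\ell)$ and vertices $v_1,v_2$. Consider the first edge of the path, incident to $v_1$ and some vertex $w$.

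\textbf{Case $s_1=0$:} the first edge lies in $E_{1,1}(\Graph)$, so $w$ is a neighbour of $v_1$ of degree at most $D$; there are at most $D$ choices for $w$ (in fact at most $\deg_\Graph(v_1)$, but $D$ suffices once we know $v_1\in\cW(\Graph)$ — and if $v_1\notin\cW(\Graph)$ the count is zero). The remaining path from $w$ to $v_2$ has length $\ell-1$ and sign vector $(s_2,\dots,s_\ell)$, so by the inductive hypothesis it is counted by at most $D^{\ell-1}\bigl(2(e_{1,2}(\Graph)+e_{2,2}(\Graph))\bigr)^{\lfloor(\ell-1)/2\rfloor}\le D^{\ell-1}\bigl(2(e_{1,2}(\Graph)+e_{2,2}(\Graph))\bigr)^{\lfloor\ell/2\rfloor}$. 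Multiplying by the $D$ choices for $w$ gives the bound.

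\textbf{Case $s_1=1$:} here I would look at the first two edges together. The first edge $(v_1,w)$ is free, so there are at most $2(e_{1,2}(\Graph)+e_{2,2}(\Graph))$ choices for the ordered pair $(v_1,w)$, hence at most that many choices for $w$ given $v_1$. If $\ell=1$ (a degenerate base case, handled separately) we are done; if $\ell\ge 2$, the second edge $(w,w')$: regardless of $s_2$, once $w$ is fixed there are at most $D$ choices for $w'$ if $s_2=0$, and if $s_2=1$ we would be double-counting free edges, so I instead bound the choice of $w'$ by $\deg_\Graph(w)$ — but this is not bounded. The cleaner route: when $s_1=1$, bound the choice of the edge $(v_1,w)$ by $2(e_{1,2}(\Graph)+e_{2,2}(\Graph))$ and the choice of $w'$ crudely by $n$ is too lossy. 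Instead, I think the right bookkeeping is: process edges greedily, and whenever we hit a free edge $s_i=1$, charge $2(e_{1,2}(\Graph)+e_{2,2}(\Graph))$ for selecting that edge and then $D$ (or $1$) for the next step, absorbing pairs; whenever we hit $s_i=0$, charge only $D$. Since between the completion of one free edge and the start of the next there is at least the vertex just reached, and each free edge "uses up" a position, the total number of free edges is at most $\lceil \ell/2\rceil$, but a more careful count — grouping each free edge with its successor edge — gives exactly the exponent $\lfloor\ell/2\rfloor$ on the free-edge factor and a clean $D^\ell$ overall.

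\textbf{Main obstacle.} The delicate point is the double-counting bookkeeping in the $s_1=1$ case: after selecting a free edge we have fixed two vertices, and we must make sure the length-accounting yields exactly $\lfloor\ell/2\rfloor$ free-edge factors and not $\lceil\ell/2\rceil$, while keeping the $D$-power at $D^\ell$ (so that the constant absorbs cleanly into the $\ell\,2^{\ell+1}D^\ell$ prefactor in Lemma \ref{lem:e-bar-ub}). I would handle this by strengthening the induction: prove simultaneously the two statements ``$\cN(P_\ell,\Graph,{\bm s},v_1,v_2)\le D^\ell\bigl(2(e_{1,2}(\Graph)+e_{2,2}(\Graph))\bigr)^{\lfloor \ell/2\rfloor}$ when $s_1=0$'' and ``$\dots\le D^{\ell-1}\bigl(2(e_{1,2}(\Graph)+e_{2,2}(\Graph))\bigr)^{\lceil \ell/2\rceil}$ when $s_1=1$'', and check that both feed into each other correctly under removal of the first edge (or first two edges). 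With the two-sided induction in place, the arithmetic $\lfloor(\ell-2)/2\rfloor+1=\lceil\ell/2\rceil$ and $\lceil(\ell-1)/2\rceil=\lfloor\ell/2\rfloor$ closes the loop, and summing the two gives the stated uniform bound $D^\ell\bigl(2(e_{1,2}(\Graph)+e_{2,2}(\Graph))\bigr)^{\lfloor\ell/2\rfloor}$ for all ${\bm s}$. The base cases $\ell=1,2$ are immediate: for $\ell=1$ the count is at most $2(e_{1,2}(\Graph)+e_{2,2}(\Graph))$ or $D$ depending on $s_1$; for $\ell=2$ one chooses the middle vertex and uses the same dichotomy edge by edge.
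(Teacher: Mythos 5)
You have the right dichotomy (each edge is either ``cheap,'' costing a factor $D$, or ``free,'' costing a factor $2(e_{1,2}(\Graph)+e_{2,2}(\Graph))$), but the proposed bookkeeping does not close, precisely at the point you flag as delicate. Consider the case $s_1=s_2=1$. Your proposed strengthened induction, with the $s_1=1$ bound $D^{\ell-1}\bigl(2\bar e\bigr)^{\lceil\ell/2\rceil}$ where $\bar e := e_{1,2}(\Graph)+e_{2,2}(\Graph)$, would have us choose the first edge in $2\bar e$ ways and then apply the $s_1=1$ bound to the remaining path of length $\ell-1$, yielding $2\bar e\cdot D^{\ell-2}(2\bar e)^{\lceil(\ell-1)/2\rceil}$. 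For even $\ell$ this equals $D^{\ell-2}(2\bar e)^{\ell/2+1}$, which is \emph{not} bounded by $D^{\ell-1}(2\bar e)^{\ell/2}$ unless $2\bar e\le D$; that is false in the regime of interest (where $\bar e$ is polynomially large in $n$ while $D$ is a constant). Moreover, even granting the strengthened statement, for odd $\ell$ the $s_1=1$ bound $D^{\ell-1}(2\bar e)^{\lceil\ell/2\rceil}=D^{\ell-1}(2\bar e)^{\lfloor\ell/2\rfloor+1}$ does not imply the Lemma's uniform bound $D^{\ell}(2\bar e)^{\lfloor\ell/2\rfloor}$. So the proposal as written has a genuine gap: it neither closes the induction nor, if it did, recovers the claimed bound.

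The missing idea is a reordering of the choices in the $s_1s_2=11$ case, which is how the paper avoids paying twice for consecutive free edges. Instead of choosing the first edge and then the second, choose the \emph{second} edge $(v_1',v_1'')$ first: there are at most $2\bar e$ ways to do so, and once it is chosen the first edge $(v_1,v_1')$ has both endpoints fixed (recall $v_1$ is given), so it admits at most one choice. Thus two consecutive free edges together cost only a single factor of $2\bar e$ (and no factor of $D$), and one may then apply the inductive hypothesis directly to the remaining path of length $\ell-2$ with string $(s_3,\dots,s_\ell)$, using $\lfloor(\ell-2)/2\rfloor+1=\lfloor\ell/2\rfloor$. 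With this trick the single uniform statement $\cN(P_\ell,\Graph,{\bm s},v_1,v_2)\le D^\ell(2\bar e)^{\lfloor\ell/2\rfloor}$ carries through by a straightforward induction that peels off one edge if $s_1=0$ and two edges if $s_1=1$; no case-split by the value of $s_1$ in the inductive hypothesis is needed.
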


\begin{proof}
The proof is done by an induction argument. Clearly for $\ell=1$, there can be at most one copy of $P_{1}$ with the fixed starting and ending points. 

For $\ell=2$, let  us consider different possible choices of ${\bm s}$. Clearly if ${\bm s}=00$, since the starting vertex is fixed, then there are at most $D$ possibilities for each of the edges and  hence $\cN(P_2, \Graph, {\bm s}, v_1, v_2)\leq D^2$. If ${\bm s}\in \{10, 01, 11\}$ then, as the starting and the ending vertices are fixed, there are at most $2 \bar {e}(\Graph)$ choices for the edge corresponding to the $1$, where for ease in writing we use the shorthand $\bar e(\Graph):= e_{1,2}(\Graph) + e_{2,2}(\Graph)$. Having chosen this edge, because the two leaf vertices are fixed, there are at most one choices for the remaining edge. This gives the desired bound for $\ell=2$, any ${\bm s} \in \{0,1\}^2$ and $v_1, v_2 \in V(\Graph)$.

Let us suppose that the statement of the lemma is true for all $\ell=1,2,\ldots, q-1$, any ${\bm s} \in \{0,1\}^\ell$, and $v_1, v_2 \in V(\Graph)$. We now establish the lemma for $\ell=q\geq 3$, any ${\bm s} \in \{0,1\}^q$, and $v_1, v_2 \in V(\Graph)$. Write ${\bm s}=s_1s_2{\bm s}'$, where $s_1$ and $s_2$ are the first two digits of ${\bm s}$ and ${\bm s}'$ is the remaining substring of length $q-2$.

If $s_1=0$ then there are at most $D$ choices for the first edge $e=(v_1,v_1')$, and for each such choice there are at most $\cN(P_{q-1}, \Graph, s_2{\bm s}', v_1', v_2)$ choices for the remaining edges. Hence by induction hypothesis we obtain 
\[
\cN(P_q, \Graph, {\bm s}, v_1,v_2)\leq D \cdot D^{q-1} (2\bar e(\Graph))^{\lfloor (q-1)/2\rfloor} \leq D^{q}(2 \bar {e}(\Graph))^{\lfloor q/2 \rfloor}.
\]
If $s_1=1$ we need to consider two cases depending on whether $s_2=0$ or $1$. If $s_1s_2=10$, arguing as above, we see that there are at most $2 \bar {e}(\Graph) D$ many choices for the first two edges $e_1 =(v_1,v_1')$ and $e_2=(v_1', v_1'')$. For each of these choices there at most $\cN(P_{q-2}, \Graph, {\bm s}', v_1'', v_2)$ many choices for the remaining edges. So by induction hypothesis in this case we derive 
\[
\cN(P_q, \Graph, {\bm s}, v_1, v_2) \leq (2 \bar {e}(\Graph) D) \cdot D^{q-2} (2 \bar {e}(\Graph))^{\lfloor (q-2)/2 \rfloor} \leq D^{q}(2 \bar {e}(\Graph))^{\lfloor q/2 \rfloor}. 
\]
If $s_1s_2=11$ we first choose the second edge $e_2=(v_1', v_1'')$, where there are at most $2 \bar {e}(\Graph)$ many choices. Given this choice, as the first vertex is fixed at $v_1$ there are at most one choice for the first edge $e_1=(v_1, v_1')$. Now there are at most a total of $N(P_{q-2}, \Graph, {\bm s}', v_1'', v_2)$ many choices for the third edge onwards. Hence, by induction hypothesis we get 
\[
\cN(P_q, \Graph, {\bm s}, v_1, v_2) \leq (2 \bar {e}(\Graph) ) \cdot D^{q-2} (2 \bar {e}(\Graph))^{\lfloor (q-2)/2 \rfloor} \leq D^{q}(2 \bar {e}(\Graph))^{\lfloor q/2 \rfloor}.
\] 
This completes the induction and the proof of the lemma. 
\end{proof}
 
We are now ready to prove Lemma \ref{lem:e-bar-ub}. 

\begin{proof}[Proof of Lemma \ref{lem:e-bar-ub}]
Let $\cN^*(C_\ell, \Graph)$ denote the number of labelled copies of $C_\ell$ in $\Graph$ such that the first edge (according to the labelling) belongs to $E_{1,2}(\Graph) \cup E_{2,2}(\Graph)$ and the second edge belongs to $E_{1,1}(\Graph)$. Since $\bar \cN_{1,1}(C_\ell, \Graph)$ counts the number of labelled copies of $C_\ell$ in $\Graph$ that have at least one edge in $E_{1,1}(\Graph)$ and at least one belonging to $E_{1,2}(\Graph) \cup E_{2,2}(\Graph)$, every such labelled copy must contain an edge in $E_{1,1}(\Graph)$ that is adjacent to some edge $E_{1,2}(\Graph) \cup E_{2,2}(\Graph)$ also contained in that copy. Therefore we find that  
\[
\bar \cN_{1,1}(C_\ell, \Graph)\leq 2\ell \cN^*(C_\ell, \Graph),
\]
where the factor $\ell$ is due to the choice of the location of the edge belonging to $E_{1,2}(\Graph) \cup E_{2,2}(\Graph)$ that is adjacent to the edge in $E_{1,1}(\Graph)$ and the factor two is due to the orientation of that edge. So it suffices to prove
\[
\cN^*(C_\ell, \Graph) \leq  2^{\ell} D^\ell \cdot (2 \bar {\bm e})^{\lfloor (\ell-1)/2 \rfloor}.
\]
Recall the $\ell$-bit string ${\bm s}({\sf C})$ associated  with every labelled copy ${\sf C}$ of $C_{\ell}$ in $\Graph$. Clearly for any labelled copy that is counted in 
$\cN^*(C_\ell, \Graph)$ the string ${\bm s}({\sf C})$ must start with the substring $10$. For any such ${\bm s}$, for ease of explanation, we further introduce the notation $\cN^*(C_\ell, \Graph, {\bm s})$ to denote the number of labelled copies ${\sf C}$ of $C_{\ell}$ that are counted in $\cN^*(C_\ell, \Graph)$ such that ${\bm s}({\sf C})={\bm s}$. Equipped with this notation it now suffices to prove that for any $\ell$-bit string ${\bm s}$ that starts with $10$ we have 
\begin{equation}
\label{e:sbound}
\cN^*(C_\ell, \Graph, {\bm s}) \leq D^{\ell} \cdot (2 \bar {\bm e})^{\lfloor (\ell-1)/2 \rfloor}.
\end{equation}
Fix an ${\bm s}$ as above and let ${\bm s}'$ be the substring of ${\bm s}$ that ends with the second $1$ of ${\bm s}$. Observe that by our construction the last edge of any cycle contributing to $\cN^*(C_\ell, \Graph)$ must belong to $E_{1,2}(\Graph) \cup E_{2,2}(\Graph)$ because the first edge also belonging to $E_{1,2}(\Graph) \cup E_{2,2}(\Graph)$ can have only one of its end point adjacent to an edge in $E_{1,1}(\Graph)$. Hence for any $\ell$-bit starting ${\bm s}$ starting with $10$ containing only one $1$ we trivially have $N^*(C_\ell, \Graph, {\bm s})=0$. Thus such strings can be safely ignored and the substring ${\bm s}'$ is well defined. Let us now write ${\bm s}={\bm s}'{\bm s}''$. Note that by definition the length of the substring ${\bm s}'$ is at least three. 

If ${\bm s}''$ is an empty string we get 
 \[
 \cN^*(C_\ell, \Graph, {\bm s})\leq (2 \bar {\bm e}) D^{\ell-1},
 \]
as there are at most $2 \bar {\bm e}$ many choices for the first edge whereas each subsequent edge must start at a small degree vertex and hence the total number of choices is upper bounded by $D$. This completes the proof of \eqref{e:sbound} in the case ${\bm s}''$ is empty.

If ${\bm s}''$ is non-empty we let $3\leq q<\ell$ to be the length of ${\bm s}'$. Note that arguing as before there are at most $(2 \bar {\bm e}) D^{q-1}$ many choices for the first $q$ edges, and for each such choice the total number of choices for the remaining $(\ell-q)$ edges is upper bounded by $N(P_{\ell-q}, \Graph, {\bm s}'', v_t, v_1)$, where $v_q$ is the end point of the $q$-th edge and $v_1$ is the appropriate end point of the first edge. This observation together with Lemma  \ref{path} now yields that 
\[
 \cN^*(C_\ell, \Graph, {\bm s}) \leq (2 \bar {\bm e}) D^{q-1} \cdot D^{\ell-q}(2 \bar {\bm e})^{\lfloor (\ell-q)/2 \rfloor}
\]
Noting that $q\geq 3$ implies $1+\lfloor (\ell-q)/2\rfloor \leq \lfloor (\ell-1)/2 \rfloor$ the proof completes.   
\end{proof}

\subsection{Proof of Proposition \ref{l:seqcounting} for general regular graphs}\label{sec:general-counting}
Equipped with Lemma \ref{lem:wtN11} we note that to extend Proposition \ref{l:seqcounting} for any general regular connected graph ${\sf H}$ it suffices to prove the following result. 

\begin{lem}\label{lem:e-bar-ub-H}
Let ${\sf H}$ be  $\Delta$-regular connected graph with $\Delta \ge 3$. Fix a graph $\Graph$. Then
\beq\label{eq:bar-cN-0-ubd}
\bar \cN_{1,1}({\sf H},\Graph) \le  C_{{\sf H}, D} \cdot \left(2 ({\bm e}_{1,2}(\Graph)+ {\bm e}_{2,2}(\Graph))\right)^{(v_{\sf H}/2)-1}
\eeq
for some constant $C_{{\sf H},D} <\infty$ depending only on the graph ${\sf H}$ and the threshold $D$.
\end{lem}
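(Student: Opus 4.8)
## Proof proposal for Lemma \ref{lem:e-bar-ub-H}

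\textbf{Overall strategy.} The plan is to mimic the cycle case (Lemma \ref{lem:e-bar-ub}) but to first reduce the general $\Delta$-regular connected graph ${\sf H}$ to a union of cycles, edges, and paths via a suitable decomposition, so that counting copies of ${\sf H}$ reduces to counting copies of these simpler pieces and gluing. Concretely, since ${\sf H}$ is $\Delta$-regular with $\Delta \ge 3$, by Petersen-type / parity arguments one can decompose $E({\sf H})$ into a collection of edge-disjoint subgraphs each of which is either a cycle or a single edge (more robustly: a spanning subgraph that is a disjoint union of cycles together with a leftover of bounded structure). It will be cleanest to instead decompose ${\sf H}$ into a bounded number of ``layers,'' where each layer is a vertex-disjoint union of cycles and edges; this is precisely the kind of decomposition result deferred to Appendix \ref{sec:decompose}, so I may invoke it. The key quantitative point is that such a decomposition has at most $v_{\sf H}$ pieces, each with at most $v_{\sf H}$ vertices, and that a labelled copy of ${\sf H}$ in $\Graph$ restricts to a labelled copy of each piece.

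\textbf{Key steps in order.} First I would fix the decomposition ${\sf H} = \bigcup_{k} {\sf H}_k$ into vertex-disjoint-union-of-cycles-and-edges pieces, and observe $\sum_k e({\sf H}_k) = e({\sf H}) = \Delta v_{\sf H}/2$ and $\sum_k \alpha^\star_{{\sf H}_k} \ge$ (something controlling the exponent). Second, for any labelled copy $\mathscr{H}$ of ${\sf H}$ contributing to $\bar\cN_{1,1}({\sf H},\Graph)$: by definition it uses at least one edge of $E_{1,1}(\Graph)$ adjacent to at least one edge of $E_{1,2}(\Graph)\cup E_{2,2}(\Graph)$. I would first select that ``boundary'' edge of $E_{1,2}(\Graph)\cup E_{2,2}(\Graph)$ — at most $2\bar{\bm e} := 2({\bm e}_{1,2}(\Graph)+{\bm e}_{2,2}(\Graph))$ oriented choices, with a bounded combinatorial factor $C_{{\sf H}}$ for \emph{which} edge of ${\sf H}$ it is. Third — and this is the crux — having pinned down one edge and its two endpoints, I would traverse ${\sf H}$ by a fixed spanning structure (say a BFS/DFS tree rooted at that edge): each time we extend along a tree-edge of ${\sf H}$ whose image lies in $E_{1,1}(\Graph)$ we incur a factor $\le D$ (low-degree endpoint), and each time we extend along an edge whose image lies in $E_{1,2}(\Graph)\cup E_{2,2}(\Graph)$ we incur a factor $\le 2\bar{\bm e}$. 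The point of the cycle lemma (and of Lemma \ref{path}) is that along any cycle or path, consecutive ``expensive'' ($2\bar{\bm e}$) steps can be amortized so that a cycle/path of length $\ell$ costs at most $D^{\ell}(2\bar{\bm e})^{\lfloor \ell/2\rfloor}$; summing over the pieces of the decomposition and using $\sum_k \lfloor e({\sf H}_k)/2 \rfloor \le e({\sf H})/2 = \Delta v_{\sf H}/4$ — and, more carefully, arranging the count so the total number of expensive steps is at most $v_{\sf H}/2 - 1$ — gives the claimed exponent $(v_{\sf H}/2)-1$. Fourth, absorb all the bounded prefactors ($D$ powers, the number of edges/pieces of ${\sf H}$, orientations, choices of which ${\sf H}$-edge maps to the boundary edge) into a single constant $C_{{\sf H},D}$.

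\textbf{Main obstacle.} The delicate point is the bookkeeping that forces the exponent of $2\bar{\bm e}$ down to exactly $(v_{\sf H}/2)-1$ rather than, say, $e({\sf H})/2$ or $v_{\sf H}/2$. For a single cycle $C_\ell$ one has $\ell = v_{C_\ell}$ and the exponent $\lfloor(\ell-1)/2\rfloor = (v_{\sf H}/2)-1$ only because the copy is \emph{required} to contain a boundary edge (an $E_{1,1}$–edge adjacent to an $E_{1,2}\cup E_{2,2}$–edge), which is what kills one power of $\bar{\bm e}$ in \eqref{e:sbound}. For general ${\sf H}$ with $\Delta \ge 3$ the number of edges exceeds the number of vertices, so naively amortizing over a cycle decomposition would overcount; the resolution is to \emph{not} spend a $\bar{\bm e}$ factor on every expensive edge independently but to re-use already-determined vertices. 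Concretely I would route the traversal through a spanning tree of ${\sf H}$ (only $v_{\sf H}-1$ edges determine all vertices), bounding the non-tree edges for free, and then argue that among the $v_{\sf H}-1$ tree-edges the ``expensive'' ones can be grouped/amortized to yield at most $v_{\sf H}/2 - 1$ powers of $\bar{\bm e}$ — using that after the first expensive edge each subsequent expensive edge has at least one endpoint already fixed (so it costs $\le D$ not $\bar{\bm e}$ whenever its far endpoint is low-degree, and the alternation forces the save). Getting this amortization argument airtight for an arbitrary connected $\Delta$-regular ${\sf H}$ — as opposed to the clean linear structure of a cycle handled by the induction in Lemma \ref{path} — is where the real work lies; I expect it to require the graph decomposition from Appendix \ref{sec:decompose} precisely so that each piece is linear (a cycle or edge) and Lemma \ref{path}'s induction applies piece-by-piece, with the gluing contributing only the bounded constant $C_{{\sf H},D}$.
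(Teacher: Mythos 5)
Your instinct to invoke the decomposition from Appendix \ref{sec:decompose} together with Lemma \ref{path} is sound, and your closing sentence roughly describes the paper's strategy, but the intermediate accounting has real gaps. The quantity you propose to sum, $\sum_k \lfloor e({\sf H}_k)/2\rfloor \le e({\sf H})/2 = \Delta v_{\sf H}/4$, is the wrong one: for $\Delta\ge 3$ it exceeds $v_{\sf H}/2$ and could never produce the exponent $(v_{\sf H}/2)-1$. What actually matters is that Proposition \ref{prop:graph-decompose} gives a \emph{vertex-disjoint} collection $\{F_i\}$ of cycles and single edges whose vertex sets \emph{cover} $V({\sf H})$, so $\sum_i v_{F_i}=v_{\sf H}$; each piece then contributes at most $O\bigl(\bar{\bm e}^{\,v_{F_i}/2}\bigr)$ choices (with $\bar{\bm e}:={\bm e}_{1,2}(\Graph)+{\bm e}_{2,2}(\Graph)$), not $\bar{\bm e}^{\,e(F_i)/2}$, because the relevant exponent is the fractional independence number of a cycle or an edge, which equals half its vertex count (this is exactly what \eqref{eq:copy-H-star-bd} gives when the piece is mapped into $E_{1,2}\cup E_{2,2}$, while Lemma \ref{lem:e-bar-ub} handles the mixed case with the slightly better $\lfloor(v_{F_i}-1)/2\rfloor$). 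The saving of one power comes from pinning a prescribed adjacent pair of ${\sf H}$-edges, one mapped into $E_{1,1}$ and one into $E_{1,2}$ (an $E_{1,1}$ edge cannot abut an $E_{2,2}$ edge), and absorbing that constraint into the first three pieces; this is a union bound over $\cS_0$, not a single ``bounded combinatorial factor.''

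Two further points. Your spanning-tree amortization does not straightforwardly work: at a branching vertex of degree $\ge 3$ in the tree, two sibling ``expensive'' tree-edges share only the already-fixed parent, so the pairing trick that works along a path does not transfer, and this is precisely why the paper insists on a decomposition into pieces that are linear (edges/cycles) so that Lemma \ref{path}'s induction applies piece by piece. More seriously, ``the gluing contributing only the bounded constant'' misses the one indispensable idea: for a piece $F_i$ whose entire image lies in $E_{1,1}(\Graph)$ you would otherwise face $n\cdot D^{v_{F_i}-1}$ choices (free first vertex, then low degree), which is far too large. The fix (Claim \ref{claim:F}, last bullet) is that ${\sf H}$'s connectivity lets one order the pieces so that for each $i\ge 4$ some ${\sf H}$-edge $(v_i',v_i'')$ has $v_i''\in\cup_{j<i}V(F_j)$ already placed; this anchor caps the choices for $v_i'$ at $D$ or $\bar{\bm e}$, after which the rest of $F_i$ costs $D^{v_{F_i}-1}\le\bar{\bm e}^{\,v_{F_i}/2}$ using $v_{F_i}\ge 2$. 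Without this ordering the bound fails, so it cannot be treated as a cosmetic gluing constant.
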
 

To prove Lemma \ref{lem:e-bar-ub-H} we will use the bounds derived in Lemmas \ref{lem:e-bar-ub} and \ref{path} for cycles and paths, and the following graph decomposition result. 

\begin{prop}\label{prop:graph-decompose}
Let ${\sf H}$ be a $\Delta$-regular graph with $\Delta \ge 3$. Fix an edge $e:=(u_1,u_2) \in E({\sf H})$. Then there exists a collection of vertex disjoint cycles $\{\mathscr{C}_i\}_{i=1}^{\lambda_1}$ and edges $\{\mathfrak{e}_i\}_{i=1}^{\lambda_2}$, for some $\lambda_1, \lambda_2 \ge 0$, with the property
\beq\label{eq:e-not-there}
e \notin \bigcup_{i=1}^{\lambda_1}E(\mathscr{C}_i)   \cup \bigcup_{j=1}^{\lambda_2} \{\mathfrak{e}_j\},
\eeq
such that vertices $V({\sf H})$ can be covered by the vertices of the collection $\mathcal C:= \{\{\mathscr{C}_i\}_{i=1}^{\lambda_1}, \{\mathfrak{e}_j\}_{j=1}^{\lambda_2}\}$. 
\end{prop}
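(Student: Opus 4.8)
The statement to prove (Proposition~\ref{prop:graph-decompose}) asserts that in a $\Delta$-regular graph with $\Delta \ge 3$, after deleting a distinguished edge $e=(u_1,u_2)$, one can cover all vertices by a collection of vertex-disjoint cycles and single edges, none of which use $e$. The natural starting point is to pass to the graph ${\sf H}' := {\sf H} \setminus \{e\}$, which has minimum degree $\Delta - 1 \ge 2$ except at $u_1$ and $u_2$, where the degree drops to $\Delta - 1 \ge 2$ as well when $\Delta \ge 3$ --- so in fact ${\sf H}'$ has minimum degree at least $2$ when $\Delta \ge 3$. Wait: deleting $e$ reduces $\deg(u_1)$ and $\deg(u_2)$ by one, to $\Delta - 1$, which is $\ge 2$ precisely because $\Delta \ge 3$. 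So ${\sf H}'$ has minimum degree $\ge 2$ everywhere. The key classical fact I would invoke is that \emph{every graph of minimum degree at least $2$ contains a spanning subgraph in which every vertex has degree $1$ or $2$ and every component is either an edge or a cycle} --- equivalently, a disjoint union of paths is \emph{not} needed; one wants a ``cycle-and-matching cover''. Actually the cleanest route: a graph with minimum degree $\ge 2$ contains a spanning subgraph each of whose components is a cycle, \emph{provided} it has no component forcing otherwise; in general one gets a spanning union of vertex-disjoint paths and cycles, but to kill the paths one needs an extra argument.

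\textbf{Main steps.} First I would reduce to ${\sf H}' = {\sf H}\setminus\{e\}$ and record $\delta({\sf H}') \ge 2$. Second, I would take a spanning subgraph $F \subseteq {\sf H}'$ that is a disjoint union of paths and cycles and maximizes the total number of edges (or, better, maximizes the number of vertices lying on cycles, breaking ties by number of edges). Such $F$ exists by a greedy/extremal argument. Third, the heart of the matter: show that one may assume $F$ has \emph{no path component of length $\ge 1$ that is isolated as a nontrivial path}; more precisely, any vertex not on a cycle should be coverable by a single edge $\mathfrak e_j$ disjoint from the rest. The mechanism: suppose $P = v_0 v_1 \cdots v_k$ is a path component of $F$ with $k \ge 2$. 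Since $\deg_{{\sf H}'}(v_0) \ge 2$, $v_0$ has a neighbor $w$ in ${\sf H}'$ other than $v_1$. If $w$ lies outside $V(P)$, then depending on which component $w$ belongs to, we can either extend $P$, or absorb $v_0 w$ into a cycle, or rotate; if $w \in V(P)$, say $w = v_j$ with $j \ge 2$, then adding $v_0 v_j$ and deleting $v_{j-1}v_j$ converts part of $P$ into a cycle $v_0 v_1 \cdots v_j v_0$ plus a shorter path, strictly increasing the number of vertices on cycles --- contradicting maximality. Iterating, every surviving path component is a single vertex or single edge; a single-vertex component is impossible since $\delta({\sf H}') \ge 2$ would give it a neighbor forcing a longer path or cycle (again contradicting maximality, using the rotation/absorption moves). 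Hence all non-cycle vertices sit on single-edge components $\mathfrak e_j$, and by construction $e \notin E(F)$ since $e \notin E({\sf H}')$. This yields the desired collection $\mathcal C$.

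\textbf{Where the difficulty lies.} The routine part is the reduction and the existence of \emph{some} spanning path-and-cycle subgraph. The genuine obstacle is the exchange argument in the third step: one must carefully set up an extremal quantity (number of cycle-vertices, then edge count as a tie-breaker) and verify that \emph{every} local configuration around an endpoint of a nontrivial path admits a move that strictly improves it. The subtle cases are when the neighbor $w$ of a path-endpoint lies on a cycle component (one must splice: delete an edge of that cycle adjacent to $w$, turning the cycle into a path, then reattach --- and check the bookkeeping does not decrease cycle-vertices) and when both endpoints of a path have all their ${\sf H}'$-neighbors inside the path (then a chord gives a cycle directly). A clean way to organize this is to first prove the lemma ``a graph of minimum degree $\ge 2$ has a spanning subgraph whose components are cycles and single edges'' as a standalone statement by induction on $|V|$: pick any vertex $v$, take a shortest cycle through $v$ or, if $v$ has two neighbors forming a path usable as an edge-component, peel it off and recurse on the rest (which may drop below min-degree $2$, so one has to be slightly cleverer and recurse on $2$-cores). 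Given the time budget I would present the extremal-subgraph version, as it avoids the min-degree-dropping issue entirely, and I would isolate the ``path-endpoint move'' as a clearly stated sub-claim whose proof enumerates the three neighbor-location cases above. Finally I would note $\lambda_1, \lambda_2 \ge 0$ are allowed (either collection may be empty) and that \eqref{eq:e-not-there} is automatic, completing the proof.
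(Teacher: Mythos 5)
Your reduction to ${\sf H}' := {\sf H} \setminus \{e\}$ discards the $\Delta$-regularity of ${\sf H}$ and retains only $\delta({\sf H}') \ge 2$. The general lemma you then state and sketch a proof of --- that every graph of minimum degree $\ge 2$ has a spanning subgraph whose components are cycles and single edges --- is \emph{false}. Take $K_{2,3}$ with parts $\{x_1,x_2\}$ and $\{y_1,y_2,y_3\}$: its minimum degree is $2$, every cycle in it has length exactly $4$ (bipartiteness forbids odd cycles, and $|\{x_1,x_2\}|=2$ caps the length at $4$), so any cycle component covers four of the five vertices and leaves one vertex uncoverable, while with no cycle component the five vertices cannot be partitioned into pairs. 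So $K_{2,3}$ admits no such cover, and the lemma cannot hold.

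The precise place your extremal argument breaks is exactly where you flag that one must ``check the bookkeeping does not decrease cycle-vertices'' when the neighbor $w$ of a path-endpoint lies on a cycle. In $K_{2,3}$, take $F$ to be the unique $4$-cycle $x_1 y_1 x_2 y_2 x_1$ with $y_3$ isolated; both neighbors of $y_3$ lie on that cycle, and your splice (delete a cycle edge at $w$, attach $y_3 w$) destroys the only cycle, dropping the number of cycle-vertices from $4$ to $0$. The exchange move is therefore not an improvement, and your purported extremal $F$ legitimately retains an isolated vertex. You assert that ``a single-vertex component is impossible since $\delta({\sf H}') \ge 2$ would give it a neighbor forcing a longer path or cycle,'' but the neighbor may sit on a cycle, and then it forces a \emph{longer path at the cost of the cycle}, which is not a contradiction under your extremal measure. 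Since your argument uses no property of ${\sf H}'$ beyond its minimum degree, it cannot be rescued without importing genuinely new structure. The paper's proof takes a different route and keeps regularity in play throughout: it works on ${\sf H}$ itself (not on ${\sf H} \setminus e$), passes to the bipartite double cover ${\sf H}'$, applies K\"{o}nig's edge-coloring theorem to split $E({\sf H}')$ into $\Delta$ perfect matchings, uses $\Delta \ge 3$ precisely to select a color class $\mathscr{S}_{j_0}$ avoiding both lifts of $e$, and then projects $\mathscr{S}_{j_0}$ back to ${\sf H}$. Because $\mathscr{S}_{j_0}$ is a \emph{perfect} matching of the double cover, every $v \in V({\sf H})$ has degree exactly $1$ or $2$ in the projection, with degree $1$ occurring only when both copies of $v$ are matched to the two copies of a single partner $w$, which pairs $\{v,w\}$ into a single-edge component. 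No isolated vertex can arise, and no exchange argument is needed; the regularity of ${\sf H}$ is load-bearing at every step.
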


Proposition \ref{prop:graph-decompose} essentially follows by applying K\"{o}nig's proper edge coloring theorem on the bipartite double cover of ${\sf H}$. The proof is postponed to Appendix \ref{sec:decompose}.


Let us now proceed to the proof of Lemma \ref{lem:e-bar-ub-H}. To this end, we let $\mathcal{S}_0$ to be the set of all ordered pairs of neighbouring edges of ${\sf H}$. Fix ${\bm q}=\{(u_1,u_2), (u_2,v)\}\in \mathcal{S}_0$ and denote $\bar{\cN}_{1,1}({\sf H}, \Graph, {\bm s})$ to be the number of labelled copies of ${\sf H}$ in $\Graph$ such that $(u_1,u_2)$ is mapped to an edge in $E_{1,1}(\Graph)$ and $(u_1,v)$ is mapped to an edge in $E_{1,2}(\Graph)$. Recall that $\bar{\cN}_{1,1}({\sf H}, \Graph)$ counts number of labelled copies of ${\sf H}$ in $\Graph$ that uses at least one edge from $E_{1,1}(\Graph)$ and one from $E_{1,2}(\Graph) \cup E_{2,2}(\Graph)$. Since an edge in $E_{1,1}(\Graph)$ cannot be adjacent to that in $E_{2,2}(\Graph)$, for any such copy there must exist a pair of adjacent edges in ${\sf H}$ that are mapped to edges in $E_{1,1}(\Graph)$ and $E_{1,2}(\Graph)$, respectively. Hence
\[
\bar{\cN}_{1,1}({\sf H}, \Graph)\leq \sum_{{\bm q}\in \mathcal{S}_0}\bar{\cN}_{1,1}({\sf H}, \Graph, {\bm q}).
\]
Since $|\mathcal{S}_0|$ depends only on ${\sf H}$ the following result immediately implies Lemma \ref{lem:e-bar-ub-H}. 

\begin{lem}\label{lem:e-bar-ub-H-fixed}
Let ${\sf H}, \Graph$, and $\Delta$ be as in Lemma \ref{lem:e-bar-ub-H}. For each fixed ${\bm q} \in \cS_0$ we have the following:
\beq\label{eq:bar-cN-0-ubd-H-fixed}
\bar \cN_{1,1}({\sf H},\Graph, {\bm q}) \le  C'_{{\sf H},D} \cdot \left(2 ({\bm e}_{1,2}(\Graph)+ {\bm e}_{2,2}(\Graph))\right)^{(v_{\sf H}/2)-1},
\eeq
 where $C'_{{\sf H},D}< \infty$ is some positive constant depending only on ${\sf H}$ and $D$.
\end{lem}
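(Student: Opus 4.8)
The strategy is to fix the pair of adjacent edges ${\bm q}=\{(u_1,u_2),(u_2,v)\}$ and use the graph decomposition of Proposition \ref{prop:graph-decompose} applied with the distinguished edge $e=(u_1,u_2)$. This yields a collection $\mathcal C=\{\{\mathscr C_i\}_{i=1}^{\lambda_1},\{\mathfrak e_j\}_{j=1}^{\lambda_2}\}$ of vertex disjoint cycles and single edges covering $V({\sf H})$, with $(u_1,u_2)$ lying in none of them. The point is that once we locate $u_1$ and $u_2$ in $\Graph$ (there are at most $2\,{\bm e}_{1,1}(\Graph)\le 2e(\Graph)\le 2\cdot\bar C n^2p^\Delta\log(1/p)$ ways — but we will have to be slightly more careful, see below) the remaining vertices are pinned down by being mapped along the cycles and edges of $\mathcal C$, each of which can be counted by the path/cycle estimates of Lemma \ref{path} and Lemma \ref{lem:e-bar-ub}.

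First I would record the key structural observation: in any labelled copy contributing to $\bar\cN_{1,1}({\sf H},\Graph,{\bm q})$, the vertex $u_2$ is mapped to a vertex of low degree (at most $D$), since the edge $(u_1,u_2)$ is mapped into $E_{1,1}(\Graph)$. Consequently every neighbour of $u_2$ in ${\sf H}$ — and in particular $v$ and the (at most $2$ or $3$, depending on $\Delta$) vertices of $\mathcal C$ adjacent to $u_2$ — is mapped to one of at most $D$ choices. More importantly, the decomposition $\mathcal C$ splits the vertex set $V({\sf H})\setminus\{u_1\}$ into the cycles $\mathscr C_i$ and the edges $\mathfrak e_j$; summing $(v_{\mathscr C_i})+\sum_j 2$ over the collection gives exactly $v_{\sf H}$ (the collection covers $V({\sf H})$, and since the $\mathscr C_i,\mathfrak e_j$ are vertex disjoint, with $u_1$ appearing in exactly one of them). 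The count then factorizes: we sum over the location of the component of $\mathcal C$ containing $u_1$, then over the locations of the remaining components, and within each component we apply the cycle bound $\cN^*(C_\ell,\Graph)\le 2^\ell D^\ell(2\bar{\bm e})^{\lfloor(\ell-1)/2\rfloor}$ from the proof of Lemma \ref{lem:e-bar-ub} (respectively the trivial bound $D$ for an edge component once one endpoint is fixed to a low-degree vertex, or $2\bar{\bm e}$ if neither endpoint is yet fixed). Here $\bar{\bm e}:={\bm e}_{1,2}(\Graph)+{\bm e}_{2,2}(\Graph)$.

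The arithmetic to carry through is the exponent bookkeeping. For a cycle component $\mathscr C_i$ of length $\ell_i$ the worst-case contribution, after fixing the one vertex it may share with the ``anchor'' $u_2$, is $O_D\big((2\bar{\bm e})^{\lfloor \ell_i/2\rfloor}\big)$ — using the path estimate of Lemma \ref{path} with one endpoint fixed — and for an edge component not yet anchored we get a factor $2\bar{\bm e}$; the anchored ones give only $O_D(1)$. Adding up the powers of $2\bar{\bm e}$ over all components, using $\sum_i \ell_i + \#\{\text{edge comps}\}\cdot 2 = v_{\sf H}$ and that each component loses at least one ``free'' vertex to the anchoring/parity, one obtains total exponent at most $(v_{\sf H}/2)-1$. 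The anchor itself — the edge $(u_1,u_2)\in E_{1,1}(\Graph)$ together with $(u_2,v)\in E_{1,2}(\Graph)$ — is counted by: at most $2{\bm e}_{1,2}(\Graph)\le 2\cdot 2\bar{\bm e}$ choices for the image of $(u_2,v)$ (which fixes $u_2$ and $v$), then at most $D$ choices for $u_1$ (a low-degree neighbour of $u_2$'s image). Crucially this anchor step contributes only \emph{one} extra power of $2\bar{\bm e}$ and $D$ powers of $D$, which is absorbed into the constant and the stated exponent because the distinguished edge $e=(u_1,u_2)$ was removed from $\mathcal C$, so $\mathcal C$ only has to cover $v_{\sf H}$ vertices with its cycles/edges but one of those vertices ($u_1$) is already placed, trimming the exponent down to $(v_{\sf H}/2)-1$.

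\textbf{Main obstacle.} The delicate point is the exact exponent accounting, i.e.\ verifying that the powers of $2\bar{\bm e}$ coming from the various cycle and edge components of $\mathcal C$ sum to \emph{at most} $(v_{\sf H}/2)-1$ and not $(v_{\sf H}/2)$. This requires using carefully (i) that one full vertex ($u_1$) is already determined by the anchor before we invoke $\mathcal C$, (ii) the parity savings $1+\lfloor(\ell-q)/2\rfloor\le\lfloor(\ell-1)/2\rfloor$ already exploited in the proof of Lemma \ref{lem:e-bar-ub}, and (iii) that an edge component of $\mathcal C$ sharing a vertex with the anchor contributes $O_D(1)$ rather than $2\bar{\bm e}$. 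One has to check that at least one component of $\mathcal C$ \emph{does} touch the anchor vertex $u_2$ (true since $\mathcal C$ covers all of $V({\sf H})$ and $u_2$ has a neighbour in ${\sf H}$ besides $u_1$), so that at least one component is anchored and thereby cheap. Once this bookkeeping is done, summing over ${\bm q}\in\mathcal S_0$ (a set of size depending only on ${\sf H}$) upgrades Lemma \ref{lem:e-bar-ub-H-fixed} to Lemma \ref{lem:e-bar-ub-H}, and combining with Lemma \ref{lem:wtN11} — which handles the purely-low-degree copies — gives Proposition \ref{l:seqcounting} for general $\Delta\ge 3$ exactly as in the cycle case, with the final inequality $e_{1,2}(\Graph)+e_{2,2}(\Graph)\ge (n^2p^2)^{1+1/(4(v_{\sf H}-1))}$ following by the same manipulation that closes the cycle proof (taking $(v_{\sf H}/2)-1$-th roots and using $np\gg(\log n)^{1/(v_{\sf H}-2)}$ together with $c_n\ge\frac12(\log\log n)^{-2}$).
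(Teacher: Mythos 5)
Your high-level plan — decompose $V({\sf H})$ via Proposition \ref{prop:graph-decompose} into vertex-disjoint cycles and edges avoiding $(u_1,u_2)$, fix the anchor by choosing the image of $(u_2,v)\in E_{1,2}(\Graph)$ and then $u_1$, and count the remaining components with Lemma \ref{path} — is the right skeleton, and it matches the paper's approach in outline. But there is a concrete gap in the way you account for components that do \emph{not} meet the anchor. If a cycle component $\mathscr C_i$ contains none of $u_1,u_2,v$ and is mapped \emph{entirely} into $E_{1,1}(\Graph)$, there is no $E_{1,2}\cup E_{2,2}$ edge inside it with which to start the count, and it shares no vertex with the anchor pair; placing its first vertex is then a priori an $n$-choice event, not $O(\bar{\bm e})$, so the contribution $O_D\big((2\bar{\bm e})^{\lfloor\ell_i/2\rfloor}\big)$ that you assert for such a component simply does not hold. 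The paper closes exactly this hole with Claim \ref{claim:F}: it upgrades Proposition \ref{prop:graph-decompose}, using connectedness of ${\sf H}$, to an \emph{ordered} collection $\{F_i\}$ in which every $F_i$ with $i\ge4$ carries a designated vertex $v_i'$ joined by an ${\sf H}$-edge $(v_i',v_i'')$ to a $v_i''$ in an already-placed component. With that chain in hand, the purely-$E_{1,1}$ case for $F_i$ costs at most $D^{v_{F_i}}$ or $e_{1,2}(\Graph)\cdot D^{v_{F_i}-1}$, which is $O_D\big((2\bar{\bm e})^{v_{F_i}/2}\big)$ because $v_{F_i}\ge2$. Anchoring only "the component touching $u_2$", as you propose, leaves all other components unaccounted for.

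A secondary issue: your exponent bookkeeping ("each component loses at least one free vertex...") is not how the $-1$ actually arises. In the paper, the components $F_i$ for $i\ge4$ each contribute exponent \emph{exactly} $v_{F_i}/2$ — no saving — and the unique saving of $1$ comes from the triple $F_1,F_2,F_3$ together with the two anchor edges (Lemma \ref{lem:e-bar-ub-H--star-fixed}, which yields exponent $(v_{{\sf H}_*}/2)-1$). That sub-lemma in turn needs a careful case split on whether $F_1,F_2,F_3$ coincide, whether each is an edge or a cycle, and where the anchor edges sit, all of which your outline glosses over. So beyond the missing chain of anchors, you would also need to isolate and prove a statement analogous to Lemma \ref{lem:e-bar-ub-H--star-fixed} for the anchor part before the exponent arithmetic can be made to close.
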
 

The remainder of the section is devoted to the proof of Lemma \ref{lem:e-bar-ub-H-fixed}. This requires introduction of some more notation. We make this following claim which is a consequence of Proposition \ref{prop:graph-decompose}. 

\begin{claim}\label{claim:F} There exists a collection $\{F_1, F_2,\ldots, F_m\}$ of edges and cycles in ${\sf H}$  satisfying the following properties:

\begin{itemize}
\item The collection covers all vertices of ${\sf H}$, i.e., $\cup_{i=1}^{m} V(F_i)=V({\sf H})$.
\item The edge $(u_1,u_2)$ is not contained in any of the $F_i$'s.
\item $u_1\in V(F_1), u_2\in V(F_2), v\in V(F_3)$.
\item {The sub collection $\{F_i\}_{i \ge 4}$ is vertex disjoint. The sub collection $\{F_i\}_{i=1}^3$ is also vertex disjoint from $\{F_i\}_{i \ge 4}$. Furthermore for any pair of indices $j_1, j_2 =1,2,3$, either $F_{j_1}=F_{j_2}$ or $V(F_{j_1}) \cap V(F_{j_2}) = \emptyset$.} 
\item For each $i\geq 4$, there exists $v_i'\in V(F_i)$ and $v_i''\in \cup_{j=1}^{i-1} V(F_j)$ such that $(v_i',v_i'')\in E({\sf H})$.
\end{itemize} 
\end{claim}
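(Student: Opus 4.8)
\textbf{Proof proposal for Claim \ref{claim:F}.}

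The plan is to build the collection $\{F_1,\ldots,F_m\}$ in two stages, first handling the three ``anchor'' vertices $u_1, u_2, v$ and the forbidden edge $(u_1,u_2)$, and then covering everything else using Proposition \ref{prop:graph-decompose}. I would begin with the observation that Proposition \ref{prop:graph-decompose}, applied to the forbidden edge $e=(u_1,u_2)$, already produces a collection $\mathcal C = \{\mathscr C_i\}_{i=1}^{\lambda_1} \cup \{\mathfrak e_j\}_{j=1}^{\lambda_2}$ of vertex-disjoint cycles and edges covering $V({\sf H})$, with $e$ contained in none of the members of $\mathcal C$. The issue is that this bare output does not yet tell us anything about where $u_1,u_2,v$ land relative to one another, nor does it guarantee the connectivity-type condition (the last bullet, that every $F_i$ with $i \ge 4$ is joined by an edge to the union of the earlier ones). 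So the real work is to massage $\mathcal C$ into the desired form.

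For the anchor structure: let $F_1 \in \mathcal C$ be the member containing $u_1$, let $F_2 \in \mathcal C$ be the member containing $u_2$, and let $F_3 \in \mathcal C$ be the member containing $v$. Since the members of $\mathcal C$ are pairwise vertex-disjoint, for each pair $j_1,j_2 \in \{1,2,3\}$ we automatically have either $F_{j_1} = F_{j_2}$ or $V(F_{j_1}) \cap V(F_{j_2}) = \emptyset$, which is exactly the fourth bullet's requirement on $\{F_i\}_{i=1}^3$; and $(u_1,u_2)$ is in no member of $\mathcal C$, so the second bullet holds. (I should note: it is possible that $F_1 = F_2$, e.g.\ if $u_1,u_2$ lie on a common cycle of $\mathcal C$ — this is permitted by the statement, which only asserts $u_1 \in V(F_1)$, $u_2 \in V(F_2)$, $v \in V(F_3)$, not that these are distinct. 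Similarly $v$ could coincide with being on $F_1$ or $F_2$.) Then enumerate the remaining members of $\mathcal C$ (those not equal to any of $F_1,F_2,F_3$) as $F_4, F_5, \ldots, F_m$ in an order to be chosen next.

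The remaining point, and the one I expect to be the main obstacle, is arranging the ordering of $F_4,\ldots,F_m$ so that the last bullet holds: for each $i \ge 4$ there is an edge of ${\sf H}$ from $V(F_i)$ to $\bigcup_{j<i} V(F_j)$. This is where connectedness of ${\sf H}$ enters. I would argue as follows: consider the auxiliary graph $\mathcal A$ whose vertices are the members of $\mathcal C$, with two members adjacent iff there is an edge of ${\sf H}$ between their vertex sets. Since ${\sf H}$ is connected and $\mathcal C$ covers $V({\sf H})$, the graph $\mathcal A$ is connected. Now run a breadth-first (or depth-first) search of $\mathcal A$ starting from the set $\{F_1, F_2, F_3\}$ treated as already-visited (this set is nonempty, and since $\mathcal A$ is connected every other node is reachable); list the remaining nodes $F_4, \ldots, F_m$ in the order they are discovered. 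By construction, when $F_i$ ($i \ge 4$) is discovered it is adjacent in $\mathcal A$ to some already-listed $F_j$ with $j < i$, i.e.\ there exist $v_i' \in V(F_i)$ and $v_i'' \in V(F_j) \subset \bigcup_{j'<i} V(F_{j'})$ with $(v_i',v_i'') \in E({\sf H})$, which is precisely the fifth bullet. Vertex-disjointness of $\{F_i\}_{i \ge 4}$ and its disjointness from $\{F_i\}_{i=1}^3$ are inherited directly from the pairwise disjointness of $\mathcal C$ (after discarding duplicates), giving the fourth bullet in full, and the covering property $\bigcup_i V(F_i) = V({\sf H})$ is inherited from $\mathcal C$. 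This completes the construction; the only subtlety to double-check is that discarding the ``duplicate'' members (when $F_1=F_2$, say) does not break coverage — it does not, since we only remove a member that equals one we are keeping. I would write this up in a few lines, emphasizing the BFS-ordering trick as the crux.
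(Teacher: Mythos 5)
Your proposal is correct and takes essentially the same approach as the paper: apply Proposition \ref{prop:graph-decompose} to the forbidden edge, relabel so that the members containing $u_1,u_2,v$ come first, and then use connectedness of ${\sf H}$ to reorder the remaining members so that each is joined by an edge to the union of the earlier ones. The paper condenses this last step to "another relabelling if necessary," whereas you make the ordering explicit via the BFS on the auxiliary contracted graph; this is the right argument and exactly what the paper is implicitly appealing to.
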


{By Proposition \ref{prop:graph-decompose} there exists a vertex disjoint collection of edges and cycles $\{\wt F_i\}$ such that $\cup_i V(\wt F_i) = V({\sf H})$ and the edge $(u_1, u_2)$ does not belong to the collection $\{\wt F_i\}$. Since Proposition \ref{prop:graph-decompose} does not guarantee that $u_1, u_2$, and $v$ belong to three different edges or cycles, after relabelling $\{\wt F_i\}$ we now obtain the collection $\{F_i\}$ satisfying the first four properties in Claim \ref{claim:F}.} The last property is due to the connectedness of ${\sf H}$ and another relabelling if necessary.  

Let ${\sf H}_*$ be the subgraph spanned by the edges $E(F_{1}) \cup E(F_2) \cup E(F_3) \cup\{(u_1,u_2), (u_2,v)\}$. For a later use we also define $\wh{\sf H}$ to be subgraph of ${\sf H}$ spanned by edge set $E({\sf H}_*) \cup (\cup_{i=4}^m E(F_i) )\cup  \{(v_i', v_i''), i=4,5,\ldots, m\}$.

Now we denote $\bar{\cN}_{1,1}({\sf H}_*, \Graph, {\bm q})$ to be the number of labelled copies of ${\sf H}_*$ in $\Graph$ such that $(u_1,u_2)$ is mapped to an edge in $E_{1,1}(\Graph)$ and $(u_2,v)$ is mapped to an edge in $E_{1,2}(\Graph)$. We now have the following lemma which will be used in the proof of Lemma \ref{lem:e-bar-ub-H-fixed}.

\begin{lem}\label{lem:e-bar-ub-H--star-fixed}
Let ${\sf H}, \Graph, \Delta, {\bm q}$, and $\cS_0$ be as in Lemma \ref{lem:e-bar-ub-H-fixed}. Then we have 
\beq\label{eq:bar-cN-0-ubd-H-fixed}
\bar \cN_{1,1}({\sf H}_*,\Graph, {\bm q}) \le  C^*_{{\sf H},D} \cdot \left(2 ({\bm e}_{1,2}(\Graph)+ {\bm e}_{2,2}(\Graph))\right)^{(v_{{\sf H}_*}/2)-1}
\eeq
for some positive constant $C^*_{{\sf H},D}$ depending only on ${\sf H}$ and $D$.
\end{lem}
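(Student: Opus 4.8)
The plan is to bound $\bar\cN_{1,1}({\sf H}_*, \Graph, {\bm q})$ by building a labelled copy of ${\sf H}_*$ in $\Graph$ edge by edge, following the structure of the collection $\{F_1,F_2,F_3\}$ together with the two distinguished edges $(u_1,u_2)$ and $(u_2,v)$, and carefully tracking how many free choices each piece contributes. The graph ${\sf H}_*$ has $v_{{\sf H}_*}$ vertices, hence $v_{{\sf H}_*}/2$ ``half-vertices'' worth of copies to account for (using that the components $F_1,F_2,F_3$ are cycles or edges, so all regular, and $v_{{\sf H}_*}$ is even since ${\sf H}$ is regular bipartite-or-cycle-like — more precisely ${\sf H}$ being $\Delta$-regular forces the relevant parity, as already used for the fractional independence number $v_{\sf H}/2$). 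The target exponent $(v_{{\sf H}_*}/2)-1$ on $2\bar{\bm e}:=2(e_{1,2}(\Graph)+e_{2,2}(\Graph))$ reflects that one ``unit'' of freedom is killed because the edge $(u_1,u_2)$ must land in $E_{1,1}(\Graph)$, whose endpoints have degree at most $D$.

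First I would choose the image of the edge $(u_1,u_2)$: at most $2 e_{1,1}(\Graph)\le 2 e(\Graph)$ ways — but this is \emph{not} how we want to count, since $e(\Graph)$ is too large. Instead, and this is the key idea, I use the fact that the image of $(u_2,v)$ lies in $E_{1,2}(\Graph)\cup E_{2,2}(\Graph)$, giving at most $2\bar{\bm e}$ choices for that edge \emph{and} pinning down the image of $u_2$ (a low-degree vertex) and of $v$; then the image of $(u_1,u_2)\in E_{1,1}(\Graph)$ is an edge with both endpoints low-degree and one endpoint ($u_2$) already fixed, so at most $D$ choices for the image of $u_1$. From $u_1$ we explore $F_1$, from $u_2$ we explore $F_2$, from $v$ we explore $F_3$. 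Each $F_j$ ($j=1,2,3$) is either a single edge or a cycle $C_{\ell_j}$; after fixing the image of one of its vertices (which we have: the images of $u_1\in V(F_1)$, $u_2\in V(F_2)$, $v\in V(F_3)$ are all determined), the number of ways to embed the rest of $F_j$ into $\Graph$ is bounded by Lemma~\ref{path} applied with the appropriate bit-string and fixed endpoints — concretely, for a cycle of length $\ell_j$ with one vertex fixed, the count is at most $D^{\ell_j}(2\bar{\bm e})^{\lfloor \ell_j/2\rfloor}$ after splitting at the fixed vertex into two paths, or one simply notes the cycle contributes at most $D^{\ell_j}(2\bar{\bm e})^{\lfloor(\ell_j-1)/2\rfloor}$ or so. Summing the half-vertex counts: $F_1,F_2,F_3$ contribute $v_{F_1}/2+v_{F_2}/2+v_{F_3}/2$ powers of $2\bar{\bm e}$ in the worst case (each $F_j$ of even order contributes $v_{F_j}/2$), but the fixed-vertex constraint in each $F_j$ plus the $E_{1,1}$-constraint on $(u_1,u_2)$ together remove one full power, landing at $(v_{{\sf H}_*}/2)-1$; the leftover $D$-factors (bounded since $\ell_j\le v_{\sf H}$ and there are boundedly many of them) get absorbed into the constant $C^*_{{\sf H},D}$.

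The main obstacle — the step I expect to need the most care — is the bookkeeping of exactly which vertex images are ``already fixed'' versus ``free'' as we move from $F_1$ to $F_2$ to $F_3$, because $F_1,F_2,F_3$ need not be distinct: Claim~\ref{claim:F} only guarantees that for $j_1,j_2\in\{1,2,3\}$ either $F_{j_1}=F_{j_2}$ or they are vertex-disjoint. If, say, $F_1=F_2$, then $u_1$ and $u_2$ both lie on the same cycle/edge and we must not double-count that component; the bound only improves (fewer free parameters), but the arithmetic identifying the final exponent changes and must be checked in each of the (boundedly many) coincidence patterns. I would handle this by a short case analysis on the partition of $\{u_1,u_2,v\}$ induced by the $F_j$'s, in each case verifying that the number of powers of $2\bar{\bm e}$ produced is at most $(v_{{\sf H}_*}/2)-1$: the cleanest uniform argument is to observe that a \emph{spanning} structure of ${\sf H}_*$ obtained by taking the distinct $F_j$'s among $\{F_1,F_2,F_3\}$ plus the connecting edges $(u_1,u_2),(u_2,v)$ contains a subgraph with at least one vertex of ``fixed'' type per component reached, so that the standard ``each half-vertex costs one $2\bar{\bm e}$, the first anchor is free, and the $E_{1,1}$ edge costs a $D$ not a $2\bar{\bm e}$'' accounting gives exactly the claimed exponent. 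Once Lemma~\ref{lem:e-bar-ub-H--star-fixed} is in hand, Lemma~\ref{lem:e-bar-ub-H-fixed} follows by extending the copy of ${\sf H}_*$ to a copy of $\wh{\sf H}$ (and then of ${\sf H}$) one component $F_i$, $i\ge 4$, at a time: each such $F_i$ is attached via the edge $(v_i',v_i'')$ to a vertex $v_i''$ whose image is already determined, so $v_i'$ has at most (degree bound) $\le$ a constant or at most $2\bar{\bm e}$ choices and then $F_i$ contributes its remaining $v_{F_i}/2$-ish powers via Lemma~\ref{path} again — and summing $v_{{\sf H}_*}/2-1+\sum_{i\ge4}v_{F_i}/2 = v_{\sf H}/2-1$, with all the bounded $D$- and degree-factors swallowed into $C'_{{\sf H},D}$.
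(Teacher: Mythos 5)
Your overall strategy matches the paper's: choose the image of $(u_2,v)\in E_{1,2}(\Graph)$ first (costing one power of $2\bar{\bm e}$ and fixing $u_2,v$), then pick $u_1$ in at most $D$ ways, propagate through $F_1,F_2,F_3$ via Lemma \ref{path}, and case-analyze the coincidence pattern of $\{F_1,F_2,F_3\}$. However, the per-component bound you actually state is not sharp enough, and if applied as written it would overshoot the target exponent. Concretely: you bound the number of embeddings of a cycle $F_j$ of length $\ell_j$ anchored at one fixed vertex by $D^{\ell_j}(2\bar{\bm e})^{\lfloor\ell_j/2\rfloor}$. Summing this over $F_1,F_2,F_3$ (all distinct, all even cycles, say) together with the one power from $(u_2,v)$ gives exponent $1+v_{F_1}/2+v_{F_2}/2+v_{F_3}/2 = v_{{\sf H}_*}/2+1$, two powers too many; even the additional ``the $E_{1,1}$ edge costs a $D$'' saving in your uniform accounting only brings this down to $v_{{\sf H}_*}/2$, still one power short of the claimed $v_{{\sf H}_*}/2-1$.

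The missing ingredient, and what the paper's proof does, is to use the fact that $u_1$ and $u_2$ are mapped to \emph{low-degree} vertices of $\Graph$. When anchoring $F_1$ at $u_1$, one first chooses the \emph{two cycle-neighbours} of $u_1$ in $F_1$ with a $D^2$-bound (available precisely because $u_1$ is low degree), and only then applies Lemma \ref{path} to the remaining path of length $v_{F_1}-2$ with \emph{both} endpoints fixed, giving $(2\bar{\bm e})^{\lfloor(v_{F_1}-2)/2\rfloor}$ — one full power less than $\lfloor v_{F_1}/2\rfloor$. The same applies to $F_2$ anchored at $u_2$. This is qualitatively different from $F_3$: its anchor $v$ is \emph{high} degree, so its cycle-neighbours cannot be $D$-bounded, and there is no analogous saving; the paper instead bounds the $F_3$-contribution by the unanchored $(2\bar{\bm e})^{v_{F_3}/2}$ via \eqref{eq:copy-H-star-bd} (if all $F_3$-edges map into $E_{1,2}\cup E_{2,2}$) or Lemma \ref{lem:e-bar-ub} (otherwise). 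With the corrected per-component bounds, the ledger $1+(v_{F_1}-2)/2+(v_{F_2}-2)/2+v_{F_3}/2 = v_{{\sf H}_*}/2-1$ closes. Your outline would need to make the low-degree vs.\ high-degree anchor distinction explicit, both to get the right exponent in the generic case and to handle the coincidence cases $F_1=F_2$, $F_3\in\{F_1,F_2\}$ correctly, as the paper does.
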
 

\begin{proof}
{First we begin with the case $F_1=F_2$. In this case, as the edge $(u_1,u_2)$ does not belong to the collection $\{F_i\}_{i=1}^m$ and $u_1, u_2 \in V(F_1)$, we note that $F_1$ must be a cycle. Now choose the edge $(u_2,v)$. Since $(u_2,v)$ needs to be mapped to an edge in $E_{1,2}(\Graph)$ this  has at most ${\bm e}_{1,2}(\Graph)$ choices. Note that the orientation is automatically fixed as we know $u_2$ must be mapped to a vertex of low degree. We next pick the vertex $u_1$ for which we have at most $D$ choices. We claim that the remaining vertices of $F_1$ can be chosen in at most $C_1^\star\left( {\bm e}_{1,2}(\Graph)+ {\bm e}_{2,2}(\Graph)\right)^{(v_{F_1}-4)/2}$ ways for some constant $C_1^\star<\infty$.} 

{To see this, next choose $\{v_1,v_2\}$ and $\{v_3,v_4\}$, the neighbors of $u_1$ and $u_2$, respectively, that are in $V(F_1)$. This can be done in at most $D^4$ ways (in the case when the edge $(u_2,v)$ belongs to the cycle $F_1$ this bound is even smaller equaling $D^3$).  Having chosen the neighbors the graphs spanned by the edges $E(F_1) \setminus \{(u_1, v_1), (u_1,v_2), (u_2,v_3), (u_2,v_4)\}$ is a disjoint union of (at most) two paths such that the sum of their lengths is $v_{F_1}-4$. Since the starting and the ending points of both of these are now fixed, upon applying Lemma \ref{path} we derive that the number of choices of the remaining vertices of $F_1$ is bounded above by $\wt C_1 \left({\bm e}_{1,2}(\Graph)+ {\bm e}_{2,2}(\Graph)\right)^{(v_{F_1}-4)/2}$ for some $\wt C_1 < \infty$. This yields the desired bound on the number of choices of the vertices $V(F_1)$.}

{
It remains to choose the remaining vertices of $F_3$. Let us consider the easier case when either $F_3=F_1$. In this case, noting that $v_{{\sf H}_*}=v_{F_1}$ we already have the desired bound from above.} 

{
So, without loss of generality let us assume that $F_3$ is distinct from $F_1$. We aim to show that in this case the total number of choices for the vertices in $V(F_3)\setminus \{v\}$ is bounded by 
\beq\label{eq:F-3-bd}
C_3\left({\bm e}_{1,2}(\Graph)+ {\bm e}_{2,2}(\Graph)\right)^{v_{F_3}/2},
\eeq
for some other constant $C_3 <\infty$. This together with the claim on the number of choices of vertices in $F_1$ yields the desired conclusion for the case $F_1=F_2$.} 

Let us now turn to prove \eqref{eq:F-3-bd}. If $F_3$ is an edge this is obvious as in that case $v \in V(F_3)$ being mapped to a vertex of high degree the edge $F_3$ must be mapped to an edge in $E_{1,2}(\Graph) \cup E_{2,2}(\Graph)$. On the other hand, if $F_3$ is a cycle then there are two possibilities:
\begin{enumerate}
\item The edges of $F_3$ are all mapped to $E_{1,2}(\Graph)\cup E_{2,2}(\Graph)$
\end{enumerate}
and
\begin{enumerate}
\item[(2)] The edges of $F_3$ are mapped to both $E_{1,1}(\Graph)$ and $E_{1,2}(\Graph)\cup E_{2,2}(\Graph)$. 
\end{enumerate}
In the first subcase, as $F_3$ is a cycle the bound \eqref{eq:F-3-bd} follows from \eqref{eq:copy-H-star-bd}. Whereas in the second case, as $v$ is mapped to high degree vertex, all edges of $F_3$ cannot be mapped to edges in $E_{1,1}(\Graph)$. Thus in the case the bound \eqref{eq:F-3-bd} follows from Lemma \ref{lem:e-bar-ub}. This proves the bound \eqref{eq:F-3-bd}.

{Now we proceed to provide a proof for the case $V(F_1) \cap V(F_2) = \emptyset$. As before we choose the edge $(u_2,v)$ and then pick the vertex $u_1$. The number of choices is bounded by $D e_{1,2}(\Graph)$. Upon fixing these vertices,} we claim that there are at most $C_1\left( {\bm e}_{1,2}(\Graph)+ {\bm e}_{2,2}(\Graph)\right)^{(v_{F_1}-2)/2}$ and $C_2\left({\bm e}_{1,2}(\Graph)+ {\bm e}_{2,2}(\Graph)\right)^{(v_{F_2}-2)/2}$ many choices for the remaining vertices of $F_1$ and $F_2$, respectively, where $C_1, C_2 < \infty$ are some constants.

First let us derive the bound for the remaining vertices of $F_1$. 

\begin{itemize}

\item If $F_1$ is an edge, as $u_1\in V(F_1)$ is mapped to a low degree vertex, there are at most $D$ choices for its remaining vertex.

\item If $F_1$ is a cycle letting $u_3$ and $u_4$ be the two neighbouring vertices of $u_1$ in $F_1$, and arguing as before there are at most $D^2$ possible choices for $u_3$ and $u_4$. Once $u_3$ and $u_4$ has been fixed we need to choose the remaining vertices of ${F_1}$. We note that the subgraph spanned by the edges $E(F_1)\setminus \{(u_1,u_3) \cup (u_1,u_4)\}$ is a path of length $v_{F_1}-2$. Therefore using Lemma \ref{path} to conclude that there are at most  $C_1'\left({\bm e}_{1,2}(\Graph)+ {\bm e}_{2,2}(\Graph)\right)^{(v_{F_1}-2)/2}$ many choices for the remaining vertices, for some $C_1' <\infty$. This yields the bound when $F_1$ is a cycle. 

\end{itemize}

As $u_2$ is also mapped to a low degree vertex in $\Graph$, the argument for $F_2$ is identical. Hence we have the claim. 

{Building on this claim to complete the proof we need to bound the number of choices of the remaining vertices of $F_3$. If $F_3=F_1$ or $F_3=F_2$ then $v_{{\sf H}_*}=v_{F_1}+v_{F_2}$. Thus we already have the desired bound from the above claim. When $F_3$ is distinct from $F_1$ and $F_2$ the bound \eqref{eq:F-3-bd} continues to hold. Therefore, also in that case we have desired bound. The proof of the lemma is now complete.} 
\end{proof}

Equipped with Lemma \ref{lem:e-bar-ub-H--star-fixed} we are now ready to prove Lemma \ref{lem:e-bar-ub-H-fixed}.

\begin{proof}[Proof of Lemma \ref{lem:e-bar-ub-H-fixed}]
We begin by recalling the definition of the subgraph $\wh{\sf H} \subset {\sf H}$ and noting that $v_{\sf H}=v_{\wh{\sf H}}=v_{{\sf H}_*}+\sum_{i=4}^{m} v_{F_{i}}$. This, in particular implies that we only need to show that \eqref{eq:bar-cN-0-ubd-H-fixed} holds with ${\sf H}$ replaced by $\wh{\sf H}$. 

Thus in light of Lemma \ref{lem:e-bar-ub-H--star-fixed} it suffices to prove that for each $i\geq 4$, once the vertices in $\cup _{j=1}^{i-1} V(F_j)$ are chosen, there are at most $C \left({\bm e}_{1,2}(\Graph)+ {\bm e}_{2,2}(\Graph)\right)^{v_{F_i}/2}$ many choices for the vertices in $V(F_i)$, where $C< \infty$ some constant. 

To this end, fix $i\geq 4$, and assume that the vertices in $\cup _{j=1}^{i-1} V(F_j)$ have been fixed. We consider three subcases: 
\begin{enumerate}

\item[(i)] All edges of $F_i$ are mapped to $E_{1,2}(\Graph) \cup E_{2,2}(\Graph)$.

\item[(ii)] Edges of $F_i$ are mapped to both $E_{1,1}(\Graph)$ and $E_{1,2}(\Graph) \cup E_{2,2}(\Graph)$

\item[(iii)] All edges of $F_i$ are mapped to edges in $E_{1,1}(\Graph)$

\end{enumerate}

In the first case the desired bound is a consequence of \eqref{eq:copy-H-star-bd}. In the second case the bound follows from Lemma \ref{lem:e-bar-ub}. Turning to treat the last case, we recall the properties of the collection $\{F_i\}_{i=1}^m$ from Claim \ref{claim:F}. As $v_i''\in \cup_{j=1}^{i-1} V(F_j)$ has already been fixed, and $v_i'$ has to be mapped to a low degree vertex it follows that there are at most $D$ or ${\bm e}_{1,2}(\Graph)$ choices for $v_i'$ depending on whether $v_i''$ is mapped to a low degree or a high degree vertex. Once $v'_i$ has been fixed, as all the edges of $F_i$ have to be mapped to vertices of low degree, the rest of the vertices of $F_i$ can be chosen in at most $D^{v_{F_i}-1}$ ways. As $v_{F_i} \ge 2$,we conclude that, in the last case as well, the  number of ways to choose all the vertices of $F_i$ is indeed upper bounded by $C \left({\bm e}_{1,2}(\Graph)+ {\bm e}_{2,2}(\Graph)\right)^{v_{F_i}/2}$ for some $C< \infty$. This completes the proof of the lemma.
\end{proof}

\appendix

\section{Decomposition of regular graphs}\label{sec:decompose}

In this short section we provide the proofs of Lemma \ref{lem:perfect-match} and Proposition \ref{prop:graph-decompose}. Recall that  these two results yield decompositions of regular graphs with certain properties that were crucially used in Section \ref{sec:G_b} and in the proof of Proposition \ref{l:seqcounting}. First let us recall the standard notion of {\em proper edge coloring} of a graph.






\begin{dfn}[Proper edge coloring]
For any graph a proper edge coloring is a coloring of its edges so that edges with a common vertex receive different colors.
\end{dfn}

The following is a classical result on the proper edge coloring of a bipartite graph. For its proof we refer the reader to \cite[Theorem 1.4.18]{LP}.

\begin{thm}[K\"{o}nig's edge coloring theorem]
Any bipartite graph with maximum degree $\Delta$ has a proper edge coloring with $\Delta$ colors.
\end{thm}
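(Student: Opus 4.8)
The plan is to prove K\"{o}nig's edge coloring theorem by induction on the number of edges, via an alternating-path recoloring argument (a Kempe chain), with the bipartiteness hypothesis entering the argument at exactly one place: the impossibility of an odd cycle.

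I would set up the induction as follows. Let $\Graph$ be bipartite with maximum degree $\Delta$; the case $e(\Graph)=0$ is trivial. For the inductive step, fix an edge $e=(u,v)$, delete it, and apply the inductive hypothesis to properly edge-color $\Graph\setminus\{e\}$ with the palette $\{1,2,\ldots,\Delta\}$. Since $\deg_{\Graph\setminus\{e\}}(u)\le\Delta-1$, some color $\alpha$ is absent at $u$, and similarly some color $\beta$ is absent at $v$. If a single color is absent at both $u$ and $v$, then coloring $e$ with that color finishes the step; so I may assume $\alpha\ne\beta$, that $\beta$ is present at $u$, and that $\alpha$ is present at $v$.

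The heart of the argument is then to look at the subgraph $\mathsf{K}\subseteq\Graph\setminus\{e\}$ spanned by the edges colored $\alpha$ or $\beta$. Each vertex has degree at most $2$ in $\mathsf{K}$ (at most one edge of each of the two colors), so $\mathsf{K}$ is a vertex-disjoint union of paths and cycles along which the colors $\alpha,\beta$ strictly alternate; moreover $u$ and $v$ have degree exactly $1$ in $\mathsf{K}$, hence each is an endpoint of a path component. I would then swap the two colors $\alpha\leftrightarrow\beta$ along the path component of $\mathsf{K}$ containing $v$: this keeps the coloring proper (swapping colors on a connected component of $\mathsf{K}$ creates no new conflict, and leaves all other colors untouched), and it makes $\alpha$ absent at $v$. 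Provided the component containing $v$ does not also contain $u$, the color $\alpha$ remains absent at $u$ as well, so $e$ may now be colored $\alpha$, completing the induction.

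The main obstacle, and the only step requiring care, is verifying that $u$ and $v$ are endpoints of \emph{distinct} path components of $\mathsf{K}$ --- this is precisely where bipartiteness is used. If instead $u$ and $v$ were the two endpoints of a common path $\mathscr{P}$ in $\mathsf{K}$, then the edge of $\mathscr{P}$ at $u$ is colored $\beta$ and the edge of $\mathscr{P}$ at $v$ is colored $\alpha$; since the colors along $\mathscr{P}$ alternate, starting with $\beta$ and ending with $\alpha$, the path $\mathscr{P}$ has an even number of edges, so $\mathscr{P}$ together with the edge $e=(u,v)$ is a cycle of odd length in $\Graph$, contradicting the fact that $\Graph$ is bipartite. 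I would spell out this short parity count in full, since it is the crux. As an alternative route that I might record as a remark, one can instead first embed $\Graph$ as a subgraph of a $\Delta$-regular bipartite graph (adding vertices and edges), note by Hall's theorem that any $\Delta$-regular bipartite graph has a perfect matching --- a set $S$ on one side sends exactly $\Delta|S|$ edges into its neighborhood $N(S)$, which is incident to at most $\Delta|N(S)|$ edges, forcing $|N(S)|\ge|S|$ --- and then peel off perfect matchings one at a time to decompose the regular graph into $\Delta$ matchings, finally restricting the induced coloring back to $\Graph$.
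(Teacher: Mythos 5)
The paper does not prove this theorem: it states it and cites Lov\'{a}sz--Plummer (Theorem 1.4.18) for the proof, so there is no in-paper argument to compare against. Your Kempe-chain induction is a correct, self-contained proof. The setup is right (after deleting $e=(u,v)$ and coloring by induction, pick a color $\alpha$ missing at $u$ and $\beta$ missing at $v$, reduce to the case $\alpha\ne\beta$ with $\beta$ present at $u$ and $\alpha$ present at $v$), the structure of the $\{\alpha,\beta\}$-subgraph $\mathsf{K}$ is correctly analyzed (max degree $2$, alternating paths and cycles, $u$ and $v$ are degree-$1$ vertices and hence path-endpoints), the Kempe swap on the component of $v$ is valid, and the parity count ruling out $u,v$ lying on the same $\alpha$--$\beta$ path is the right place to invoke bipartiteness: the path would have an even number of edges, so closing it with $e$ would create an odd cycle. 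Your alternative route --- embed in a $\Delta$-regular bipartite supergraph, extract a perfect matching via Hall's condition (the regularity count $\Delta|S|\le \Delta|N(S)|$), peel off $\Delta$ matchings, then restrict --- is also correct, and is in fact closer to how the paper \emph{uses} the theorem: both Lemma \ref{lem:perfect-match} and Proposition \ref{prop:graph-decompose} only need the corollary that a $\Delta$-regular bipartite graph decomposes into $\Delta$ perfect matchings. The Kempe-chain version is slightly more economical for the theorem as stated because it handles non-regular bipartite graphs directly, without the embedding step.
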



We now proceed to the proof of Proposition \ref{prop:graph-decompose}. The idea behind this proof stems from that of \cite[Lemma 5.2]{hms}. The key difference is the use of K\"{o}nig's edge coloring theorem which 
allows us to produce a decomposition that avoids a prescribed edge (see \eqref{eq:e-not-there}). 

\begin{proof}[Proof of Proposition \ref{prop:graph-decompose}]
Let ${\sf H}'$ be the bipartite double cover of ${\sf H}$. That is, $V({\sf H}') := V({\sf H}) \times \{1,2\}$ and $((v_1,1), (v_2,2)) \in E({\sf H}')$ if and only if $(v_1, v_2) \in E({\sf H})$. 
By construction ${\sf H}'$ is a bipartite graph. Since ${\sf H}$ is a $\Delta$-regular graph so is ${\sf H}'$. Thus applying K\"{o}nig's edge coloring theorem we deduce that ${\sf H}'$ has a proper edge coloring with $\Delta$ colors. For $j \in \llbracket \Delta \rrbracket$ we let  $\mathscr{S}_j$ be the subset of the edges $E({\sf H}')$ that receive color $j$ in this proper edge coloring. 

We claim that 
\beq\label{eq:mathscr-S}
|\mathscr{S}_j| = e({\sf H}')/\Delta \qquad \text{ for all } j \in \llbracket \Delta \rrbracket. 
\eeq
To see this claim, if possible let us assume that $|\mathscr{S}_{j_\star}| > e({\sf H}')/\Delta$ for some $j_\star \in \llbracket \Delta \rrbracket$. Since the color of all edges in $\mathscr{S}_{j_\star}$ is same and it is obtained via a proper edge coloring we deduce that all the edges in $\mathscr{S}_{j_\star}$ must be vertex disjoint. Otherwise it violates the definition of a proper edge coloring. Therefore, by a slight abuse of notation viewing $\mathscr{S}_{j_\star}$ as a subgraph of ${\sf H}'$ we find that
\[
v_{\mathscr{S}_{j_\star}} = 2 e(\mathscr{S}_{j_\star}) > \f{2 e({\sf H}')}{\Delta} = v_{{\sf H}'},
\]
where the last equality is due to the fact that ${\sf H}'$ is a $\Delta$-regular graph. As $\mathscr{S}_{j_\star} \subset {\sf H}'$, this is indeed a contradiction. Thus we deduce that 
\[
|\mathscr{S}_j| \le e({\sf H}')/\Delta \qquad \text{ for all } j \in \llbracket \Delta \rrbracket. 
\] 
As $\sum_j |\mathscr{S}_j| = e({\sf H}')$ we arrive at \eqref{eq:mathscr-S}. Having proven this claim we now turn to complete the proof of the proposition. 

Since $\Delta \ge 3$ there exists $j_0 \in \llbracket \Delta \rrbracket$ such that
\beq\label{eq:e-not-there-1}
 e_1:=((u_1,1), (u_2,2)), e_2:=((u_1,2), (u_2,1)) \notin \mathscr{S}_{j_0}. 
 \eeq
 During the rest of the proof we will work with $\mathscr{S}_{j_0}$ to produce a matching (recall its definition from Definition \ref{dfn:matching}) of the original graph ${\sf H}$ with the desired property \eqref{eq:e-not-there}. 
 
 To this end, let us define the projection map $\pi: V({\sf H}') \mapsto V({\sf H})$ as follows: $\pi((v,\dagger)) = v$ for any $v \in V({\sf H})$ and $\dagger \in \{1,2\}$. Thus $\pi(\cdot)$ naturally induces a map from $E({\sf H}')$ to $E({\sf H})$ as well. By an abuse of notation we continue to denote the latter map by $\pi(\cdot)$. Equipped with these notation we recall from above that $\mathscr{S}_{j_0}$ is a matching in ${\sf H}'$. Therefore $\mathscr{M}:= \pi(\mathscr{S}_{j_0})$ is a subgraph of ${\sf H}$ with degree of its every non-isolated vertices is at most two. Thus every connected component of $\mathscr{M}$ is either a path or a cycle. That is $\mathscr{M}:= \{\{\mathscr{C}_i\}_{i=1}^{\lambda_1},  \{\mathscr{P}_i\}_{i=1}^{\lambda_2}\}$, for some $\lambda_1, \lambda_2 \ge 0$, where $\mathscr{C}_i$'s are cycles and $\mathscr{P}_j$'s are paths. Furthermore, by \eqref{eq:e-not-there-1} we deduce that $e=(u_1,u_2) \notin E(\mathscr{M})$.
 
 Hence to complete the proof it remains to establish that $\mathscr{P}_j$'s must be paths of length one and 
 \beq\label{eq:cover-vertex}
 \bigcup_{i=1}^{\lambda_1} V(\mathscr{C}_i) \cup \bigcup_{j=1}^{\lambda_2} V(\mathscr{P}_j) \supset V({\sf H}). 
 \eeq
 As $\mathscr{S}_{j_0}$ is a matching in ${\sf H}'$ and by \eqref{eq:mathscr-S} we have $v_{\mathscr{S}_{j_0}} = 2 e({\sf H}')/\Delta = v_{{\sf H}'}$ we conclude that for any $v \in V({\sf H})$ and $\dagger \in \{1,2\}$ the degree of $(v, \dagger)$ in $\mathscr{S}_{j_0}$ is one. Recalling that $\mathscr{M}=\pi(\mathscr{S}_{j_0})$ it immediately establishes \eqref{eq:cover-vertex}. Finally, for any $v \in V({\sf H})$ the degrees of $(v,1)$ and $(v,2)$ both being one in $\mathscr{S}_{j_0}$ also implies that degree of $v$ in $\mathscr{M}$ must be two unless the connected component containing $v$ is a path of length one.  Therefore all $\mathscr{P}_j$'s have to be paths of length one. 
 This completes the proof of the proposition. 
\end{proof}

We now proceed to the proof of Lemma \ref{lem:perfect-match}. It proof is similar to that of Proposition \ref{prop:graph-decompose}. Hence only a brief outline is provided. We remind the reader that in this lemma we need to prove that any regular bipartite graph ${\sf H}$ admits a perfect matching that avoids a collection of prescribed edges.

\begin{proof}[Proof of Lemma \ref{lem:perfect-match}]
As ${\sf H}$ is a bipartite graph with maximum degree $\Delta$ an application of K\"{o}nig's edge coloring theorem yields that ${\sf H}$ has a proper edge coloring with $\Delta$ colors. Denoting $\wh{\mathscr{S}}_j$ to be the collection of edges of $E({\sf H})$ that receive color $j$ and proceeding similarly as in the proof of the claim \eqref{eq:mathscr-S} we find that
\beq\label{eq:mathscr-S-1}
|\wh{\mathscr{S}}_j| = E({\sf H})/\Delta \qquad \text{ for all } j \in \llbracket \Delta \rrbracket. 
\eeq
As $\wh{\mathscr{S}}_j$ is obtained via a proper edge coloring all its edges must be vertex disjoint and thus it is matching. On other hand, \eqref{eq:mathscr-S-1} implies that $\wh{\mathscr{S}}_j$ is a perfect matching for any $j \in \llbracket \Delta \rrbracket$. 

To complete the proof we note that given any collection of $(\Delta-1)$ edges $\{e_1,e_2,\ldots, e_{\Delta-1}\}$ there exists a color $j_0 \in \llbracket \Delta \rrbracket$ such that those $(\Delta-1)$ edges receive a color different from $j_0$. Hence, the perfect matching induced by $\wh{\mathscr{S}}_{j_0}$ indeed avoids those $(\Delta-1)$ edges. This finishes the proof. 
\end{proof}

\section{Proofs of Lemmas \ref{lem:bad-graph-bd} and \ref{lem:bad-edges-bounds} }\label{app:app1}
Let us remind the reader that Lemmas \ref{lem:bad-graph-bd} and \ref{lem:bad-edges-bounds} provide a lower bound on the product of the degrees of adjacent vertices of any (strong)-core graph. Furthermore, Lemma \ref{lem:bad-edges-bounds} shows that for any strong-core graph there exists a large subgraph, containing most of the copies of ${\sf H}$ of the whole graph, such that the end points of all of its edges satisfy a tight upper and lower bound on the product of their degrees. This observation was crucial to the proof of the fact that the strong-core graphs are entropically stable.    

\begin{proof}[Proof of Lemma \ref{lem:bad-edges-bounds}]
First let us prove the lower bound on the product of the degrees. To this end, from \cite[Lemma 5.13]{hms}, for any edge $e =(u,v) \in E(\Graph)$ we have that 
\[
N({\sf H}, \Graph, e) \le 4 e({\sf H}) \cdot (2 e_\Graph)^{\f{v_{\sf H}}{2} - \f{2\Delta-1}{\Delta}} \cdot \left(4 \deg_\Graph (u) \cdot \deg_\Graph(v)\right)^{\f{\Delta-1}{\Delta}}.
\]
Since for any strong-core graph $\Graph$ and an edge $e \in E(\Graph)$ we have that
\beq\label{eq:copy-e-lbd}
N({\sf H}, \Graph, e) \ge ({\vep}/{\bar C_\star}) \cdot  (np^{\Delta/2})^{v_{\sf H}-2},
\eeq
it now follows from above that
\begin{multline}\label{eq:prod-lbd-pre}
\deg_\Graph(u) \cdot \deg_\Graph(v) \\
\ge \f14 \cdot \left(\f{\vep}{\bar C_\star}\right)^{\f{\Delta}{\Delta-1}} \cdot \left(\f{1}{4 e({\sf H})}\right)^{\f{\Delta}{\Delta-1}}\cdot \left[\f{(np^{\Delta/2})^{v_{\sf H}-2}}{(np^{\Delta/2})^{v_{\sf H}- 4 +\f{2}{\Delta}}} \right]^{\f{\Delta}{\Delta-1}} \cdot \left( \f{n^2 p^\Delta}{2 e(\Graph)}\right)^{(\f{v_{\sf H}}{2} - \f{2\Delta-1}{\Delta})\f{\Delta}{\Delta-1}} \\
 \ge \f14 \cdot \left(\f{\vep}{\bar C_\star}\right)^{\f{\Delta}{\Delta-1}} \cdot \left(\f{1}{4 e({\sf H})}\right)^{\f{\Delta}{\Delta-1}}\cdot n^2 p^\Delta \cdot (2 \bar C_\star)^{-(\f{v_{\sf H}}{2} - \f{2\Delta-1}{\Delta})\f{\Delta}{\Delta-1}}\ge c_0(\vep) n^2 p^\Delta,
\end{multline}
where the last step follows upon choosing $c_0(\vep)$ sufficiently small and the fact that $e(\Graph) \le \bar C_\star n^2 p^\Delta$. This completes the proof of part (a). 

Turning to proof part (b) let us fix some $C_0 < \infty$ and let $\Graph_{\rm high}$ be as in Definition \ref{dfn:bad-graph}. We claim that 
\beq\label{eq:e-G-high-ubd}
e(\Graph_{\rm high}) \le \f{5 e(\Graph)^2}{C_0 n^2 p^\Delta}.
\eeq
To see the above using \eqref{eq:copy-H-star-bd} we have that 
\[
N(P_3, \Graph) \le (2 e(\Graph))^2.
\]
On the other hand it is easy to see that
\begin{multline*}
N(P_3, \Graph) \ge 2 \sum_{(u,v) \in E(\Graph)} (\deg_\Graph(u)-1) \cdot (\deg_\Graph(v) -2) \\
\ge  2 \sum_{(u,v) \in E(\Graph)} \deg_\Graph(u) \cdot \deg_\Graph(v) - 3 \sum_{v \in V(\Graph)} (\deg_\Graph(v))^2 
\ge 2 C_0 n^2 p^\Delta \cdot e(\Graph_{\rm high})  - 6 e(\Graph)^2,
\end{multline*}
where the last step follows from the lower bound \eqref{eq:sc-ubd}. Combining the upper and lower bounds on $N(P_3, \Graph)$ the inequality \eqref{eq:e-G-high-ubd} is now immediate. 

Using \eqref{eq:e-G-high-ubd} we next proceed to find the desired upper bound on $e(\Graph_{\rm bad})$. As the lower bound \eqref{eq:copy-e-lbd} holds for every $e \in E(\Graph)$ we deduce that 
\beq\label{eq:e-G-bad-bd1}
e(\Graph_{\rm bad}) \cdot (\vep/\bar C_\star) (np^{\Delta/2})^{v_{\sf H}-2} \le \sum_{e \in E(\Graph_{\rm bad})} N({\sf H}, \Graph, e) \le \sum_{e' \in E(\Graph_{\rm high})} N({\sf H}, \Graph, e'),
\eeq
where the rightmost inequality is due to the fact by definition for any edge $e \in E(\Graph_{\rm bad})$ every copy of ${\sf H}$ passing through $e$ uses at least one edge belonging to $E(\Graph_{\rm high})$. 
From \cite[Lemma 5.15]{hms} we have that
\begin{align}\label{eq:e-G-bad-bd2}
\sum_{e \in E(\Graph_{\rm high})} N({\sf H}, \Graph, e)  & \le e({\sf H}) \cdot (2 e(\Graph))^{v_{\sf H}/2} \cdot \left(\f{e(\Graph_{\rm high})}{e(\Graph)}\right)^{1/\Delta} \notag\\
& \le e({\sf H}) \cdot (2 e(\Graph))^{v_{\sf H}/2} \cdot \left(\f{5 \bar C_\star}{C_0}\right)^{1/\Delta} 
\le 2^{v_{\sf H}/2} e({\sf H}) \cdot (n p^{\Delta/2})^{v_{\sf H} -2} \cdot \left(\f{5 \bar C_\star}{C_0}\right)^{1/\Delta} \cdot e(\Graph),
\end{align}
where the penultimate inequality follows from \eqref{eq:e-G-high-ubd} and the fact that $e(\Graph) \le \bar C_\star n^2 p^\Delta$. The last step is again a consequence of the upper bound on $e(\Graph)$. As, by definition, $\Graph_{\rm high} \subset \Graph_{\rm bad}$, now \eqref{eq:e-G-bad-bd1} together with \eqref{eq:e-G-bad-bd2} yield that  
\[
e(\Graph_{\rm high}) \le e(\Graph_{\rm bad}) \le \vep e(\Graph),
\]
upon choosing $C_0$ sufficiently large. Finally noting that
\[
N({\sf H}, \Graph) - N({\sf H}, \Graph_{\rm low})  \le \sum_{e \in E(\Graph_{\rm high})} N({\sf H}, \Graph, e)
\]
the lower bound 
\[
N({\sf H}, G_{\rm low})\ge (1-\vep) N({\sf H}, \Graph)
\]
follows from \eqref{eq:e-G-bad-bd2} upon using the upper bound on $e(\Graph)$, the fact that 
\[
N({\sf H}, \Graph) \ge \delta(1-6\vep) n^{v_{\sf H}} p^{e({\sf H})} = \delta(1-6\vep) (n p^{\Delta/2})^{v_{\sf H}},
\] 
and enlarging $C_0$ if necessary. This completes the proof of the lemma. 
\end{proof}

The proof of Lemma \ref{lem:bad-graph-bd} follows from a same line reasoning as that in Lemma \ref{lem:bad-edges-bounds}. Indeed, replacing \eqref{eq:copy-e-lbd} by the lower bound
\[
N({\sf H}, \Graph, e) \ge \vep n^{v_{\sf H}} p^{e({\sf H})} /(\bar C n^2 p^\Delta \log(1/p))
\]
which holds for any edge $e$ in a core graph, arguing similarly as in \eqref{eq:prod-lbd-pre}, and using the upper bound $e(\Graph) \le \bar C n^2 p^\Delta \log(1/p)$ Lemma  \ref{lem:bad-graph-bd} follows. We omit further details.

\end{document}